\documentclass[11pt, reqno]{amsart}

\usepackage[top=3.75cm, bottom=3cm, left=3cm, right=3cm]{geometry}
\frenchspacing

\usepackage{amsmath}
\usepackage{amsfonts}
\usepackage{amssymb}
\usepackage{amsthm}
\usepackage[usenames]{xcolor}
\usepackage{graphicx}
\usepackage[all]{xy}
\usepackage{url}
\usepackage[abs]{overpic}
\usepackage[hyperfootnotes=false]{hyperref}
\usepackage{verbatim}
\usepackage{todonotes}
\usepackage{tikz}
\usepackage{tikz-cd}
\usetikzlibrary{arrows,calc,matrix,patterns,shadings,backgrounds,fit}
\usepackage{lipsum}
\usepackage{mathtools}
\usepackage{stmaryrd}
\usepackage{environ}

\makeatletter
\newdimen\imgresize@width
\newdimen\imgresize@threshold
\setlength{\imgresize@threshold}{.1pt}

\makeatother

\theoremstyle{definition}
\newtheorem{defn}{Definition}[section]
\newtheorem{ex}[defn]{Example}
\newtheorem{rmk}[defn]{Remark}
\newtheorem{cond}[defn]{Condition}

\newtheorem{question}[defn]{Question}
\theoremstyle{plain}
\newtheorem{thm}[defn]{Theorem}
\newtheorem{lem}[defn]{Lemma}
\newtheorem{prop}[defn]{Proposition}
\newtheorem{cor}[defn]{Corollary}

\newtheorem{conj}[defn]{Conjecture}
\newtheorem{PropDef}[defn]{Proposition and Definition}
\newtheorem{sublem}[defn]{Sublemma}
\numberwithin{equation}{section}

\def\C{\ensuremath{\mathbb{C}}}

\def\L{\ensuremath{\mathbb{L}}}
\def\N{\ensuremath{\mathbb{N}}}
\def\P{\ensuremath{\mathbb{P}}}

\def\R{\ensuremath{\mathbb{R}}}
\def\Z{\ensuremath{\mathbb{Z}}}


\def\AA{\ensuremath{\mathcal A}}
\def\BB{\ensuremath{\mathcal B}}
\def\CC{\ensuremath{\mathcal C}}
\def\DD{\ensuremath{\mathcal D}}
\def\EE{\ensuremath{\mathcal E}}
\def\FF{\ensuremath{\mathcal F}}
\def\GG{\ensuremath{\mathcal G}}
\def\HH{\ensuremath{\mathcal H}}
\def\II{\ensuremath{\mathcal I}}

\def\KK{\ensuremath{\mathcal K}}
\def\LL{\ensuremath{\mathcal L}}
\def\MM{\ensuremath{\mathcal M}}

\def\OO{\ensuremath{\mathcal O}}

\def\TT{\ensuremath{\mathcal T}}

\def\XX{\ensuremath{\mathcal X}}
\def\YY{\ensuremath{\mathcal Y}}
\def\ZZ{\ensuremath{\mathcal Z}}





\def\Aut{\mathop{\mathrm{Aut}}\nolimits}

\def\ch{\mathop{\mathrm{ch}}\nolimits}

\def\Coh{\mathop{\mathrm{Coh}}\nolimits}

\def\Db{\mathop{\mathrm{D}^{\mathrm{b}}}\nolimits}

\def\dim{\mathop{\mathrm{dim}}\nolimits}

\def\ext{\mathop{\mathrm{ext}}\nolimits}
\def\Ext{\mathop{\mathrm{Ext}}\nolimits}


\def\hom{\mathop{\mathrm{hom}}\nolimits}
\def\Hom{\mathop{\mathrm{Hom}}\nolimits}

\def\RlHom{\mathop{\mathbf{R}\mathcal Hom}\nolimits}

\def\id{\mathop{\mathrm{id}}\nolimits}

\def\Ker{\mathop{\mathrm{Ker}}\nolimits}


\def\rk{\mathop{\mathrm{rk}}}

\def\Spec{\mathop{\mathrm{Spec}}}

\def\supp{\mathop{\mathrm{supp}}}
\def\Sym{\mathop{\mathrm{Sym}}}

\def\Stab{\mathop{\mathrm{Stab}}\nolimits}

\def\Ku{\mathop{\mathrm{Ku}}\nolimits}
\def\Forg{\mathop{\mathrm{Forg}}\nolimits}
\def\dlt{\Delta_{\mathcal B_0}}

\def\chbl{\ch^{-1}_{\BB_0,\leq 2}}
\def\zab{Z_{\alpha,\beta}}

\def\Ms{M^{\text{sing}}}
\def\Zs{Z^{\text{sing}}}
\newcommand{\tM}{\widetilde{M}}

\newcommand{\tJ}{\widetilde{J}}

\def\Def{\text{Def}}
\def\kn{\mathrm{K}_{\mathrm{num}}}


\author{Chunyi Li}
\address{C.\ L.: Mathematics Institute (WMI), University of Warwick, Coventry, CV4 7AL, United Kingdom.}
\email{C.Li.25@warwick.ac.uk}
\urladdr{https://sites.google.com/site/chunyili0401/}
\author{Laura Pertusi}
\address{L.\ P.: Dipartimento di Matematica F.\ Enriques, Universit\`a degli studi di Milano, Via Cesare Saldini 50, 20133 Milano, Italy.}
\email{laura.pertusi@unimi.it}
\urladdr{http://www.mat.unimi.it/users/pertusi/}
\author{Xiaolei Zhao}
\address{X.\ Z.: Department of Mathematics, University of California, Santa Barbara, South Hall 6705, Santa Barbara, CA 93106, USA.}
\email{xlzhao@ucsb.edu}
\urladdr{https://sites.google.com/site/xiaoleizhaoswebsite/}

\begin{document}
\title[Elliptic quintics on cubic fourfolds]{Elliptic quintics on cubic fourfolds, O'Grady 10, and Lagrangian fibrations}
\maketitle 

\begin{abstract}
For a smooth cubic fourfold $Y$, we study the moduli space $M$ of semistable objects of Mukai vector $2\lambda_1+2\lambda_2$ in the Kuznetsov component of $Y$. We show that with a certain choice of stability conditions, $M$ admits a symplectic resolution $\tM$, which is a smooth projective hyperk\"ahler manifold, deformation equivalent to the $10$-dimensional examples constructed by O'Grady. As applications, we show that a birational model of $\tM$ provides a hyperk\"ahler compactification of the twisted family of intermediate Jacobians associated to $Y$. This generalizes the previous result of Voisin \cite{Voisin:twisted} in the very general case. We also prove that $\tM$ is the MRC quotient of the main component of the Hilbert scheme of elliptic quintic curves in $Y$, confirming a conjecture of Castravet.
\end{abstract}

\section{Introduction}

Moduli spaces of stable sheaves on a K3 surface provide the major examples of projective hyperk\"ahler manifolds. These examples are deformation equivalent to Hilbert schemes of points on a K3 surface, by the seminal work of Mukai \cite{Mukai:BundlesK3} and the contribution of many other authors, including Beauville \cite{Beauville:remarksonc1zero}, O’Grady \cite{OG:weight2}, Yoshioka \cite{Yoshioka:Irreducibility, Yoshioka:Abelian}. In \cite{OGrady}, O'Grady considered the case when the moduli space contains also strictly semistable sheaves. In particular, he constructed a symplectic resolution of the singular moduli space of semistable torsion-free sheaves on a K3 surface with rank $2$, trivial first Chern class and second Chern class equal to $4$. This construction provides a new example of a hyperk\"ahler manifold of dimension $10$, not deformation equivalent to the previous construction. O'Grady's result was generalized by Lehn and Sorger in \cite{LS} to moduli spaces of semistable sheaves on a K3 surface having Mukai vector of the form $v=2v_0$ with $v_0^2=2$. In addition, they showed that the symplectic resolution of the moduli space can be obtained by blowing up the singular locus with the reduced scheme structure.

In this paper we investigate the analogous situation of O'Grady's example, in the case of moduli spaces of semistable complexes in the noncommutative K3 surface associated to a smooth cubic fourfold. By \cite{Kuz:fourfold}, the bounded derived category of a cubic fourfold $Y$ has a semiorthogonal decomposition of the form
$$\Db(Y)=\langle\Ku(Y),\OO_Y,\OO_Y(H),\OO_Y(2H)\rangle,$$
where $H \subset Y$ is a hyperplane section and $\Ku(Y)$ is a triangulated subcategory of K3 type, in the sense that it has the same Serre functor and Hochschild homology as the derived category of a K3 surface \cite[Corollary 4.3]{Kuz:V14}, \cite[Proposition 4.1]{Kuz:rat}. We call this category $\Ku(Y)$ the \emph{Kuznetsov component} of $Y$. One reason to study $\Ku(Y)$ is related to the birational geometry of $Y$. For instance, there is a folklore conjecture  \cite[Conjecture 1.1]{Kuz:fourfold} that $Y$ is rational if and only if $\Ku(Y)$ is equivalent to the derived category of a K3 surface. 

Another interest in studying $\Ku(Y)$ is to generalize Mukai's construct to this noncommutative K3 surface. Bayer, Lahoz, Macr\`i and Stellari construct Bridgeland stability conditions on $\Ku(Y)$ in \cite{BLMS:kuzcomponent} (see Section \ref{section_preliminaries} for a review of the construction). We denote by $\Stab^\dag(\Ku(Y))$ the connected component of the stability manifold containing these stability conditions. In a second paper \cite{BLM+}, joint also with Nuer and Perry, they develop the deep theory of families of stability conditions, which allows studying the properties of moduli spaces of stable objects in $\Ku(Y)$ by deforming to cubic fourfolds whose Kuznetsov components are equivalent to the derived category of a K3 surface. As a consequence, they produced infinite series of unirational, locally complete families of smooth polarized hyperk\"ahler manifolds, deformation equivalent to Hilbert schemes of points on a K3 surface. These hyperk\"ahler manifolds are given as moduli spaces of stable objects in $\Ku(Y)$ of primitive Mukai vector. It is worth to point out that the hyperk\"ahler manifolds constructed from some Hilbert schemes of rational curves of low degree in $Y$ can be interpreted as moduli spaces of stable objects in $\Ku(Y)$. Indeed, we gave in \cite{LPZ1} a description of the Fano variety of lines in $Y$ \cite{Beauville-cubic4fold} and, when $Y$ does not contain a plane, of the hyperk\"ahler $8$-fold constructed in \cite{LLSvS} using twisted cubic curves in $Y$, as moduli spaces of stable objects in $\Ku(Y)$ with primitive Mukai vector.

In analogy to the case of K3 surfaces, the Mukai lattice of $\Ku(Y)$ has been defined in \cite{AddingtonThomas:CubicFourfolds} and carries a weight two Hodge structure induced from that on the cohomology of $Y$. We denote by $H^*_{\mathrm{alg}}(\Ku(Y),\Z)$ the sublattice of integral $(1,1)$ classes in the Mukai lattice of $\Ku(Y)$ (see Section \ref{subsec_algMukai}).

Consider now a vector $v=2v_0 \in H^*_{\mathrm{alg}}(\Ku(Y),\Z)$ such that $v_0$ is primitive with $v_0^2=2$. Let $\tau$ be a stability condition in $\Stab^\dag(\Ku(Y))$ which is generic with respect to $v$, in other words, the strictly $\tau$-semistable objects with Mukai vector $v$ are (S-equivalent to) direct sums of $\tau$-stable objects with Mukai vector $v_0$. Let $M$ be the moduli space of $\tau$-semistable objects with Mukai vector $v$. The first result of this paper is the following.
\begin{thm}[Theorem \ref{thm_OG10}]
\label{thm_OG10intro}
The moduli space $M$ has a symplectic  resolution $\tM$, which is a $10$-dimensional smooth projective hyperk\"ahler manifold, deformation equivalent to the O'Grady's example constructed in \cite{OGrady}. 
\end{thm}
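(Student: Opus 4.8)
The plan is to reduce the statement to the known K3 surface case via the deformation theory of stability conditions developed in \cite{BLM+}. The key philosophy is that $\Ku(Y)$ behaves like the derived category of a (possibly twisted, possibly noncommutative) K3 surface, and that O'Grady-type phenomena are governed entirely by the numerical data $v=2v_0$ with $v_0^2=2$, which is deformation-invariant.

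\textbf{Step 1: Local structure of the moduli space.} First I would show that $M$ is a singular symplectic variety whose singularities are modeled, étale-locally, on the singularities appearing in O'Grady's and Lehn--Sorger's constructions. The singular locus should consist of the polystable objects, i.e. those S-equivalent to $E_1\oplus E_2$ with $E_i$ being $\tau$-stable of Mukai vector $v_0$; these split into the locus where $E_1\not\cong E_2$ and the deeper locus where $E_1\cong E_2$. Using the Kuranishi-type description of the local model of $M$ near a polystable object via the Ext-algebra $\Ext^*(E,E)$ of $E=E_1\oplus E_2$, together with the fact that $\Ku(Y)$ has the same Serre functor as a K3 surface (so that $\Ext^2$ is Serre-dual to $\Hom$ and the relevant quadratic part of the Kuranishi map is controlled by $v_0^2=2$), I expect to identify the local model with exactly the one studied by Lehn--Sorger. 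This gives that $M$ has symplectic singularities and that blowing up the singular locus $\Ms$ with its reduced structure produces a symplectic resolution $\tM$ which is smooth, projective, and holomorphic symplectic.

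\textbf{Step 2: Deformation to the geometric K3 case.} Having a pointwise/étale-local model, the global statement — that $\tM$ is a projective irreducible holomorphic symplectic manifold of dimension $10$ — follows by deforming $Y$ to a cubic fourfold $Y_0$ whose Kuznetsov component $\Ku(Y_0)$ is equivalent to $\Db(S)$ (or $\Db(S,\alpha)$) for an actual K3 surface $S$. Here I would invoke the theory of families of stability conditions and relative moduli spaces from \cite{BLM+}: the moduli space $M$ fits into a family over the (locally complete) deformation space, the relative singular locus and its blow-up deform flatly, and the symplectic resolution $\tM$ deforms to the corresponding symplectic resolution on the K3 side. On the special fiber the statement is precisely O'Grady's theorem (as extended by Lehn--Sorger to all $v=2v_0$ with $v_0^2=2$): the resolution is a smooth projective hyperkähler $10$-fold in the OG10 deformation class.

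\textbf{Step 3: Identifying the deformation class.} Finally, since being deformation equivalent is a property preserved along the smooth proper family of symplectic resolutions constructed in Step 2, and since the central fiber is O'Grady's original example, $\tM$ lies in the OG10 deformation class as claimed.

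\textbf{Main obstacle.} The hard part will be Step 1, specifically establishing that the local analytic (or étale) structure of $M$ at the deepest stratum agrees with the Lehn--Sorger local model in the noncommutative setting, where one cannot directly appeal to sheaf-theoretic arguments. One must verify that the Kuranishi algebra governing deformations of the polystable object $E_1 \oplus E_2$ in $\Ku(Y)$ is formal and has the expected quadratic-cone form, which relies on the K3-type structure of $\Ku(Y)$ (CY2 property, existence of the Mukai pairing) rather than on $Y$ being an honest K3 surface. A secondary difficulty is ensuring the symplectic resolution $\tM$ deforms together with the moduli space in the family of \cite{BLM+} so that the deformation-equivalence conclusion is valid; this requires checking that the blow-up of the relative singular locus commutes with base change and stays smooth and symplectic in the family.
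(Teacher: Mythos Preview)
Your outline is essentially the paper's approach: local Lehn--Sorger model plus deformation to the geometric K3 locus via \cite{BLM+}. A few points where your sketch diverges from or underestimates what the paper actually does:

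\textbf{Local model.} You frame the obstacle as verifying that the Kuranishi algebra is \emph{formal}. The paper does not prove formality. Following \cite{LS}, it instead shows that the local ring at the worst singularity has the same normal cone as the Lehn--Sorger affine model $Z\subset\mathfrak{sp}(4)$, and then invokes the rigidity of that singularity (\cite[Th\'eor\`eme~3.1]{LS}): the deformation of $(\C^4\times Z,0)$ toward its normal cone is trivial, so the completed local ring is isomorphic to that of the model. The genuinely new ingredient here, which you do not mention, is that $M$ is only known as a good moduli space of an Artin stack, not as a GIT quotient; the Luna slice argument used for sheaves on K3 surfaces is replaced by the \'etale slice theorem for stacks of Alper--Hall--Rydh \cite{AlperHallRydh}, and the paper spends Lemma~\ref{lemma_bad} making this work.

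\textbf{Projectivity.} You write that the local analysis already gives $\tM$ projective. It does not: a priori $M$ is only a proper algebraic space. The paper proves projectivity separately in the final step by showing that the Bayer--Macr\`i class $l$ on $M$ is strictly nef, that its pullback $\tilde l$ to $\tM$ has $q(\tilde l)>0$ (this is checked by deformation to the K3 locus), hence is big via a bimeromorphic map to a projective hyperk\"ahler \cite{Perego:Kahlerness}, and then that the Base Point Free Theorem makes $ml$ ample. This is a real step, not a formality.

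\textbf{Irreducibility.} You implicitly assume $M$ is irreducible. The paper first works with the component $M'\supset M^{\mathrm{sing}}$, and only concludes $M'=M$ at the end via \cite{Kaledin:symplectic-singularities} once projectivity and normality are in hand.
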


In the second part we explain two main applications, which make a connection between the derived categorical viewpoint of Theorem \ref{thm_OG10intro} and the classical construction of hyperk\"ahler manifolds from $Y$. Recall that by \cite{AddingtonThomas:CubicFourfolds}, the algebraic Mukai lattice of $\Ku(Y)$ contains two classes $\lambda_1$ and $\lambda_2$ spanning an $A_2$-lattice. Motivated by classical geometric constructions (as it will be clear later), we consider the case $v_0=\lambda_1+\lambda_2$, $v=2v_0$ and we analyze the objects in $M:=M_{\sigma}(v)$ where $\sigma$ is a stability condition as constructed in \cite{BLMS:kuzcomponent}. It is not difficult to see that by \cite{LPZ1} the strictly semistable locus of $M$ is identified with the symmetric square of the Fano variety of lines in $Y$, up to a perturbation of the stability condition (see Remark \ref{rem:ssstable}). On the other hand, stable objects are harder to describe. If $X$ is a smooth hyperplane section of $Y$, in other words, $X$ is a smooth cubic threefold, then the moduli space $M_{\mathrm{inst}}$ parametrizing rank $2$ instanton sheaves on $X$ have been described by \cite{Druel:Instanton}. In particular, stable sheaves in $M_{\mathrm{inst}}$ belong to one of the following classes: rank $2$ stable vector bundles constructed from non-degenerate elliptic quintics in $X$,  rank $2$ stable torsion free sheaves associated to smooth conics in $X$. Moreover, the strictly semistable objects in $M_{\mathrm{inst}}$ are direct sums of two ideal sheaves of lines in $X$ (see Section \ref{subsec_instanton} for a review). By \cite{Beauville:Cubics, Druel:Instanton} the moduli space  $M_{\mathrm{inst}}$ is birational to the translate $J^2(X)$ of the intermediate Jacobian, which parameterizes $1$-cycles of degree $2$ on $X$.

Denote by $\sigma$ a stability condition constructed in \cite{BLMS:kuzcomponent}. A key result for our applications is the following theorem, which provides a description of an open subset of the stable locus of $M:=M_{\sigma}(2\lambda_1+2\lambda_2)$.

\begin{thm}[Theorem \ref{thm:EgammaECinkustab}]
\label{thm:EgammaECinkustabintro}
Let $X$ be a smooth hyperplane section of $Y$. Then the projection in $\Ku(Y)$ of the stable rank $2$ instanton sheaves associated to non degenerate elliptic quintic curves and smooth conics in $X$ are $\sigma$-stable objects with Mukai vector $2\lambda_1+2\lambda_2$.
\end{thm}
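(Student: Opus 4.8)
The plan is to study the object $F := \pr(i_* E)$, where $i\colon X \hookrightarrow Y$ is the inclusion of the hyperplane section, $\pr\colon \Db(Y) \to \Ku(Y)$ is the projection functor attached to the semiorthogonal decomposition, and $E$ is one of the stable rank $2$ instanton sheaves of Druel's classification (arising from a nondegenerate elliptic quintic or from a smooth conic). There are two things to establish: that $v(F) = 2\lambda_1 + 2\lambda_2$, and that $F$ is $\sigma$-stable. For the Mukai vector I would start from the Serre-type presentations of $E$ recorded in Section \ref{subsec_instanton}, push them forward to $Y$, and apply $\pr$ by successively removing the graded pieces along $\OO_Y$, $\OO_Y(H)$, $\OO_Y(2H)$; the needed groups $\RHom(i_* E, \OO_Y(jH))$ reduce to cohomology on $X$ by adjunction. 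Feeding the resulting Chern character into the Addington--Thomas identification of the Mukai lattice should yield $v(F) = 2\lambda_1 + 2\lambda_2$ after a lengthy but routine Riemann--Roch computation.

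For stability I would first place $F$ in the heart $\AA$ of $\sigma$ up to shift, by bounding the tilt-slopes of the cohomology sheaves of $i_* E$ against the weak stability condition underlying the BLMS construction and verifying the $\Hom$-vanishings against the exceptional collection. Assuming $F \in \AA$, any destabilizing subobject $A \hookrightarrow F$ in $\AA$ has Mukai vector $w$ with $\phi_\sigma(A) \geq \phi_\sigma(F)$ and, by the support property, $w^2 \geq -2$ and $(v-w)^2 \geq -2$. Together with the position of the central charge this should leave only finitely many classes $w$, and on the semistability boundary force $w = v_0 = \lambda_1 + \lambda_2$; since $\sigma$ is generic with respect to $v = 2v_0$, the sole obstruction to $F$ being stable is then that it be S-equivalent to $A_1 \oplus A_2$ with each $A_i$ a $\sigma$-stable object of class $v_0$.

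The crux, and the step I expect to be hardest, is to exclude precisely this degenerate possibility and thereby upgrade semistability to stability in the non-primitive case. By the identification of the strictly semistable locus of $M$ with the symmetric square of the Fano variety of lines (Remark \ref{rem:ssstable}), the $\sigma$-stable objects of class $v_0$ are precisely those parametrized by the Fano variety of lines, that is, the objects of $\Ku(Y)$ associated to single lines in $Y$; call these line-objects. I would therefore show that no line-object maps nontrivially into or out of $F$, equivalently that $\hom(F,F) = 1$ and that $F$ is not an extension of two line-objects. This is where the geometry of $E$ is essential: a nondegenerate elliptic quintic and a smooth conic are irreducible, and their instanton sheaves are stable and indecomposable on $X$, an indecomposability that must survive $\pr \circ i_*$. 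Concretely I would compute $\Hom$ and $\Ext$ between $F$ and the line-objects, once more reducing to cohomology on $X$, in order to rule out both the destabilizing $v_0$-subobjects and the splitting simultaneously; carrying out these vanishings, and treating the elliptic-quintic and conic cases separately, is the delicate part of the argument.
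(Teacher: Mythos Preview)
Your overall strategy diverges substantially from the paper's, and there is a genuine gap.

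The paper does not argue inside $\Ku(Y)$ via the Mukai lattice at all. Instead it transports $E_\Gamma$ and $E_C$ through the equivalence $\Psi\rho^*\colon \Ku(Y)\xrightarrow{\sim}\Ku(\P^3,\BB_0)$ (Proposition~\ref{prop:equivalentofKu}), computes their images explicitly as complexes of $\BB_0$-modules (Section~\ref{subsec_expressionGamma}, equation~\eqref{eq_defE_CasB_0mod}), and then proves tilt-stability with respect to $\sigma_{\alpha,-1}$ on $\Db(\P^3,\BB_0)$ for $\alpha\gg 0$ (Propositions~\ref{prop:stabofEgamma}, \ref{prop:stabofEC}). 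A wall-crossing argument (Proposition~\ref{prop:nowallforobjinKU}) shows that no actual wall is hit as $\alpha$ decreases, and Lemma~\ref{lem:tiltstabtokustab} bridges tilt-stability to $\sigma$-stability in $\Ku(\P^3,\BB_0)$. The lattice used throughout is the rank-$3$ lattice $\{(\rk,\ch_1,\ch_2)\}$ on $\Db(\P^3,\BB_0)$, which is the same for every cubic fourfold.

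Your approach has two structural problems. First, the heart $\AA$ of $\sigma$ is not defined by a tilt on $\Db(Y)$; the BLMS construction for cubic fourfolds passes through the conic fibration and lands in $\Db(\P^3,\BB_0)$ (Sections~\ref{subsec_kuzcompoinP3}--\ref{subsection_stabcondonKu(Y)}). So ``bounding tilt-slopes of the cohomology sheaves of $i_*E$'' on $Y$ does not place $F$ in $\AA$; one must compute $\Psi\rho^*F$ and work in $\Coh^{-1}(\P^3,\BB_0)$, which is exactly what the paper does and is already most of the work.

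Second, and more seriously, you assume that $\sigma$ is $v$-generic and that the only classes with the same phase as $v=2v_0$ are multiples of $v_0$. This is false for special $Y$: the algebraic Mukai lattice can have rank larger than $2$, and Section~\ref{sec:C12} exhibits, for $Y\in\CC_{12}$, an object $\II_{\Sigma/X}(H)\in\Ku(Y)$ of class $\lambda_1+\lambda_2+s$ (with $s$ a primitive class) which has the same $\sigma$-slope as $E_\Gamma$ and can destabilize it when the linear span of $\Gamma$ cuts out a \emph{singular} threefold. Your lattice enumeration cannot see such classes, so it cannot establish stability uniformly in $Y$; yet the theorem is stated for every smooth $Y$ and every smooth hyperplane section $X$. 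The paper's argument avoids this precisely because the wall analysis in Proposition~\ref{prop:nowallforobjinKU} uses the fixed $\Db(\P^3,\BB_0)$ lattice and the membership of $\Psi\rho^*E_\Gamma$ in $\Ku(\P^3,\BB_0)$, not the variable Mukai lattice of $\Ku(Y)$.

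Finally, even granting the rank-$2$ case, your proposed exclusion of the strictly semistable possibility by computing $\Hom(P_\ell,F)$ cannot be ``reduced to cohomology on $X$'' when $\ell\not\subset X$: the line objects $P_\ell$ range over all lines in $Y$, and for generic $\ell$ there is no adjunction to $X$ available.
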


We apply Theorem \ref{thm:EgammaECinkustabintro} to show that, up to a perturbation of the stability condition $\sigma$ in $\Stab^\dag(\Ku(Y))$ (see Section \ref{sec:Lagrangian}), the sympletic resolution $\tM$, given by Theorem \ref{thm_OG10intro} has a deep connection to a classical construction of Jacobian fibration associated to $Y$. Consider the $(\P^{5})^\vee$-family of cubic threefolds obtained as hyperplane sections of $Y$ and let $\P_0$ be its smooth locus. Consider the twisted family of intermediate Jacobians $p: J \to \P_0$, whose fibers are the twisted intermediate Jacobians of the smooth cubic threefolds parametrized by $\P_0$. It is known that there exists a holomorphic symplectic form on $J$ by \cite{DonagiMarkman}. However, it remained a long standing question whether $J$ can be compactified to a hyperk\"ahler manifold $\bar{J}$ and a Lagrangian fibration $\bar{J} \to (\P^{5})^\vee$ extending $p$. This has been recently proved for very general cubic fourfolds in the beautiful works \cite{LSV} for the untwisted family and \cite{Voisin:twisted} by Voisin for $J$. We mention that  in the recent preprint \cite{Sacca:birgeomJac}, Saccà extended the result for the untwisted family in \cite{LSV} to all smooth cubic fourfolds. The same argument applies to the twisted family and extends Voisin's result to all smooth cubic fourfolds (see \cite[Remark 1.10]{Sacca:birgeomJac}).

Our main result is the following modular construction of a hyperk\"ahler compactification of $J$ for every cubic fourfold $Y$, obtained combining Theorems \ref{thm_OG10intro}, \ref{thm:EgammaECinkustabintro} and some techniques in birational geometry of hyperk\"ahler varieties.

\begin{thm} [Propositions \ref{birational_model}, \ref{prop_compactification}]
\label{thm_compacttwistedJac}
There exists a hyperk\"ahler manifold $N$ birational to $\tM$, which admits a Lagrangian fibration structure compactifying the twisted intermediate Jacobian family $J \to \P_0$.
\end{thm}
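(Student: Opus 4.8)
The plan is to realise the Lagrangian fibration as the resolved \emph{support map} that records, for a generic stable object, the hyperplane section of $Y$ on which the corresponding instanton sheaf lives, and then to identify its fibers with the twisted intermediate Jacobians. Concretely, I would first construct a rational map $\pi \colon \tM \dashrightarrow (\P^5)^\vee$. On the dense open locus of $M$ consisting of stable objects that arise, via Theorem \ref{thm:EgammaECinkustabintro}, as projections $\pr(i_{X*}F)$ of rank $2$ instanton sheaves $F$ on smooth hyperplane sections $X = Y \cap \H$, the map should send such an object to the point $[\H] \in (\P^5)^\vee$. The first task is to check that $\H$ is intrinsically recovered from $E = \pr(i_{X*}F) \in \Ku(Y)$ --- for instance through the support of a canonically associated sheaf, or through the vanishing locus of a natural $\RHom$ against the line bundles $\OO_Y(kH)$ --- so that $\pi$ is well defined on a dense open subset of $\tM$.

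Next I would analyse the fibers over the smooth locus $\P_0$. Over $[\H] \in \P_0$ the preimage is birational to the moduli space $M_{\mathrm{inst}}$ of rank $2$ instanton sheaves on the smooth cubic threefold $X$, which by \cite{Druel:Instanton, Beauville:Cubics} is birational to the torsor $J^2(X)$ of the intermediate Jacobian. As $[\H]$ varies over $\P_0$ these torsors should glue to the twisted family $p \colon J \to \P_0$. Since $\dim (\P^5)^\vee = 5 = \tfrac{1}{2}\dim \tM$ and the general fiber is an abelian fivefold, the class $L = \pi^*\OO_{(\P^5)^\vee}(1) \in \NS(\tM)$ is isotropic for the Beauville--Bogomolov--Fujiki form with $L^5 = 0$, and it lies on the boundary of the movable cone.

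I would then pass to a birational model. Invoking the minimal model program for hyperk\"ahler manifolds together with the hyperk\"ahler SYZ statement in the O'Grady $10$ deformation class --- namely that a nonzero movable isotropic class induces a genuine Lagrangian fibration on an appropriate birational model --- I would produce a smooth projective hyperk\"ahler manifold $N$, birational to $\tM$, on which the strict transform of $L$ is nef and semiample and defines a Lagrangian fibration $g \colon N \to (\P^5)^\vee$ whose general fibers are abelian fivefolds; Matsushita's theorem confirms that the base is a smooth projective space, and the fiber description above pins it down as the dual $(\P^5)^\vee$. A final comparison over $\P_0$, matching $g|_{\P_0}$ with $p \colon J \to \P_0$ as families of torsors under the intermediate Jacobians, yields the asserted compactification.

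The main obstacle I expect is twofold. First, genuinely resolving $\pi$ to a morphism requires controlling the wall-and-chamber structure near the isotropic class and exploiting the freedom to perturb the stability condition $\sigma$ within $\Stab^\dag(\Ku(Y))$, so that no unexpected divisorial or flopping contraction obstructs the existence of the Lagrangian model $N$; this is where the MMP/SYZ input does the heavy lifting. Second, and more delicate, is matching the \emph{twist}: one must verify that the torsor structure of the fibers of $g$ over $\P_0$ reproduces exactly the Brauer-twisted family $J$ of Voisin, rather than some other torsor under the intermediate Jacobian. This amounts to computing the relevant torsor class and checking that it agrees with the twist appearing in \cite{Voisin:twisted}.
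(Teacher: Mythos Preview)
Your overall strategy is essentially the paper's: build a support map $\pi_0\colon M_0\to \P_0$, promote it to a line bundle $\LL$ on $\tM$, pass to a birational model $N$ on which (the transform of) $\LL$ becomes semiample, and identify the fibration over $\P_0$ with $J$. The paper constructs $\LL$ via divisors $D_x=\{E_\Gamma:\Hom(E_\Gamma,\OO_x)\neq 0\}$, uses Matsushita's result to get a nef model $N$, then proves semiampleness directly: the key input is that an open set $J_0\subset J$ embeds into $\tM$ (hence into $N$) away from $\mathrm{Bs}(\LL)$, so $N$ contains proper curves with $\LL'\cdot C=0$, forcing $\LL'$ not big; combined with $\kappa(\LL')\geq 5$ this gives $\kappa=\nu=5$, whence semiampleness by Kawamata and the Lagrangian fibration by Matsushita. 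You instead black-box this step as ``hyperk\"ahler SYZ for the O'Grady $10$ type,'' which is fine if you are willing to cite that result, but the paper gives a self-contained argument specific to this situation.

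Two points deserve correction. First, your claim that ``Matsushita's theorem confirms that the base is a smooth projective space'' overreaches: the paper explicitly does \emph{not} assert $B\cong\P^5$, only that $B$ contains $\P_0$ as an open subset (the identification of the base with projective space is conjectural in general, and Hwang's theorem requires the base to be a priori smooth). Second, and more importantly, your plan for the final identification --- computing the Brauer/torsor class of the fibers of $g$ over $\P_0$ and matching it against Voisin's twist --- is much harder than necessary and would be genuinely difficult to carry out. The paper's argument is far cleaner: both $\pi^{-1}(\P_0)\to\P_0$ and $J\to\P_0$ are projective with symplectic total space, hence relative minimal models, so by \cite[Theorem~3.52]{KollarMori} they are isomorphic in codimension one and connected by a relative flop; but $J\to\P_0$ is a family of abelian varieties, which contains no rational curves in fibers, so the flop is trivial and $\pi^{-1}(\P_0)\cong J$. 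This MMP argument completely sidesteps any torsor computation.
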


It is worth to note that $N$ and $\tM$ are birational, but not isomorphic if $Y$ is very general. In Example \ref{ex_flopconics} we describe an explicit flop between them, involving the locus of stable objects in $\Ku(Y)$ coming from the projection of instanton sheaves associated to smooth conics in $Y$. In Remark \ref{rmk:compare_with_Voisin}, we explain how $N$ is related to the compactification constructed by Voisin \cite{Voisin:twisted}.

The next application arises from the following conjecture of Castravet. Note that the original conjecture involves rational quartics, but it can be equivalently stated for elliptic quintics by residuality (see Remark \ref{rmk_conjCastravet}).

\begin{conj}[{\cite[Page 416]{deJong_starr}}]
\label{conj_Castravet}
Let $\CC$ be the connected component of the Hilbert scheme $\emph{Hilb}^{5m}(Y)$ containing elliptic quintics in $Y$. Then the maximally rationally connected quotient of $\CC$ is birationally equivalent to the twisted intermediate Jacobian of $Y$.
\end{conj}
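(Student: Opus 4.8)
The plan is to realize the maximally rationally connected (MRC) quotient of $\CC$ explicitly as (a birational model of) the moduli space $\tM$, and then to invoke Theorem \ref{thm_compacttwistedJac} to identify the latter with the twisted intermediate Jacobian. First I would construct a rational map $f\colon \CC \dashrightarrow M$ as follows. A general point of $\CC$ is a smooth non-degenerate elliptic quintic $C\subset Y$; its linear span is a hyperplane $\P^4\subset\P^5$ cutting out a generically smooth cubic threefold $X=Y\cap\P^4$ in which $C$ sits as a non-degenerate elliptic quintic. By \cite{Druel:Instanton} the curve $C$ is the zero locus of a section of a suitable twist of a rank $2$ stable instanton bundle $E_C$ on $X$, and by Theorem \ref{thm:EgammaECinkustabintro} the projection of $E_C$ into $\Ku(Y)$ is a $\sigma$-stable object of Mukai vector $2\lambda_1+2\lambda_2$, hence a point of $M$. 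Setting $f(C)$ to be this projection defines $f$ on the dense open locus of $\CC$ parametrizing smooth non-degenerate quintics contained in smooth hyperplane sections.

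Next I would show that $f$ is dominant. For a fixed smooth cubic threefold $X$, the elliptic quintics sweep out a dense open subset of the instanton moduli space $M_{\mathrm{inst}}$, which by \cite{Druel:Instanton, Beauville:Cubics} is birational to the fibre $J^2(X)$ of the intermediate Jacobian fibration; letting $X$ vary over $\P_0$, the images $f(C)$ therefore sweep out a dense subset of the total space of the twisted Jacobian fibration, which by Theorem \ref{thm_compacttwistedJac} is birational to $M$ through $\tM$ and $N$. Hence $f$ dominates $\tM$. It then remains to identify the general fibre of $f$. The threefold $X=\langle C\rangle$, equivalently the hyperplane section underlying the object, is recovered from $f(C)$ via the Lagrangian fibration $N\to(\P^5)^\vee$ of Theorem \ref{thm_compacttwistedJac}, so fibres of $f$ do not mix different $X$; thus the fibre over a general object coming from an instanton bundle $E$ consists precisely of the elliptic quintics on the single threefold $X$ giving rise to $E$. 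By the Serre--Hartshorne correspondence underlying \cite{Druel:Instanton}, these quintics are the zero loci of the sections of the relevant twist $E(t)$, and since two sections with the same zero locus differ by a scalar, the fibre is a dense open subset of the projective space $\P\big(H^0(X,E(t))\big)$. In particular it is rational, hence rationally connected.

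Finally, since $\tM$ is a hyperk\"ahler manifold by Theorem \ref{thm_OG10intro}, it has trivial canonical class and is therefore not uniruled. A dominant rational map with rationally connected general fibre whose target is non-uniruled is the MRC fibration, by the fundamental results of Graber--Harris--Starr and Koll\'ar--Miyaoka--Mori. Consequently $f$ realizes $\tM$ as the MRC quotient of $\CC$, and by Theorem \ref{thm_compacttwistedJac} the target $\tM$ is birational to $N$ and to the twisted intermediate Jacobian of $Y$, which proves the conjecture.

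The main obstacle is the fibre analysis of the second paragraph: one must verify that the generic member of the component $\CC$ is a smooth non-degenerate quintic lying in a smooth hyperplane section, that its associated instanton bundle is stable and its projection to $\Ku(Y)$ is injective on the generic stable instanton (so that the fibre of $f$ agrees with the fibre of the map $C\mapsto E_C$), and that the correspondence of \cite{Druel:Instanton} genuinely presents this fibre as an open subset of a projective space rather than some more complicated rationally connected variety. Controlling the degenerate loci --- singular hyperplane sections, non-reduced or reducible quintics, and sections whose zero scheme fails to be a smooth curve --- is where the care is required, although all of these lie in proper closed subsets of $\CC$ and hence do not affect the birational MRC statement.
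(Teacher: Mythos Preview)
Your proposal is correct and follows essentially the same route as the paper: define the rational map $\CC\dashrightarrow M$ via the projection functor, identify the generic fibre with an open subset of $\P\bigl(H^0(X,E_\Gamma(H))\bigr)$ using the Serre construction for instanton bundles, and conclude via non-uniruledness of the hyperk\"ahler target that this is the MRC fibration. The paper packages the last step by citing \cite[Lemma 1.4]{deJong_starr} rather than Graber--Harris--Starr directly, and handles your ``injectivity of the projection on generic instantons'' concern implicitly through Proposition \ref{prop:EGamaECinKu}(2), which shows $E_\Gamma\cong\iota_*F_\Gamma$ so that the support $X$ is recovered from the object itself.
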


\noindent Using Theorems \ref{thm_OG10intro}, \ref{thm:EgammaECinkustabintro} and \ref{thm_compacttwistedJac} we are able to prove Conjecture \ref{conj_Castravet}.

\begin{prop}[Propositions \ref{prop_objsinmodulispace}, \ref{prop:quartics}]
\label{prop_objsinmodulispaceintro}
The projection functor (see Defintion \ref{def:EGammaEC}) induces a rational map $\CC \dashrightarrow M$ which is the maximally rationally connected fibration of $\CC$. The maximally rationally connected quotient of $\CC$ is birational to the the twisted family $J$ of intermediate Jacobians of $Y$.
\end{prop}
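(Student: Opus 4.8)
The plan is to realize the asserted rational map via Druel's Serre correspondence together with the projection functor, to show that its general fibers are projective spaces, and to identify its image with $J$ using Theorems \ref{thm_OG10intro}, \ref{thm:EgammaECinkustabintro}, and \ref{thm_compacttwistedJac}; the conclusion then follows from the characterization of the maximally rationally connected (MRC) fibration. First I would construct the map. A general curve $C \in \CC$ is a non-degenerate elliptic quintic, hence spans a unique hyperplane $H \subset \P^5$, and the corresponding cubic threefold $X = Y \cap H$ is smooth for $C$ general, with $C \subset X$ a non-degenerate elliptic quintic. By Druel's work, $C$ is the zero locus of a section of a stable twisted rank $2$ instanton sheaf $E_C$ on $X$. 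Applying the projection functor and invoking Theorem \ref{thm:EgammaECinkustabintro}, the projection of $E_C$ into $\Ku(Y)$ is a $\sigma$-stable object of Mukai vector $2\lambda_1 + 2\lambda_2$, i.e.\ a point of $M$. This defines the rational map $\Phi \colon \CC \dashrightarrow M$, regular on a dense open subset of $\CC$.

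Next I would analyze the fibers. Over a general point of $\im(\Phi)$ the data $(H, E)$ of the hyperplane and the instanton sheaf can be recovered: the hyperplane $H$ from the Lagrangian fibration $M \dashrightarrow (\P^5)^\vee$ supplied by Theorem \ref{thm_compacttwistedJac}, and the sheaf $E$ from the birational identification of the instanton moduli space with $J^2(X)$. The fiber of $\Phi$ is then precisely the family of elliptic quintics in $X$ arising as zero loci of sections of $E_C$, that is, an open subset of the projective space $\P(H^0(X, E_C))$. In particular the general fiber of $\Phi$ is rationally connected.

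Finally I would identify the image and conclude. As $H$ ranges over $\P_0$ and $E$ over the instantons on $X = Y \cap H$, the map $\Phi$ is dominant onto $M$, whose generic fiber structure matches the total family of the $J^2(X)$ over $\P_0$; by Theorem \ref{thm_compacttwistedJac} this is birational to the hyperk\"ahler manifold $N$, and restricting to $\P_0$ it is birational to $J$. Since $M$ is birational to $N$, it carries a holomorphic symplectic form and is therefore not uniruled. A dominant rational map with rationally connected general fiber and non-uniruled target is the MRC fibration: because $\im(\Phi)$ is not uniruled, $\Phi$ factors as $\CC \dashrightarrow \mathrm{MRC}(\CC) \to \im(\Phi)$, and because the rationally connected fibers of $\Phi$ are connected they lie in single fibers of the MRC fibration, so a comparison of fibers forces $\mathrm{MRC}(\CC) \dashrightarrow \im(\Phi)$ to be birational. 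This proves that $\Phi$ is the MRC fibration of $\CC$ and that its quotient is birational to $M$, hence to $J$, establishing Conjecture \ref{conj_Castravet}.

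The main obstacle I expect is the fiber analysis: one must control precisely the locus of $\CC$ on which $\Phi$ is defined and verify that the general fiber is the \emph{full} linear system of sections of $E_C$, so that it is genuinely a projective space. This relies on Druel's deformation theory of instanton sheaves, on the genericity required for $X = Y \cap H$ to be smooth, and on excluding the lower-dimensional conic-type locus that is flopped in Theorem \ref{thm_compacttwistedJac}. Ensuring that $\im(\Phi)$ is all of $M$ (equivalently, that the MRC quotient is $J$ and not a strictly larger or smaller subvariety) is the other delicate point, and it is here that the dimension bookkeeping tying $\dim \CC$, the fiber dimension $h^0(X,E_C)-1$, and $\dim M = 10$ must be made to close up.
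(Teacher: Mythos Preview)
Your proposal is correct and follows essentially the same approach as the paper: construct the rational map via the projection functor (using Theorem~\ref{thm:EgammaECinkustabintro}), identify the general fiber as an open subset of $\P(H^0(X,F_\Gamma(H)))$ via the Serre correspondence, and conclude using the fact that the target is not uniruled since it is birational to a hyperk\"ahler manifold. The paper packages the last step by citing \cite[Lemma 1.4]{deJong_starr} rather than spelling out the MRC characterization, and it does not need your dimension bookkeeping at all---rationality of the general fiber plus non-uniruledness of the target suffices---so your closing worry about matching dimensions is unnecessary. (One notational caution: in the paper $E_C$ denotes the object coming from a \emph{conic}; the instanton bundle from an elliptic quintic is $F_\Gamma$ and its projection is $E_\Gamma$.)
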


\noindent \textbf{Plan of the paper.} In Section \ref{section_preliminaries} we review some definitions and results about (weak) stability conditions on triangulated categories and semiorthogonal decompositions. In particular, we recall the construction of stability conditions on the Kuznetsov component $\Ku(Y)$ of a cubic fourfold $Y$ as in \cite{BLMS:kuzcomponent}. 

Section \ref{sec_sympleresol} is devoted to the proof of Theorem \ref{thm_OG10intro}. For an element $v_0$ with square $2$ in the algebraic Mukai lattice of $\Ku(Y)$, consider a stability condition $\tau$ on $\Ku(Y)$ which is $2v_0$-generic. We show that the blow-up $\tM$ of the singular locus of the moduli space $M:=M_\tau(2v_0)$ with the reduced scheme structure is a symplectic resolution, by describing the local structure of $M$ at the worst singularity, as done in \cite{LS} for singular moduli spaces on K3 surfaces. 

In Section \ref{sec:stabobjinM} we compute the projection in the Kuznetsov component of some objects related to elliptic quintics and smooth conic curves in a cubic fourfold. We explain their relation with stable instanton sheaves on smooth hyperplane sections of $Y$, which were previously studied in \cite{Druel:Instanton}. 

Section \ref{sec:stabE} deals with the proof of Theorem \ref{thm:EgammaECinkustabintro}. We show that the objects in the Kuznetsov component, constructed out of elliptic quintics and conics in $Y$, are $\sigma$-stable, where $\sigma$ is any stability condition as constructed in \cite{BLMS:kuzcomponent}. In particular, they describe an open subset of the moduli space $M_\sigma(2\lambda_1+2\lambda_2)$. Recall that $\sigma$ is induced on $\Ku(Y)$ from the restriction of (a tilt of) a weak stability condition $\sigma_{\alpha, -1}$ on the bounded derived category of coherent $\BB_0$-modules on $\P^3$, depending on a real parameter $\alpha>0$. Here $\BB_0$ is the even part of the sheaf of Clifford algebras associated to the conic fibration on $\P^3$ obtained by blowing-up a line in $Y$ (see Section \ref{subsec_kuzcompoinP3}, Proposition and Definition \ref{prop:stabonku}). In Section \ref{subsec_expressionGamma} we compute the expression of our objects as complexes of $\BB_0$-modules on $\P^3$. Then in Sections \ref{subsec_tiltstablGamma} and \ref{subsec_tiltstabC} we show they are $\sigma_{\alpha,-1}$-stable for $\alpha$ sufficiently large. Finally in Section \ref{sec:nowall} we show they are $\sigma_{\alpha,-1}$-stable for every $\alpha$, proving there are no walls for stability. 

In Section \ref{sec:Lagrangian} we prove Theorem \ref{thm_compacttwistedJac}. Fix a stability condition $\sigma_0$ which is generic with respect to $2\lambda_1+2\lambda_2$ and with the same stable objects as $\sigma$. Applying Theorems \ref{thm_OG10intro} and \ref{thm:EgammaECinkustabintro} to the moduli space $M:=M_{\sigma_0}(2\lambda_1+2\lambda_2)$, we consider the open subvariety $M_0$ of the symplectic resolution $\tM$ of $M$ consisting of stable objects associated to elliptic quintics in $Y$, with support on smooth hyperplane sections of $Y$. We define a line bundle $\LL$ on $\tM$ inducing a rational map from $\tM$ to $\P^{5\vee}$, which is defined on $M_0$ by sending the object to its support. Using some results in birational geometry of hyperk\"ahler varieties, we show that there is a birational model $N$ of $\tM$ and a semiample line bundle $\LL'$ on $N$, such that a multiple of $\LL'$ induces a Lagrangian fibration structure on $N$ which is a compactification of the twisted intermediate Jacobian $J$ over $\P_0$.

We conclude with Section \ref{sec_conjCastravet} where Conjecture \ref{conj_Castravet} is proved as a consequence of Theorems \ref{thm_OG10intro}, \ref{thm:EgammaECinkustabintro} and \ref{thm_compacttwistedJac}.\\

\noindent \textbf{Acknowledgements.}
This paper benefited from many useful conversations with Arend Bayer, Jack Hall, Daniel Huybrechts, Mart\'i Lahoz, Manfred Lehn, Emanuele Macrì, Alex Perry, Giulia Saccà, Junliang Shen, Paolo Stellari, Luca Tasin, Ziyu Zhang. We are very grateful to all of them. Part of this paper was written when the second author was visiting the University of California Santa Barbara, MSRI, the Max-Planck-Institut f\"ur Mathematik in Bonn, the University of Edinburgh and the University of Warwick, whose hospitality is gratefully acknowledged.

C.~L.~ is a Leverhulme Early Career Fellow and would like to acknowledge the Leverhulme Trust for the support. L.~P.~ was supported by the ERC Consolidator Grant ERC-2017-CoG-771507-StabCondEn. X.~Z.~ was partially supported by the Simons Collaborative Grant 636187.

\section{Preliminaries on stability conditions on $\Ku(Y)$} \label{section_preliminaries}

In this section we review the definitions of (weak) stability conditions on a triangulated category, semiorthogonal decompositions, and Kuznetsov component $\Ku(Y)$ of a cubic fourfold $Y$. Then we recall the construction of stability conditions on $\Ku(Y)$ due to \cite{BLMS:kuzcomponent} and some useful properties. The new contributions are an easier expression for the central charge of these stability conditions in Proposition \ref{prop:stabonku} and Lemma \ref{lem:tiltstabtokustab} which makes more clear how to check the stability of objects in $\Ku(Y)$.

\subsection{(Weak) stability conditions} It is in general a difficult task to construct stability conditions on a triangulated category. In the case of the Kuznetsov component Ku$(Y)$ of cubic fourfolds, it is proved in \cite{BLMS:kuzcomponent} that such stability conditions can be induced by `restricting' certain weak stability conditions, which can be constructed via the tilting heart technique. In this section,  we briefly recall the notion of weak stability conditions following the summary in \cite[Section 2]{BLMS:kuzcomponent}.

Let $\TT$ be a $\C$-linear triangulated category. We denote by $\kn(\TT)$ the numerical Grothendieck group of $\TT$. Let $\Lambda$ be a finite rank lattice with a surjective homomorphism $v: \kn(\TT) \twoheadrightarrow \Lambda$.

\begin{defn}\label{def:heartstructrue}
The \emph{heart of a bounded t-structure} is a full subcategory $\AA$ of $\TT$ such that 
\begin{enumerate}
    \item[(a)]  for any objects $E$ and  $F$ in $\AA$ and negative integer $n$, we have $\Hom(E,F[n])=0$;
    \item[(b)] for every $E$ in $\TT$, there exists a sequence of morphisms
$$0=E_0 \xrightarrow{\phi_1} E_1 \xrightarrow{\phi_2} \dots \xrightarrow{\phi_{m-1}} E_{m-1} \xrightarrow{\phi_m} E_m=E$$
such that the cone of $\phi_i$ is of the form $A_i[k_i]$, for some sequence $k_1 > k_2 > \dots > k_m$ of integers and objects $A_i$ in $\AA$.
\end{enumerate}
\end{defn}

Recall that the heart of a bounded t-structure is an abelian category by \cite{BBD}. 

\begin{defn}\label{def:weakstabfunction}
Let $\AA$ be an abelian category. A group homomorphism $Z: \kn(\AA) \rightarrow \C$ is a
\emph{weak stability function} (resp.\ a \emph{stability function}) on $\AA$ if, for $E \in \AA$, we have $\Im Z(E) \geq 0$, and in the case that $\Im Z(E) = 0$, we have $\Re Z(E)\leq 0$ (resp.\ $\Re Z(E) < 0$ when $E \neq 0$).
\end{defn}
For every object $E$ in  $\AA$, its slope with respect to $Z$ is given by
$$\mu_{Z}(E)= 
\begin{cases}
-\frac{\Re Z(E)}{\Im Z(E)} & \text{if } \Im Z(E) > 0, \\
+ \infty & \text{otherwise.}
\end{cases}$$
An object $E$ in  $\AA$ is \emph{semistable} (resp.\ \emph{stable}) with respect to $Z$ if for every proper subobject $F$ of $E$ in $\AA$, we have $\mu_{Z}(F) \leq \mu_{Z}(E)$ (resp.\ $\mu_{Z}(F) < \mu_{Z}(E/F))$.

\begin{defn}\label{def:wstab}
A \emph{weak stability condition} (with respect to $\Lambda$) on $\TT$ is a pair $\sigma=(\AA,Z)$, where $\AA$ is the heart of a bounded t-structure on $\TT$ and $Z$ is a group homomorphism from $\Lambda$ to $\C$, satisfying the following properties:
\begin{enumerate}
    \item [(a)] The composition $\kn(\AA)=\kn(\TT) \xrightarrow{v} \Lambda \xrightarrow{Z} \C$ is a weak stability function on $\AA$. \footnote{We will write $Z(-)$ instead of $Z(v(-))$ for simplicity.} We say that an object $E$ in $\AA[k]$ is $\sigma$-(semi)stable if $E[-k]$ is (semi)stable with respect to $Z$.
    \item [(b)] Every object of $\AA$ has a Harder--Narasimhan filtration with $\sigma$-semistable factors.
    \item [(c)] There exists a quadratic form $Q$ on $\Lambda \otimes \R$ such that the restriction of $Q$ to $\ker Z$ is negative definite and $Q(E) \geq 0$ for all $\sigma$-semistable objects $E$ in $\AA$.
\end{enumerate}
In particular, if $Z$ is a stability function, then $\sigma$ is a \emph{stability condition} introduced by Bridgeland in \cite{Bridgeland:Stab}. In this situation, we will usually call $Z$ the \emph{central charge} of the stability condition.
\end{defn}

\begin{rmk}\label{rem:wkstab}
There is usually a natural choice of the lattice $v:\kn(\TT)\rightarrow \Lambda$ in each case of triangulated categories considered in this paper.
\end{rmk}

\subsection{Semiorthogonal decompositions and Kuznetsov components} 
\begin{defn}\label{def:sod}
Let $\TT$ be a triangulated category. A \emph{semiorthogonal decomposition} 
	\begin{equation*}
	\TT = \langle \DD_1, \dots, \DD_m \rangle
	\end{equation*}
	is a sequence of full triangulated subcategories $\DD_1, \dots, \DD_m$ of $\TT$ such that: 
	\begin{enumerate}
		\item[(a)] $\Hom(F, G) = 0$, for any objects $F$ in $ \DD_i$, $G$ in $\DD_j$ and $i>j$;	
		\item[(b)] For any object $F$ in  $\DD$, there is a unique sequence of morphisms
		\begin{equation*}
		0 = F_m \to F_{m-1} \to \cdots \to F_1 \to F_0 = F,
		\end{equation*}
		with factors $\mathsf{pr}_i(F):=\mathrm{Cone}(F_i \to F_{i-1}) \in \DD_i$ for $1 \leq i \leq m$. 
	\end{enumerate}
The subcategories $\DD_i$ are called the \emph{components} of the decomposition.
\end{defn}
\begin{defn}\label{def:excobj}
An object $E$ in $\TT$ is \emph{exceptional} if $\Hom(E,E[p])=0$ for all integers $p\neq0$, and $\Hom(E,E)\cong \C$. 

A set of objects $\{E_1,\ldots,E_m\}$ in $\TT$ is an \emph{exceptional collection} if $E_i$ is an exceptional object for all $i$, and $\Hom(E_i,E_j[p])=0$ for all $p$ and all $i>j$.
\end{defn}
By \cite{BondalOrlov:Main}, an exceptional collection $\{E_1,\dots,E_m\}$ in $\TT$  provides a semiorthogonal decomposition
\begin{equation}\label{eq:sod}
    \TT = \langle \DD,E_1,\dots,E_m \rangle.
\end{equation}
Here by abuse of notation, we write $E_i$ also for the full triangulated subcategory of $\TT$ generated by $E_i$. The full subcategory $\DD:=\langle E_1,\dots,E_m\rangle^\perp$ consists of objects
\begin{equation}\label{eq:sodcubic4fold}
   \{G \in \mathrm{Obj}( \TT)| \Hom (E_i,G[p])=0 \text{ for all } p \text{ and } i\}.
\end{equation}

Let $Y$ be a smooth cubic fourfold in particular. Denote by $H$ a hyperplane section of $Y$. There is an exceptional collection $\{\OO_Y,\OO_Y(H),\OO_Y(2H)\}$.  The bounded derived category of coherent sheaves on $Y$ admits a semiorthogonal decomposition of the form 
\begin{equation}\label{eq:kuofY}
\Db(Y)=\langle\Ku(Y),\OO_Y,\OO_Y(H),\OO_Y(2H)\rangle.
    \end{equation}
The subcategory $\Ku(Y)$ is studied in details in \cite{Kuz:fourfold}, and it is now commonly referred to as the Kuznetsov component. 

This projection functor in Definition \ref{def:sod} can be expressed by compositions of \emph{left (right) mutation} functors, depending on the explicit semiorthogonal decomposition. 

\begin{defn}\label{def:mutationfunctor}
Let $E$ be an exceptional object in $\TT$. The left(resp. right) mutation functors $\mathsf L_E$(resp. $\mathsf R_E$) are defined as follows:
\begin{align*}
    \mathsf L_E (F)&\coloneqq \mathrm{Cone}\left(\bigoplus_{p\in \Z}\Hom(E[p],F)\otimes E[p]\xrightarrow {\mathsf{ev}} F\right);\\
    \mathsf R_E(F) &\coloneqq \mathrm{Cone}\left(F\xrightarrow{\mathsf{ev}^\vee}\bigoplus_{p\in \Z}\Hom(F,E[p])^\vee\otimes E[p]\right)[-1].
\end{align*}
\end{defn}

\begin{rmk}\label{rem:mutations}
Note that since the Serre functor in $\Db(Y)$ is given by $$\mathsf S_{Y}(-)=-\otimes \OO_Y(-3H)[4],$$ the Kuznetsov component of $Y$ can be also given by $^\perp\!\langle \OO_Y(-2H)\OO_Y(-H)\rangle\cap \OO_Y^\perp$. Namely, it appears in the semiorthogonal decomposition
\begin{equation}\label{eq:kuinthemidsod}
    \langle \OO_Y(-2H), \OO_Y(-H),\Ku(Y),\OO_Y\rangle.
\end{equation}
Note that the object $\mathsf L_E (F)$ (resp. $\mathsf R_E(F)$) is in $E^\perp$(resp. $^\perp\! E$). We denote by  $$\mathsf{pr}:\Db(Y)\rightarrow \Ku(Y)$$  the functor as that in Definition \ref{def:sod}(b) to the component $\Ku(Y)$ with respect to the decomposition \eqref{eq:kuinthemidsod}. In particular, it is given as the composition of mutations:
\begin{equation}\label{eq:defofpr}
    \mathsf{pr}= \mathsf R_{\OO_Y(-H)}\mathsf R_{\OO_Y(-2H)}\mathsf L_{\OO_Y} = \mathsf L_{\OO_Y}\mathsf R_{\OO_Y(-H)}\mathsf R_{\OO_Y(-2H)}.
\end{equation}
We will use the functor $\mathsf{pr}$ to produce objects in $\Ku(Y)$ in Section \ref{sec:stabobjinM}. Note that the functor $\mathsf{pr}$ in our paper is different from the more standard projection functor $\mathsf L_{\OO_Y}\mathsf L_{\OO_Y(H)}\mathsf L_{\OO_Y(2H)}$ with respect to the decomposition  \eqref{eq:sodcubic4fold}, which is also the left adjoint functor of the natural embedding of $\Ku(Y)$.

\end{rmk}

\subsection{Kuznetsov components of $\Db(Y)$ and $\Db(\P^3,\BB_0)$}  \label{subsec_kuzcompoinP3}
It is usually a highly non-trivial task to construct stability conditions on the Kuznetsov component. The only technique so far is to restrict weak stability conditions on the whole derived category to its Kuznetsov component. In the cubic fourfold case, such weak stability conditions on $\Db(Y)$ require a Bogomolov type inequality involving the third Chern character. Unfortunately, such inequality is not known yet for any cubic fourfold. 

To avoid this technical difficulty, the idea in \cite{BLMS:kuzcomponent} is to embed $\Ku(Y)$ as a component in a bounded derived category of lower dimension. More precisely, the key observation in \cite{BLMS:kuzcomponent} is that $\Ku(Y)$ is equivalent to the Kuznetsov component of $\Db(\P^3,\BB_0)$. We briefly summarize the construction of this equivalence in this section.

Let $L\subset Y\subset \P^5$ be a line which is not on any plane in $Y$, and we denote by $$\rho_L:\tilde Y\rightarrow Y$$ the blow-up of $L$ in $Y$. 

 The projection from $L$ to a disjoint $\P^3$ (in $\P^5$) equips $\tilde{Y}$ with a natural conic fibration structure
$$\pi:\tilde Y \to \P^3.$$
There is a rank three vector bundle $\FF \cong \OO_{\P^3}^{\oplus 2}\oplus \OO_{\P^3}(-1)$ on $\P^3$ such that $\tilde Y$ embeds into the $\P^2$-bundle $\P_{\P^3} (\FF)$ as the zero locus of a section
$$s_{\tilde Y}\in \mathrm{H}^0(\P^3,\mathrm{Sym}^2\FF^\vee\otimes \OO_{\P^3}(1))\cong \mathrm{H}^0(\P_{\P^3}(\FF),\OO_{\P_{\P^3}(\FF)}(2)\otimes q^*\OO_{\P^3}(1)).$$
The cartoon of these morphisms is as follow:
\begin{equation}\label{diag:blowupL}
\xymatrix{ \tilde Y \ar[d]_-{\rho_L} \ar[rrd]^-{\;\;\pi} \ar[r]^{\alpha} & \mathrm{Bl}_L\P^5 \ar@{=}[r] \ar[d] & \P_{\P^3}(\FF)  \ar[d]^q\\
Y \ar[r] & \P^5 &  \P^3.
}
\end{equation}

By \cite[Section 3]{Kuz:Quadric}, we have an associated sheaf of Clifford algebras of $\pi$ over $\P^3$. Denote its even part (resp.\ odd part) by $\BB_0$ (resp.\ $\BB_1$).  By \cite[(12)]{Kuz:Quadric}, as a sheaf on $\P^3$, the even part $\BB_0$ is a rank four vector bundle:
$$\OO_{\P^3}\oplus \left(\bigwedge \! ^2 \FF\otimes \OO_{\P^3}(-1)\right)\cong \OO_{\P^3}\oplus  \OO_{\P^3}(-1)\oplus \OO_{\P^3}(-2)^{\oplus 2}.$$

As for its algebra structure, the structure sheaf is central. The other relations are determined by
\begin{equation}\label{eq:clifalg}
    e_i\wedge e_k\cdot e_k\wedge e_j= s_{\tilde Y}(e_k\otimes e_k) e_i\wedge e_j,\;\;\;\; e_i\wedge e_k\cdot e_i\wedge e_k= s_{\tilde Y}(e_i\otimes e_i) s_{\tilde Y}(e_k\otimes e_k),
\end{equation}
for an orthogonal basis $(e_1,e_2,e_3)$ and $i\neq j\neq k\neq i$.

\begin{defn}\label{def:p3b0alg}
We denote by $\Coh(\P^3,\BB_0)$ the category of coherent sheaves on $\P^3$ with a right $\BB_0$-module structure, and denote its bounded derived category by $\Db(\P^3,\BB_0)$. The natural forgetful functor is denoted by $\mathrm{Forg}: \Db(\P^3,\BB_0)\rightarrow \Db(\P^3)$.
\end{defn}
 In particular, we have 
\begin{equation}\label{eq:forgetful}\Hom_{\Db(\P^3,\BB_0)}(\BB_0,\GG)\cong \Hom_{\Db(\P^3)}(\OO_{\P^3},\mathrm{Forg}(\GG))
    \end{equation} for every $\GG\in \Db(\P^3,\BB_0)$.
    
By \cite[(14)]{Kuz:Quadric}, the odd part $\BB_1$  as a coherent sheaf is $$\mathrm{Forg}(\BB_1)=\mathrm{Forg}(\FF\oplus(\bigwedge\!^3\FF\otimes \OO_{\P^3}(-1)))\cong \OO_{\P^3}^{\oplus 2}\oplus  \OO_{\P^3}(-1)\oplus \OO_{\P^3}(-2).$$    
As in \cite[(15)]{Kuz:Quadric}, we define the following $\BB_0$-bimodules for $j\in \Z$:
$$\BB_{2j}:= \BB_0 \otimes \OO_{\P^3}(j) \quad \text{and} \quad  \BB_{2j+1}:= \BB_1 \otimes \OO_{\P^3}(j).$$
The Serre functor on $\Db(\P^3,\BB_0)$ (see \cite[page 28]{BLMS:kuzcomponent}) is explicitly given as
\begin{equation} \label{eq:serrep3b0}
\mathsf S_{\BB_0}(-) = (-)\otimes_{\BB_0}\BB_{-3}[3].
\end{equation}
By \cite[Lemma 3.8 and Corollary 3.9]{Kuz:Quadric} and a direct computation using \eqref{eq:forgetful} and \eqref{eq:serrep3b0}, the ordered set $\{\BB_1,\BB_2,\BB_3\}$ is an exceptional collection  in $\Db(\P^3,\BB_0)$. By \eqref{eq:sod}, there is a semiorthogonal decomposition of the form 
\begin{equation}
\label{eq:kuzofp3b0}
\Db(\P^3,\BB_0)=\langle \Ku(\P^3,\BB_0), \BB_1, \BB_2, \BB_3 \rangle,
\end{equation}
One of the key observations in \cite[Section 7]{BLMS:kuzcomponent} to construct stability conditions on $\Ku(Y)$ is as follows.
\begin{prop}[{\cite[Lemma 7.6 and Proposition 7.7]{BLMS:kuzcomponent}}]\label{prop:equivalentofKu}
The Kuznetsov component $\Ku(Y)$ is equivalent to $\Ku(\P^3,\BB_0)$.
\end{prop}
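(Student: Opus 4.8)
The plan is to exhibit an explicit equivalence $\Phi \colon \Ku(Y) \xrightarrow{\sim} \Ku(\P^3,\BB_0)$ by constructing a Fourier--Mukai type functor from the geometry of the conic fibration and then checking that it matches up the two Kuznetsov components inside their ambient derived categories. First I would recall that the blow-up $\rho_L \colon \tilde Y \to Y$ induces, via Orlov's blow-up formula, a semiorthogonal decomposition of $\Db(\tilde Y)$ whose components are $\rho_L^* \Db(Y)$ together with a twist of $\Db(L) \cong \Db(\Spec \C)$ coming from the exceptional divisor. In particular $\rho_L^*$ is fully faithful and carries $\Ku(Y)$ into $\Db(\tilde Y)$. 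On the other side, the quadratic (conic) fibration structure $\pi \colon \tilde Y \to \P^3$ gives, by Kuznetsov's theory of quadric bundles in \cite{Kuz:Quadric}, a semiorthogonal decomposition of $\Db(\tilde Y)$ whose ``interesting'' piece is equivalent to $\Db(\P^3,\BB_0)$, with the remaining pieces generated by pullbacks $\pi^*\Db(\P^3)\otimes\OO(jH)$ of line bundles on the base. Thus both the Kuznetsov component of the cubic fourfold and the sheaf of even Clifford algebras appear as pieces of the \emph{same} category $\Db(\tilde Y)$.

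The core of the argument is then to compare these two semiorthogonal decompositions of $\Db(\tilde Y)$ and to show that after mutating the exceptional line-bundle--type pieces into a common position, the leftover pieces are identified. Concretely, I would write down both decompositions
\begin{align*}
\Db(\tilde Y) &= \langle \rho_L^*\Ku(Y),\ \rho_L^*\OO_Y,\ \rho_L^*\OO_Y(H),\ \rho_L^*\OO_Y(2H),\ \text{(exceptional divisor piece)} \rangle, \\
\Db(\tilde Y) &= \langle \Phi(\Ku(\P^3,\BB_0)),\ \pi^*\Db(\P^3)\text{-pieces},\ \Phi(\BB_i)\text{-pieces} \rangle,
\end{align*}
and perform a sequence of mutations (using the functors $\mathsf L_E, \mathsf R_E$ of Definition \ref{def:mutationfunctor}) to bring the exceptional objects $\rho_L^*\OO_Y(iH)$, the exceptional divisor class, the pullbacks $\pi^*\OO_{\P^3}(j)$, and the Clifford pieces $\BB_i$ into matching order. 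Since a semiorthogonal decomposition is determined (up to the chosen mutations) by its list of components, once the exceptional pieces on both sides are mutated into the identical subcategory, the orthogonal complements must coincide: the image of $\Ku(\P^3,\BB_0)$ under the Clifford functor agrees with $\rho_L^*\Ku(Y)$. Composing with the inverse of the fully faithful $\rho_L^*$ then yields the desired equivalence.

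The main obstacle, and the step requiring genuine computation, is the bookkeeping of the mutations: one must verify that the several exceptional collections (those coming from the blow-up and those coming from the quadric-bundle decomposition) can be transformed into one another by an explicit chain of left and right mutations, which amounts to checking a number of $\Hom$ and $\Ext$ vanishings and computing a few nontrivial mutation cones involving the vector bundles $\FF$ and the Clifford bundles $\BB_0, \BB_1$ described in Section \ref{subsec_kuzcompoinP3}. This is exactly the content of \cite[Lemma 7.6 and Proposition 7.7]{BLMS:kuzcomponent}, whose geometric input is the zero-locus description of $\tilde Y$ inside the projective bundle $\P_{\P^3}(\FF)$ via $s_{\tilde Y}$ and the explicit algebra relations \eqref{eq:clifalg}. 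Since the statement to be proved is precisely Proposition \ref{prop:equivalentofKu}, which is cited from \cite{BLMS:kuzcomponent}, I would ultimately reduce the verification to those references, sketching the two decompositions and the comparison of components rather than recomputing every mutation cone in full.
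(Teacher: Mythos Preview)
Your proposal is correct and aligns with the paper's treatment: the paper does not prove this proposition at all but simply cites \cite[Lemma 7.6 and Proposition 7.7]{BLMS:kuzcomponent}, recording in Remark \ref{rem:equivalentofKu} only the explicit functor $\Psi\circ\rho_L^*$ realizing the equivalence. Your sketch of the underlying argument (comparing the Orlov blow-up decomposition with Kuznetsov's quadric-bundle decomposition of $\Db(\tilde Y)$ via mutations) is an accurate outline of what happens in the cited reference, and your final paragraph correctly acknowledges that the verification reduces to those results.
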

\begin{rmk}\label{rem:equivalentofKu}
The functor of this equivalence is given explicitly as
\begin{align}
    \Psi\circ\rho_L^*:\Ku(Y)\hookrightarrow \Db(Y) \xrightarrow{\rho_L^*} \Db(\tilde Y) \xrightarrow{\Psi} \Db(\P^3,\BB_0)\supset \Ku(\P^3,\BB_0), 
\end{align}
where the functor $\Psi$ is defined by 
$$\Psi(-)=\pi_*(- \otimes \EE[1]).$$
Here $\EE$ is a sheaf of right $\pi^*\BB_0$-modules on $\tilde Y$ as that defined in \eqref{eq:defofcE}.  We will only make essential use of this functor in Section \ref{sec:stabE}.
\end{rmk}

\subsection{Weak stability conditions on $\Db(\P^3,\BB_0)$}
We first review the notion of t-structure by tilting. Let $\sigma=(\AA,Z)$ be a weak stability condition on $\TT$, and $t\in\R$. We can form
the following subcategories of $A$:
\begin{align*}
    \AA_\sigma^{>t}\coloneqq \{E|\text{ every Harder--Narasimhan factor $F$ of $E$ has $\mu_Z(F)>t$}\};\\
     \AA_\sigma^{\leq t}\coloneqq \{E|\text{ every Harder--Narasimhan factor $F$ of $E$ has $\mu_Z(F)\leq t$}\}.
\end{align*}
It follows from the existence of Harder--Narasimhan filtrations that $(\AA_\sigma^{>\mu},\AA_\sigma^{\leq\mu})$ forms a \emph{torsion pair} in $\AA$
in the sense of \cite{Happel-al:tilting}. In particular, we can obtain a new heart of a bounded t-structure by tilting.
\begin{PropDef}[\cite{Happel-al:tilting}]\label{pd:tiltheart}
Given a weak stability condition $\sigma = (\AA,Z)$ on $\TT$ and a choice of slope $t\in\R$, there exists a heart of a bounded t-structure defined by:
$$\AA_\sigma^t\coloneqq\langle \AA_\sigma^{>t},\AA_\sigma^{\leq t}[1]\rangle_{\text{extension closure}}.$$
\end{PropDef}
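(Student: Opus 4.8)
The statement is precisely the tilting construction of \cite{Happel-al:tilting}, so the plan is to reduce it to verifying that $(\AA_\sigma^{>t},\AA_\sigma^{\leq t})$ is a torsion pair in $\AA$ and then to exhibit the tilted t-structure explicitly. As already observed in the excerpt, the existence of Harder--Narasimhan filtrations yields the decomposition axiom of a torsion pair: splitting the HN filtration of $E\in\AA$ at the slope $t$ produces a short exact sequence $0\to E^{>t}\to E\to E^{\leq t}\to 0$ with $E^{>t}\in\AA_\sigma^{>t}$ and $E^{\leq t}\in\AA_\sigma^{\leq t}$. For the orthogonality $\Hom(\AA_\sigma^{>t},\AA_\sigma^{\leq t})=0$ I would first record the slope comparison for $Z$-semistable objects: if $A,B$ are semistable with $\mu_Z(A)>\mu_Z(B)$, then any $f\colon A\to B$ vanishes, since $\im f$ is both a quotient of $A$ and a subobject of $B$, forcing $\mu_Z(A)\leq\mu_Z(\im f)\leq\mu_Z(B)$. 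A filtration argument along the HN factors of the source and target then upgrades this to the required vanishing between the two subcategories (note that objects with $\Im Z=0$ have slope $+\infty>t$ and hence lie in $\AA_\sigma^{>t}$, so no boundary case arises).

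Granting the torsion pair, I would build the tilt by hand from the bounded t-structure on $\TT$ whose heart is $\AA$, with cohomology functors $\mathrm{H}^i_{\AA}$ and truncations $\tau^{\leq n}_{\AA},\tau^{\geq n}_{\AA}$. Set
$$\DD_t^{\leq 0} := \{\, E\in\TT : \mathrm{H}^i_{\AA}(E)=0 \ (i>0),\ \mathrm{H}^0_{\AA}(E)\in\AA_\sigma^{>t} \,\},$$
$$\DD_t^{\geq 1} := \{\, E\in\TT : \mathrm{H}^i_{\AA}(E)=0 \ (i<0),\ \mathrm{H}^0_{\AA}(E)\in\AA_\sigma^{\leq t} \,\}.$$
The nesting $\DD_t^{\leq 0}[1]\subset\DD_t^{\leq 0}$ and $\DD_t^{\geq 1}\subset\DD_t^{\geq 1}[1]$ is immediate from the definitions, and the semiorthogonality $\Hom(\DD_t^{\leq 0},\DD_t^{\geq 1})=0$ follows by applying the long exact sequences of $\mathrm{H}^i_{\AA}$ to the original truncation triangles of the source and target, reducing everything to the torsion-pair vanishing established above.

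The decomposition axiom is the crux. For $E\in\TT$ I would start from the truncation triangle $\tau^{\leq 0}_{\AA}E\to E\to\tau^{\geq 1}_{\AA}E$, noting $\tau^{\geq 1}_{\AA}E\in\DD_t^{\geq 1}$ already, decompose the degree-zero cohomology by the torsion pair as $0\to T\to\mathrm{H}^0_{\AA}(E)\to F\to 0$ with $T\in\AA_\sigma^{>t}$ and $F\in\AA_\sigma^{\leq t}$, and define $A$ by the triangle $A\to\tau^{\leq 0}_{\AA}E\to F$ induced by the quotient $\mathrm{H}^0_{\AA}(E)\twoheadrightarrow F$; the cohomology sequence then gives $\mathrm{H}^0_{\AA}(A)=T$ and $\mathrm{H}^{>0}_{\AA}(A)=0$, so $A\in\DD_t^{\leq 0}$. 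Applying the octahedral axiom to $A\to\tau^{\leq 0}_{\AA}E\to E$ produces $B:=\cone(A\to E)$, which sits in a triangle $F\to B\to\tau^{\geq 1}_{\AA}E$; reading off cohomology yields $\mathrm{H}^{<0}_{\AA}(B)=0$ and $\mathrm{H}^0_{\AA}(B)=F\in\AA_\sigma^{\leq t}$, so $B\in\DD_t^{\geq 1}$, giving the desired triangle $A\to E\to B$. With the t-structure in place, its heart $\DD_t^{\leq 0}\cap\DD_t^{\geq 1}[1]$ is exactly $\langle\AA_\sigma^{>t},\AA_\sigma^{\leq t}[1]\rangle=\AA_\sigma^t$, and conditions (a) and (b) of Definition \ref{def:heartstructrue} follow formally: (a) holds since for $E',F'\in\AA_\sigma^t$ and $n<0$ one has $E'\in\DD_t^{\leq 0}$ and $F'[n]\in\DD_t^{\geq 1}$, so the semiorthogonality gives $\Hom(E',F'[n])=0$, while (b) is the Postnikov filtration of the new t-structure, whose finiteness is inherited from boundedness of the original one since the new cohomological amplitude of any object exceeds the old by at most one in each direction. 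The main obstacle is this octahedral assembly of the decomposition triangle together with the verification that $A$ and $B$ land in $\DD_t^{\leq 0}$ and $\DD_t^{\geq 1}$ respectively; this is the only place where the torsion-pair property, rather than a mere pair of subcategories, is genuinely used.
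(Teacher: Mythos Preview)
Your proof is correct and complete; it is precisely the standard Happel--Reiten--Smal\o\ argument, carried out carefully in the weak-stability setting (including the boundary case $\Im Z=0$). The paper, however, does not give a proof at all: the statement is quoted as a Proposition/Definition from \cite{Happel-al:tilting}, with only the one-line remark preceding it that the existence of Harder--Narasimhan filtrations makes $(\AA_\sigma^{>t},\AA_\sigma^{\leq t})$ a torsion pair, after which the tilted heart is taken for granted. So your write-up is not a different route but rather a fleshed-out version of what the paper leaves to the cited reference; it supplies exactly the details (torsion-pair axioms, construction of $\DD_t^{\leq 0}$ and $\DD_t^{\geq 1}$, the octahedral decomposition) that the citation is meant to cover.
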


For an object $F$ in $\Db(\P^3,\BB_0)$, we define its modified Chern character as
\begin{equation} \label{eq:modchern}
    \ch_{\BB_0}(F)=\ch(\text{Forg}(F))(1-\frac{11}{32}l),
\end{equation}
where $l$ denotes the class of a line in $\P^3$. 
For every $\beta\in \R$, we define the twisted Chern character as 
\[\ch_{\BB_0}^\beta=e^{-\beta h}\ch_{\BB_0}=(\rk,\ch_{\BB_0,1}-\rk \beta h,\ch_{\BB_0,2}-\beta h \cdot \ch_{\BB_0,1}+\rk \frac{\beta^2}{2} h^2,\dots),\]
where $h$ denotes the class of a plane in $\P^3$.

We fix the lattice $\Lambda$ of $\kn(\P^3,\BB_0)$ in Definition \ref{def:wstab} as:
$$v=\ch_{\BB_0,\leq 2}:\kn(\P^3,\BB_0)\rightarrow \Lambda =\{(\rk(F),l\cdot\ch_1(\Forg(F)),h\cdot\ch_2(\Forg(F))-\frac{11}{32}\rk(F))\}.$$ To simplify the notation, we will also write $\ch_1$(resp. $\ch_2$) for $l\cdot\ch_1$(resp. $h\cdot \ch_2$) in the future when there is no ambiguity. 

The \emph{discriminant} of an object $F$ in $\Db(\P^3,\BB_0)$ is defined as follows:
\begin{equation}\label{eq:discrim}
    \dlt(F)\coloneqq(\ch_{\BB_0,1}(F))^2-2\rk(F)\ch_{\BB_0,2}(F) = (\ch^{\beta}_{\BB_0,1}(F))^2-2\rk(F)\ch^{\beta}_{\BB_0,2}(F).
\end{equation}

Setting $Z_{\mathrm{slope}}=i\rk-\ch_{\BB_0,1}$, then the classical slope stability $$\sigma_{\mathrm{slope}}=(\Coh(\P^3,\BB_0),Z_{\mathrm{slope}})$$ is a weak stability condition in the sense of Definition \ref{def:wstab}. For $\beta\in \R$, as in Proposition and Definition \ref{pd:tiltheart}, we have the heart of bounded t-structure
$$\mathrm{Coh}^\beta(\P^3, \BB_0)\coloneqq\langle\mathrm{Coh}^{>\beta}_{\sigma_\mathrm{slope}}(\P^3, \BB_0),\mathrm{Coh}^{\leq \beta}_{\sigma_\mathrm{slope}}(\P^3, \BB_0)[1]\rangle.$$

With this notation, we can state the following result.

\begin{PropDef}[{\cite[Proposition 9.3]{BLMS:kuzcomponent}}]\label{prop:constructstabcond}
Given $\alpha>0$ and $\beta\in \R$, the pair $\sigma_{\alpha,\beta}=(\mathrm{Coh}^\beta(\P^3, \BB_0), \zab)$ with
$$\zab(F)= i\ch^{\beta}_{\BB_0,1}(F)+\frac{1}2\alpha^2\ch^{\beta}_{\BB_0,0}(F)-\ch^{\beta}_{\BB_0,2}(F)$$
defines a weak stability condition on $\Db(\P^3,\BB_0)$.  These stability conditions vary continuously as $(\alpha,\beta)\in \R_{>0}\times \R$ varies. The quadratic form in Definition \ref{def:wstab} can be taken as $\dlt$. In particular, if an object $F$ is $\sigma_{\alpha,\beta}$-semistable for some $\alpha>0,\beta\in \R$, then we have
\[\dlt(F)\geq 0.\]
\end{PropDef}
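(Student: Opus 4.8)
The plan is to verify the three conditions (a)--(c) of Definition~\ref{def:wstab} for the pair $\sab=(\Coh^\beta(\P^3,\BB_0),\zab)$, taking the quadratic form to be $\dlt$. Throughout it is convenient to record that $\Im\zab=\ch^\beta_{\BB_0,1}$ while $\Re\zab=\tfrac12\alpha^2\ch^\beta_{\BB_0,0}-\ch^\beta_{\BB_0,2}$. The one external input feeding every step is the Bogomolov--Gieseker inequality $\dlt(E)\ge 0$ for $\sigma_{\mathrm{slope}}$-semistable objects of $\Coh(\P^3,\BB_0)$. This descends from the classical inequality on $\P^3$: the functor $\Forg$ sends a slope-semistable $\BB_0$-module to a slope-semistable sheaf, since its maximal destabilizing subsheaf is canonical and hence a $\BB_0$-submodule; combining the usual Bogomolov inequality for $\Forg(E)$ with the explicit rank-proportional twist in \eqref{eq:modchern} then yields $\dlt(E)\ge 0$.

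For condition (a) I would first show $\Im\zab\ge 0$ on the heart. This is immediate from the torsion pair used in the tilt: objects of $\Coh^{>\beta}_{\sigma_{\mathrm{slope}}}(\P^3,\BB_0)$ have $\ch^\beta_{\BB_0,1}\ge 0$, the shifted part $\Coh^{\le\beta}_{\sigma_{\mathrm{slope}}}(\P^3,\BB_0)[1]$ reverses the sign of a non-positive quantity, and $\Coh^\beta$ is their extension closure. It then remains to examine the boundary $\Im\zab(F)=0$ and prove $\Re\zab(F)\le 0$. Passing to the slope Harder--Narasimhan factors of $F$, the only contributions are torsion $\BB_0$-sheaves supported in dimension $\le 1$, for which $\ch^\beta_{\BB_0,0}=0$ and $\ch^\beta_{\BB_0,2}\ge 0$, giving $\Re\zab\le 0$; and shifts $E[1]$ of $\sigma_{\mathrm{slope}}$-semistable sheaves of slope exactly $\beta$, for which the Bogomolov inequality with $\ch^\beta_{\BB_0,1}(E)=0$ forces $\ch^\beta_{\BB_0,2}(E)\le 0$, whence $\Re\zab(E[1])=\ch^\beta_{\BB_0,2}(E)-\tfrac12\alpha^2\ch^\beta_{\BB_0,0}(E)\le 0$.

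Condition (c) is the crux, and the support property is the main obstacle. The linear-algebra part is direct: in coordinates $(\ch^\beta_{\BB_0,0},\ch^\beta_{\BB_0,1},\ch^\beta_{\BB_0,2})=(r,c,d)$ one has $\zab=\tfrac12\alpha^2 r-d+ic$, so $\ker\zab$ is the line $\{c=0,\ d=\tfrac12\alpha^2 r\}$, on which $\dlt=c^2-2rd=-\alpha^2 r^2<0$ for $r\ne 0$; thus $\dlt$ is negative definite on $\ker\zab$. The hard part is $\dlt(E)\ge 0$ for every $\sab$-semistable object $E$, which I would prove by induction on the discrete non-negative quantity $\ch^\beta_{\BB_0,1}(E)\in\Z-\beta\Z$. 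The base case $\ch^\beta_{\BB_0,1}(E)=0$ is exactly the computation from condition (a) (the dimension $\le 1$ pieces give $\dlt=0$, and the shifted slope-$\beta$ sheaves inherit $\dlt\ge 0$). For $\ch^\beta_{\BB_0,1}(E)>0$, I would increase $\alpha$ with $\beta$ fixed: either $E$ stays $\sab$-semistable for all large $\alpha$, and the large-volume limit identifies it, up to the tilt, with a $\sigma_{\mathrm{slope}}$-semistable sheaf so that $\dlt(E)\ge 0$ by the classical inequality (recall $\dlt$ is independent of $\alpha$); or $E$ meets a wall at some $\alpha_0$, where a short exact sequence $0\to F\to E\to G\to 0$ in $\Coh^\beta$ has $F,G$ of the same finite $\sab$-slope as $E$, so that $\ch^\beta_{\BB_0,1}(F)$ and $\ch^\beta_{\BB_0,1}(G)$ are positive and strictly smaller than $\ch^\beta_{\BB_0,1}(E)$. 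By induction $\dlt(F),\dlt(G)\ge 0$, and I would conclude with the key convexity lemma: the locus $\{\Im\zab>0,\ \dlt\ge 0\}$ intersected with the $2$-plane of classes sharing a fixed $\sab$-slope is a convex cone (it is the wedge bounded by the two null rays of the Lorentzian form $\dlt$ restricted to that plane, singled out by the sign of $\Im\zab$), hence closed under the addition $[E]=[F]+[G]$, giving $\dlt(E)\ge 0$. Making the large-volume reduction precise and controlling the finitely many (nested) walls is where I expect the real technical work to lie.

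Finally, condition (b) follows from the standard existence criterion for tilted hearts: for fixed $\beta$ the image of $\Im\zab=\ch^\beta_{\BB_0,1}$ lies in the discrete set $\Z-\beta\Z$ and $\Coh^\beta(\P^3,\BB_0)$ is Noetherian, so Harder--Narasimhan filtrations exist and no infinite sequence of destabilizing subobjects occurs (cf.\ \cite{Bridgeland:Stab,BLMS:kuzcomponent}). Continuity in $(\alpha,\beta)\in\R_{>0}\times\R$ is clear from the explicit formula, since both the map $\Lambda\otimes\R\to\C$ and the form $\dlt$ depend real-analytically on the parameters. Assembling (a), (b) and (c) exhibits $\sab$ as a weak stability condition with quadratic form $\dlt$; in particular every $\sab$-semistable object $F$ satisfies $\dlt(F)\ge 0$, which is the final assertion.
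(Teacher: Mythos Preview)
The paper gives no proof here: the statement is quoted from \cite[Proposition 9.3]{BLMS:kuzcomponent} and used as a black box, so there is nothing in this paper to compare your argument against. Your sketch follows the now-standard tilt-stability construction and the main ideas are correct; in particular, your reduction of the Bogomolov inequality for slope-semistable $\BB_0$-modules to the classical one on $\P^3$ (the canonical maximal destabilizing subsheaf of $\Forg(F)$ is automatically a $\BB_0$-submodule, since $\OO_{\P^3}$ is central in $\BB_0$ and hence right multiplication by any local section of $\BB_0$ is $\OO$-linear) is exactly the right mechanism, after which the shift in \eqref{eq:modchern} only makes $\dlt$ larger than the classical discriminant.

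One point needs correcting: the set $\Z-\beta\Z$ is \emph{not} discrete for irrational $\beta$, so neither your induction for the support property nor your argument for Harder--Narasimhan filtrations terminates as written. For the support property, the usual fix is to show that for fixed $E$ the number of walls between a given $\alpha_0$ and $\alpha=+\infty$ is finite, by bounding the lattice classes of possible destabilizers via $0<\ch^\beta_{\BB_0,1}(F)<\ch^\beta_{\BB_0,1}(E)$ together with $\dlt(F)\ge 0$ (only finitely many lattice points survive both constraints). For the existence of HN filtrations in the tilted heart, the argument in the literature combines Noetherianity of $\Coh^\beta(\P^3,\BB_0)$ with a direct bound showing $\mu_{\zab}$ attains a maximum on subobjects (see \cite[Section 2]{BLMS:kuzcomponent} or the appendix to \cite{BMS:stabCY3s}), rather than appealing to discreteness of $\Im\zab$.
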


\begin{rmk}\label{rem:wkstabonp3b0}
\begin{enumerate}
    \item [(i)] By `vary continuously', we mean that if an object $F$ is $\sigma_{\alpha_0,\beta_0}$-stable for some $\alpha_0>0$ and $\beta_0 \in \R$, then $F$ is $\sigma_{\alpha,\beta}$-stable for $(\alpha,\beta)$ in an open neighbourhood of $(\alpha_0,\beta_0)$.
    \item[(ii)] For all $j\in \Z$, the object $\BB_j$ is $\sigma_{\alpha,\beta}$-stable for every $\alpha>0$ and $\beta\in \R$.
\end{enumerate}
\end{rmk}

\subsection{Stability conditions on $\Ku(\P^3,\BB_0)$ and $\Ku(Y)$}
\label{subsection_stabcondonKu(Y)}
The weak stability conditions in Proposition \ref{prop:constructstabcond} do not restrict to stability conditions on $\Ku(\P^3,\BB_0)$ directly. We need to modify them by one more tilting. 

Fix some $\alpha<\frac{1}4$ and $\beta=-1$. Consider the tilting of  $\sigma_{\alpha,-1}$ with respect to the slope value $0$ as in Proposition and Definition \ref{pd:tiltheart}. We refurbish the main result for the stability conditions on $\Ku(\P^3,\BB_0)$ in \cite[Theorem 1.2]{BLMS:kuzcomponent} as follows.

\begin{PropDef}\label{prop:stabonku}
The pair
 \begin{equation}
\label{stabcond}
\sigma_\alpha\coloneqq \left(\left(\Coh^{-1}(\P^3, \BB_0)\right)^0_{\sigma_{\alpha,-1}}\bigcap \Ku(\P^3,\BB_0),Z=\ch^{-1}_{\BB_0,1}-i\rk\right)
\end{equation}
is a stability condition on $\Ku(\P^3,\BB_0)$ with respect to the natural rank-$2$ lattice
\begin{equation*}
    \ch_{\BB_0,\leq 1}:\kn(\Ku(\P^3,\BB_0))\rightarrow \Lambda = \{(\rk,\ch_1)\}.
\end{equation*}
Moreover, the stability condition $\sigma_\alpha$ does not depend on the choice of $\alpha$. 
\end{PropDef}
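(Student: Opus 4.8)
The plan is to import the existence of the induced stability condition from \cite{BLMS:kuzcomponent} and then upgrade it to the sharper statement by a lattice reduction. By \cite[Theorem 1.2]{BLMS:kuzcomponent}, applied to the exceptional collection $\BB_1,\BB_2,\BB_3$ and the weak stability condition $\sigma_{\alpha,-1}$ of Proposition and Definition \ref{prop:constructstabcond}, the double-tilted heart $\left(\Coh^{-1}(\P^3,\BB_0)\right)^0_{\sigma_{\alpha,-1}}\cap\Ku(\P^3,\BB_0)$ together with the rotation $-iZ_{\alpha,-1}$ of the central charge is already a Bridgeland stability condition on $\Ku(\P^3,\BB_0)$; unwinding the definition of $Z_{\alpha,-1}$ gives
\begin{equation*}
-iZ_{\alpha,-1}(F)=\chbo(F)+i\left(\chbt(F)-\tfrac12\alpha^2\rk(F)\right).
\end{equation*}
So the heart in the statement is known to underlie a stability condition, and it remains to (i) show that this central charge may be replaced by the simpler $Z=\chbo-i\,\rk$ on the rank-$2$ lattice $\{(\rk,\chbo)\}$, and (ii) prove that the resulting datum is independent of $\alpha$.

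The heart of the matter is a purely numerical reduction. Every $F\in\Ku(\P^3,\BB_0)=\langle\BB_1,\BB_2,\BB_3\rangle^\perp$ satisfies $\chi(\BB_j,F)=0$ for $j=1,2,3$. I would expand these three Euler pairings by Hirzebruch--Riemann--Roch on $(\P^3,\BB_0)$, using the explicit Chern characters of $\BB_0,\BB_1$ recorded in Section \ref{subsec_kuzcompoinP3}, the Serre functor \eqref{eq:serrep3b0}, and the modified Chern character \eqref{eq:modchern}. This produces three independent linear relations on $(\rk,\chbo,\chbt,\ch_{\BB_0,3})(F)$, which I would solve to express both $\chbt(F)$ and $\ch_{\BB_0,3}(F)$ as linear functions of $\rk(F)$ and $\chbo(F)$; this is exactly the content of identifying $\kn(\Ku(\P^3,\BB_0))$ with the rank-$2$ lattice $\{(\rk,\chbo)\}$. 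The decisive point, and the reason for the normalisations $\beta=-1$ and the $-\tfrac{11}{32}l$ twist, is that in these coordinates the relation for $\chbt$ carries no $\chbo$-term, i.e.\ $\chbt(F)=c\cdot\rk(F)$ for a constant $c$ on $\kn(\Ku(\P^3,\BB_0))$.

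Granting $\chbt=c\,\rk$, the restricted central charge becomes $-iZ_{\alpha,-1}(F)=\chbo(F)+(\tfrac12\alpha^2-c)\,\Im Z(F)\cdot i$, i.e.\ it is obtained from $Z=\chbo-i\,\rk$ by scaling the imaginary part by $\tfrac12\alpha^2-c$ while fixing the real part. The condition $\Im(-iZ_{\alpha,-1})\ge 0$ on the heart (valid by \cite{BLMS:kuzcomponent}) forces $\tfrac12\alpha^2-c>0$ throughout the range $\alpha<\tfrac14$ (one checks $c\le 0$), so this is a strictly \emph{positive} rescaling of the imaginary axis. Such a rescaling is an element of $\widetilde{\mathrm{GL}}^+(2,\R)$ lying in the component that fixes the heart and preserves every slope-ordering; hence it leaves the semistable objects, the Harder--Narasimhan filtrations and the support property (quadratic form $\dlt$) unchanged, and $\left(\left(\Coh^{-1}(\P^3,\BB_0)\right)^0_{\sigma_{\alpha,-1}}\cap\Ku(\P^3,\BB_0),\,Z\right)$ is again a stability condition. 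For the independence statement, observe that $Z=\chbo-i\,\rk$ is manifestly $\alpha$-free, so only the heart could a priori move. Since $\sigma_{\alpha,-1}$ varies continuously in $\alpha$ (Proposition and Definition \ref{prop:constructstabcond}), the restriction yields a continuous path $\alpha\mapsto\sigma_\alpha$ in $\Stab(\Ku(\P^3,\BB_0))$ of constant central charge $Z$; as the forgetful map $\Stab(\Ku(\P^3,\BB_0))\to\Hom(\Lambda,\C)$ is a local homeomorphism, its fibre over $Z$ is discrete, and a continuous map from the connected interval $(0,\tfrac14)$ into a discrete set is constant.

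I expect the genuine obstacle to be the reduction in the second paragraph: carrying out the three Riemann--Roch computations for right $\BB_0$-modules correctly — the twisted sheaves $\BB_j$, the nonstandard Serre functor, and the $-\tfrac{11}{32}l$ correction all intervene — and verifying that the $\chbo$-coefficient of $\chbt$ really vanishes, so that the passage to $Z$ is a pure positive rescaling rather than a genuine shear. Were a nonzero $\chbo$-term to survive, one would have to check separately that the corresponding shear in $\widetilde{\mathrm{GL}}^+(2,\R)$ still preserves the heart on the discrete set of classes realised in $\Ku(\P^3,\BB_0)$, which is considerably more delicate.
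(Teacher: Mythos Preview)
Your proposal is correct and follows essentially the same approach as the paper: invoke \cite{BLMS:kuzcomponent} for the stability condition with central charge $-iZ_{\alpha,-1}$, use the vanishing of $\chi(\BB_j,E)$ and Riemann--Roch to deduce $\chbt(E)=-\tfrac{7}{32}\rk(E)$ on $\Ku(\P^3,\BB_0)$, conclude that $-iZ_{\alpha,-1}$ is a positive rescaling of the imaginary part of $Z$, and finish with the continuity-plus-constant-central-charge argument for $\alpha$-independence. The paper's only simplification over your sketch is that it uses just the two vanishings $\chi(\BB_1,E)=\chi(\BB_3,E)=0$, passes to $\chi_{\P^3}$ via the forgetful functor and \eqref{eq:forgetful}, and subtracts the resulting Riemann--Roch identities to eliminate $\ch_3$ directly, arriving at \eqref{eqn:charofKuobj} without ever needing to solve a $3\times 3$ system or compute $\ch_{\BB_0,3}$.
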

\begin{proof}
By Proposition \ref{prop:constructstabcond} and \cite[Proposition 2.15, Proposition 5.1 and Proof of Theorem 1.2]{BLMS:kuzcomponent}, if we replace the central charge in the pair \eqref{stabcond} by $-iZ_{\alpha,-1}$, then that will be a stability condition on $\Ku(\P^3,\BB_0)$. We only need to check that  the  central charge in \eqref{stabcond} induces the same slope function as that induced by $-iZ_{\alpha,-1}$.

For any object $E$ in $\Ku(\P^3,\BB_0)$, by \eqref{eq:sodcubic4fold} and \eqref{eq:kuzofp3b0} , we have 
\begin{equation}\label{eq:chivan}
    \chi_{\Db(\P^3,\BB_0)}(\BB_1,E)=\chi_{\Db(\P^3,\BB_0)}(\BB_3,E)=0.
\end{equation}
Recall that the Riemann--Roch formula for $\P^3$ is given as \begin{equation}\label{eq:rrp3}
    \chi_{\P^3}(F)=\ch_3(F)+2\ch_2(F)+\frac{11}6\ch_1(F)+\rk(F)
\end{equation}for every object $F$ in $\Db(\P^3)$.

Denote $G=\Forg(E\otimes_{\BB_0}\BB_{-1})$. By \eqref{eq:forgetful}, \eqref{eq:chivan} and \cite[Corollary 3.9]{Kuz:Quadric}, we have
\begin{align}\label{eqn:tosubtract}
    0=&\chi_{\Db(\P^3,\BB_0)}(\BB_1,E)=\chi_{\P^3}(G)= \ch_3(G)+2\ch_2(G)+\frac{11}6\ch_1(G)+\rk(G),\\ \label{eqn:tosubtract2} 0=&\chi_{\Db(\P^3,\BB_0)}(\BB_3,E)=\chi_{\P^3}(G(-H)) = \ch_3(G)+\ch_2(G)+\frac{1}3\ch_1(G).
\end{align}

By \cite[Corollary 3.9]{Kuz:Quadric}, for every $\BB_i$ we have
\begin{align*}
    \ch_2(\BB_{i}\otimes_{\BB_0}\BB_{-1})=\ch_2(\BB_{i-1})=\ch_2(\mathrm{Forg}(\BB_i))-\frac{1}2\ch_1(\mathrm{Forg}(\BB_i))+\frac{1}8\rk(\BB_i).
\end{align*}
By \cite[Proposition 2.12]{BMMS:Cubics} and restricting to a general hyperplane, the character $\ch_{\leq2}(E)$ is spanned by $\ch_{\leq 2}(\BB_i)$'s. As the sheaf $\BB_{-1}$ is a flat $\BB_0$-module, the operation $-\otimes_{\BB_0}{\BB_{-1}}$ is linear on $\ch_{\leq 2}$, so we have 
\begin{align*}
    \ch_2(G)=\ch_2(\mathrm{Forg}(E))-\frac{1}2\ch_1(\mathrm{Forg}(E))+\frac{1}8\rk(E)\text{ and }
    \ch_1(G)=  \ch_1(\mathrm{Forg}(E))-\frac{1}2\rk(E).
\end{align*}
By subtracting the equations \eqref{eqn:tosubtract} and \eqref{eqn:tosubtract2}, we have 
\begin{equation}\label{eqn:charofKuobj}
 \ch_2(\mathrm{Forg}(E))=-\ch_1(\mathrm{Forg}(E))-\frac{3}8\rk(E).
\end{equation}
Note that
\begin{align*}
   \Im(-iZ_{\alpha,-1}(E))=&\ch_{2}^{-1}(\Forg E)-\frac{11}{32}\rk(E)-\frac{1}2\alpha^2\rk(E)\\ =&\ch_{2}(\Forg E)+\ch_{1}(\Forg E)+\frac{1}2\rk(E)-\frac{11}{32}\rk(E)-\frac{1}2\alpha^2\rk(E) \\ =&-(\frac{7}{32}+\frac{1}2\alpha^2)\rk(E) \qquad \text{ by \eqref{eqn:charofKuobj}.}
\end{align*}
We have
\begin{align*}
    Z(E)= \ch^{-1}_{\BB_0,1}(E)-i\rk(E) =\Re(-iZ_{\alpha,-1}(E)) +\frac{i}{\frac{7}{32}+\frac{1}2\alpha^2}\Im(-iZ_{\alpha,-1}(E)).
\end{align*}
Therefore, the slope function induced by $-iZ_{\alpha,-1}$ is the same  up to a constant scalar as that of $Z=\ch^{-1}_{\BB_0,1}-i\rk$. So the pair \eqref{stabcond} is a stability condition on $\Ku(\P^3,\BB_0)$. Note that the stability conditions $\sigma_\alpha$'s vary continuously when $\alpha$ varies. Since  they have the same central charge, all of them are the same stability condition.
\end{proof}
It is worth pointing out that to check the stability of an object in $\Ku(\P^3,\BB_0)$, we usually only need to work in the heart $\Coh^{-1}(\P^3,\BB_0)$. The following simple lemma makes this more precise. 
\begin{lem}\label{lem:tiltstabtokustab}
Let $E$ be a $\sigma_{\alpha,-1}$-stable object such that \begin{enumerate}
    \item[(a)] $E$ is an  object in $\left(\Coh^{-1}(\P^3, \BB_0)\right)^0_{\sigma_{\alpha,-1}}\bigcap \Ku(\P^3,\BB_0)$;
    \item[(b)] $\Hom_{\BB_0}(T,E)=0$ for every $T\in \Coh(\P^3,\BB_0)$ supported on zero dimensional locus.
\end{enumerate}
Then $E$ is $\sigma_{\alpha}$-stable. 
\end{lem}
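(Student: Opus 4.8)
The plan is to verify $\sigma_\alpha$-stability directly, by excluding destabilizing subobjects in the heart $\AA:=\left(\Coh^{-1}(\P^3,\BB_0)\right)^0_{\sigma_{\alpha,-1}}\cap\Ku(\P^3,\BB_0)$. Suppose, for contradiction, that $E$ is not $\sigma_\alpha$-stable, so there is a short exact sequence $0\to F\to E\to G\to 0$ in $\AA$ with $F,G\neq 0$ and $\mu_{\sigma_\alpha}(F)\ge\mu_{\sigma_\alpha}(E)$. By the construction underlying Proposition and Definition \ref{prop:stabonku} (see \cite[Proposition 2.15 and Proposition 5.1]{BLMS:kuzcomponent}), the bounded t-structure on $\Db(\P^3,\BB_0)$ with heart $\left(\Coh^{-1}(\P^3,\BB_0)\right)^0_{\sigma_{\alpha,-1}}$ restricts to the one with heart $\AA$ on $\Ku(\P^3,\BB_0)$; hence the sequence is also exact in $\left(\Coh^{-1}(\P^3,\BB_0)\right)^0_{\sigma_{\alpha,-1}}$, and $F\hookrightarrow E$, $E\twoheadrightarrow G$ are genuine sub- and quotient objects there. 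Moreover, by the computation in the proof of Proposition and Definition \ref{prop:stabonku}, the slope induced by $Z=\ch^{-1}_{\BB_0,1}-i\rk$ on $\Ku(\P^3,\BB_0)$ agrees up to a positive scalar with the slope induced by $-iZ_{\alpha,-1}$, so the destabilizing inequality may be tested with the central charge $-iZ_{\alpha,-1}$ of the tilted weak stability condition.

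Next I would pass to the long exact sequence of $0\to F\to E\to G\to 0$ with respect to the original heart $\Coh^{-1}(\P^3,\BB_0)$. Since $E$ is $\sigma_{\alpha,-1}$-stable and, by hypothesis (a), lies in the tilt $\left(\Coh^{-1}(\P^3,\BB_0)\right)^0_{\sigma_{\alpha,-1}}\subset\langle\Coh^{-1}(\P^3,\BB_0),\Coh^{-1}(\P^3,\BB_0)[1]\rangle$, either $E$ or $E[-1]$ lies in $\Coh^{-1}(\P^3,\BB_0)$ as a single $\sigma_{\alpha,-1}$-stable object. This pins down the cohomology sheaves of $F$ and $G$: up to the connecting map, the nonzero $\Coh^{-1}(\P^3,\BB_0)$-cohomology of $F$ and of $G$ is a sub- respectively quotient object of $E$ inside $\Coh^{-1}(\P^3,\BB_0)$.

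I would then convert $\mu_{\sigma_\alpha}(F)\ge\mu_{\sigma_\alpha}(E)$ into inequalities among the invariants $\rk$ and $\ch^{-1}_{\BB_0,1}$ of these cohomology pieces. Using that a proper sub- (resp.\ quotient) object of $E$ in $\Coh^{-1}(\P^3,\BB_0)$ has strictly smaller (resp.\ larger) $\mu_{\alpha,-1}$-slope than $E$, every candidate destabilizer of nonzero rank is excluded. The only survivors have $\rk=0$, hence infinite $\sigma_\alpha$-slope; the stability-function axiom for $\sigma_\alpha$ on $\AA$ (Proposition and Definition \ref{prop:stabonku}) forces $\ch^{-1}_{\BB_0,1}<0$ on such a survivor, and tracing it back through the tilt produces a nonzero sheaf $T$ supported in dimension zero together with a nonzero map $T\to E$ in $\Db(\P^3,\BB_0)$. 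This is precisely what hypothesis (b), namely $\Hom_{\BB_0}(T,E)=0$, forbids, giving the desired contradiction.

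The hard part will be the third step: controlling the slopes across the tilt so as to show that any destabilizer must have rank zero, and then extracting from such a rank-zero object an honest zero-dimensional sheaf mapping to $E$. All of the subtlety concentrates on the objects with vanishing central charge — the zero-dimensional sheaves, on which both $Z$ and $Z_{\alpha,-1}$ vanish — which is exactly why the correct hypothesis is a $\Hom$-vanishing against them rather than a purely numerical condition.
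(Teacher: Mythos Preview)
Your overall strategy parallels the paper's: both take the long exact $\Coh^{-1}$-cohomology sequence of a destabilizing triangle in the tilted heart, exploit the $\sigma_{\alpha,-1}$-stability of $E$, and translate to the rotated slope $\mu_{-iZ_{\alpha,-1}}$ (which agrees with $\mu_{\sigma_\alpha}$ on $\Ku(\P^3,\BB_0)$). One difference worth noting: the paper actually proves the stronger claim that $E$ is stable for the weak stability condition $\bigl((\Coh^{-1})^0_{\sigma_{\alpha,-1}},\,-iZ_{\alpha,-1}\bigr)$ on the \emph{whole} tilted heart, testing against all subobjects there, not only those in $\Ku$.

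The real gap is in your third paragraph, where the dichotomy on $\rk(F)$ and the proposed use of (b) go astray. The paper makes no such dichotomy. In the case $E\in\Coh^{-1}(\P^3,\BB_0)$ it shows directly that $0<\mu_{Z_{\alpha,-1}}(F)\le\mu_{Z_{\alpha,-1}}(E)$ from the long exact sequence and the stability of $E$, then rotates by $-i$; hypothesis (b) is invoked only to rule out $Z_{\alpha,-1}(F)=0$, which for $F\in\Coh^{-1}$ means exactly that $F$ is a zero-dimensional torsion sheaf, and the nonzero inclusion $F\hookrightarrow E$ then contradicts (b) on the nose. Your proposed role for (b)---starting from a rank-zero object $F\in\Ku$ with $\ch^{-1}_{\BB_0,1}(F)<0$ and ``tracing it back through the tilt'' to produce a zero-dimensional $T$ with $T\to E$---is not substantiated, and in fact cannot be carried out as stated: in the case $E\in\Coh^{-1}$ you already have $F\in\Coh^{-1}$, whence $\ch^{-1}_{\BB_0,1}(F)\ge 0$, so such a rank-zero $F$ in $\Ku$ cannot exist and there is nothing to trace back. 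The fix is to follow the paper: compare the $\mu_{Z_{\alpha,-1}}$-slopes directly rather than splitting on $\rk$, and invoke (b) precisely when the subobject in the big tilted heart has $Z_{\alpha,-1}=0$.
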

\begin{proof}
We only need to show that $E$ is stable with respect to the weak stability condition $\left(\left(\Coh^{-1}(\P^3, \BB_0)\right)^0_{\sigma_{\alpha,-1}}, -iZ_{\alpha,-1}\right)$. Denote $\AA=\Coh^{-1}(\P^3,\BB_0)$. Let $F$ be a non-zero proper subobject of $E$ in $\left(\Coh^{-1}(\P^3, \BB_0)\right)^0_{\sigma_{\alpha,-1}}$, then we have the exact sequence of objects in $\AA$:
\begin{equation}
    0\rightarrow \HH^{-1}_\AA(F)\rightarrow \HH^{-1}_\AA(E)\rightarrow \HH^{-1}_\AA(E/F)\rightarrow \HH^{0}_\AA(F)\rightarrow \HH^{0}_\AA(E)\rightarrow \HH^{0}_\AA(E/F)\rightarrow 0.
\end{equation}
Since $E$ is $\sigma_{\alpha,-1}$-stable, either $\HH^{-1}_\AA(E)$ or $\HH^{0}_\AA(E)=0$. When $\HH^{0}_\AA(E)\neq 0$, we have $\HH^{-1}_\AA(F)=\HH^{-1}_\AA(E)=0$. Since $E=\HH^{0}_\AA(E)$ is $\sigma_{\alpha,-1}$-stable, we have
\begin{equation}
\mu_{Z_{\alpha,-1}}(\HH^{0}_\AA(E/F))\geq \mu_{Z_{\alpha,-1}}(E)>0. 
\end{equation}
By condition (b), we have $Z_{\alpha,-1}(F)\neq 0$. Since $\mu_{Z_{\alpha,-1}}(\HH^{-1}_\AA(E/F))<0$ or $\HH^{-1}_\AA(E/F)=0$,  it is clear that 
\begin{equation}\label{2.14}
    \mu_{Z_{\alpha,-1}}(E)\geq \mu_{Z_{\alpha,-1}}(F)>0.
\end{equation}
The equality in \eqref{2.14} can hold only when $\HH^{-1}_\AA(E/F)= 0$ and $Z_{\alpha,-1}(E/F)=0$. In particular,  $E/F=\HH^{0}_\AA(E/F)$ in this case.  By Definition \ref{def:weakstabfunction}, we always have $$\mu_{Z_{\alpha,-1}}(E/F) \text{ or }\mu_{Z_{\alpha,-1}}(E)>\mu_{Z_{\alpha,-1}}(F)>0.$$
Therefore, $$\mu_{-iZ_{\alpha,-1}}(E/F)\text{ or }\mu_{-iZ_{\alpha,-1}}(E)>\mu_{-iZ_{\alpha,-1}}(F)>0.$$
By Definition \ref{def:weakstabfunction}, the object $E$ is $\mu_{-iZ_{\alpha,-1}}$-stable.  A similar argument also holds for the case when $\HH^{-1}_\AA(E)\neq 0$. 
\end{proof}

Another subtle issue is that the Clifford structure and the embedding of $\Ku(Y)$ in $\Db(\P^3,\BB_0)$ depend on the choice of the line $L$ to blow up, see Remark \ref{rem:equivalentofKu}. However, for the induced stability conditions on the Kuznetsov component, we have the following result.

\begin{prop}[{\cite[Proposition 2.6]{LPZ1}}]
\label{prop:change_line}
If $\sigma$ is a stability condition as defined in \eqref{stabcond}, then the induced stability condition $(\Psi\circ \rho_L^*)^{-1}\sigma$ on $\Ku(Y)$  is independent of the choice of $L$.
\end{prop}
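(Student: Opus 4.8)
The plan is to combine a lattice-theoretic computation showing that the central charge is intrinsic to $\Ku(Y)$ with a rigidity argument based on the connectedness of the space of lines and Bridgeland's local homeomorphism theorem. Write $U$ for the nonempty Zariski open, hence connected, subset of the Fano variety of lines $F(Y)$ parametrizing lines $L\subset Y$ not contained in a plane, and for $L\in U$ set $\sigma_L:=(\Psi\circ\rho_L^*)^{-1}\sigma_\alpha$, the stability condition on $\Ku(Y)$ induced via the equivalence of Proposition \ref{prop:equivalentofKu}. Recall that the forgetful map $\ZZ\colon \Stab^\dag(\Ku(Y))\to \Hom(\Lambda,\C)$ sending a stability condition to its central charge is a local homeomorphism onto an open subset. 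It therefore suffices to prove two things: (i) the central charge of $\sigma_L$ is independent of $L$, and (ii) the family $\{\sigma_L\}_{L\in U}$ varies continuously over the connected base $U$. Given these, the central charge is constant along the connected family, so by local injectivity of $\ZZ$ the assignment $L\mapsto\sigma_L$ is locally constant, hence constant.

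For (i), I would argue that the identification of lattices induced by $\Psi\circ\rho_L^*$ is rigid. The numerical Grothendieck group $\kn(\Ku(Y))$ carries the intrinsic Mukai pairing, and the rank-$2$ lattice $\{(\rk,\ch_1)\}$ attached to $\Ku(\P^3,\BB_0)$ in Proposition \ref{prop:stabonku} corresponds under the equivalence to the sublattice spanned by the distinguished classes $\lambda_1,\lambda_2$ of $H^*_{\mathrm{alg}}(\Ku(Y),\Z)$, which are defined independently of $L$. Since $\Psi\circ\rho_L^*$ is an equivalence it preserves the Mukai pairing and carries $\lambda_1,\lambda_2$ to classes in $\kn(\Ku(\P^3,\BB_0))$ whose $(\rk,\ch_1)$-coordinates are independent of $L$; consequently the functional $Z=\ch^{-1}_{\BB_0,1}-i\,\rk$ pulls back to one and the same central charge $Z_0$ on $\langle\lambda_1,\lambda_2\rangle$ for every $L\in U$. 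This reduces (i) to a finite computation in the $A_2$-lattice.

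For (ii), the continuity is the technical heart of the argument. Over $U$ one organizes the blow-ups $\rho_L$, the conic bundle structures $\pi_L\colon \tilde Y_L\to\P^3$ and the associated sheaves of even Clifford algebras $\BB_{0,L}$ into a single family, producing a continuously varying family of equivalences $\Ku(Y)\simeq\Ku(\P^3,\BB_{0,L})$ as in Remark \ref{rem:equivalentofKu}. The continuity statement in Proposition \ref{prop:constructstabcond}, together with the explicit tilting recipe defining $\sigma_\alpha$ in \eqref{stabcond}, then shows that $L\mapsto\sigma_L$ is continuous into $\Stab^\dag(\Ku(Y))$; that all the $\sigma_L$ lie in the single distinguished component $\Stab^\dag$ follows since they deform into one another.

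The main obstacle I expect is precisely (ii): making rigorous the claim that the construction of $\sigma_\alpha$ can be carried out in a family over $U$, given that both the Clifford algebra $\BB_{0,L}$ and the tilted heart $\Coh^{-1}(\P^3,\BB_{0,L})$ depend on the choice of $L$. Once the family of induced stability conditions is known to be continuous and to have constant central charge $Z_0$ by step (i), the conclusion is immediate from the local injectivity of $\ZZ$ along the connected base $U$.
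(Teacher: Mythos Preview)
The paper does not supply its own proof of this proposition; it is quoted verbatim from \cite[Proposition 2.6]{LPZ1}, so there is nothing in the present paper to compare your argument against.

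That said, your outline is essentially the strategy used in the cited reference: one first verifies that the central charge pulled back to $\kn(\Ku(Y))$ is independent of $L$ (your step (i), which amounts to the computation recorded later in \eqref{eq:charoflambdas} for the generators $\lambda_1,\lambda_2$), and then combines the connectedness of the locus $U\subset F(Y)$ of admissible lines with Bridgeland's local homeomorphism $\Stab^\dag(\Ku(Y))\to\Hom(\Lambda,\C)$ to conclude that the stability conditions themselves agree. You have correctly identified step (ii), the continuity of $L\mapsto\sigma_L$, as the nontrivial input. In practice this is not done by constructing the full Clifford-algebra apparatus in a family over $U$; rather, one argues more directly that for any fixed object $E\in\Ku(Y)$ the phase and stability with respect to $\sigma_L$ are controlled by a finite list of Hom-vanishings that vary upper-semicontinuously with $L$, which is enough to force local constancy of the slicing once the central charge is fixed. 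Your sketch is on the right track but, as you acknowledge, the passage from ``constant central charge plus connected base'' to ``constant stability condition'' still requires you to produce the continuity of the hearts, and the relative-construction approach you propose is heavier than what is actually needed.
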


\begin{rmk}\label{rem:noL}
As we are only interested in the stability of objects in $\Ku(Y)$, we will omit $L$ in all the morphisms and functors that rely on $L$ in the future texts.  We will also write $\sigma$ instead of $(\Psi\circ \rho_L^*)^{-1}\sigma$ for the stability condition on $\Ku(Y)$ for simplicity.

The stability condition $\sigma$ is also a \emph{full numerical stability condition} with respect to the whole lattice $\kn(\Ku(Y))$. The whole connected component $\Stab^\dag(\Ku(Y))$ containing $\sigma$ is described as that in \cite[Theorem 29.1]{BLM+}.
\end{rmk}

\section{Symplectic resolution of the moduli space $M_\tau(2v_0)$ with $v_0^2=2$} \label{sec_sympleresol}
This section is devoted to the proof of Theorem \ref{thm_OG10intro}. After recalling the definition of algebraic Mukai lattice of $\Ku(Y)$ and stating the main result, we describe the local structure of the moduli space $M_\tau(2v_0)$ at the worst singular points. This is used to construct the symplectic resolution $\tM$ by blowing up the singular locus with the reduced scheme structure as in \cite{LS}. Finally, we obtain the projectivity and the deformation class of $\tM$ by specializing to Kuznetsov components equivalent to the bounded derived category of a K3 surface.

\subsection{Algebraic Mukai lattice of $\Ku(Y)$} \label{subsec_algMukai} Let $Y$ be a cubic fourfold over $\C$ and $\Ku(Y)$ be its Kuznetsov component. The \emph{algebraic Mukai lattice} $H^*_{\mathrm{alg}}(\Ku(Y),\Z)$ of $\Ku(Y)$  is introduced in \cite[Proposition and Definition 9.5]{BLMS:kuzcomponent}. It consists of algebraic cohomology classes of $Y$ which are orthogonal to the classes of $\OO_Y$, $\OO_Y(H)$, $\OO_Y(2H)$ with respect to the Euler pairing. 

As for an alternative description, the algebraic Mukai lattice is $\kn(\Ku(Y))$ equipped with a Mukai pairing:
\begin{equation}\label{eq:mukaipairing}
    ([E],[F])\coloneqq -\chi(E,F) = -\chi(F,E)
\end{equation}
for objects $E$ and $F$ in $\Ku(Y)$. The signature of the Mukai pairing is $(2,\rho)$, where $0\leq \rho \leq 20$.

We will be only interested in a sub-lattice in $\kn(\Ku(Y))$ generated by  two special classes
\begin{equation}\label{eq:defoflambdas}
\lambda_i\coloneqq[\mathsf{L}_{\OO_Y}\mathsf{L}_{\OO_Y(H)}\mathsf{L}_{\OO_Y(2H)}(\OO_L(iH))] \text{ for $i=1$ and $2$},
\end{equation}
where $L$ is a line on $Y$ and $\mathsf{L}_{\OO_Y}\mathsf{L}_{\OO_Y(H)}\mathsf{L}_{\OO_Y(2H)}:\Db(Y)\to \Ku(Y)$ is the  projection functor with respect to the semiorthogonal decomposition $\langle\Ku(Y),\OO_Y,\OO_Y(H),\OO_Y(2H)\rangle$ of $\Db(Y)$. The Mukai pairing of them can be computed as:
\begin{equation}\label{eq:mukaipairoflambdas}
    (\lambda_1,\lambda_1)=(\lambda_2,\lambda_2)=2, (\lambda_1,\lambda_2)=-1.
\end{equation}
In particular, when $Y$ is a very general cubic fourfold, the Mukai lattice is spanned by $\lambda_1$ and $\lambda_2$.

\subsection{The main theorem}
 Fix an element $v_0$ in the algebraic Mukai lattice of $\Ku(Y)$ such that $(v_0,v_0)=2$ and set $v:=2v_0$. Let $\Stab^\dag(\Ku(Y))$ be  the connected component of full numerical stability conditions on $\Ku(Y)$  containing $\sigma$.  By \cite[Theorem 21.24]{BLM+} (which makes use of the main result in \cite{AHLH}), for every $\tau \in \Stab^\dag(\Ku(Y))$, the moduli stack $\MM_{\tau}(v)$ parametrizing $\tau$-semistable objects in $\Ku(Y)$ admits a good moduli space $M_\tau(v)$, which is a proper algebraic space.

In this section, we fix a stability condition  $\tau \in \Stab^\dag(\Ku(Y))$ which is \emph{generic} with respect to $v$. In other words, the strictly $\tau$-semistable objects in $M_\tau(v)$ are S-equivalent to the direct sum of two $\tau$-stable objects with Mukai vector $v_0$. Note that this stability condition may be different as that in Remark \ref{rem:noL}, which was denoted by $\sigma$, when $Y$ is not very general. Set $M:=M_\tau(v)$; the aim of this section is to prove the following result.
\begin{thm}
\label{thm_OG10}
The moduli space $M$ has a symplectic resolution $\tM$, which is a $10$-dimensional smooth projective hyperk\"ahler manifold, deformation equivalent to the O'Grady's example constructed in \cite{OGrady}. 
\end{thm}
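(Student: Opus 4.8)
The plan is to follow the strategy of O'Grady \cite{OGrady} and Lehn--Sorger \cite{LS} for singular moduli spaces on K3 surfaces, exploiting that $\Ku(Y)$ is a category of K3 type so that the relevant deformation theory is formally identical to the classical case. First I would record the numerology: since $(v_0,v_0)=2$ we have $(v,v)=4(v_0,v_0)=8$, so the expected dimension of the stable locus is $(v,v)+2=10$. The moduli space $M_\tau(v_0)$ of $\tau$-stable objects of primitive class $v_0$ is, by \cite{BLM+}, a smooth projective hyperk\"ahler fourfold of $\mathrm{K3}^{[2]}$-type. For an object $E$ in $\Ku(Y)$ the tangent and obstruction spaces are $\Ext^1(E,E)$ and $\Ext^2(E,E)$, and the Serre duality $\Ext^1(E,E)\cong\Ext^1(E,E)^\vee$ coming from the K3-type Serre functor equips the smooth stable locus $M^{\mathrm{st}}$ with a holomorphic symplectic form.

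Next I would identify the singular locus. By genericity of $\tau$, every strictly semistable point is S-equivalent to a polystable object $E_1\oplus E_2$ with $E_1,E_2$ $\tau$-stable of class $v_0$, so that $\Sing(M)=M\setminus M^{\mathrm{st}}\cong\Sym^2 M_\tau(v_0)$ is an $8$-dimensional subvariety, with deepest stratum $\{[E^{\oplus 2}]\}\cong M_\tau(v_0)$ along the diagonal. The core of the argument is the local analytic structure of $M$ at a polystable point $F$, via the Kuranishi description: the completed local ring is the invariant ring of the zero fibre of the obstruction map $\kappa\colon\Ext^1(F,F)\to\Ext^2(F,F)$ under the reductive stabiliser $\Aut(F)$ (whose central $\C^\ast$ acts trivially on the moduli), the leading term of $\kappa$ being the Yoneda square. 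At a point $E_1\oplus E_2$ with $E_1\not\cong E_2$ the stabiliser is a torus and one obtains an $A_1$-type transverse singularity; the essential case is the worst point $[E^{\oplus 2}]$, where $\Aut(E^{\oplus 2})=\GL_2$ acts on $\Ext^1(E^{\oplus 2},E^{\oplus 2})=\End(\C^2)\otimes W$ with $W:=\Ext^1(E,E)$ a $4$-dimensional quadratic space, the pairing given by Yoneda composition into $\Ext^2(E,E)\cong\C$. Because these data — the quadratic form on $W$, the $\GL_2$-action, and the quadratic obstruction — coincide exactly with those for sheaves on a K3 surface with $v_0^2=2$, the local model is precisely the one computed by Lehn--Sorger, and I may quote their result that, up to a smooth factor, its blow-up along the reduced singular locus is a symplectic resolution.

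With the local models in hand, I would set $\tM$ to be the blow-up of $M$ along $\Sing(M)$ with its reduced scheme structure. The local computation shows that $\tM\to M$ is a resolution and that the symplectic form on $M^{\mathrm{st}}$ extends to a holomorphic symplectic form on all of $\tM$, so $\tM$ is a smooth holomorphic symplectic variety of dimension $10$. It remains to establish projectivity and to pin down the deformation class. For this I would use the family of stability conditions and moduli spaces of \cite{BLM+} over the moduli of cubic fourfolds: specialising to a cubic fourfold whose Kuznetsov component is equivalent to the bounded derived category of a (twisted) K3 surface $S$, the space $M$ becomes a moduli space of (twisted) semistable sheaves on $S$ with Mukai vector $2v_0$, $v_0^2=2$, whose symplectic resolution is exactly O'Grady's $10$-dimensional example \cite{OGrady}. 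Checking that the blow-up of the reduced singular locus yields a smooth proper family of symplectic resolutions over this family then shows simultaneously that $\tM$ is projective and deformation equivalent to the O'Grady example.

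The hard part will be the local analytic step: one must verify rigorously that the Kuranishi data of the polystable objects in $\Ku(Y)$ reproduce the Lehn--Sorger quadratic model — controlling the higher $A_\infty$-products, or equivalently establishing formality near these points — and carry out the $\GL_2$-quotient at the worst point. One must also ensure that the blow-up of the reduced singular locus commutes with the specialisation to the (twisted) K3 case, so that projectivity and the deformation type genuinely descend.
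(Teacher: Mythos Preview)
Your outline matches the paper's strategy closely: identify $\Sing(M)\cong\Sym^2 M_\tau(v_0)$, reproduce the Lehn--Sorger local model at the worst point $[E^{\oplus2}]$, blow up the reduced singular locus, and then specialise to the K3 locus via \cite{BLM+} to read off the deformation type. Two technical points deserve comment, however.

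First, for the local Kuranishi description you invoke ``the invariant ring of the zero fibre of the obstruction map'', but since $M$ is not constructed as a GIT quotient of a Quot scheme there is no Luna slice available. The paper addresses this by appealing to the \'etale slice theorem for algebraic stacks of Alper--Hall--Rydh \cite{AlperHallRydh}: one shows that the coherent completion of the moduli stack at the polystable point is isomorphic to the quotient stack $[\Spec(\widehat A/\mathfrak a)/\Aut(E^{\oplus2})]$, whence $\widehat{\OO}_{M,[E^{\oplus2}]}\cong(\widehat A/\mathfrak a)^{\GL_2}$. Only then can the Lehn--Sorger computation be imported verbatim; in particular the formality issue you flag is circumvented by their rigidity argument (\cite[Th\'eor\`eme 3.1]{LS}) rather than by proving formality directly.

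Second, your projectivity step --- ``a smooth proper family over the specialisation shows $\tM$ is projective'' --- is not sufficient as stated: projectivity is not a closed condition for algebraic spaces, so having projective nearby fibres does not automatically force the special fibre to be projective. The paper instead works with the Bayer--Macr\`i line bundle $l$ on $M$, pulls it back to $\tilde l$ on $\tM$, uses the deformation to the K3 locus to see $q(\tilde l)>0$, invokes \cite{Perego:Kahlerness} to get a bimeromorphic projective model, and then applies the Base Point Free Theorem (using that $l$ is strictly nef and $\tilde l$ is big) to conclude that $l$ is ample on $M$. Irreducibility of $M$ is handled separately via \cite{Kaledin:symplectic-singularities}. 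Your overall plan is correct, but these two steps require the additional inputs just described.
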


The construction of the symplectic resolution is done in \cite{LS} in the case of the moduli space of semistable sheaves having Mukai vector $2v_0$ with $(v_0, v_0)=2$ over a polarized K3 surface. A large part of their argument applies to our more general setup without much change. For this reason, we only sketch the proof, referring to \cite{LS} for a complete discussion. The main difference is that in the case of moduli of sheaves, the moduli is constructed as a GIT quotient. To study the local structure, it is enough to take an \'etale slice. In our case, we instead use the deep result on \'etale slice of algebraic stacks \cite{AlperHallRydh}, and we give the details for this part of the proof. 

The strategy is to study the local structure of the moduli space at the worst singularity and prove that its normal cone is isomorphic to an affine model obtained as a nilpotent orbit in the symplectic Lie algebra $\mathfrak{sp}(4)$. It turns out that the singularity is formally isomorphic to its normal cone. Since the singularity at the generic point of the singular locus of $M$ is of type $A_1$, one can conclude that the blow up $\tM$ of $M$ at its singular locus endowed with the reduced scheme structure is a symplectic resolution of $M$. The other properties of $\tM$ (projectivity, deformation type) will be obtained by degeneration to the locus of cubic fourfolds with Kuznetsov component equivalent to the bounded derived category of a K3 surface, as in \cite{BLM+}.

\subsection{Local structure of $M$}
\label{subsec_localstructuremoduli}
We have the following possibilities for $E \in M$:
\begin{enumerate}
\item $E$ is $\tau$-stable. Its automorphism group is $\Aut(E)\cong \C^*$.  
\item $E$ is S-equivalent to $F \oplus F'$ with non-isomorphic $F, F' \in M_\tau(v_0)$. In this case, we have $\Aut(E)\cong \C^* \times \C^*$.
\item $E$ is S-equivalent to $F^{\oplus 2}$ for $F \in M_\tau(v_0)$. Then, $\Aut(E)\cong \mbox{GL}(2,\C)$. 
\end{enumerate}
In this section, we investigate the structure of $M$ in a formal neighborhood of a semistable point as in item (3). 

Let $E$ be a $\tau$-semistable object in $M$. As in \cite{LS}, the first ingredient for the proof is the description of the infinitesimal deformation of $E$. In the case of polystable sheaves on a K3 surface a good summary of the results is provided in \cite[Sections 2 and 4]{ArbSacca}, which we follow in our case. The deformation theory for perfect complexes in the derived category has been studied in \cite{Lieblich:mother-of-all}. In our setting, we consider the functor
$$\Def_E: \text{Art} \to \text{Sets}$$
from the category of local Artinian $\C$-algebras to the category of sets, which assigns to an object $A$ in $\text{Art}$, the set $\Def_E(A)$ of equivalence classes of deformations of $E$ to $Y_A:=Y \times \Spec A$. Explicitly, objects in $\Def_E(A)$ are equivalence classes of pairs $(E_A,\varphi)$, where $E_A$ is a complex on $Y_A$ together with an isomorphism $\varphi: E_A \otimes^{\L}_A \C \cong E$ (see \cite[Definition 3.2.1]{Lieblich:mother-of-all}). Two pairs $(E_A,\varphi)$ and $(E_A',\varphi')$ are equivalent if there is an isomorphism $\psi: E_A \cong E'_A$ such that $\varphi' \circ \psi=\varphi$. 

Note that by \cite[Lemma 3.2.4]{Lieblich:mother-of-all},   $E_A$ is an object in $\Db(Y_A)$. By base change and the definition of $E_A$, if $p$ is the closed point of $\Spec A$, then 
$$\RlHom(\OO_{Y_A}(iH), E_A)_p \cong \RlHom_p(\OO_{Y_A}(iH)_p, E_{A \, p})_p \cong \RlHom_p(\OO_Y(iH), E)=0$$
for $i=0, 1, 2$. So the property of being in $\Ku(Y)$ is an open condition, and we may assume $E_A$ is an object in $\Ku(Y_A):=\langle \OO_{Y_A}, \OO_{Y_A}(H), \OO_{Y_A}(2H) \rangle^{\perp}$, where $\OO_{Y_A}(H)$ is the trivial deformation of $\OO_Y(H)$ to $Y_A$. By \cite[Theorem 3.1.1 and Proposition 3.5.1]{Lieblich:mother-of-all}, the functor $\Def_E$ is a deformation functor and its tangent space $\Def_E(\C[\epsilon])$ is $\Ext^1(E,E)$, where $\C[\epsilon]:=\C[t]/(t^2)$. 

As proved in \cite{Kuznetsov2009Symplectic}, the definition of the trace map requires an additional step. Denote by $\epsilon_E$ the \emph{linkage class} of $E$ (see \cite[Proposition 3.1]{Kuznetsov2009Symplectic}) 
and consider the composition
$$\text{tr}: \Ext^2(E,E) \xrightarrow{\epsilon_E \circ -} \Ext^4(E,E \otimes \Omega^4_Y) \xrightarrow{\mathrm{Tr}} H^4(\Omega^4_Y) \cong \C,$$
where the first map is given by the composition with $\epsilon_E$ and the second map is the usual trace map. We set $\Ext^2(E,E)_0:=\ker(\text{tr})$. 

Using the fact that $E \in \Db(Y)$ has an injective resolution which is equivariant with respect to the canonical action of the automorphism group $\Aut(E)$ of $E$, the argument in \cite[Appendix]{LS} allows to construct a formal map
$$\kappa=\kappa_2+\kappa_3+\dots: \Ext^1(E,E) \to \Ext^2(E,E)_0$$
known as the Kuranishi map, defined inductively on the order, with the following properties:
\begin{enumerate}
\item The map $\kappa$ is equivariant with respect to the conjugation action of $\Aut(E)$.
\item The second order term $\kappa_2: \Ext^1(E,E) \to \Ext^2(E,E)_0$ is given by the Yoneda product $\kappa_2(e)=e \mathbin{\smile} e$ for $e \in \Ext^1(E,E)$.
\item By \cite{Rim} there exists an $\Aut(E)$-equivariant formal deformation $(\widehat{E},\widehat{\varphi})$ of $E$ having the versality property, parametrized by the formal scheme $D_\kappa:=\kappa^{-1}(0)$.
\end{enumerate} 
Denote by $A:=\C[\Ext^1(E,E)]$ the polynomial ring on $\Ext^1(E,E)$.  Let $\widehat{A}$ be the completion of the ring $A$ with respect to the maximal ideal $\mathfrak{m}$ of polynomial functions vanishing at $0$. The Kuranishi map can be also written dually as
$$\kappa^*: \Ext^2(E,E)_0^* \to \mathfrak{m}^2\widehat{A}.$$
If $\mathfrak{a} \subset \widehat{A}$ is the ideal generated by the image of $\kappa^*$, then by definition we have
$$D_\kappa=\text{Spf}(\widehat{A}/\mathfrak{a})=\text{colim}_{n}\Spec((\widehat{A}/ \mathfrak{a})/\mathfrak{m}^n),$$
where $\mathfrak{m}$ is the maximal ideal of $\widehat{A}/\mathfrak{a}$ by abuse of notation.

On the other hand, the object $E$ defines a closed point $x$ in the moduli stack $\MM:=\MM_\sigma(v)$ and the S-equivalence class of $E$ determines a point $\pi(x) \in M$, where $\pi: \MM \to M$ is  the good moduli space. The stabilizer $G_x$ of $x$ is identified with $\Aut(E)$. If $\mathfrak{n}$ is the maximal ideal defining the inclusion of the residual gerbe $BG_x \hookrightarrow \MM$, we denote by $\MM_n \hookrightarrow \MM$ the $n$-th nilpotent thickening of $\MM$ at $x$ defined by $\mathfrak{n}^{n+1}$ for $n \geq 0$. By \cite[Theorem 4.16]{AlperHallRydh} there exists the coherent completion of $\MM$ at $x$, which is a complete local stack $(\widehat{\MM}_x,\widehat{x})$ and a morphism $(\widehat{\MM}_x,\widehat{x}) \to (\MM,x)$ inducing isomorphisms on the $n$-th nilpotent thickenings of $\widehat{x}$ and $x$. Moreover, since $\MM$ has a good moduli space $M$, by \cite[Theorem 4.16(3)]{AlperHallRydh} we have $\widehat{\MM}_x=\MM \times_M \Spec(\widehat{\OO}_{M,\pi(x)})$ and $\widehat{\MM}_x \to \Spec(\widehat{\OO}_{M,\pi(x)})$ is a good moduli space. Note that $\widehat{\MM}_x \to \MM$ is formally versal at $x$, i.e.\ for every commutative diagram
$$
\xymatrix{
(\widehat{\MM}_x)_0 \ar@{^{(}->}[r] & \ZZ \ar[r] \ar@{^{(}->}[d] & \widehat{\MM}_x \ar[d]\\
& \ZZ' \ar[r] \ar@{-->}[ru]& \MM
}
$$
where $\ZZ \hookrightarrow \ZZ'$ is an inclusion of local artinian stacks, there is a lift $\ZZ' \to \widehat{\MM}_x$ filling the above diagram (see \cite[Definition A.13]{AlperHallRydh}).

The next lemma is a generalization of a well-known result for moduli spaces of sheaves on a K3 surface (see \cite[Section 2.6]{Kaledin:symplectic-singularities} or \cite[Proposition 4.1(3)]{LS}). In that case the proof relies on the description of the moduli space as a GIT quotient of an open subset of a Quot scheme and on the Luna slice Theorem. In our case of moduli spaces of complexes, we apply the deep results in \cite{AlperHallRydh}, which among all imply that the stack $\MM$ is \'etale-locally a GIT quotient.
\begin{lem}
\label{lemma_bad}
Assume $E=F \oplus F$ where $F$ is $\tau$-stable of Mukai vector $v_0$. Adopt the notation of $\pi(x)$, $A$ and $\mathfrak a$ as above, then
$$\widehat{\OO}_{M,\pi(x)} \cong (\widehat{A}/\mathfrak{a})^{\Aut(E)} \cong \widehat{A}^{\Aut(E)} /(\mathfrak{a} \cap \widehat{A}^{\Aut(E)}).$$
\end{lem}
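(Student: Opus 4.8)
The plan is to deduce the statement from the structural results about the good moduli space $M$ recalled just before the lemma, specifically the identifications coming from \cite{AlperHallRydh}. First I would recall that by \cite[Theorem 4.16(3)]{AlperHallRydh} we have $\widehat{\MM}_x = \MM \times_M \Spec(\widehat{\OO}_{M,\pi(x)})$, and that $\widehat{\MM}_x \to \Spec(\widehat{\OO}_{M,\pi(x)})$ is itself a good moduli space. Since taking the good moduli space of a stack amounts to taking the ring of invariant global functions, this immediately gives $\widehat{\OO}_{M,\pi(x)} = \Gamma(\widehat{\MM}_x, \OO_{\widehat{\MM}_x})$. So the entire content of the lemma is an explicit identification of the coherent completion $\widehat{\MM}_x$ as a quotient stack, together with the computation of its invariant functions.

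The key geometric input is that $\widehat{\MM}_x$ is a quotient stack of a local model built from the deformation theory of $E$. Concretely, I would invoke the \'etale slice theorem for stacks \cite[Theorem 1.2 or Theorem 4.16]{AlperHallRydh}: since the stabilizer $G_x = \Aut(E)$ is linearly reductive (here $\GL(2,\C)$), $\MM$ is \'etale-locally near $x$ a quotient stack $[\Spec R / G_x]$. The deformation-theoretic analysis recalled in the excerpt — the Kuranishi map $\kappa: \Ext^1(E,E) \to \Ext^2(E,E)_0$, its $\Aut(E)$-equivariance, and the versal formal deformation parametrized by $D_\kappa = \mathrm{Spf}(\widehat{A}/\mathfrak{a})$ — identifies this local model. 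The formal versality of $\widehat{\MM}_x \to \MM$ together with the fact that $\widehat{A}/\mathfrak{a}$ pro-represents the equivariant deformation functor $\Def_E$ (via the versality property (3) from \cite{Rim}) yields an isomorphism of complete local stacks
$$\widehat{\MM}_x \cong [\Spec(\widehat{A}/\mathfrak{a}) / \Aut(E)].$$
This is the step where the bookkeeping matters: one must check that the coherent completion $\widehat{\MM}_x$, characterized by inducing isomorphisms on all nilpotent thickenings, agrees with the quotient stack of the Kuranishi model. This follows because both carry the same versal formal family and the coherent completion is unique by \cite[Theorem 4.16]{AlperHallRydh}.

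Granting this identification, the computation of invariants is formal. The good moduli space of $[\Spec(\widehat{A}/\mathfrak{a})/\Aut(E)]$ is $\Spec\bigl((\widehat{A}/\mathfrak{a})^{\Aut(E)}\bigr)$, giving the first isomorphism $\widehat{\OO}_{M,\pi(x)} \cong (\widehat{A}/\mathfrak{a})^{\Aut(E)}$. For the second isomorphism I would use that $\Aut(E) = \GL(2,\C)$ is linearly reductive, so the functor of taking invariants is exact. Applying it to the short exact sequence $0 \to \mathfrak{a} \to \widehat{A} \to \widehat{A}/\mathfrak{a} \to 0$ and using that $\mathfrak{a}$ is an $\Aut(E)$-stable ideal (a consequence of the equivariance of $\kappa$, hence of $\kappa^*$) gives $(\widehat{A}/\mathfrak{a})^{\Aut(E)} \cong \widehat{A}^{\Aut(E)}/(\mathfrak{a} \cap \widehat{A}^{\Aut(E)})$, where exactness identifies the image of the invariants of $\mathfrak{a}$ with $\mathfrak{a} \cap \widehat{A}^{\Aut(E)}$.

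The main obstacle I anticipate is the first identification — rigorously matching the coherent completion $\widehat{\MM}_x$ with the Kuranishi quotient stack $[\Spec(\widehat{A}/\mathfrak{a})/\Aut(E)]$. In the sheaf case of \cite{LS} this is clean because the moduli space is literally a GIT quotient of a Quot scheme and one applies the Luna slice theorem; here one must instead marry Lieblich's derived deformation theory \cite{Lieblich:mother-of-all} (ensuring $\Def_E$ is a well-behaved deformation functor with tangent space $\Ext^1(E,E)$ and obstruction space $\Ext^2(E,E)_0$) with the algebraicity and slice results of \cite{AlperHallRydh}. The compatibility of the trace-zero obstruction space with the symplectic structure, and the verification that formal versality plus agreement on nilpotent thickenings forces an isomorphism of stacks rather than merely of their rings of functions, is the delicate point; everything downstream, being invariant theory for a linearly reductive group, is routine.
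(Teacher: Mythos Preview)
Your strategy matches the paper's. The point you flag as the main obstacle is exactly where the paper does the real work, and your sketch of its resolution (``uniqueness of the coherent completion'') presupposes a fact that needs proof: to invoke uniqueness one must first know that $\KK := [\Spec(\widehat{A}/\mathfrak{a})/\Aut(E)]$ is itself coherently complete along its closed point, i.e.\ that $(\widehat{A}/\mathfrak{a})^{\Aut(E)}$ is a complete local ring. The paper establishes this by first proving the second isomorphism $(\widehat{A}/\mathfrak{a})^{\Aut(E)} \cong \widehat{A}^{\Aut(E)}/(\mathfrak{a}\cap\widehat{A}^{\Aut(E)})$ --- not via your direct exactness argument on $0\to\mathfrak{a}\to\widehat{A}\to\widehat{A}/\mathfrak{a}\to 0$ (the power series ring $\widehat{A}$ is not a rational $G_x$-module, so exactness of invariants does not apply na\"ively), but by working at each finite level $\widehat{A}/\mathfrak{m}^n \twoheadrightarrow (\widehat{A}/\mathfrak{a})/\mathfrak{m}^n$ and passing to the limit --- and then invoking the explicit presentation of the right-hand side from \cite[(4.7)]{LS} as a quotient of a formal power series ring to see it is complete local. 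Only then does \cite[Theorem 1.3]{AlperHallRydh} give coherent completeness of $\KK$; the versal family effectivizes via \cite[Corollary 2.6]{AlperHallRydh} to a map $\KK \to \MM$, and the isomorphism $\KK \cong \widehat{\MM}_x$ is obtained by playing the two versality properties against each other, with \cite[Proposition A.8]{AlperHallRydh} supplying that the lift $\KK \to \widehat{\MM}_x$ is a closed immersion.
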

\begin{proof}
Consider the quotient stack $\TT:=[\Spec(\text{Sym}^\bullet (T_{\MM,x})) / G_x]$, where $T_{\MM,x}$ is the tangent space to $\MM$ at $x$. By definition $T_{\MM,x}= \Def_E(\C[\epsilon]) \cong \Ext^1(E,E)$, so in this case $$\TT=[\Spec{A}/\Aut(E)] \to T:=\Spec{A}\sslash\Aut(E)=\Spec{A^{\Aut(E)}},$$
which is a good moduli space. We denote by $\TT_n$ the $n$-th thickening of $\TT$ at the point $0$. As computed in the proof of \cite[Theorem 1.1]{AlperHallRydh}, since $G_x=\Aut(E)$ is linearly reductive and smooth, the isomorphisms $\MM_0=BG_x\cong\TT_0$ and $\MM_1 \cong \TT_1$ lift to closed immersions $\MM_n \hookrightarrow \TT_n$ which effectivize to a closed immersion $\widehat{\MM}_x \hookrightarrow \widehat{\TT}$, where $\widehat{\TT}:= \TT\times_T \Spec{\widehat{\OO}_{T,0}}$. Note that $\widehat{\OO}_{T,0} \cong ((A^{\Aut(E)})_{\mathfrak{m}})^{\widehat{ }} \cong (\widehat{A^{\Aut(E)}})_\mathfrak{m} \cong \widehat{A^{\Aut(E)}} \cong \widehat{A}^{\Aut(E)}$, as localization and completion with respect to a maximal ideal commute.

On the other hand, note that $G_x$ acts on the quotient $\widehat{A}/ \mathfrak{a}$. Indeed, as $\kappa$ is $G_x$-equivariant, we have $G_x (\mathfrak{a}) \subset \mathfrak{a}$, so the action on the quotient is well-defined. 

We claim that $(\widehat{A}/ \mathfrak{a})^{G_x}$ is a complete local ring. In order to prove this, we firstly show that there is an isomorphism of rings $(\widehat{A}/ \mathfrak{a})^{G_x} \cong \widehat{A}^{G_x}/ (\mathfrak{a} \cap \widehat{A}^{G_x})$. Indeed, note that the surjection $\widehat{A} \twoheadrightarrow (\widehat{A} / \mathfrak{a})$ induces surjections $\widehat{A}/ \mathfrak{m}^n \twoheadrightarrow (\widehat{A} / \mathfrak{a})/ \mathfrak{m}^n$ for every $n$. Now recall that $G_x$ is linearly reductive, so every surjection $B \twoheadrightarrow C$ of $G_x$-rings induces a surjection $B^{G_x} \twoheadrightarrow C^{G_x}$ on the invariant rings. As a consequence, we have the surjections $({\widehat{A}/ \mathfrak{m}^n})^{G_x} \twoheadrightarrow ({(\widehat{A} / \mathfrak{a})/ \mathfrak{m}^n})^{G_x}$ for every $n$. Passing to the completions, it follows that there is a surjection $\widehat{A}^{G_x} \twoheadrightarrow (\widehat{A} / \mathfrak{a})^{G_x}$. An easy computation shows that this surjection induces a surjection $\widehat{A}^{G_x}/ (\mathfrak{a} \cap \widehat{A}^{G_x}) \twoheadrightarrow (\widehat{A} / \mathfrak{a})^{G_x}$, which is injective. 

Now note that $\widehat{A}^{G_x}/ (\mathfrak{a} \cap \widehat{A}^{G_x})$ is a local ring. Indeed, $\widehat{A}^{G_x} \cong \widehat{A^{G_x}}$ is a local ring, and the quotient of a local ring is a local ring. Moreover, by \cite[Equation (4.7)]{LS} we have the explicit description of $\widehat{A}^{G_x}/ (\mathfrak{a} \cap \widehat{A}^{G_x})$ as the quotient of the ring of formal power series, i.e.\ $$\widehat{A}^{G_x}/ (\mathfrak{a} \cap \widehat{A}^{G_x}) \cong \C[\![X_1, \dots, X_4, Y_{11}, Y_{12}, \dots, Y_{44}]\!] /I,$$ 
where $I$ is an ideal of $\C[\![X_1, \dots, X_4, Y_{11}, Y_{12}, \dots, Y_{44}]\!]$ (for the precise definition see \cite[Section 4, page 10]{LS}). Since the quotient of a Noetherian complete local ring is complete, we deduce the desired properties for $(\widehat{A}/ \mathfrak{a})^{G_x}$.

Define the stack $\KK:=[\Spec{(\widehat{A}/\mathfrak{a})}/G_x]$, whose good moduli space is $\KK \to \Spec(\widehat{A}/\mathfrak{a})^{G_x}$ (see \cite[Example 8.3]{Alper}). By the above computation and \cite[Theorem 1.3]{AlperHallRydh} we have that $\KK$ is coherently complete along $x$. Let $\KK_n$ be the $n$-th thickening of $\KK$ at $0$.
The $G_x$-equivariant versal family $(\widehat{E},\widehat{\varphi})$ constructed out of $\kappa$ defines a collection of equivariant compatible objects $(E_n, \varphi_n) \in \Def_E((\widehat{A}/\mathfrak{a})/\mathfrak{m}^{n+1})$ for every $n$. Equivalently we have the compatible collection $\KK_n \to \MM$. Since $\KK$ is coherently complete, by \cite[Corollary 2.6]{AlperHallRydh} these morphisms effectivize to $\KK \to \MM$. Also $\KK$ satisfies the versality property as $(\widehat{E},\widehat{\varphi})$ does.

Now note that $\KK_0 \cong \TT_0$ and $\KK_1 \cong \TT_1$ as $\mathfrak{a} \subset \mathfrak{m}^2\widehat{A}$. Thus we have the commutative diagram
$$
\xymatrix{
\MM_1 \ar@{^{(}->}[r] \ar[d]^\cong & \widehat{\MM}_x \ar[d]\\
\KK_1  \ar@{^{(}->}[d]& \MM\\
\KK \ar@{-->}[ruu]^f \ar[ru] &
}.
$$
By the universal property of $\widehat{\MM}_x$, there exists a lifting $f: \KK \to \widehat{\MM}_x$ filling the above diagram and inducing a collection of morphisms $f_n: \KK_n \to \widehat{\MM}_x$ for every $n \geq 0$. Since $\KK_1 \cong \MM_1 \hookrightarrow \widehat{\MM}_x$ is a closed immersion, by \cite[Proposition A.8]{AlperHallRydh} we have that $f_n$ and $f$ are closed immersions. So we have the commutative diagram
$$
\xymatrix{
\KK \ar[r]^{\text{id}} \ar@{^{(}->}[d]^f & \KK \ar[d] \\
\widehat{\MM}_x \ar@{-->}[ru]^g \ar[r] & \MM .
}
$$
The versality property of $\KK$ implies that there exists a lifting $g: \widehat{\MM}_x \to \KK$ filling the above diagram. This implies that $f$ is an isomorphism. On the level of good moduli spaces, by \cite[Theorem 6.6]{Alper} this implies the isomorphism $(\widehat{A}/\mathfrak{a})^{\Aut(E)} \cong \widehat{\OO}_{M,\pi(x)}$ of local rings as we wanted.
\end{proof}

Note that since the tangent space to the moduli space at $E$ is $\Ext^1(E,E)^{\Aut(E)}$, the singular part $\Ms$ of $M$ corresponds to the locus where the dimension of the tangent space jumps, i.e.\ the locus of strictly $\tau$-semistable objects. In particular, we have a stratification
$$\Delta \subset \Ms \subset M,$$
where 
$$\Delta \cong M_\sigma(v_0) \quad \mbox{and} \quad \Ms \cong \text{Sym}^2(M_\sigma(v_0)).$$

\subsection{Affine model}
\label{subsection_affinemodel}
The affine model $Z$ for the local structure at the worst singularities of $M$ is described in \cite[Section 2]{LS}. Here we recall the definition for sake of completeness referring to \cite{LS} for the details.

Let $V$ be a $4$-dimensional $\C$-vector space with a symplectic form $\omega$. Denote by $\mathfrak{sp}(V)$ the associated symplectic Lie algebra which has dimension $10$. Consider the set $Z \subset \mathfrak{sp}(v)$ defined as
$$Z=\lbrace B \in \mathfrak{sp}(V)| B^2=0  \rbrace.$$
The ideal $I_0 \subset \C[\mathfrak{sp}(V)]$ defining $Z$ is generated by the coefficients of the matrix $B^2$, which are linearly dependent, and $Z$ has dimension $6$. The singular locus $Z_{\text{sing}}$ of $Z$  is 
$$Z_{\text{sing}}= \lbrace B \in Z | \rk(B) \leq 1 \rbrace$$
and it is defined by the ideal $L_0$ generated by the $2 \times 2$-minors of $B$. Note that $Z_{\text{sing}}$ has dimension $4$ away from the origin.

On the other hand, consider the Grassmannian $G$ parametrizing maximal isotropic subspaces $U \subset V$. Defined the incidence subvariety
$$\widetilde{Z}=\lbrace (B, U) \in Z \times  G | B(U)=0 \rbrace \subset Z \times G.$$
The canonical projection $\pi: \widetilde{Z} \to G$ to the second factor is identified with the canonical projection $T^*G \to G$. Moreover, the first projection $\sigma: \widetilde{Z} \to Z$ is a semi-small resolution. Indeed, over matrices $B \in Z_{\text{sing}}$ with $\rk(B)=1$, the fiber of $\sigma$ over $B$ is $\P((\text{ker}B / \text{im} B)^*) \cong \P^1$, while over $B=0$ the fiber is $G$.

\begin{prop}[\cite{LS}, Th\'eor\`eme 2.1]
\label{prop_theor2.1LS}
The resolution $\sigma: \tilde{Z} \to Z$ is isomorphic to the blow-up of $Z$ along $\Zs \subset Z$.
\end{prop}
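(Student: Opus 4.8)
The plan is to exhibit $\sigma$ as the closure of the graph of a natural rational map $Z \dashrightarrow G$, and then to invoke the standard identification of such a graph closure with the blow-up along the base ideal of the map.

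First I would set up the rational map. For $B \in Z$ of maximal rank $\rk(B)=2$, the relation $B^2=0$ forces $\im(B)=\Ker(B)$, and since $B \in \mathfrak{sp}(V)$ this subspace equals its own symplectic orthogonal and is therefore Lagrangian. This produces a rational map $\phi\colon Z \dashrightarrow G$, $B \mapsto \im(B)$, regular precisely on $Z \setminus \Zs$. Post-composing with the Plücker embedding $G \hookrightarrow \P(\wedge^2 V)$, the map is $B \mapsto [\wedge^2 B]$, whose homogeneous coordinates are exactly the $2 \times 2$ minors of $B$, i.e.\ the generators of the ideal $L_0 = I_{\Zs}$. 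Thus $\phi$ is the rational map attached to $L_0$, with indeterminacy locus $V(L_0)=\Zs$.

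Next I would identify $\widetilde{Z}$ with the graph closure $\overline{\Gamma_\phi} \subseteq Z \times G$. Over the dense open subset $Z \setminus \Zs$ the fiber of $\sigma$ over a rank-$2$ matrix $B$ is the single point $U=\im(B)$: any Lagrangian with $B(U)=0$ lies in $\Ker(B)=\im(B)$ and hence equals it by dimension. So $\widetilde{Z}$ and $\overline{\Gamma_\phi}$ agree there, and since $\widetilde{Z}\cong T^*G$ is smooth and irreducible of dimension $6$ with the rank-$2$ locus dense in it, they agree everywhere: $\widetilde{Z}=\overline{\Gamma_\phi}$, with $\sigma$ the first projection. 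Now the Rees algebra $\bigoplus_d L_0^d$ is generated in degree one by the minors, which produces a closed immersion $\mathrm{Bl}_{\Zs} Z \hookrightarrow Z \times \P(\wedge^2 V)$ with image $\overline{\Gamma_\phi}$; this yields a canonical isomorphism $\widetilde{Z}\cong \mathrm{Bl}_{\Zs} Z$ over $Z$, identifying $\sigma$ with the blow-up map.

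The main point requiring genuine verification is that scheme structures match, not merely underlying sets: the blow-up depends on the ideal $L_0$, so I must check that the inverse image ideal $\sigma^{-1}(L_0)\cdot\OO_{\widetilde{Z}}$ is invertible and equals the base ideal, rather than some power, saturation, or radical of it. I would do this in the chart over $U \in G$, where a point of $\widetilde{Z}=T^*G$ is a symmetric form $q \in \Sym^2 U$: writing $B$ in a basis adapted to the flag $U \subseteq \Ker(B)$, every $2\times 2$ minor of $B$ vanishes except the one supported on the rows and columns of $\Sym^2 U$, which equals $\det q$. Hence $\sigma^{-1}(L_0)\cdot\OO_{\widetilde{Z}}=(\det q)$ is locally principal, so $\sigma^{-1}(\Zs)$ is an effective Cartier divisor cutting out on each fiber the rank-$\le 1$ quadric cone in $\Sym^2 U$. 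This computation is the crux: it simultaneously furnishes the morphism $\widetilde{Z}\to \mathrm{Bl}_{\Zs} Z$ via the universal property of the blow-up and pins down the ideal, while the smoothness of $T^*G$ guarantees that no embedded or spurious components appear over $\Zs$.
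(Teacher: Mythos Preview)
The paper gives no proof of this proposition; it is cited from \cite{LS} and used as input. Your argument is the natural one and works, with two caveats.

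First, the projective embedding is misidentified. The map $B\mapsto[\wedge^2 B]$ lands in $\P(\End(\wedge^2 V))\cong\P^{35}$, not in $\P(\wedge^2 V)\cong\P^5$: it is the full collection of $2\times 2$ minors (the generators of $L_0$) that are global regular functions on $Z$, not six Pl\"ucker coordinates. The Pl\"ucker coordinates of $\im(B)$ depend on a choice of two independent columns of $B$ and do not globalize as polynomials. What salvages the argument is that on $Z$ one has $\rk B\le 2$ everywhere, so $\wedge^2 B$ has rank at most one and the image of the rational map lies in the Segre variety $\P(\wedge^2 V)\times\P(\wedge^2 V^*)$, indeed in a diagonal copy of $G$ since $\im B=\Ker B$ is Lagrangian. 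With this correction your identification $\widetilde Z=\overline{\Gamma_\phi}\subset Z\times\P^{35}$ goes through unchanged.

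Second, the identification $\mathrm{Bl}_{L_0}Z=\overline{\Gamma_\phi}$ as \emph{schemes} uses that $Z$ is integral, so that the Rees algebra is a domain and both sides are reduced, irreducible, and share a dense open. This holds because nilpotent orbit closures in classical Lie algebras are normal (Kraft--Procesi), but it should be stated explicitly; your appeal to the smoothness of $\widetilde Z=T^*G$ does not by itself rule out embedded or nonreduced components on the blow-up side.

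Your local computation $\sigma^{-1}(L_0)=(\det q)$ in the chart $\Sym^2 U$ is correct and is the essential content of any proof. It also confirms that the exceptional divisor is reduced, so blowing up $L_0$ agrees with blowing up the reduced locus $\Zs$, which is the form in which the paper actually applies the result.
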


A key property of $Z$ is that its singularity is rigid with respect to deformations, meaning that a deformation of $Z$ which does not change the singularities of $Z$ around the origin cannot change the singularity at the origin \cite[Th\'eor\`eme 3.1]{LS}.

\subsection{Symplectic resolution of $M$}
The main result of this section is the following.
\begin{thm}
\label{thm_symplres}
The blow-up $\tM$ of the singular locus of $M$ with the structure of reduced algebraic space is a symplectic resolution of $M$.
\end{thm}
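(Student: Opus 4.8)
The plan is to follow the strategy of Lehn--Sorger \cite{LS}, using the local description of $M$ from Lemma \ref{lemma_bad} together with the affine model $Z\subset\mathfrak{sp}(V)$ of Section \ref{subsection_affinemodel}. The first and main step is to identify the formal local structure of $M$ along the two strata of $\Ms$. At a worst point $\pi(x)$ with $E=F\oplus F$ and $\Aut(E)=\GL(2,\C)$, I would compute the Kuranishi model explicitly. Writing $E=F\otimes W$ with $W=\C^2$, one has $\Ext^1(E,E)=\Ext^1(F,F)\otimes\End(W)$ and $\Ext^2(E,E)=\Ext^2(F,F)\otimes\End(W)$; since $(v_0,v_0)=2$ the space $V:=\Ext^1(F,F)$ is $4$-dimensional and carries the symplectic form $\omega$ given by the Yoneda pairing into $\Ext^2(F,F)\cong\C$. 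The quadratic Kuranishi term is the Yoneda square $\kappa_2(e)=e\smile e$, and antisymmetry of $\omega$ shows it equals $\tfrac12\sum_{i,j}\omega_{ij}[a_i,a_j]$ for $e=\sum_i \xi_i\otimes a_i$, hence lands in $\mathfrak{sl}(W)=\Ext^2(E,E)_0$. Splitting $\End(W)=\C\cdot\id\oplus\mathfrak{sl}(W)$ singles out the $\GL(2)$-invariant directions $V\otimes\C\cdot\id$, which form the tangent space to $\Delta$, while on $V\otimes\mathfrak{sl}(W)$ the assignment $e\mapsto \omega^{-1}G$, where $G$ is the $\mathrm{SO}(3)$-invariant Gram matrix of the $\mathfrak{sl}(W)$-components, identifies $\kappa_2^{-1}(0)\,/\!/\,\GL(2)$ with $Z$. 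This is exactly the computation of \cite[Section 2]{LS}, and it gives $\widehat{\OO}_{M,\pi(x)}\cong\widehat{\OO}_{Z\times\A^4,0}$ once we know the Kuranishi map is governed by its quadratic part.

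The main obstacle is precisely this last point: a priori $\kappa=\kappa_2+\kappa_3+\cdots$ has higher-order terms, so $D_\kappa$ is only a deformation of the quadratic cone $\kappa_2^{-1}(0)$. Here I would invoke the rigidity of the singularity of $Z$ (\cite[Th\'eor\`eme 3.1]{LS}): since, as I will check, the transverse singularity of $M$ at the generic point of $\Ms$ is of type $A_1$, the only nearby singularities are rigid, and hence the higher-order terms cannot alter the formal isomorphism type at the origin. Together with the $\GL(2)$-equivariance of $\kappa$ and Lemma \ref{lemma_bad}, this yields the formal isomorphism $\widehat{\OO}_{M,\pi(x)}\cong\widehat{\OO}_{Z\times\A^4,0}$, which is what is meant by the singularity being formally isomorphic to its normal cone.

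The transverse $A_1$ statement is the parallel, easier computation at a point with $E=F\oplus F'$, $F\not\cong F'$, where $\Aut(E)=\C^*\times\C^*$: then $\Ext^1(E,E)$ contains the off-diagonal summand $\Ext^1(F,F')\oplus\Ext^1(F',F)$, of dimension $2+2$ since $\hom(F,F')=\ext^2(F,F')=0$, the residual $\C^*$ acts with weights $\pm1$, and the Yoneda square cuts out, after taking invariants, the ordinary double point $A_1$ times a smooth factor. Thus along $\Ms\setminus\Delta$ the moduli space $M$ has a transverse $A_1$-singularity, as required above.

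Finally I would globalize and check that the resolution is symplectic. By the local descriptions, $\Ms$ corresponds, near $\Delta$, to the reduced subspace $Z_{\text{sing}}\times\A^4$, so blowing up $\Ms$ with its reduced structure is formally the blow-up $\mathrm{Bl}_{Z_{\text{sing}}}Z\times\A^4$, which by Proposition \ref{prop_theor2.1LS} equals the smooth $\widetilde{Z}\times\A^4$; along $\Ms\setminus\Delta$ it is the blow-up of the vertex of the transverse $A_1$, i.e.\ its minimal resolution; and away from $\Ms$ it is an isomorphism. Hence $\tM$ is smooth and $\tM\to M$ is a resolution. For the symplectic property, I would use that $M$ carries a holomorphic symplectic form on its smooth locus $M\setminus\Ms$, induced by the Serre-duality (Yoneda) pairing $\Ext^1(E,E)\times\Ext^1(E,E)\to\Ext^2(E,E)\to\C$ as in the K3-type/Kuznetsov construction. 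Since both local models $\widetilde{Z}\to Z$ (the resolution $T^*G\to Z$) and the minimal resolution of $A_1$ are symplectic resolutions, the form extends to a nowhere-degenerate closed $2$-form on all of $\tM$; equivalently $\tM\to M$ is crepant and semismall, so $\tM$ is a symplectic resolution of $M$.
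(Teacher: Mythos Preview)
Your proposal is correct and follows essentially the same route as the paper: you reproduce the content of Proposition \ref{prop_localstruct} (the identification of the Kuranishi quotient with $\C^4\times Z$, followed by the rigidity argument from \cite{LS} to kill higher-order terms), then invoke Proposition \ref{prop_theor2.1LS} to see that the blow-up resolves, and handle $\Ms\setminus\Delta$ via the transverse $A_1$ computation.

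The only noticeable difference is in the extension of the symplectic form. Your first justification---that the local isomorphisms with $\widetilde{Z}\times\A^4$ and the $A_1$-resolution are themselves symplectic---would require checking that the formal isomorphism of Proposition \ref{prop_localstruct} can be taken to be a symplectomorphism, which is extra work you do not do. Your fallback ``equivalently crepant and semismall'' is valid (the fiber dimensions $1$ and $3$ over strata of codimension $2$ and $6$ give semismallness, and $M$ has symplectic singularities via the local model), but the paper's argument is more direct: once the form is known to extend over the exceptional divisor above $\Ms\setminus\Delta$, the preimage of $\Delta$ has codimension $\geq 2$ in $\tM$ (it is a $3$-dimensional quadric bundle over the $4$-fold $\Delta$ inside the $10$-fold $\tM$), so the holomorphic form extends by Hartogs, and non-degeneracy then follows since the degeneracy locus of a holomorphic $2$-form is a divisor.
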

For the definition of blow-up of an algebraic space and reduced algebraic space consult Stacks Project. The argument is due to \cite{LS} and we summarize it for the interested reader.

Theorem \ref{thm_symplres} is a consequence of the following result.

\begin{prop}[{\cite[Th\'eor\`eme 4.5]{LS}}]
\label{prop_localstruct}
Let $E:=F^{\oplus 2}$ where $F \in M_\tau(v_0)$. Then there is an isomorphism of germs of analytic spaces
$$(M,[E]) \cong (\C^4 \times Z,0).$$
\end{prop}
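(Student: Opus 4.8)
The plan is to reduce, via the \'etale-slice description of Lemma~\ref{lemma_bad}, to a purely linear-algebraic problem about the Yoneda square on $\Ext^1(E,E)$, and then to match the resulting quotient with the affine model $Z$ exactly as in \cite[Section 2]{LS}. By Lemma~\ref{lemma_bad} the completed local ring $\widehat{\OO}_{M,\pi(x)}$ is the ring of $\Aut(E)$-invariants of $\widehat{A}/\mathfrak{a}$, where $A=\C[\Ext^1(E,E)]$ and $\mathfrak{a}$ is generated by the components of the Kuranishi map $\kappa=\kappa_2+\kappa_3+\cdots$. Hence it suffices to understand, as a germ at $0$, the good quotient of the Kuranishi space $D_\kappa=\kappa^{-1}(0)\subset\Ext^1(E,E)$ by the conjugation action of $G:=\Aut(E)\cong\GL(2,\C)$, and to identify it with $\C^4\times Z$.

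First I would assemble the linear algebra. Write $U=\C^2$, so that $E=F\otimes U$ and $G=\GL(U)$ acts on $E$ through $U$. Since $F$ is $\tau$-stable with $(v_0,v_0)=2$ and $\Ku(Y)$ is of K3 type, we have $\Hom(F,F)=\Ext^2(F,F)=\C$ and $\dim\Ext^1(F,F)=4$; moreover the Yoneda pairing $W\times W\to\Ext^2(F,F)\cong\C$ on $W:=\Ext^1(F,F)$ is non-degenerate and alternating, so $(W,\omega)$ is a $4$-dimensional symplectic space. Additivity of $\Ext$ then gives $\Ext^1(E,E)=W\otimes\End(U)$ with $G$ acting by conjugation on the second factor, $\Ext^2(E,E)=\Ext^2(F,F)\otimes\End(U)=\End(U)$, and $\Ext^2(E,E)_0=\mathfrak{sl}(U)$ once one checks that the trace map of Section~\ref{subsec_localstructuremoduli} is (a scalar multiple of) $\mathrm{id}\otimes\Tr_U$. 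Computing the leading Kuranishi term $\kappa_2(\xi)=\xi\smile\xi$ in a Darboux basis $\{e_i,f_i\}_{i=1,2}$ of $W$, writing $\xi=\sum_i(e_i\otimes P_i+f_i\otimes Q_i)$ with $P_i,Q_i\in\End(U)$, graded-commutativity of the Yoneda product turns the symplectic pairing into
\[
\kappa_2(\xi)=\sum_{i=1}^2[P_i,Q_i]\in\mathfrak{sl}(U),
\]
which is precisely the moment map for the $G$-action.

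The decomposition $\End(U)=\C\cdot\mathrm{id}_U\oplus\mathfrak{sl}(U)$ now splits $\Ext^1(E,E)=(W\otimes\C\,\mathrm{id}_U)\oplus(W\otimes\mathfrak{sl}(U))$. The first summand is $G$-fixed and maps isomorphically onto the tangent space $T_{[F]}M_\tau(v_0)=W$; it contributes the smooth factor $\C^4$ and, since scalars commute, does not enter $\kappa_2$. On the second summand $\kappa_2$ restricts to the same moment map, now for the \emph{effective} action of $\PGL(U)$, and the good quotient of $\{\xi\in W\otimes\mathfrak{sl}(U):\kappa_2(\xi)=0\}$ by $\PGL(U)$ has dimension $12-3-3=6=\dim Z$. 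Following the explicit computation in \cite[Section 2]{LS}, this quotient is isomorphic to the affine model $Z=\{B\in\mathfrak{sp}(V):B^2=0\}$ of Section~\ref{subsection_affinemodel}, so that the tangent cone of $M$ at $[E]$ is $\C^4\times Z$.

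The hard part will be to upgrade this computation of the tangent cone to an isomorphism of germs, i.e.\ to absorb the higher-order Kuranishi terms $\kappa_3,\kappa_4,\dots$ by a $G$-equivariant formal change of coordinates. Here I would argue exactly as in \cite[Th\'eor\`eme 4.5]{LS}: the full Kuranishi map presents the germ of $M$ at $[E]$ as a deformation of its tangent cone $\C^4\times Z$, and away from the origin this germ is, by the analysis of strata $(1)$ and $(2)$ in Section~\ref{subsec_localstructuremoduli}, generically an $A_1$-singularity along $\Ms$, matching the transverse singularity of $Z$ along $\Zs\setminus\{0\}$. The rigidity of the singularity of $Z$ \cite[Th\'eor\`eme 3.1]{LS}, recalled in Section~\ref{subsection_affinemodel}, then forces this deformation to be analytically trivial at the origin, yielding the desired isomorphism of analytic germs $(M,[E])\cong(\C^4\times Z,0)$. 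The main subtlety, as in \cite{LS}, is verifying the hypotheses of the rigidity theorem, namely that the higher-order terms do not alter the singularity type of $Z$ away from $0$; this is exactly where the $A_1$-description of the generic point of $\Ms$ is essential.
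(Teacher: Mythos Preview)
Your proposal is correct and follows essentially the same route as the paper: reduce via Lemma~\ref{lemma_bad} to the $\GL(2)$-invariant Kuranishi germ, identify $\Ext^1(E,E)\cong\mathfrak{gl}_2\otimes W$ and $\Ext^2(E,E)_0\cong\mathfrak{sl}_2$ with $\kappa_2$ the moment map, recognize the tangent cone as $\C^4\times Z$, and then invoke the rigidity of $Z$ from \cite{LS} to conclude that the deformation to the normal cone is trivial. Two small points: the identification of the moment-map quotient with $\C^4\times\mathfrak{sp}(4)$ (and of the ideal of initial terms with $I_0$) is \cite[Proposition~4.3 and Lemme~4.4]{LS} rather than Section~2, and you should invoke Artin's approximation theorem \cite[Corollary~1.6]{Artin} to pass from the formal isomorphism $\widehat{R}/I\cong\widehat{R}/I_0\widehat{R}$ to an isomorphism of analytic germs.
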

\begin{proof}
We use the notation introduced in Sections \ref{subsec_localstructuremoduli} and \ref{subsection_affinemodel}. By Lemma \ref{lemma_bad} we have the isomorphism $\widehat{\OO}_{M,\pi(x)} \cong \widehat{A}^{\Aut(E)} /(\mathfrak{a} \cap \widehat{A}^{\Aut(E)})$. 

Set $V=\Ext^1(F,F)$; then $\Ext^1(E,E) \cong \mathfrak{gl}_2 \otimes V \cong \mathfrak{gl}_2^{\oplus 4}$ and $\Ext^2(E,E)_0 \cong \mathfrak{sl}_2$. By \cite[Proposition 4.3]{LS}, we have $\widehat{A}^{\Aut(E)} /(\mathfrak{a} \cap \widehat{A}^{\Aut(E)}) \cong \widehat{R}/I$, where $R=\C[\C^4 \times \mathfrak{sp}(4)]$, $\widehat{R}$ is the completion of $R$ at $0$ and $I$ is an ideal of $\widehat{R}$. Moreover, by \cite[Proposition 4.3(3)]{LS} the ideal $L_0$ corresponds to the locus of strictly semistable objects via the isomorphism above and by \cite[Lemma 4.4]{LS} the ideal of initial terms of $I$ satisfies $\text{in}(I)=I_0R$. 

In order to prove $(M,[E]) \cong (\C^4 \times Z,0)$ by Artin's Theorem \cite[Corollary 1.6]{Artin} and the above observations, it is enough to show $\widehat{R}/I \cong \widehat{R} / I_0\widehat{R}$. By the computation in \cite[Section 5]{LS} the deformation of $\widehat{R}/I$ towards its normal cone is trivial. This implies the statement.
\end{proof}

\begin{proof}[Proof of Theorem \ref{thm_symplres}]
The same computation as in \cite[(2.2.4)]{OGrady} shows that the singularity of a point in $\Ms \setminus \Delta$ is of type $A_1$ transversally to $\Ms$. Thus the blow-up of $M \setminus \Delta$ in $\Ms \setminus \Delta$ is a resolution of these singularities and the symplectic form over the smooth part of $M$ extends to the exceptional divisor of this blow-up. By Propositions \ref{prop_localstruct} and \ref{prop_theor2.1LS} applied to the points in $\Delta$, we have that the blow-up $\sigma$ of $M$ in $\Ms$ is a resolution of singularities. Note that the fiber of $\sigma$ over a point in $\Delta$ is a $3$-dimensional quadric. Thus the symplectic form extends to $M$ by Hartog's Theorem.
\end{proof}

\begin{rmk}
\label{rmk_Mnormal}
Note that the moduli space $M$ is normal, as it is locally described by $Z$.
\end{rmk}

\subsection{Relative version}
In order to complete the proof of Theorem \ref{thm_OG10}, we need to apply the deep theory introduced in \cite{BLM+} about families of stability conditions and relative moduli spaces. 

Recall that given a family of cubic fourfolds $\YY \to S$ over a smooth quasi-projective variety $S$, by \cite[Lemma 30.1]{BLM+} there exists an admissible subcategory $\Ku(\YY) \subset \Db(\YY)$ which defines a family of Kuznetsov components over $S$. In particular, $\Ku(\YY_s)$ is the Kuznetsov component of $\YY_s$ for every $s \in S$. A stability condition on $\Ku(\YY)$ is a collection $\underline{\tau}=(\tau_s)_{s \in S}$ of stability conditions $\tau_s$ on $\Ku(\YY_s)$ for $s \in S$, satisfying the compatibility conditions of \cite[Definitions 20.5 and 21.15]{BLM+}.

The next result is the relative version of Theorem \ref{thm_symplres} over a $1$-dimensional base and is the generalization of \cite[Corollary 32.1]{BLM+} in the case of a non-primitive Mukai vector.

\begin{prop}
\label{prop_deformovercurve}
Let $Y$ be a cubic fourfold, let $v=2v_0$ be a Mukai vector in $H^*_{\mathrm{alg}}(\Ku(Y),\Z)$ with $(v_0,v_0)=2$ and let $\tau \in \Stab^\dag(\Ku(Y))$ be $v$-generic. Let $Y'$ be an other cubic fourfold such that there is smooth family of cubic fourfolds over a connected quasi-projective base with fibers $Y$ and $Y'$ along which $v_0$ remains a Hodge class. Then there exist a family $g: \YY \to C$ of cubic fourfolds over a connected quasi-projective curve, complex points $0, 1 \in C(\C)$ and a stability condition $\underline{\tau}$ on $\Ku(\YY)$ over $C$ such that:
\begin{enumerate}
    \item $\YY_0=Y$ and $\YY_1=Y'$.
    \item $v_0$ is a primitive vector in $H^*_{\mathrm{alg}}(\Ku(\YY_c),\Z)$ for all $c \in C$.
    \item $\tau_c$ is $v$-generic for all $c \in C$ and $\tau_0$ is a small deformation of $\tau$ so that $M_{    \tau_0}(v)=M_\tau(v)$.
    \item There exist an algebraic space $M_{\underline{\tau}}(v)$ and a proper morphism $M_{\underline{\tau}}(v) \to C$ such that every fiber is the connected component containing the singular locus $\emph{Sym}^2(M_{\tau_c}(v_0))$ of the good moduli space $M_{\tau_c}(v)$ of semistable objects in $\Ku(\YY_c)$.
    \item There exist an algebraic space $\widetilde{M_{\underline{\tau}}(v)}$ and a proper morphisms $\widetilde{M_{\underline{\tau}}(v)} \to C$ making $\widetilde{M_{\underline{\tau}}(v)}$ a relative symplectic resolution of $M_{\underline{\tau}}(v)$: its fiber over any point $c \in C$ is a symplectic resolution of the fiber of $M_{\underline{\tau}}(v)$ over $c$, obtained by blowing up the singular locus $\emph{Sym}^2(M_{\tau_c}(v_0))$. 
\end{enumerate}
\end{prop}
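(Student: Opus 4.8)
The plan is to upgrade the fiberwise results of the previous subsections to the relative setting over the curve $C$, following the strategy of \cite[Corollary 32.1]{BLM+} for the primitive case and appending the symplectic resolution as a final step.

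\textbf{Family and stability condition.} Since $Y$ and $Y'$ lie in a connected smooth family of cubic fourfolds along which $v_0$ stays a Hodge class, I choose inside the given base a connected quasi-projective curve $C$ and restrict the family to obtain $g\colon \YY \to C$ with $\YY_0=Y$, $\YY_1=Y'$, which is (1). By \cite[Lemma 30.1]{BLM+} there is an associated family of Kuznetsov components $\Ku(\YY)\subset \Db(\YY)$, and using the family framework of \cite[Sections 20--21 and Theorem 29.1]{BLM+} the stability condition $\tau$ spreads out to a stability condition $\underline{\tau}=(\tau_c)_{c\in C}$ on $\Ku(\YY)$ over $C$ with $\tau_0$ an arbitrarily small deformation of $\tau$. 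As $\tau$ is $v$-generic, it lies in the interior of a chamber, so for a small enough deformation $M_{\tau_0}(v)=M_\tau(v)$; arguing as in the primitive case \cite[Corollary 32.1]{BLM+}, the locally finite wall-and-chamber structure for $v$ lets me refine $\underline{\tau}$ so that $\tau_c$ is $v$-generic for every $c$, giving (3). Finally (2) is automatic: if $v_0=kw$ with $w$ primitive and $k\geq 2$ then $2=(v_0,v_0)=k^2(w,w)$ with $(w,w)$ even, which is impossible, so $v_0$ stays primitive in $H^*_{\mathrm{alg}}(\Ku(\YY_c),\Z)$.

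\textbf{Relative good moduli space.} By the primitive case \cite[Corollary 32.1]{BLM+}, the relative moduli space $M_{\underline{\tau}}(v_0)\to C$ of $\tau_c$-stable objects of class $v_0$ is a smooth proper family of hyperk\"ahler manifolds. Applying \cite[Theorem 21.24]{BLM+} to the class $v=2v_0$ produces a good moduli space with a proper morphism to $C$ whose fiber over $c$ is $M_{\tau_c}(v)$; taking the connected component containing the relative strictly semistable locus, which by the fiberwise description in Section \ref{subsec_localstructuremoduli} is the relative symmetric square $\Sym^2(M_{\underline{\tau}}(v_0))$, yields $M_{\underline{\tau}}(v)\to C$ and hence (4). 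In particular the relative singular locus is the family $\Sym^2(M_{\underline{\tau}}(v_0))\to C$, which is smooth over $C$.

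\textbf{Relative symplectic resolution.} I define $\widetilde{M_{\underline{\tau}}(v)}$ as the blow-up of $M_{\underline{\tau}}(v)$ along $\Sym^2(M_{\underline{\tau}}(v_0))$ with its reduced structure; the induced map to $C$ is proper. To obtain (5) it suffices to prove a relative version of Proposition \ref{prop_localstruct}: in an analytic neighbourhood of a worst singular point lying over $c\in C$, the total space $M_{\underline{\tau}}(v)\to C$ is isomorphic to the trivial family $C\times \C^4\times Z\to C$, under which the relative singular locus corresponds to $C\times \C^4\times \Zs$. Granting this, Proposition \ref{prop_theor2.1LS} identifies the blow-up locally with $C\times \C^4\times \widetilde{Z}\to C$, which is smooth over $C$; its fibre over any $c$ is the blow-up of $\C^4\times Z$ along $\C^4\times \Zs$, namely the symplectic resolution of Theorem \ref{thm_symplres}. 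Thus the relative blow-up is smooth over $C$, commutes with base change, and restricts fibrewise to the fibrewise symplectic resolutions, which is (5).

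\textbf{Main obstacle.} The crux is the relative local structure, i.e.\ that the singularity of $M_{\underline{\tau}}(v)$ along the deepest stratum does not degenerate as $c$ varies and splits off a smooth $C$-direction. I would run the deformation-theoretic argument of Lemma \ref{lemma_bad} and Proposition \ref{prop_localstruct} relatively over $C$: the coherent completion of the relative moduli stack at a worst point is governed by the same relative Kuranishi map on $\Ext^1(E,E)\cong \mathfrak{gl}_2\otimes \Ext^1(F,F)$, and since the singularity of the affine model $Z$ is rigid under deformation by \cite[Th\'eor\`eme 3.1]{LS}, the extra base parameter can only contribute a smooth factor. I expect the delicate point to be the relative base-change compatibility of this model---in particular checking that the identity $\mathrm{in}(I)=I_0R$ of \cite{LS} persists over $C$, so that the center of the blow-up stays flat and the blow-up commutes with restriction to each fibre.
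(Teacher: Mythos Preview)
Your argument for (1)--(4) is essentially the paper's argument, though the paper cites \cite[Proposition 30.7]{BLM+} directly for (1)--(3) rather than assembling it from several pieces; your primitivity argument for (2) is a nice explicit touch. For (4) the paper also invokes \cite[Theorem 21.24(c)]{BLM+} and isolates the connected component, first using normality of the fibres (Remark \ref{rmk_Mnormal}) to see that connected components are irreducible.

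For (5) both you and the paper define $\widetilde{M_{\underline{\tau}}(v)}$ as the blow-up of $M_{\underline{\tau}}(v)$ along the relative singular locus, but the routes to identifying the fibres diverge. The paper does \emph{not} establish a relative local model $C\times\C^4\times Z$; instead it applies \cite[Corollary 32.1]{BLM+} to the primitive vector $v_0$ to get that $\Sym^2(M_{\underline{\tau}}(v_0))$ is proper over $C$ with $\Sym^2(M_{\underline{\tau}}(v_0))_c=\Sym^2(M_{\tau_c}(v_0))$, and then simply asserts that the fibre of the blow-up over $c$ is the blow-up of the fibre. In other words, the paper bypasses the relative Kuranishi theory entirely: once the centre is known to be a flat (indeed smooth) family over $C$ with the correct fibres, the compatibility of blow-up with restriction to fibres is taken for granted, and (5) follows from the fibrewise Theorem \ref{thm_symplres}. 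Your proposed route via a relative version of Proposition \ref{prop_localstruct} and the rigidity of $Z$ would also work and makes the base-change step for the blow-up completely transparent, but it is considerably more work than what the paper actually does. What you flag as the ``main obstacle'' is precisely the point the paper leaves implicit; the paper's shortcut trades your explicit local analysis for an appeal to the smoothness of the centre over $C$.
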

\begin{proof}
Properties (1)-(3) are a consequence of the assumptions and \cite[Proposition 30.7]{BLM+}. In order to prove (4), note that $M_{\tau_c}(v)$ is normal for every $c \in C$ as observed in Remark \ref{rmk_Mnormal}; in particular, $M_{\tau_c}(v)$ is locally irreducible. It follows that if $M_{\tau_c}(v)$ is reducible, then it is a finite disjoint union of normal irreducible components. Denote by $M_{\tau_c}(v)'$ the connected component containing $\text{Sym}^2(M_{\tau_c}(v_0))$. Replacing $M_{\tau_c}(v)$ with $M_{\tau_c}(v)'$, item (4) follows from \cite[Theorem 21.24(c)]{BLM+}. Finally, by \cite[Corollary 32.1]{BLM+} applied to $v_0$ we have that the algebraic space $\Sym^2(M_{\underline{\tau}}(v_0))$ is proper over $C$ and satisfies $\Sym^2(M_{\underline{\tau}}(v_0))_c=\Sym^2(M_{\tau_c}(v_0))$ for every $c \in C$. Thus we define $\widetilde{M_{\underline{\tau}}(v)}$ as the blow up of $M_{\underline{\tau}}(v)$ in $\Sym^2(M_{\underline{\tau}}(v_0))$ and we have that $\widetilde{M_{\underline{\tau}}(v)}_c$ is the blow up of $M_{\tau_c}(v)'$ in $\Sym^2(M_{\tau_c}(v_0))$. This implies (5).
\end{proof}

\subsection{Proof of Theorem \ref{thm_OG10}}
By Theorem \ref{thm_symplres}, we know $\tM$ is smooth, proper and symplectic of dimension $10$. In this paragraph, we end the proof of Theorem \ref{thm_OG10}. In particular, we show that $M$ and $\tM$ are projective, by proving that they carry an ample divisor, and that $\tM$ is deformation equivalent to the O'Grady's $10$-dimensional example.

Consider the irreducible component $M' \subset M$ containing $M^{\text{sing}}$. By abuse of notation, we still denote by $\tM$ the blow up of $M'$ in the reduced singular locus. By Proposition \ref{prop_deformovercurve}(3) we have that $M'$ is a limit of moduli spaces $M_n$ of semistable objects in the derived category of a K3 surface with Mukai vector $v$ with respect to a $v$-generic stability condition. Indeed, it is enough to choose a curve $C$ in the moduli space of cubic fourfolds, such that its intersection with the loci of cubic fourfolds having Kuznetsov component equivalent to the bounded derived category of a K3 surface is dense in $C$. By \cite[Proposition 2.2, Corollary 3.16]{MZ:OGradytype}, the moduli space $M_n$ admits a symplectic resolution $\tM_n$ which is deformation equivalent to the irreducible holomorphic symplectic manifold constructed by O’Grady in \cite{OGrady}. Then by Proposition \ref{prop_deformovercurve}(4) the blow-up $\tM$ is the limit of the smooth irreducible holomorphic symplectic varieties $\tM_n$. 

By the same argument used in \cite{BLM+}, there is a non-degenerate quadratic form $q$ defined over $H^2(\tM,\Z)$, which plays the role of the Beauville--Bogomolov--Fujiki form. By \cite[Theorem 1.14]{Perego:Kahlerness}, there is a bimeromorphic function $f: \tM \to \tM''$, where $\tM''$ is a projective irreducible holomorphic symplectic variety. Moreover, the bimeromorphic function $f$ induces an isometry $H^2(\tM,\Z) \cong H^2(\tM'',\Z)$ respecting the Beauville--Bogomolov--Fujiki forms.

Now denote by $l$ the divisor class on $M'$ constructed in \cite{BM:projectivity}. By \cite[Theorem 1.1]{BM:projectivity}, the class $l$ is strictly nef. On the other hand, if $\tilde{l}$ is the pullback via the blow-up $\sigma$ of $l$, then $q(\tilde{l})>0$. Indeed, the same statement is true for the desingularized moduli spaces of semistable objects on K3 surfaces, and the divisor class $l$ behaves well with respect to deformations by \cite[Theorem 21.25]{BLM+}. 

Let $\tilde{l}''$ be the line bundle of $\tM''$ such that $\tilde{l}=f^*\tilde{l}''$; note that $q(\tilde{l}'')=q(\tilde{l})>0$.
By \cite[Corollary 3.10]{Huybrechts:compactHyperkaehlerbasic}, $\tilde{l}''$ is big. Since $f$ is an isomorphism out of codimension 2, it follows $\tilde{l}$ is big too. Since $\tM$ has trivial canonical bundle, the Base Point Free Theorem (see \cite{KollarMori}, or \cite{Ancona:Vanishing} for algebraic spaces) implies that $m\tilde{l}$ is globally generated for a certain integer $m \gg 0$. Since by Theorem \ref{thm_symplres}, the moduli space $M$ has rational singularities, we deduce that also $ml$ is globally generated. Together with the fact that $ml$ is strictly nef, we conclude that $ml$ is ample. This implies the projectivity of $M'$, and then of $\tM$.

Finally note that since $M'$ is normal and projective, we can apply the same argument in \cite[Theorem 4.4]{Kaledin:symplectic-singularities} to deduce that $M'=M$, namely that $M$ is irreducible. The deformation type of $\tM$ is obtained by degeneration to the loci of cubic fourfolds with associated K3 surface. This ends the proof of Theorem \ref{thm_OG10}.

\section{Stable objects in the moduli space $M_\sigma(2\lambda_1+2\lambda_2)$}\label{sec:stabobjinM}
In this section, we introduce the objects which form an open subset of $M_\sigma(2\lambda_1+2\lambda_2)$. After recalling the definition of instanton sheaves on a smooth cubic threefold from \cite{Druel:Instanton}, we compute the projection of the stable ones in $\Ku(Y)$.
\begin{rmk}\label{rem:ssstable}
Comparing with $\sigma$-stable objects,  strictly semistable objects are easier to describe. Note that we may vary the stability condition $\sigma$ to $\sigma_0$ in $\Stab^\dag(\Ku(Y))$ such that
\begin{enumerate}
    \item[(a)] $M^s_\sigma(2\lambda_1+2\lambda_2)=M^s_{\sigma_0}(2\lambda_1+2\lambda_2)$ and $M^s_\sigma(\lambda_1+\lambda_2)=M^s_{\sigma_0}(\lambda_1+\lambda_2)$;
    \item[(b)] $\sigma_0$ is generic with respect with $2\lambda_1+2\lambda_2$.
\end{enumerate}
By condition (b), as the character $\lambda_1+\lambda_2$ is primitive, the Jordan--H\"{o}lder factors of  strictly $\sigma_0$-semistable objects are all with character $\lambda_1+\lambda_2$. By \cite[Theorem 1.1]{LPZ1}, such a factor is always of the form
\begin{align}\label{eq:defofPl}
  P_\ell\coloneqq  \mathsf{pr} (\OO_\ell[-1]) 
= \mathrm{Cone}(\II_\ell[-1]\xrightarrow{\mathsf{ev}} \OO_Y(-H)[1] ), 
\end{align}
where $\ell$ is a line in $Y\subset \P^5$, $\II_\ell$ denotes the ideal sheaf of $\ell$. Let $F(Y)$ be the Fano variety of lines on $Y$; then the strictly semistable locus in $M_{\sigma_0}(2\lambda_1+2\lambda_2)$ is isomorphic to $\mathrm{Sym}^2F(Y)$. 
\end{rmk}

\subsection{Moduli space of semistable instanton sheaves on a smooth cubic threefold}
\label{subsec_instanton}
Recall the definition of $\lambda_1$ and $\lambda_2$ as that in \eqref{eq:defoflambdas}. By a direct computation, their Chern characters are 
\begin{equation}\label{eq:choflambda}
 \ch(\lambda_1)=(3,-H,-\frac{H^2}2,\frac{H^3}6,\frac{3}8)\text{ and }\ch(\lambda_2)=  (-3,2H,0,-\frac{H^3}3,0).
\end{equation}
 In particular, we have 
 \begin{equation}\label{eq:2lambda12}
 \ch(2\lambda_1+2\lambda_2)=(0,2H,-H^2,-\frac{H^3}3,\frac{3}4).
\end{equation}
On the other hand, in \cite{Druel:Instanton} Druel studies the moduli space of semistable sheaves $F$ on a smooth cubic threefold with Chern classes $$\rk(F)=2, \; c_1(F)=0,\; c_2(F)=2 \;\text{ and }c_3(F)=0.$$ We follow the definition as that in  \cite{LMS:ACM,Kuzne:Instanton}, and call such sheaves \emph{rank $2$ instanton sheaves} on cubic threefolds. Let $X=H\cap Y$ be a smooth cubic threefold and denote by $\iota:X\rightarrow Y$ the inclusion. For such an instanton sheaf $F$, by a direct computation, we have
$$\ch(\iota_*F)=\ch(2\lambda_1+2\lambda_2).$$
We summarize the results about rank $2$ instanton sheaves in \cite{Beauville:Cubics, Druel:Instanton} as follows.
\begin{rmk}\label{rem:instanton}
Let $X$ be a smooth cubic threefold. The moduli space  $M_{\mathrm{inst}}$ of rank $2$ instanton sheaves on $X$ consists of the following objects, see \cite[Theorem 3.5]{Druel:Instanton}.
\begin{enumerate}
    \item[(i)] $F_\Gamma$: For every stable rank $2$ instanton bundle $F$, the zero locus of a non-zero section on $F(H)$ is  a non-degenerate \emph{elliptic quintic} curve $\Gamma$. Namely, an elliptic quintic curve is a \emph{locally complete intersection} quintic curve with trivial
canonical bundle and $h^0(\OO_\Gamma)=1$, and the curve is called non-degenerate if it spans $\P^4$. For a generic section, the curve $\Gamma$ is smooth.
    
    For every non-degenerate elliptic quintic curve $\Gamma$, one can produce the vector bundle $F_\Gamma$ by the Serre construction. For a more categorical description,
    \begin{equation}\label{eq:defofFGamma}
    F_\Gamma\coloneqq \mathrm{Cone}\left(\II_\Gamma(H)[-1]\xrightarrow{\mathsf{ev}}\OO_X(-H)\right).\end{equation}
    All these stable bundles form a dense affine open subset $M^{\mathrm{s}}_{\mathrm{inst}}$ in  $M_{\mathrm{inst}}$, see \cite[Corollary 6.6]{Beauville:Cubics}.
    \item[(ii)] $F_C$: Every stable non-locally-free rank $2$ instanton sheaf is one-to-one corresponding to a smooth conic curve $C$. Conversely, for each smooth conic $C$, one can define a stable sheaf $F_C$ as the kernel of 
    \begin{equation}\label{eq:defofFC}
    \OO_X\otimes \Hom(\OO_X,\theta_C(H))\xrightarrow {\mathrm{ev}}\theta_C(H).
    \end{equation}
    Here we write $\theta_C$ for the theta-characteristic of $C$ so that $\theta_C(H)\cong\OO_{\P^1}(1)$ is a degree $1$ line bundle on $X$.
    
    The locus $A$ in $M_{\mathrm{inst}}$ that parameterizes these sheaves is of dimension $4$.
    \item[(iii)] $\II_{\ell_1}\oplus \II_{\ell_2}$: Every strictly semistable rank $2$ instanton sheaves is S-equivalent to this direct sum. Here $\ell_1$ and $\ell_2$ are lines (possibly the same) on $X$.
    
    The locus $B$ in $M_{\mathrm{inst}}$ that parameterizes these sheaves is isomorphic to  $\mathrm{Sym}^2 F(X)$, where $F(X)$ stands for the Fano surface of lines on $X$.
\end{enumerate}
The following properties of $M_{\mathrm{inst}}$ are summarized from \cite[Section 4]{Druel:Instanton} and \cite[Section 6]{Beauville:Cubics}.
Let $J^2(X)$ be the the translate of the intermediate Jacobian which parameterizes $1$-cycles of degree $2$ on $X$. Consider the morphism
\begin{equation}\label{eq:chow2map}
    \mathfrak{c}_2: M_{\mathrm{inst}}\rightarrow J^2(X): F\mapsto \tilde{c}_2(F),
\end{equation} 
where $\tilde{c}_2(F)$ corresponds to the Chern class $c_2(F)$ via the cycle map.
\begin{enumerate}
    \item The moduli space $M_{\mathrm{inst}}$ is smooth and connected. The morphism $\mathfrak c_2$ induces an isorphism of $M^{\mathrm{s}}_{\mathrm{inst}}$ onto its image in $J^2(X)$.
    \item The morphism $\mathfrak c_2$ contracts the locus $A$ to  $F_{\mathrm{conic}}(X)\subset J^2(X)$, where $ F_{\mathrm{conic}}(X)$ is the image of the variety of conics and is isomorphic to $F(X)$. The morphism $\mathfrak c_2$ is isomorphic to the blowing up of $J^2(X)$ along $F_{\mathrm{conic}}(X)$.
    \item The morphism $\mathfrak c_2$ maps $B$ onto an ample divisor $D_{ F+F}$ in $J^2(X)$.
\end{enumerate}  
We will make use of these further details on $M_{\mathrm{inst}}$ in Section \ref{sec:Lagrangian}.
\end{rmk}

\subsection{Formulas of $E_\Gamma$ and $E_C$}
The classification of semistable rank $2$ instanton sheaves summarized in the previous section inspires us the construction of some objects in $\Ku(Y)$ with character $2\lambda_1+2\lambda_2$.  Recall the definition of the projection functor $\mathsf{pr}=\mathsf R_{\OO_Y(-H)}\mathsf R_{\OO_Y(-2H)}\mathsf L_{\OO_Y}=\mathsf L_{\OO_Y}\mathsf R_{\OO_Y(-H)}\mathsf R_{\OO_Y(-2H)}$ as in \eqref{eq:defofpr}.
\begin{defn}\label{def:EGammaEC}
Let $Y$ be a smooth cubic fourfold. Let $\Gamma$ be an elliptic quintic curve on $Y$. We define the object $E_\Gamma$ as:
\begin{equation}\label{eq:defofEGamma} 
    E_\Gamma\coloneqq \mathsf{pr}(\II_{\Gamma}(H)).
\end{equation}
Let $C$ be a smooth conic curve on $Y$. We define the object $E_C$ as:
\begin{equation}\label{eq:defofEC} 
    E_C\coloneqq \mathsf {pr}(\theta_C(H))[-1].
\end{equation}
\end{defn}
 We will need a more explicit expression of  $E_C$, as computed in the following lemma.
\begin{lem}\label{lem:formofEC}
The object $E_C$ can be written as
\begin{equation}\label{eq:formularofEC}
    \mathrm{Cone}\left(\mathsf L_{\OO_Y}(\theta_C(H))[-2]\xrightarrow{\mathsf{ev}}\OO_Y(-H)[1]\otimes (\Hom(\mathsf L_{\OO_Y}(\theta_C(H)),\OO_Y(-H)[3]))^*\right).
\end{equation}
\end{lem}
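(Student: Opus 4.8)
The plan is to unwind the definition $E_C = \mathsf{pr}(\theta_C(H))[-1]$ by applying the three mutations in the factorization $\mathsf{pr} = \mathsf R_{\OO_Y(-H)}\mathsf R_{\OO_Y(-2H)}\mathsf L_{\OO_Y}$ one at a time, the key point being that the middle mutation $\mathsf R_{\OO_Y(-2H)}$ will act as the identity, so that only $\OO_Y(-H)$ survives in the final expression. Every $\Ext$-computation I need follows from the Serre functor $\mathsf S_Y(-) = -\otimes\OO_Y(-3H)[4]$ together with the semiorthogonality of the exceptional collection $\{\OO_Y, \OO_Y(H), \OO_Y(2H)\}$. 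First I would record the relevant numerics: since $C$ is a smooth conic, $H|_C\cong\OO_{\P^1}(2)$, so the normalization $\theta_C(H)\cong\OO_{\P^1}(1)$ forces $\theta_C\cong\OO_{\P^1}(-1)$ as a sheaf on $Y$ supported on $C$. Writing $G := \mathsf L_{\OO_Y}(\theta_C(H))$, the only nonzero group among $\Hom(\OO_Y, \theta_C(H)[p]) = H^p(\P^1, \OO_{\P^1}(1))$ is the two-dimensional $H^0$, so $G$ sits in a triangle $\OO_Y^{\oplus 2} \xrightarrow{\mathsf{ev}} \theta_C(H) \to G$.

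Next I would prove that $\mathsf R_{\OO_Y(-2H)}(G) = G$, which is the structural heart of the argument. Applying $\Hom(-, \OO_Y(-2H)[p])$ to the triangle above, the $\OO_Y^{\oplus 2}$ term contributes nothing, since $\Hom(\OO_Y, \OO_Y(-2H)[p]) = \Hom(\OO_Y(2H), \OO_Y[p]) = 0$ by semiorthogonality; hence $\Hom(G, \OO_Y(-2H)[p]) \cong \Hom(\theta_C(H), \OO_Y(-2H)[p])$. By Serre duality this group is $\Hom(\OO_Y, \theta_C[4-p])^* = H^{4-p}(\P^1, \OO_{\P^1}(-1))^*$, which vanishes for every $p$. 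Thus the middle mutation is trivial and $E_C = \mathsf R_{\OO_Y(-H)}(G)[-1]$.

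The same strategy then computes $\Hom(G, \OO_Y(-H)[p])$: the $\OO_Y^{\oplus 2}$ part again drops out by semiorthogonality, and Serre duality identifies $\Hom(\theta_C(H), \OO_Y(-H)[p]) \cong H^{4-p}(\P^1, \OO_{\P^1}(-3))^*$ (using $\theta_C(-H)|_C\cong\OO_{\P^1}(-3)$), which is two-dimensional precisely when $p = 3$ and zero otherwise. Feeding this into the definition of $\mathsf R_{\OO_Y(-H)}$ yields
$$\mathsf R_{\OO_Y(-H)}(G) = \mathrm{Cone}\!\left(G \to \OO_Y(-H)[3] \otimes \Hom(G, \OO_Y(-H)[3])^*\right)[-1],$$
and applying the remaining shift $[-1]$ together with $\mathrm{Cone}(f)[-2] = \mathrm{Cone}(f[-2])$ to push both shifts inside reproduces exactly \eqref{eq:formularofEC}.

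I do not expect a serious obstacle: the content is a chain of Serre-duality and semiorthogonality vanishings. The two points requiring care are the bookkeeping of the homological shifts when composing the two right mutations with the final $[-1]$, and the structural observation — the one responsible for the clean form of the answer — that $\mathsf R_{\OO_Y(-2H)}$ is the identity on $G$, which ultimately rests on the vanishing $H^*(\P^1, \OO_{\P^1}(-1)) = 0$.
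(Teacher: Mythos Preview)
Your proof is correct and follows essentially the same approach as the paper: both compute the mutations one at a time, both show $\mathsf R_{\OO_Y(-2H)}$ acts trivially on $G=\mathsf L_{\OO_Y}(\theta_C(H))$ (the paper phrases this as $\theta_C(H),\OO_Y\in\OO_Y(H)^\perp={}^\perp\OO_Y(-2H)$, you compute the vanishing directly via $H^*(\P^1,\OO_{\P^1}(-1))=0$), and both use Serre duality to find the single nonzero $\Hom(G,\OO_Y(-H)[3])\cong\C^2$ before unwinding the shifts.
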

\begin{proof}
 Note that
\begin{equation}\label{eq:homofOC}
    \Hom_{\Db(Y)}(\OO_Y,\theta_C(H)[i])=\Hom_{\OO_C}(\OO_C,\theta_C(H)[i]) = \begin{cases}
\C^2, & \text{when }i=0;\\ 0, & \text{when }i\neq 0.
\end{cases}
\end{equation}
In particular, the object $\mathsf L_{\OO_Y} (\theta_C(H))[-1]$ is a coherent sheaf on $Y$. 
Note that $$\theta_C(H),\OO_Y\in \OO_Y(H)^\perp= \ ^\perp\! \OO_Y(-2H),$$ therefore the object $\mathsf L_{\OO_Y} (\theta_C(H))$ is also in $^\perp\! \OO_Y(-2H)$, in other words, $$\mathsf R_{\OO_Y(-2H)}\mathsf L_{\OO_Y}(\theta_C(H))=\mathsf L_{\OO_Y}(\theta_C(H)).$$   

Since $\OO_Y\in\;^\perp\! \OO_Y(-H)$, by Serre duality we have 
\begin{align}
 & \Hom_{\Db(Y)}(\mathsf L_{\OO_Y}(\theta_C(H)),\OO_Y(-H)[i])\cong \Hom_{\Db(Y)}(\theta_C(H),\OO_Y(-H)[i])    \\ 
 \cong  & (\Hom_{\Db(Y)}(\OO_Y(-H),\theta_C(-2H)[4-i]))^*=\begin{cases}
 \C^2, & \text{when }i=3;\\
 0, & \text{when }i\neq 3.
 \end{cases}\label{eq:homofOYOC}
\end{align}
By Definition \ref{def:mutationfunctor}, the formula \eqref{eq:formularofEC} for $E_C$ holds.
\end{proof}

\begin{prop}\label{prop:EGamaECinKu}
Let $X$ be a smooth cubic threefold on $Y$ and $\iota:X\rightarrow Y$ be the embedding morphism. We have the following statements for objects of the form $E_C$ and $E_\Gamma$.
\begin{enumerate}
    \item If $C$ is a smooth conic contained in $X$, then $$E_C\cong \mathsf{pr} (\iota_*F_C),$$ where $F_C$ is as that defined in \eqref{eq:defofFC}.
    \item If $\Gamma$ is a non-degenerate elliptic quintic curve contained in $X$, then $E_\Gamma\cong\iota_*F_\Gamma$ as that defined in \eqref{eq:defofFGamma}. In particular, the object $E_\Gamma$ sits in the short exact sequence in $\Coh(Y)$: \begin{equation}\label{eq:trianofEGamma}
   0\to\OO_X(-H)\to E_\Gamma\rightarrow \II_{\Gamma/X}(H)\to 0.
\end{equation}
    \item Let $\ell$ be a line on $X$; then $P_\ell\cong \mathsf{pr}(\II_{\ell/X})$.
    \item Both $E_\Gamma$ and $E_C$ are in $\Ku(Y)$ with character $2\lambda_1+2\lambda_2$.
\end{enumerate}
 
\end{prop}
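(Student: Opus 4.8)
The plan is to handle parts (1)--(3) by pushing short exact sequences through the projection functor $\mathsf{pr}$, using repeatedly that $\mathsf{pr}$ annihilates the exceptional line bundles of the semiorthogonal decomposition \eqref{eq:kuinthemidsod}. Since $\OO_Y(-2H),\OO_Y(-H),\OO_Y$ are exactly those exceptional objects, we have $\mathsf{pr}(\OO_Y(jH))=0$ for $j=0,-1,-2$; applying $\mathsf{pr}$ to the structure sequences $0\to\OO_Y(-H)\to\OO_Y\to\OO_X\to0$ and $0\to\OO_Y(-2H)\to\OO_Y(-H)\to\OO_X(-H)\to0$ then gives $\mathsf{pr}(\OO_X)=\mathsf{pr}(\OO_X(-H))=0$. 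For part (3) I would apply $\mathsf{pr}$ to $\iota_*$ of $0\to\II_{\ell/X}\to\OO_X\to\OO_\ell\to0$ and use $\mathsf{pr}(\OO_X)=0$ to obtain $\mathsf{pr}(\II_{\ell/X})\cong\mathsf{pr}(\OO_\ell)[-1]=P_\ell$. For part (1), the evaluation sequence $0\to F_C\to\OO_X^{\oplus2}\to\theta_C(H)\to0$ (surjective since $\theta_C(H)\cong\OO_{\P^1}(1)$ is globally generated and has the $2$-dimensional section space of \eqref{eq:homofOC}), pushed to $Y$ and hit with $\mathsf{pr}$, yields $\mathsf{pr}(\iota_*F_C)\cong\mathsf{pr}(\theta_C(H))[-1]=E_C$.

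For part (2) I would first move $E_\Gamma$ onto $X$. The inclusions $\Gamma\subset X\subset Y$ give $0\to\OO_Y(-H)\to\II_{\Gamma/Y}\to\iota_*\II_{\Gamma/X}\to0$, whose twist by $\OO_Y(H)$ is $0\to\OO_Y\to\II_{\Gamma/Y}(H)\to\iota_*\II_{\Gamma/X}(H)\to0$, so $\mathsf{pr}(\OO_Y)=0$ gives $E_\Gamma\cong\mathsf{pr}(\iota_*\II_{\Gamma/X}(H))$. The Serre-construction sequence $0\to\OO_X(-H)\to F_\Gamma\to\II_{\Gamma/X}(H)\to0$ defining $F_\Gamma$, pushed to $Y$ and hit with $\mathsf{pr}$ (using $\mathsf{pr}(\OO_X(-H))=0$), then gives $\mathsf{pr}(\iota_*F_\Gamma)\cong\mathsf{pr}(\iota_*\II_{\Gamma/X}(H))\cong E_\Gamma$. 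Thus everything reduces to showing that $\iota_*F_\Gamma$ already lies in $\Ku(Y)$, i.e. $\mathsf{pr}(\iota_*F_\Gamma)=\iota_*F_\Gamma$; granting this, the sequence \eqref{eq:trianofEGamma} is just the pushforward of the defining sequence.

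Establishing $\iota_*F_\Gamma\in\Ku(Y)$ is the crux. By adjunction $\Ext^\bullet_Y(\OO_Y(iH),\iota_*F_\Gamma)\cong H^\bullet(X,F_\Gamma(-iH))$, so I must prove $H^\bullet(X,F_\Gamma(-iH))=0$ for $i=0,1,2$. Twisting the defining sequence and using the elementary vanishings $H^\bullet(X,\OO_X(-H))=0$ and $H^\bullet(X,\II_{\Gamma/X}(H))=0$ — the latter being where non-degeneracy of $\Gamma$ enters, through the isomorphism $H^0(X,\OO_X(H))\cong H^0(\Gamma,\OO_\Gamma(H))$ of two $5$-dimensional spaces — gives $H^\bullet(X,F_\Gamma)=0$ at once; the same sequence at $i=1$, together with the low-degree vanishing of $H^\bullet(X,\OO_X(-2H))$ and of $H^\bullet(X,\II_{\Gamma/X})$, gives $H^0(F_\Gamma(-H))=H^1(F_\Gamma(-H))=0$. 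I would then upgrade these to full vanishing via Serre duality on $X$ combined with the self-duality $F_\Gamma^\vee\cong F_\Gamma$ of a rank-$2$ bundle with $\det=\OO_X$: since $K_X=\OO_X(-2H)$ one finds $H^i(F_\Gamma(-H))\cong H^{3-i}(F_\Gamma(-H))^\vee$ and $H^i(F_\Gamma(-2H))\cong H^{3-i}(F_\Gamma)^\vee$, so the proven vanishings propagate to all degrees and to all $i\in\{0,1,2\}$. This cohomological step, in particular the interplay of non-degeneracy and self-duality, is the main obstacle.

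Finally, for part (4) the membership $E_\Gamma,E_C\in\Ku(Y)$ is automatic since both are outputs of $\mathsf{pr}$. For the numerical class I note that $\ch(E_\Gamma)$ and $\ch(E_C)$ depend only on the discrete invariants of $\Gamma$ (degree $5$, arithmetic genus $1$) and of $C$ (a conic with its theta-characteristic), hence are constant in families; choosing $\Gamma$ and $C$ on a smooth hyperplane section $X$ and invoking (2) and (1) identifies $E_\Gamma\cong\iota_*F_\Gamma$ and $E_C\cong\mathsf{pr}(\iota_*F_C)$. Since $\ch(\iota_*F)=\ch(2\lambda_1+2\lambda_2)$ for every rank-$2$ instanton sheaf (recorded before Remark \ref{rem:instanton}) and $\ch$ is injective on $\kn$, the class $[\iota_*F]$ equals $2\lambda_1+2\lambda_2$ and lies in $\kn(\Ku(Y))$, on which $\mathsf{pr}$ acts as the identity; hence $[E_\Gamma]=[E_C]=2\lambda_1+2\lambda_2$. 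Alternatively one computes $[E_\Gamma]$ and $[E_C]$ directly from \eqref{eq:choflambda} and the mutation expression \eqref{eq:defofpr}.
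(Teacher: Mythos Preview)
Your proof is correct and, for parts (1)--(3), takes a different and somewhat cleaner route than the paper. The paper proves (1) by an explicit diagram chase: it verifies $\iota_*F_C\in\OO_Y^\perp$ and $\iota_*F_C\in{}^\perp\OO_Y(-2H)$ by hand, then computes $\mathsf R_{\OO_Y(-H)}(\iota_*F_C)$ via a $3\times3$ commutative diagram of triangles built from \eqref{eq:homofOX} and matches it with the expression \eqref{eq:formularofEC}. Your observation that $\mathsf{pr}$ is exact on triangles and annihilates $\iota_*\OO_X$ (hence $\mathsf{pr}(\iota_*F_C)\cong\mathsf{pr}(\theta_C(H))[-1]=E_C$ directly from the evaluation sequence) bypasses this entirely, and the same trick with $\mathsf{pr}(\iota_*\OO_X(-H))=0$ streamlines (2) and (3). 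The paper's approach has the advantage of exhibiting the mutation explicitly, which it later reuses in \eqref{eqn:formularofpsirhoEC}; yours is shorter but black-boxes the cone.

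For the key step in (2), showing $\iota_*F_\Gamma\in\Ku(Y)$, the paper simply cites \cite[Lemma~3.1]{Kuzne:Instanton}. Your direct argument---computing $H^\bullet(F_\Gamma)$ and $H^{\le1}(F_\Gamma(-H))$ from the Serre-construction sequence (using non-degeneracy for the isomorphism $H^0(\OO_X(H))\cong H^0(\OO_\Gamma(H))$), then propagating via the self-duality $F_\Gamma^\vee\cong F_\Gamma$ and Serre duality $H^i(F_\Gamma(-H))\cong H^{3-i}(F_\Gamma(-H))^\vee$, $H^i(F_\Gamma(-2H))\cong H^{3-i}(F_\Gamma)^\vee$---is a correct self-contained substitute; note it does rely on $F_\Gamma$ being locally free, which is exactly the content of Remark~\ref{rem:instanton}(i) for non-degenerate $\Gamma$.

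For (4) your argument via $[\iota_*F]=2\lambda_1+2\lambda_2\in\kn(\Ku(Y))$ and the fact that $\mathsf{pr}$ induces the identity on this summand of $\kn(\Db(Y))$ is correct, though the deformation remark is superfluous since the proposition already places $\Gamma$ and $C$ on the fixed smooth $X$. The paper instead computes $\ch(E_C)$ directly from \eqref{eq:formularofEC} and \eqref{eq:homofOC}.
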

\begin{proof}(1). When $C$ is contained in a smooth cubic threefold $X$, note that $F_C$ is stable on $X$, so we have $\Hom(\OO_Y,\iota_*F_C)=0$. Note that $\iota_*F_C=\mathrm{Cone}(\iota_*\OO_X^{\oplus 2}\rightarrow \theta_C(H))[-1]$ and $\Hom(\OO_Y,\iota_*\OO_X[i])=0$ when $i\neq 0$. Together with \eqref{eq:homofOC}, this implies that $\iota_*F_C\in \OO_Y^\perp$.
Since both $\OO_C,\OO_X\in \OO_Y(H)^\perp$, we have $$\mathsf{pr}(\iota_*F_C)=\mathsf R_{\OO_Y(-H)}(\iota_*F_C).$$
By Serre duality
\begin{equation}\label{eq:homofOX}
  \Hom_{\Db(Y)}(\iota_*\OO_X,\OO_Y(-H)[i])=\Hom_{\OO_X}(\OO_X,\OO_X(-2H)[4-i])= \begin{cases}
\C, & \text{when }i=1;\\ 0, & \text{when }i\neq 1.\end{cases}
\end{equation}
The unique extension gives the obvious triangle $\iota_*\OO_X\rightarrow \OO_Y(-H)[1]\rightarrow \OO_Y[1]\xrightarrow{+}$. Therefore, we have the natural commutative diagram of distinguished triangles:
$$
\xymatrix{
\iota_*\OO_X^{\oplus 2}[-1] \ar[r] \ar[d]^{\mathsf{ev}} & \OO_Y(-H)^{\oplus 2} \ar[r] \ar[d] & \OO_Y^{\oplus 2} \ar[d]^{\mathsf{ev}} \ar[r]^+& \\
\theta_C(H)[-1] \ar[r] \ar[d] &  0 \ar[r] \ar[d] & \theta_C(H) \ar[d] \ar[r]^+& \\
\iota_*F_C \ar[r]^{\mathsf{ev}} & \OO_Y(-H)[1]^{\oplus 2} \ar[r] & \mathsf{L}_{\OO_Y}(\theta_C(H)) \ar[r]^{\;\;+}& .
}
$$
The morphism $\mathsf{ev}$ at the bottom line is $\iota_*F_C\rightarrow \OO_Y(-H)[1]\otimes (\Hom(\iota_*F_C,\OO_Y(-H)[1]))^*$. By \eqref{eq:homofOYOC} and \eqref{eq:homofOX}, the object $\mathsf R_{\OO_Y(-H)}(\iota_*F_C)$ is 
\begin{equation*}
    \mathrm{Cone}\left(\iota_*F_C\xrightarrow{\mathsf{ev}}\OO_Y(-H)^{\oplus 2}[1]\bigoplus \OO_Y(-H)[2]\otimes (\Hom(\mathsf L_{\OO_Y}(\theta_C(H)),\OO_Y(-H)[2]))^*\right)[-1],
\end{equation*}
which is isomorphic to the object as that in \eqref{eq:formularofEC}.

(2). Consider the short exact sequence
$$0 \to \II_{\Gamma}(H) \to \OO_Y(H) \to \OO_{\Gamma}(H) \to 0.$$
As $h^1(\OO_{\Gamma}(H))=h^0(\OO_{\Gamma}(-H))=0$ and $\chi(\OO_{\Gamma}(mH))=5m$, we have $h^0(\OO_{\Gamma}(H))=5$. Note that $h^0(\OO_Y(H))=6$ and the induced map
$$H^0(\OO_Y(H)) \to H^0(\OO_{\Gamma}(H))$$
is surjective, since the linear span of $\Gamma$ is a $\P^4$. We conclude that 
$$\mathsf{L}_{\OO_Y}(\II_\Gamma(H))=\mathrm{Cone}\left(\OO_Y \xrightarrow{\mathsf{ev}} \II_{\Gamma/Y}(H) \right)=\II_{\Gamma/X}(H).$$
Consider on the category $\OO_Y^\perp$ the following semiorthogonal decomposition with two components:
\begin{equation}\label{eq:sodfokuy1}
    \langle \langle \OO_Y(-2H),\OO_Y(-H)\rangle, \Ku(Y)\rangle.
\end{equation}
Consider the expression of $\II_{\Gamma/X}(H)$ as that in Definition \ref{def:sod} (b):
\begin{equation}
    0=F_0\rightarrow F_1=\iota_*F_\Gamma\xrightarrow{\mathsf{ev}} F_2=\II_{\Gamma/X}(H).
\end{equation}
  Here Cone$(F_1\to F_2)$ is $\OO_X(-H)[1]$ by \eqref{eq:defofFGamma}. Note that $$\OO_X(-H)[1]=\mathrm{Cone}(\OO_Y(-2H)\to\OO_Y(-H))[1]\in \langle \OO_Y(-2H),\OO_Y(-H)\rangle.$$
  By \cite[Lemma 3.1]{Kuzne:Instanton}, \begin{equation}
      \Hom_{\OO_Y}(\OO_Y(jH),\iota_*F_\Gamma[i])=\Hom_{\OO_X}(\OO_X(jH),F_\Gamma[i])=0
  \end{equation}
for $j=0,1,2$ and all $i\in \Z$. Therefore, $\iota_*F_\Gamma\in \Ku(Y)$. By Remark \ref{rem:mutations}, the functor $\mathsf{pr}_{\Ku(Y)}$ with respect to \eqref{eq:sodfokuy1} maps $\mathsf{L}_{\OO_Y}(\II_\Gamma(H))$ to $\iota_*F_\Gamma$. 

(3). Since $\II_{\ell/X}\in \langle \OO_Y,\OO_Y(H)\rangle^\perp$, we have $\mathsf{pr}(\II_{\ell/X})=\mathsf{R}_{\OO_Y(-H)}(\II_{\ell/X})$. By the same argument as that for the conic case, the statement holds.

(4). By \eqref{eq:homofOC} and \eqref{eq:formularofEC}, the character of $E_C$ is
\begin{align*}
    \ch(E_C)&= \ch(\mathsf L_{\OO_Y}(\theta_C(H))[-1])-2\ch(\OO_Y(-H))\\ &=2\ch(\OO_Y)-\ch(\theta_C(H))-2\ch(\OO_Y(-H))=\ch(2\lambda_1+\lambda_2).
\end{align*}
The Chern character of $E_\Gamma$ is  $\ch(\iota_*F_\Gamma)$ which is the same as $\ch(2\lambda_1+2\lambda_2)$.
\end{proof}

\section{Stability of $E_\Gamma$ and $E_C$} 
\label{sec:stabE}
In this technical section, we prove Theorem \ref{thm:EgammaECinkustab}, namely, Theorem \ref{thm:EgammaECinkustabintro} in the introduction. In particular, we study the essential image of the objects $E_\Gamma$ and $E_C$ defined in Section \ref{sec:stabobjinM} via the equivalence between $\Ku(Y)$ and $\Ku(\P^3, \BB_0)$ of Proposition \ref{prop:equivalentofKu}. We show that these objects are stable with respect to tilt-stability $\sigma_{\alpha, -1}$ on $\Db(\P^3,\BB_0)$ by a wall-crossing argument.

For this purpose, we will inevitably work with details about the category $\Db(\P^3,\BB_0)$ and we will prove some additional properties used for the computation, which are also of independent interest. This is the only section where $\Db(\P^3,\BB_0)$ is involved. For readers not familiar with this setting, there is no harm to skip the whole section, since the only result that we will use in the rest of the paper is Theorem \ref{thm:EgammaECinkustab}. 
\subsection{More on the equivalence between $\Ku(Y)$ and $\Ku(\P^3,\BB_0)$} Recall that the construction of the stability condition $\sigma$ on $\Ku(Y)$ is via pull-back of the stability condition induced on $\Ku(\P^3,\BB_0)$. In particular, in order to prove the stability of an object $E$ in $\Ku(Y)$, we need to show that $\Psi(\rho^* E)$ in $\Ku(\P^3,\BB_0)$ is stable. In this section, we recall some properties of the functor $\Psi$ which we will use in the next.

Recall from \eqref{diag:blowupL} and Remark \ref{rem:equivalentofKu} that $\rho:\tilde{Y}\rightarrow Y$ is the blow-up morphism. The functor $\Psi$ is defined as that in \cite[(6)]{BLMS:kuzcomponent} by
\begin{equation}\label{eq:defofPsi}
    \Psi(-)=\pi_*(- \otimes  \EE[1]):\Db(\tilde Y)\rightarrow \Db(\P^3,\BB_0).
\end{equation}
Here $\EE$ is a sheaf of right $\pi^*\BB_0$-modules on $\tilde Y$ defined by the short exact sequence of right $q^*\BB_0$-modules
\begin{equation}\label{eq:defofcE}
    0\to \OO_{\P_{\P^3} (\FF)/\P^3}(-2)\otimes q^*\BB_{1}\xrightarrow{\delta_{-1,2}} \OO_{\P_{\P^3} (\FF)/\P^3}(-1)\otimes q^*\BB_2\to \alpha_*\EE\to 0,
\end{equation}
where the morphism $\delta_{-1,2}$ is defined in \cite[Section 3.1 and 3.4]{Kuz:Quadric}. We would not use further details about $\delta_{-1,2}$ here, but the following fact  will be important for us.
\begin{lem}[{\cite[Lemma 4.7]{Kuz:Quadric}}]\label{lem:Eisvb}
 The $\OO_{\tilde Y}$-coherent sheaf $\Forg(\EE)$ is locally free with rank $2$.
\end{lem}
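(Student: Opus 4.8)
The plan is to realise $\alpha_*\EE$ as (the pushforward of) a maximal Cohen--Macaulay module on the divisor $\tilde{Y}\subset P\coloneqq\P_{\P^3}(\FF)$, and then to use the smoothness of $\tilde{Y}$ to upgrade Cohen--Macaulayness to local freeness. Recall that $\tilde{Y}$ is cut out in $P$ by the section $s_{\tilde Y}\in \mathrm H^0(P,\OO_P(2)\otimes q^*\OO_{\P^3}(1))$. The Clifford multiplication maps $\delta_{i,j}$ of \cite[Section 3.1 and 3.4]{Kuz:Quadric} assemble into the (twisted) $2$-periodic complex
\[\cdots \xrightarrow{\delta_{-2,1}} \OO_{P/\P^3}(-2)\otimes q^*\BB_1 \xrightarrow{\delta_{-1,2}} \OO_{P/\P^3}(-1)\otimes q^*\BB_2 \xrightarrow{\delta_{0,3}} \OO_{P/\P^3}(0)\otimes q^*\BB_3 \xrightarrow{\delta_{1,4}}\cdots,\]
and, under the identifications $\BB_{j+2}\cong\BB_j\otimes\OO_{\P^3}(1)$, the consecutive compositions are multiplication by $s_{\tilde Y}$, i.e. $\delta_{0,3}\circ\delta_{-1,2}=s_{\tilde Y}\cdot\id$ and $\delta_{-1,2}\circ\delta_{-2,1}=s_{\tilde Y}\cdot\id$. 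This exhibits $\delta_{-1,2}$, the map defining $\alpha_*\EE$ in \eqref{eq:defofcE}, as one of the two maps in a matrix factorisation of $s_{\tilde Y}$ valued in the line bundle $\OO_{P/\P^3}(2)\otimes q^*\OO_{\P^3}(1)$.

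First I would record the consequences of this relation. Since $P$ is integral and $s_{\tilde Y}\neq 0$, multiplication by $s_{\tilde Y}$ is injective, whence $\delta_{-1,2}$ is injective (consistent with the exactness of \eqref{eq:defofcE}) and $\cok(\delta_{-1,2})=\alpha_*\EE$ is annihilated by $s_{\tilde Y}$, hence set-theoretically supported on $\tilde{Y}$. The standard theory of matrix factorisations then shows that $\alpha_*\EE$, viewed as an $\OO_{\tilde Y}$-module, is maximal Cohen--Macaulay: it has no embedded components and its depth at each point of $\tilde{Y}$ equals $\dim\tilde{Y}$. This is a local statement, so the line-bundle twist causes no trouble.

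The key point is then purely local. The variety $\tilde{Y}=\rho_L^{-1}(Y)$ is the blow-up of the smooth fourfold $Y$ along the smooth line $L$, and is therefore smooth; in particular every local ring $\OO_{\tilde Y,y}$ is regular. By the Auslander--Buchsbaum formula a maximal Cohen--Macaulay module over a regular local ring has projective dimension zero, so it is free. Applying this at every point shows that $\Forg(\EE)$ is locally free. Its rank is locally constant, and since $\tilde{Y}$ is connected it suffices to compute it on one fibre: over a point $p\in\P^3$ with $C_p=\pi^{-1}(p)$ a smooth conic, the even Clifford algebra $\BB_0(p)$ is the matrix algebra $M_2(\C)$ and $\EE|_{C_p}$ is the associated spinor module, the simple $M_2(\C)$-module, which has rank $2$ over $\OO_{C_p}$ (equivalently, the restricted Clifford factorisation of the nondegenerate ternary form has generic corank $2$ on $C_p$). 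Hence $\rk\Forg(\EE)=2$.

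The main obstacle I expect is the bookkeeping in the first paragraph: one must pin down Kuznetsov's normalisations of the maps $\delta_{i,j}$ and the twists relating $\BB_{j+2}$ to $\BB_j$, so that the composite $\delta_{0,3}\circ\delta_{-1,2}$ really is multiplication by $s_{\tilde Y}$ and not by $s_{\tilde Y}$ times a spurious line-bundle factor that would break the matrix-factorisation interpretation. Once this identification is secured, the remainder is formal, with the smoothness of $\tilde{Y}$ doing the essential work in passing from Cohen--Macaulayness to local freeness.
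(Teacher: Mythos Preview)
The paper does not prove this statement; it is quoted directly from \cite[Lemma 4.7]{Kuz:Quadric} with no argument given. Your outline is correct and follows the standard matrix-factorisation approach, which is essentially how the original reference proceeds: the Clifford multiplication maps assemble into a matrix factorisation of the section $s_{\tilde Y}$, so by Eisenbud's theorem the cokernel $\alpha_*\EE$ is maximal Cohen--Macaulay on the hypersurface $\tilde Y$; since $\tilde Y$ is smooth (being the blow-up of the smooth $Y$ along the smooth centre $L$), Auslander--Buchsbaum upgrades this to local freeness. Your rank computation on a smooth conic fibre is also the standard one. The normalisation worry you flag is harmless: the identifications $\BB_{j+2}\cong\BB_j\otimes\OO_{\P^3}(1)$ are part of the definition of the $\BB_j$, and the composites $\delta\circ\delta$ are multiplication by $s_{\tilde Y}$ by the very construction of Clifford multiplication.
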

By \cite[Lemma 4.12]{Kuz:Quadric} and \cite[Proposition 7.7]{BLMS:kuzcomponent}, the image of some objects under $\Psi\circ\rho^*$ is as follows:
\begin{equation}\label{eq:imageofOHs}
\Psi(\rho^*\OO_Y)=0;\;\;\Psi(\rho^*\OO_Y(-H))=\BB_{-1};\;\;\Psi(\rho^*\OO_Y(H))=\BB_{2}[1].
\end{equation}
 By \cite[Lemma 4.10]{Kuz:Quadric}, the functor $\Psi$ has a left adjoint functor 
\begin{equation}\label{eq:defofphi}
    \Phi(-)=\pi^*(-)\otimes_{\pi^*\BB_0}\EE',
\end{equation} 
where $\EE'$ is a left $\pi^*\BB_0$-module  on $\tilde Y$ defined by the following short exact sequence of left $q^*\BB_0$-modules:
\begin{equation}\label{eq:defofcEpie}
    0\to \OO_{\P_{\P^3} (\FF)/\P^3}(-2)\otimes q^*\BB_{0}\xrightarrow{\delta'_{-1,1}} \OO_{\P_{\P^3} (\FF)/\P^3}(-1)\otimes q^*\BB_1\to \alpha_*\EE'\to 0.
\end{equation}
\begin{rmk}\label{rem:charofpsi2lambda}
The rank of torsion-free $\BB_0$-modules on $\P^3$ is always a multiple of $4$ by \cite[Remark 8.4]{BLMS:kuzcomponent}. The functor $\Psi\rho^*$ maps the characters $\lambda_1$ and  $\lambda_2$ to the twisted Chern characters
\begin{equation}\label{eq:charoflambdas}
    \ch_{\BB_0,\leq 2}^{-1}(\Psi\rho^*(\lambda_1))=(4,3,-\frac{7}8)\text{ and }\ch_{\BB_0,\leq 2}^{-1}(\Psi\rho^*(\lambda_2))=(-8,0,\frac{7}4)
\end{equation}respectively. 
In particular, for an object $E$ in $\Ku(Y)$ with   character $2\lambda_1+2\lambda_2$, the twisted Chern character of $\Psi\rho^*(E)$ is\begin{equation}\label{eq:char-86}
  \ch_{\BB_0,\leq 2}^{-1}(E)=(-8,6,\frac{7}4).
\end{equation}
\end{rmk}

\subsection{Expression of $\Psi\rho^*(E_{\Gamma})$} \label{subsec_expressionGamma}
Let  $\Gamma$ be a non-degenerate smooth elliptic quintic contained in a smooth cubic threefold $X$ (which is unique). By the formula \eqref{eq:trianofEGamma}, the object $\Psi\rho^*(E_{\Gamma})$ sits in the distinguished triangle 
\begin{equation}\label{eq:triangleofpsiegamma}
    \Psi\rho^*(\OO_X(-H)) \to \Psi\rho^*(E_{\Gamma}) \to \Psi\rho^*(\II_{\Gamma/X}(H))\xrightarrow{+}.
\end{equation}

Recall that the morphism $\rho$ is the blow-up along a line $L$ on $Y$, and by Proposition \ref{prop:change_line}, the choice of $L$  does not affect the stability of an object in $\Ku(Y)$. As a consequence, we can choose $L$ such that $\Psi\rho^*(E_{\Gamma})$ has a more explicit and nicer description. More precisely, given $\Gamma$ and $X$, we may choose the line $L$ not contained in a plane on $Y$ such that:\begin{cond}\label{cond:choiceofL}
\begin{enumerate}
    \item[(a)] the line $L$ intersects $X$ at a point $P$; 
    \item[(b)] the point $P$ is not on the segment variety of $\Gamma$ (since $X$ is smooth, the segment variety of $\Gamma$ does not contain $X$);
    \item[(c)] the point $P$ is only on finitely many lines on $X$.
\end{enumerate}
\end{cond}
By condition (a), the restriction of $\rho$ to $\rho^{-1}(X)$ is the blow-up  $\tilde{X}$ of $X$ in the point $P$. By condition (c), a plane  containing $L$ intersects with $X$ at either three points (counting multiplicity) including $P$ or a line through $P$. A fiber of $\pi|_{\tilde X}:\tilde X\to \P^3$ is either two points or a line. 

By condition (b), the image $\pi\rho^{-1}(\Gamma)$ in $\P^3$ is isomorphic to $\Gamma$. By definition \eqref{eq:defofPsi} and Lemma \ref{lem:Eisvb},
\begin{equation}\label{eq:TGamma}
    \Psi \rho^*(\OO_{\Gamma}(H))=\TT_{\Gamma}[1]
\end{equation}
for some  torsion sheaf $\TT_{\Gamma}$  supported on $\pi\rho^{-1}(\Gamma)$. By \eqref{eq:imageofOHs}, we have
\begin{equation}\label{eq:imageofOx}
    \Psi\rho^*(\OO_X(H))=\Psi\rho^*(\OO_Y(H))=\BB_2[1].
\end{equation}
We deduce the distinguished triangle for one object in \eqref{eq:triangleofpsiegamma}: 
\begin{equation}
\label{eq_PsiIGamma}
 \TT_{\Gamma}\to \Psi\rho^*(\II_{\Gamma/X}(H)) \to \BB_2[1]\xrightarrow{+} . 
\end{equation}

In order to compute the other factor $\Psi\rho^*(\OO_X(-H))$ in \eqref{eq:triangleofpsiegamma}, we consider the sequence
$$0 \to \OO_X(-H) \to \OO_X \to \OO_S \to 0,$$
where $S$ is a smooth cubic surface not containing $P$. The  object
$$\Psi \rho^*\OO_S=\TT_S[1],$$
where $\TT_S$ is a torsion $\BB_0$-module supported on $\pi\rho^{-1}(S)$. On the other hand, by \eqref{eq:imageofOHs}, we have
$$\Psi \rho^*(\OO_X)=\Psi \rho^*(\OO_Y(-H)[1])=\BB_{-1}[1].$$
In conclusion, we have the distinguished triangle
\begin{equation}\label{eq:formofpsiox-h}
\TT_S\to \Psi \rho^*(\OO_X(-H)) \to \BB_{-1}[1]\xrightarrow{+}.
\end{equation}

Putting everything together, we observe the following property of $\Psi\rho^*(E_\Gamma)$.
\begin{lem}\label{lem:homvanforEgamma}
Let $\Gamma$ be a non-degenerate smooth elliptic quintic spanning  a smooth cubic threefold $X$. Then \begin{equation}\label{eq:homvanofEgamma}
    \Hom(\BB_i[1],\Psi\rho^*(E_\Gamma))=0
\end{equation} 
for every $i\geq 1$.
\end{lem}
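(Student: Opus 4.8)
The plan is to split the argument according to the value of $i$. For $1\le i\le 3$ the vanishing is immediate: by Proposition \ref{prop:EGamaECinKu}(4) the object $E_\Gamma$ lies in $\Ku(Y)$, so $\Psi\rho^*(E_\Gamma)\in\Ku(\P^3,\BB_0)$ under the equivalence of Proposition \ref{prop:equivalentofKu}. By the semiorthogonal decomposition \eqref{eq:kuzofp3b0} together with the orthogonality description \eqref{eq:sodcubic4fold}, this gives $\Hom(\BB_i,\Psi\rho^*(E_\Gamma)[p])=0$ for all $p\in\Z$ and $i\in\{1,2,3\}$; in particular $\Hom(\BB_i[1],\Psi\rho^*(E_\Gamma))=\Hom(\BB_i,\Psi\rho^*(E_\Gamma)[-1])=0$. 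Thus only the range $i\ge 4$ requires work, and here one cannot use orthogonality directly since, e.g., $\Hom(\BB_1,\BB_2)\ne 0$.

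For $i\ge 4$ I would apply $\Hom(\BB_i[1],-)$ to the defining triangle \eqref{eq:triangleofpsiegamma}. The resulting long exact sequence reduces the claim to showing
\[
\Hom(\BB_i[1],\Psi\rho^*(\OO_X(-H)))=0\quad\text{and}\quad \Hom(\BB_i[1],\Psi\rho^*(\II_{\Gamma/X}(H)))=0.
\]
Each of these I would in turn attack through the triangles \eqref{eq:formofpsiox-h} and \eqref{eq_PsiIGamma}. Applying $\Hom(\BB_i[1],-)$ to $\TT_S\to\Psi\rho^*(\OO_X(-H))\to\BB_{-1}[1]$ and to $\TT_\Gamma\to\Psi\rho^*(\II_{\Gamma/X}(H))\to\BB_2[1]$, the torsion contributions are $\Hom(\BB_i[1],\TT_S)=\Ext^{-1}_{\BB_0}(\BB_i,\TT_S)$ and $\Hom(\BB_i[1],\TT_\Gamma)=\Ext^{-1}_{\BB_0}(\BB_i,\TT_\Gamma)$, both of which vanish: $\BB_i$ is a locally free $\BB_0$-module and $\TT_S,\TT_\Gamma$ are honest sheaves, so there are no $\Ext$'s in negative degree. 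Hence the two target groups inject into $\Hom(\BB_i[1],\BB_{-1}[1])=\Hom_{\BB_0}(\BB_i,\BB_{-1})$ and $\Hom(\BB_i[1],\BB_2[1])=\Hom_{\BB_0}(\BB_i,\BB_2)$, so it remains to prove these last two groups vanish for $i\ge 4$.

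For this final step I would use that the bimodule multiplication $\BB_a\otimes_{\BB_0}\BB_b\cong\BB_{a+b}$ of \cite[Corollary 3.9]{Kuz:Quadric} makes $\BB_{-1}$ invertible, so that $-\otimes_{\BB_0}\BB_{-1}$ is an autoequivalence of $\Db(\P^3,\BB_0)$ (with inverse $-\otimes_{\BB_0}\BB_1$) sending $\BB_k$ to $\BB_{k-1}$; this is the same mechanism underlying the Serre functor \eqref{eq:serrep3b0}. Iterating it and using the forgetful identity \eqref{eq:forgetful} yields
\[
\Hom_{\BB_0}(\BB_i,\BB_j)\cong\Hom_{\BB_0}(\BB_0,\BB_{j-i})\cong H^0(\P^3,\Forg(\BB_{j-i})).
\]
Now $\Forg(\BB_k)$ is, up to the twist $\OO_{\P^3}(\lfloor k/2\rfloor)$, one of the bundles $\OO\oplus\OO(-1)\oplus\OO(-2)^{\oplus 2}$ or $\OO^{\oplus 2}\oplus\OO(-1)\oplus\OO(-2)$, all of whose summands have nonpositive degree; a direct check gives $H^0(\P^3,\Forg(\BB_k))=0$ whenever $k\le -2$. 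Since for $i\ge 4$ and $j\in\{-1,2\}$ we have $j-i\le -2$, both Hom groups vanish, which completes the proof.

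I expect the only genuine subtlety to be the bookkeeping in the reduction, namely verifying that the torsion terms $\TT_S,\TT_\Gamma$ drop out (which rests squarely on the local freeness of the $\BB_i$) and the identification $\BB_i\otimes_{\BB_0}\BB_{-1}\cong\BB_{i-1}$. Once these are in place, the cohomological vanishing $H^0(\P^3,\Forg(\BB_k))=0$ for $k\le -2$ is entirely routine, and the long exact sequences assemble formally.
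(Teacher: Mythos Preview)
Your proof is correct and follows essentially the same strategy as the paper: handle $i\in\{1,2,3\}$ via membership in $\Ku(\P^3,\BB_0)$, then for $i\ge 4$ apply $\Hom(\BB_i[1],-)$ to the triangles \eqref{eq:triangleofpsiegamma}, \eqref{eq_PsiIGamma}, \eqref{eq:formofpsiox-h} and check vanishing on each factor. The paper simply asserts that ``the vanishing holds for every factor,'' whereas you spell out the two ingredients that make this work---the negative-degree $\Ext$ vanishing for the torsion terms $\TT_S,\TT_\Gamma$, and the computation $\Hom_{\BB_0}(\BB_i,\BB_j)\cong H^0(\P^3,\Forg(\BB_{j-i}))=0$ for $j-i\le -2$---so your write-up is a faithful expansion of the paper's argument.
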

\begin{proof}
By definition, the object $E_\Gamma$ is in $\Ku(Y)$, so the object  $\Psi\rho^*(E_\Gamma))$ is in $\Ku(\P^3,\BB_0)$. Therefore \eqref{eq:homvanofEgamma} holds for $i=1,2,3$. 

When $i\geq 4$, we may apply $\Hom(\BB_i[1],-)$ to the triangles \eqref{eq:triangleofpsiegamma}, \eqref{eq_PsiIGamma} and \eqref{eq:formofpsiox-h}. The vanishing holds for every factor, therefore \eqref{eq:homvanofEgamma} holds.
\end{proof}

\subsection{Potential destabilizing objects for $\Psi(\rho^*E_\Gamma)$ and $\Psi(\rho^*E_C)$ in $\Coh^{-1}(\P^3,\BB_0)$} 
In this section, we prove some lemmas which will be useful to characterize the potential destabilizing objects of $\Psi(\rho^*E_\Gamma)$ and $\Psi(\rho^*E_C)$. In order to do this, we need the following natural definition.

\begin{defn}
Let  $F$ be an object in $\Coh(\P^3,\BB_0)$, we define $$F^{*}\coloneqq\mathcal{H}om_{\OO_{\P^3}}(F,\OO_{\P^3})$$ as the dual of $F$. Note that $\OO_{\P^3}$ is  the center of the algebra $\BB_0$. The dual sheaf $F^*$ becomes a left $\BB_0$-module. The double dual $F^{**}$ is a right $\BB_0$-module. When $F$ is a torsion-free $\OO_{\P^3}$-module, its double dual $F^{**}$ is  reflexive as a $\OO_{\P^3}$-module and we have the natural inclusion 
$$F\hookrightarrow F^{**} \rightarrow F_{\mathrm{s}}$$
as a right $\BB_0$-module. Here $F_{\mathrm{s}}$ is a torsion $\BB_0$-module and $\dim\mathrm{supp}(F_{\mathrm{s}})\leq 1$. 
\label{def:doubledual}
\end{defn}

Recall that the tilt-stability condition $\sigma_{\alpha,\beta}$ is defined in Proposition \ref{prop:constructstabcond}.

\begin{lem}[{\cite[Lemma 3.2]{LPZ1}}]\label{lem:dlt0obj}
Let $E$ be a $\sigma_{\alpha_0,\beta_0}$-semistable object in $\Coh^{\beta_0}(\P^3,\BB_0)$ for some $\alpha_0 > 0$ and $\beta_0 \in \R$. Assume that $\dlt(E)= 0$ and $\rk(E)<0$. Then
$$E=\BB_i^{\oplus n}[1] \quad \text{ for some } i\in \Z \text{ and } n\in\N.$$
\end{lem}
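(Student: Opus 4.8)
The plan is to reduce to the $\sigma_{\alpha_0,\beta_0}$-stable case and then to recognise the stable factors as shifts of the $\BB_i$. For the reduction I would exploit that, by Proposition \ref{prop:constructstabcond}, the quadratic form $\dlt$ is a support property form for $\sigma_{\alpha_0,\beta_0}$: it is negative definite on $\ker Z_{\alpha_0,\beta_0}$ and satisfies $\dlt(F)\ge 0$ on every $\sigma_{\alpha_0,\beta_0}$-semistable $F$. Let $W\subset\Lambda\otimes\R$ be the $2$-dimensional subspace $Z_{\alpha_0,\beta_0}^{-1}(\R\cdot Z_{\alpha_0,\beta_0}(E))$, which contains $\ker Z_{\alpha_0,\beta_0}$ together with all classes of the same phase as $E$. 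On the whole lattice $\dlt$ has signature $(2,1)$, so its restriction to $W$ has signature $(1,1)$: it cannot be negative definite, since $\dlt(E)=0$ with $E\neq 0$. The Jordan--H\"older factors $G_1,\dots,G_m$ of $E$ all have classes in the forward cone $\{v\in W:\ \dlt(v)\ge 0,\ Z_{\alpha_0,\beta_0}(v)\in\R_{\ge 0}Z_{\alpha_0,\beta_0}(E)\}$. Writing $\dlt(-,-)$ for the associated bilinear form, the reverse Cauchy--Schwarz inequality in signature $(1,1)$ gives $\dlt(v(G_j),v(G_k))\ge 0$, so the identity $0=\dlt(E)=\sum_{j,k}\dlt(v(G_j),v(G_k))$ forces $\dlt(G_j)=0$ for every $j$ and $\dlt(v(G_j),v(G_k))=0$ for all $j,k$; since two forward null vectors pair to zero only when proportional, all $v(G_j)$ lie on a single null ray. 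As $\rk(E)<0$, each $G_j$ then has $\rk(G_j)<0$.

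The heart of the matter is to show that a $\sigma_{\alpha_0,\beta_0}$-\emph{stable} object $G$ with $\rk(G)<0$ and $\dlt(G)=0$ equals $\BB_i[1]$ for some $i$. Writing $\HH^\bullet$ for cohomology with respect to the standard heart $\Coh(\P^3,\BB_0)$, I would first prove $\HH^0(G)=0$. There is a short exact sequence $0\to\HH^{-1}(G)[1]\to G\to\HH^0(G)\to 0$ in $\Coh^{\beta_0}(\P^3,\BB_0)$, with $\HH^{-1}(G)\in\Coh^{\le\beta_0}_{\sigma_{\mathrm{slope}}}(\P^3,\BB_0)$ torsion-free and $\HH^0(G)\in\Coh^{>\beta_0}_{\sigma_{\mathrm{slope}}}(\P^3,\BB_0)$. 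Since $\rk(G)<0$ we have $\HH^{-1}(G)\neq 0$; if $\HH^0(G)$ were also nonzero, then $\HH^{-1}(G)[1]$ and $\HH^0(G)$ would be a nontrivial sub- and quotient object of $G$, and a standard support-property argument combining the stability inequalities for their slopes with the Bogomolov bounds $\dlt(\HH^{-1}(G))\ge 0$ and $\dlt(\HH^0(G))\ge 0$ would force a strict inequality $\dlt(G)>0$, a contradiction. Hence $G=A[1]$ with $A=\HH^{-1}(G)$ a $\mu$-stable torsion-free $\BB_0$-module satisfying $\dlt(A)=0$.

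To identify $A$, I would invoke the equality case of the Bogomolov inequality. Restricting to a general surface $S\subset\P^3$, as in the reduction via \cite[Proposition 2.12]{BMMS:Cubics} used already in the proof of Proposition \ref{prop:stabonku}, a $\mu$-stable module with $\dlt=0$ restricts to a $\mu$-stable bundle attaining Bogomolov's bound with equality, hence to a projectively flat $\BB_0|_S$-module; on the simply connected $S$ this forces $A$ to be a single twisted Clifford module, and $\mu$-stability singles out one $\BB_i$. For the reassembly, recall that every $\BB_i$ has rank $4$, so $\rk(G_j)=-4$ for all $j$; combined with the proportionality of the classes $v(G_j)$ this shows that all $v(G_j)$ are in fact \emph{equal}, and since distinct $\BB_i$ have distinct classes, all Jordan--H\"older factors are isomorphic to one fixed $\BB_i[1]$. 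As each $\BB_i$ is exceptional (being a twist of a member of the exceptional collection in \eqref{eq:kuzofp3b0}), we have $\Ext^1(\BB_i[1],\BB_i[1])=\Hom(\BB_i,\BB_i[1])=0$, so the object $E$, being an iterated extension of copies of $\BB_i[1]$, splits as $E\cong\BB_i^{\oplus n}[1]$.

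The main obstacle is the core of the middle step: proving $\HH^0(G)=0$ cleanly and, above all, establishing the equality case of the Bogomolov inequality in the twisted setting of $\BB_0$-modules, namely that a $\mu$-stable torsion-free $\BB_0$-module of vanishing discriminant is one of the $\BB_i$. Everything else is formal manipulation of the support property and of the exceptionality of the $\BB_i$.
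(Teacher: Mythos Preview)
The paper does not prove this lemma; it is quoted verbatim from \cite[Lemma 3.2]{LPZ1}. Your overall architecture---reduce to stable Jordan--H\"older factors via the support property in signature $(1,1)$, identify each stable factor as some $\BB_i[1]$, then reassemble using exceptionality of the $\BB_i$---matches the shape of that argument, and the first and last steps are fine as written.

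The two gaps you flag are real, and your proposed fixes for them are not quite right. For ``$\HH^0(G)=0$'' you invoke $\dlt(\HH^{-1}(G))\ge 0$ and $\dlt(\HH^0(G))\ge 0$, but neither cohomology sheaf is known to be tilt- or slope-semistable at $(\alpha_0,\beta_0)$, so the Bogomolov bound does not apply directly. The standard repair is the observation that an object with $\dlt=0$ has \emph{no walls}: any would-be destabiliser $F$ at some $\alpha$ has $v(F)$ in the same null ray of $W$ as $v(G)$ by your own signature argument, hence has the same slope as $G$ for \emph{every} $\alpha$. Thus $G$ stays $\sigma_{\alpha,\beta_0}$-semistable for all $\alpha>0$, and passing to $\alpha\gg 0$ one gets from \cite[Lemma 2.7]{BMS:stabCY3s} that $\HH^{-1}(G)$ is $\mu$-semistable torsion-free and $\HH^0(G)$ is supported in dimension $\le 1$. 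Now the discriminant computation does go through, forcing $\HH^0(G)$ to be zero-dimensional; once one knows $\HH^{-1}(G)$ is locally free, the extension splits and the resulting zero-dimensional summand contradicts semistability.

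For the identification step, the ``projectively flat on a simply connected surface'' heuristic is not how the equality case of Bogomolov is handled for $\BB_0$-modules: $\BB_0$ is Azumaya only generically, so the relevant rigidity is not that of $\pi_1$-representations. The argument in \cite{LPZ1} (and the closely related Lemma~\ref{lem:rk4b0alg} in this paper) instead restricts to a general $\P^2$ and uses the explicit classification of stable $\BB_0|_{\P^2}$-modules from \cite{BMMS:Cubics,LMS:ACM}: the numerical lattice on the plane is generated by the $\ch(\BB_i|_{\P^2})$, and for a class with $\dlt=0$ the moduli space of stable modules is zero-dimensional, hence a single point, necessarily $\BB_i|_{\P^2}$. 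One then lifts this identification back to $\P^3$.
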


\begin{lem}
Let $F\in\Coh(\P^3,\BB_0)$ be reflexive as an $\OO_{\P^3}$-module with rank $4$, then $F\cong \BB_i$ for some $i\in\Z$.
\label{lem:rk4b0alg}
\end{lem}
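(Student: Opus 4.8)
The plan is to exploit that rank $4$ is the \emph{minimal} positive rank of a torsion-free $\BB_0$-module, so that $F$ is automatically stable, and then to force its discriminant to vanish and conclude via the classification of Lemma \ref{lem:dlt0obj}.

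First I would record the generic structure of $F$. Over the function field $K=\C(\P^3)$ the algebra $\BB_0\otimes K$ is the even Clifford algebra of the generic, nondegenerate conic of the fibration $\pi$, hence a quaternion algebra $D$, and $F\otimes K$ is a $4$-dimensional right $D$-module. Any such module is isomorphic to $D$ itself (if $D$ is a division algebra this is the unique simple module, while if $D\cong M_2(K)$ then $F\otimes K\cong M_2(K)$ as a right module), so $F\cong\BB_0$ generically. Consequently $F$ has no $\BB_0$-submodule of rank strictly between $0$ and $4$: such a submodule would be torsion, which is impossible since $F$ is reflexive, hence torsion-free, or else would have positive rank a multiple of $4$ smaller than $4$, which is impossible by Remark \ref{rem:charofpsi2lambda}. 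Thus $F$ is $\mu$-stable as a $\BB_0$-module; being reflexive it is $\sigma_{\alpha,\beta}$-semistable for $\alpha\gg0$, and Proposition and Definition \ref{prop:constructstabcond} gives the Bogomolov inequality $\dlt(F)\geq 0$.

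The core of the argument is to prove the reverse inequality $\dlt(F)\leq 0$, equivalently $\dlt(F)=0$, by comparing $F$ with $\BB_0$. Since $\P^3$ is smooth of dimension $3$ and $F$ is reflexive, $F$ is locally free away from a finite set $Z$, and on $\P^3\setminus Z$ it is an invertible (rank-one locally free) $\BB_0$-module. A generic isomorphism $F\otimes K\cong D\cong\BB_0\otimes K$ is a rational section of the reflexive sheaf $\lHom_{\BB_0}(F,\BB_0)$; twisting by a suitable power of $\OO_{\P^3}(1)$ and by the invertible odd bimodule $\BB_1$ (which satisfies $\BB_1\otimes_{\BB_0}\BB_1\cong\BB_0\otimes\OO_{\P^3}(1)=\BB_2$) one arranges that this section extends to a nonzero $\BB_0$-module homomorphism $\varphi\colon F\to\BB_j$ for some $j$, which is an isomorphism in codimension one. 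As $\dlt(\BB_j)=0$ and both $F$ and $\BB_j$ are reflexive of rank one over $\BB_0$, controlling the codimension-two cokernel of $\varphi$ forces $\dlt(F)\leq\dlt(\BB_j)=0$. Combined with the previous paragraph this gives $\dlt(F)=0$, and the argument of Lemma \ref{lem:dlt0obj} (applied in positive rank, cf.\ \cite[Lemma 3.2]{LPZ1}) then yields $F\cong\BB_i$.

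The main obstacle is precisely the vanishing $\dlt(F)=0$, i.e.\ ruling out nontrivial reflexive rank-one $\BB_0$-modules. Concretely this amounts to showing that $F$ is actually locally free (that $Z=\varnothing$) and that the class group of the sheaf of maximal orders $\BB_0$ on $\P^3$ is generated by the twists $\OO_{\P^3}(1)$ and the odd bimodule $\BB_1$ — that is, by the family $\{\BB_i\}$. This is where the geometry of the conic fibration $\pi$ enters: one must analyse the local structure of rank-one $\BB_0$-modules along the discriminant of $\pi$ and invoke $\Pic(\P^3)=\Z$, the subtle point being the Brauer-type constraint that prevents a nontrivial reflexive ideal class. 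I expect this local and class-group analysis to be the delicate part, whereas the stability input and the final appeal to the discriminant classification are routine given the machinery assembled in Section \ref{section_preliminaries}.
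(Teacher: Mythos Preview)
Your outline correctly isolates the key point: once you know $\dlt(F)=0$, the reflexivity of $F$ gives the vanishing $\Hom_{\BB_0}(T,F[1])=0$ for every $T$ supported in dimension $\le 1$, so $F[1]$ is $\sigma_{\alpha,\beta}$-stable for $\alpha\gg0$ and suitable $\beta$, and Lemma~\ref{lem:dlt0obj} finishes. You also correctly obtain $\dlt(F)\ge 0$ from minimality of the rank and Bogomolov.

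The gap is in the step $\dlt(F)\le 0$. Your proposed argument produces an injection $\varphi\colon F\hookrightarrow\BB_j$ which is an isomorphism in codimension one; but then the cokernel is a sheaf supported in codimension $\ge 2$, so $\ch_{\BB_0,2}(\mathrm{coker}\,\varphi)\ge 0$, i.e.\ $\ch_{\BB_0,2}(F)\le\ch_{\BB_0,2}(\BB_j)$. Since $\rk(F)=4>0$ and $\ch_1(F)=\ch_1(\BB_j)$, this gives $\dlt(F)\ge\dlt(\BB_j)=0$, the \emph{opposite} inequality to the one you claim. So the codimension-two cokernel contributes in the wrong direction, and you are back to $\dlt(F)\ge 0$, which you already had. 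The ``class-group'' heuristic you invoke does not supply the missing bound either: it is precisely the statement you are trying to prove, and you acknowledge that the local analysis along the discriminant is not carried out.

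The paper avoids this obstruction entirely by \emph{restricting to a general hyperplane} $H\cong\P^2$. Then $F|_H$ is locally free of rank $4$, hence slope-stable as a $\BB_0|_H$-module, and the numerical classification of such modules on $(\P^2,\BB_0|_H)$ from \cite[Proposition~2.12]{BMMS:Cubics} forces $\ch(F|_H)=\ch(\BB_i|_H)+(0,0,-m)$ for some $m\ge 0$. Now one uses the moduli space of stable $\BB_0|_H$-modules with this class: it is smooth, irreducible, $2m$-dimensional, and its generic point is $\Ker(\BB_i|_H\twoheadrightarrow\OO_Z)$ with $Z$ of length $m$ (cf.\ \cite[Theorem~2.12]{LMS:ACM}). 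Semicontinuity then gives $\Hom_{\BB_0|_H}(F|_H,\BB_i|_H)\neq 0$; since $F|_H$ is locally free of the minimal rank, any nonzero map is an isomorphism, so $m=0$ and $\dlt(F)=0$. This is the step your proposal is missing: the inequality $\dlt(F)\le 0$ (in fact equality) comes not from a class-group argument on $\P^3$ but from the well-understood two-dimensional moduli theory on $(\P^2,\BB_0|_{\P^2})$.
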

\begin{proof}
As $F$ is a reflexive $\OO_{\P^3}$-module, we may choose a general hyperplane section $\P^2\cong H\subset \P^3$ such that the restricted sheaf $F|_H$ is a $\BB_0|_H$-module and locally free as a $\OO_H$-module. Since $F|_H$ is of rank $4$ and torsion free, it is a slope stable $\BB_0|_H$-module in $\Coh(H,\BB_0|_H)$. By \cite[Proposition 2.12]{BMMS:Cubics}, the numerical character $$\mathrm{ch}(F|_H)=\mathrm{ch}(\BB_i|_H)+(0,0,-m)$$ for some $i, m\in\Z$. By \cite[Remark 2.2 and Lemma 2.4]{LMS:ACM}, we have 
$$1\geq \chi_{\BB_0|_H}(F|_H,F|_H)= \chi(\BB_i|_H,\BB_i|_H)-2m=1-2m.$$

Hence, $m\geq 0$. We denote $\mathfrak M$ as the moduli space of semistable $\BB_0|_H$-modules with
numerical class $[F|_H]$. By the same argument as that for \cite[Theorem 2.12]{LMS:ACM}, this moduli space is irreducible and smooth with dimension $2m$. A generic point in $\mathfrak M$ stands for the object $\Ker(\BB_i\twoheadrightarrow \OO_Z)$, where $Z$ is a $0$-dimensional subscheme with length $m$ on $H$. By the semi-continuity property, we have
$$\Hom_{\BB_0|_H}(F|_H,\BB_i|_H)\neq 0.$$

As $F|_H$ is locally free with the smallest possible rank as a $\BB_0$-module, we have $F|_H\cong \BB_i|_H$. In particular,
we have $\mathrm{ch}(F|_H)=\mathrm{ch}(\BB_i|_H)$, which implies
\begin{equation}
    \ch_{\BB_0,\leq 2}^{-1}(F|_H)=
\ch_{\BB_0,\leq 2}^{-1}(\BB_i) \implies \dlt(F)= 0.
\label{eq:delta=0}
\end{equation}

For any $T\in\Coh(\P^3,\BB_0)$ such that $\dim\mathrm{supp}(T)\leq 1$, we have Hom$_{\BB_0}(T,F[1])=0$ as otherwise,  by \cite[Lemma 2.15]{BMMS:Cubics} (same statement holds for $(\P^3,\BB_0)$-algebra),  $F$ is strictly contained in another torsion-free $\BB_0$-module $F'$ with the same rank and degree. This contradicts the assumption that $F$ is a reflexive $\OO_{\P^3}$-module.

The object $F[1]$ is therefore $\sigma_{\alpha,\beta}$-stable for $\alpha \gg 0$ and $\beta>\mu_{\mathrm{slope}}(\BB_i)$. By (\ref{eq:delta=0}) and Lemma \ref{lem:dlt0obj}, we have $F\cong\BB_i$.
\end{proof}
\textbf{Notation:}
For an object $F$ in $\Db(\P^3,\BB_0)$, we denote $\HH^{i}(F)\coloneqq \HH^{i}_{\Coh(\P^3,\BB_0)}(F)$ for $i\in \Z$.
\begin{cor}
Let $F$ be an object in $\Coh^{-1}(\P^3,\BB_0)$ with rank $-4$ such that $F$ is  $\sigma_{\alpha,-1}$-stable for $\alpha \gg 0$. Then $\HH^{-1}(F)$ is $\BB_i$, and $\HH^{0}(F)$ is either $0$ or a torsion sheaf such that $\dim\mathrm{supp}(\HH^{0}(F))=1$.
\label{cor:rk4stontheright}
\end{cor}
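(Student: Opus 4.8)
The plan is to split $F$ according to the two-term filtration of the tilted heart, determine the ranks of its cohomology sheaves by examining the large-volume limit $\alpha\to\infty$, and then use the double dual of Definition \ref{def:doubledual} together with $\sigma_{\alpha,-1}$-stability to force $\HH^{-1}(F)\cong\BB_i$ and to constrain $\HH^0(F)$.

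First I would record that, since $F\in\Coh^{-1}(\P^3,\BB_0)$, its only possibly nonzero cohomology sheaves with respect to $\Coh(\P^3,\BB_0)$ are $\HH^{-1}(F)\in\Coh^{\leq -1}_{\sigma_{\mathrm{slope}}}(\P^3,\BB_0)$, which is torsion-free of slope $\leq -1$, and $\HH^0(F)\in\Coh^{>-1}_{\sigma_{\mathrm{slope}}}(\P^3,\BB_0)$. Ranks of torsion-free $\BB_0$-modules being multiples of $4$ (Remark \ref{rem:charofpsi2lambda}), we have $\rk\HH^0(F)-\rk\HH^{-1}(F)=-4$ with both ranks nonnegative multiples of $4$. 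Since $\rk F<0$ one checks that $\mu_{\alpha,-1}(F)\to +\infty$ as $\alpha\to\infty$, whereas a quotient of positive rank would have slope tending to $-\infty$; as $\HH^0(F)$ is a quotient of $F$ in the heart, stability for $\alpha\gg 0$ forces $\rk\HH^0(F)=0$. Thus $\HH^0(F)$ is torsion and $\rk\HH^{-1}(F)=4$.

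The heart of the matter is the reflexivity of $G:=\HH^{-1}(F)$. Being torsion-free of rank $4$, and $4$ being the minimal rank of a nonzero torsion-free $\BB_0$-module, $G$ is slope-semistable, and its double dual $G^{**}$ is reflexive of rank $4$, hence $G^{**}\cong\BB_i$ for some $i$ by Lemma \ref{lem:rk4b0alg}. I must show that the cokernel $G_{\mathrm{s}}=G^{**}/G$, which is supported in dimension $\leq 1$, vanishes. The plan is to form the homotopy pushout $\tilde F$ of the inclusion $G[1]\hookrightarrow F$ along the map $G[1]\to\BB_i[1]$ induced by $G\hookrightarrow G^{**}$. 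As $G_{\mathrm{s}}$ is supported in dimension $\leq 1$ we have $\ch_1(G_{\mathrm{s}})=0$, whence $\mu_{\mathrm{slope}}(\BB_i)=\mu_{\mathrm{slope}}(G)\leq -1$; therefore $\BB_i[1]$ lies in $\Coh^{-1}(\P^3,\BB_0)$, and so does $\tilde F$ (an extension of the torsion sheaf $\HH^0(F)$ by $\BB_i[1]$). The defining triangle $F\to\tilde F\to G_{\mathrm{s}}[1]\xrightarrow{+}$ then yields, on taking cohomology in the heart, a short exact sequence $0\to G_{\mathrm{s}}\to F\to\tilde F\to 0$. Thus $G_{\mathrm{s}}$ is a subobject of $F$ supported in dimension $\leq 1$, so $\Im Z_{\alpha,-1}(G_{\mathrm{s}})=0$ and $\mu_{\alpha,-1}(G_{\mathrm{s}})=+\infty$; since $F$ is $\sigma_{\alpha,-1}$-stable it can have no proper subobject of maximal slope, forcing $G_{\mathrm{s}}=0$ and $\HH^{-1}(F)\cong\BB_i$. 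I expect this reflexivity step to be the main obstacle: the essential idea is to convert the failure of reflexivity into a genuine destabilizing subobject inside the heart via the pushout, and the delicate point is the bookkeeping needed to check that $\tilde F$ remains in the heart.

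It remains to analyze $T:=\HH^0(F)$ from the short exact sequence $0\to\BB_i[1]\to F\to T\to 0$ in the heart. If $\dim\supp T\geq 2$ then $\ch_1(T)>0$, so $T$ is a quotient of $F$ of finite, $\alpha$-independent slope, again contradicting $\mu_{\alpha,-1}(F)\to+\infty$; hence $\dim\supp T\leq 1$. Finally, any $0$-dimensional subsheaf $T_0\subset T$ pulls back to a subobject $F_0\subset F$ fitting in $0\to\BB_i[1]\to F_0\to T_0\to 0$; since $\Ext^2(T_0,\BB_i)=0$ by Serre duality \eqref{eq:serrep3b0} together with the local freeness of $\BB_i$, this sequence splits, exhibiting $T_0$ as a subobject of $F$ of infinite slope, which is again impossible. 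Therefore $T$ is either zero or pure of dimension $1$, giving $\dim\supp\HH^0(F)=1$ and completing the argument.
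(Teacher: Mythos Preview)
Your proof is correct and follows essentially the same strategy as the paper's. Both arguments pass through the double dual of $\HH^{-1}(F)$ and Lemma~\ref{lem:rk4b0alg}, and both convert a nonzero $G_{\mathrm{s}}$ into a subobject of $F$ with infinite slope; your pushout construction and the paper's observation that $\Hom_{\BB_0}(G_{\mathrm{s}},\HH^{-1}(F)[1])\hookrightarrow\Hom_{\BB_0}(G_{\mathrm{s}},F)$ are two phrasings of the same map. The only substantive differences are organizational: the paper cites \cite[Lemma~2.7(c)]{BMS:stabCY3s} to get $\dim\supp\HH^0(F)\leq 1$ in one stroke, whereas you rule out positive rank and $\dim\supp=2$ by direct slope comparisons; and for the final step the paper argues that nonzero $\HH^0(F)$ forces $\Hom(\HH^0(F),\BB_i[2])\neq 0$ (else $F$ would split), hence $\dim\supp\geq 1$, while you take the contrapositive on a $0$-dimensional subsheaf. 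Your version is slightly more self-contained and in fact proves a hair more (purity of $\HH^0(F)$), but the underlying ideas coincide.
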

\begin{proof}
By \cite[Lemma 2.7(c)]{BMS:stabCY3s}, the sheaf $\HH^{-1}(F)$ is torsion-free of rank $4$, and the sheaf $\HH^{0}(F)$ is either $0$ or torsion supported in dimension $\leq 1$.
Consider the double dual of $\HH^{-1}(F)$:
\begin{equation}
0\rightarrow \HH^{-1}(F)\rightarrow (\HH^{-1}(F))^{**}\rightarrow F_s\rightarrow 0.
    \label{eq:extofdd}
\end{equation}
If $F_s$ is non-zero, then we have $\mu_{\alpha,-1}(F_s)=+\infty$ and the injective map 
$$0\rightarrow \Hom_{\BB_0}(F_s,\HH^{-1}(F)[1])\rightarrow \Hom_{\BB_0}(F_s,F). $$
As (\ref{eq:extofdd}) is non-split, we have $\Hom_{\BB_0}(F_s,\HH^{-1}(F)[1])\neq 0$. In particular, $\Hom_{\BB_0}(F_s,F)\neq 0$. This contradicts the stability of $F$. Therefore,  the sheaf $\HH^{-1}(F)$ is reflexive as an $\OO_{\P^3}$-module of rank $4$. By Lemma \ref{lem:rk4b0alg},  $\HH^{-1}(F)\cong \BB_i$ for some $i\leq 0$.

If $\HH^0(F)$ is non-zero, we must have $\Hom_{\BB_0}(\HH^0(F),\HH^{-1}(F)[2])\neq 0$, since otherwise $F=\HH^0(F)\oplus \HH^{-1}(F)[1]$. Note that $\HH^{-1}(F)=\BB_i$ is locally free, so the dimension of the support of $\HH^0(F)$ must be $1$.
\end{proof}

\subsection{Tilt-stability of $\Psi(\rho^*E_{\Gamma})$} \label{subsec_tiltstablGamma}
We are now ready to show the stability of $\Psi(\rho^*E_{\Gamma})$ with respect to $\sigma_{\alpha, -1}$ for $\alpha$ large enough. The following basic commutative algebra lemma will be useful.
\begin{lem}
Let $V$ be a smooth variety and $U$ be a smooth subvariety with dimension $n$. Denote the embedding map by $\iota: U \rightarrow V$. Let $\mathcal G$ be a locally free sheaf on $U$, and $\mathcal F$ be a coherent sheaf on $V$ such that 
$\dim(\mathrm{supp}(\mathcal F)\cap U)=l.$
Then we have $$\mathrm{Ext}^i_{\mathcal O_V}(\mathcal F,\iota_*\mathcal G)=0$$
for $i<n-l$.
\label{lem:caextvanish}
\end{lem}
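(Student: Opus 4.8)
The plan is to dualise over the smooth ambient space $V$ and transport the whole computation onto $U$. Since $\iota$ is a closed immersion one has $\mathbf{R}\iota_*=\iota_*$, so the derived adjunction $(\mathbf{L}\iota^*,\iota_*)$ yields a natural isomorphism
\[
\RHom_{\OO_V}(\mathcal{F},\iota_*\mathcal{G})\;\cong\;\RHom_{\OO_U}(\mathbf{L}\iota^*\mathcal{F},\mathcal{G}),
\]
hence $\Ext^i_{\OO_V}(\mathcal{F},\iota_*\mathcal{G})\cong \Ext^i_{\OO_U}(\mathbf{L}\iota^*\mathcal{F},\mathcal{G})$ for every $i$. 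Because $V$ is smooth, the inclusion $U\hookrightarrow V$ is a regular embedding, so $\mathbf{L}\iota^*\mathcal{F}$ is a bounded complex on $U$ whose cohomology sheaves $\mathcal{H}^{-j}(\mathbf{L}\iota^*\mathcal{F})$ (the local Tor sheaves $\mathcal{T}or_j^{\OO_V}(\mathcal{F},\OO_U)$, $j\ge 0$) are coherent and supported inside $\supp(\mathcal{F})\cap U$, and therefore have dimension at most $l$.

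First I would plug this complex into the hyperext spectral sequence
\[
E_2^{p,q}=\Ext^p_{\OO_U}(\mathcal{H}^{-q}(\mathbf{L}\iota^*\mathcal{F}),\mathcal{G})\;\Longrightarrow\;\Ext^{p+q}_{\OO_U}(\mathbf{L}\iota^*\mathcal{F},\mathcal{G}),
\]
in which only the rows with $q\ge 0$ occur. The statement then reduces to the single claim that for any coherent sheaf $\mathcal{T}$ on $U$ with $\dim\supp(\mathcal{T})\le l$ and any locally free $\mathcal{G}$ one has $\Ext^p_{\OO_U}(\mathcal{T},\mathcal{G})=0$ for $p<n-l$. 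Indeed, granting the claim, every term with $p+q=i<n-l$ satisfies $p\le i<n-l$ and so vanishes, forcing $\Ext^i_{\OO_U}(\mathbf{L}\iota^*\mathcal{F},\mathcal{G})=0$.

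To prove the claim I would argue pointwise and then globalise. At a point $x\in U$ the ring $\OO_{U,x}$ is regular, hence Cohen--Macaulay, so grade equals codimension and $\Ext^p_{\OO_{U,x}}(\mathcal{T}_x,\OO_{U,x})=0$ for $p<\dim\OO_{U,x}-\dim\mathcal{T}_x$; since $U$ is catenary and $\dim\supp(\mathcal{T})\le l$, one checks $\dim\OO_{U,x}-\dim\mathcal{T}_x\ge n-l$ whenever $\mathcal{T}_x\neq0$, so that $\mathcal{E}xt^p_{\OO_U}(\mathcal{T},\OO_U)=0$ for $p<n-l$. As $\mathcal{G}$ is locally free, $\mathcal{E}xt^p_{\OO_U}(\mathcal{T},\mathcal{G})\cong \mathcal{E}xt^p_{\OO_U}(\mathcal{T},\OO_U)\otimes\mathcal{G}$ vanishes in the same range, and the local-to-global spectral sequence
\[
E_2^{a,b}=H^a\big(U,\mathcal{E}xt^b_{\OO_U}(\mathcal{T},\mathcal{G})\big)\;\Longrightarrow\;\Ext^{a+b}_{\OO_U}(\mathcal{T},\mathcal{G})
\]
then gives $\Ext^p_{\OO_U}(\mathcal{T},\mathcal{G})=0$ for $p<n-l$, every nonzero contribution lying in bidegree $b\ge n-l$, $a\ge 0$.

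The only genuine input is the regular-local-ring vanishing $\Ext^p_R(M,R)=0$ for $p<\dim R-\dim M$, equivalently the equality of grade and support-codimension over a Cohen--Macaulay local ring; everything else is the formal adjunction together with two standard spectral sequences. The hard part is thus purely bookkeeping: one must verify that each Tor sheaf is supported in dimension $\le l$ and that smoothness of $V$ keeps $\mathbf{L}\iota^*\mathcal{F}$ bounded, ensuring convergence of both spectral sequences.
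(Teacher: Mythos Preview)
Your proof is correct but takes a different route from the paper's. The paper first applies Serre duality on $V$ to reduce to showing $\Ext^i_{\OO_V}(\iota_*\mathcal{G},\mathcal{F})=0$ for $i>m-n+l$ (where $m=\dim V$), and then uses a single local-to-global spectral sequence on $V$: the $\mathcal{E}xt$ sheaves are supported on $\supp(\mathcal{F})\cap U$ (bounding the $p$-index by $l$), while the Koszul resolution of $\OO_U$ in $\OO_V$ bounds the $q$-index by $m-n$. Your argument instead transports everything to $U$ via the adjunction $(\mathbf{L}\iota^*,\iota_*)$ and then runs two spectral sequences, the key local input being the equality of grade and codimension over a regular local ring.

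Each approach has a small advantage. The paper's is shorter and uses only one spectral sequence, but its invocation of Serre duality tacitly assumes $V$ is proper (which holds in the application but is not stated in the lemma). Your argument avoids any properness hypothesis and isolates the commutative-algebra content cleanly, at the cost of an extra spectral sequence and a slightly more delicate local step.
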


\begin{proof}
Let $m$ be the dimension of $V$, by Serre duality, we need to show that $$\mathrm{Ext}^i_{\mathcal O_V}(\iota_*\mathcal G,\FF)=0$$
for $i>m-n+l$ and every $\FF$ as that in the statement.
By the local to global spectral sequence, we have
$$E^{p,q}_2=H^p(\EE xt^q_{\mathcal O_V}(\iota_*\mathcal G,\FF))\Rightarrow \mathrm{Ext}^{p+q}_{\mathcal O_Y}(\iota_*\mathcal G,\FF).$$
Since $\dim(\mathrm{supp}(\mathcal F)\cap U)=l$, we have $E^{p,q}_2=0$ when $p>l$.

For any closed point $x\in U$, since $\GG$ is locally free on $U$, we have $\iota_*\GG_x\cong \OO_{U,x}^{\oplus r}$ as an $\OO_{V,x}$-module. Since $U$ is smooth in $V$, the quotient module  $\OO_{U,x}$ admits a free resolution of $m-n+1$ terms. Therefore,
$$\EE xt^q_{\mathcal O_V}(\iota_*\mathcal G,\FF)_x\cong \mathrm{Ext}^q_{\OO_{V,x}}(\iota_*\GG_x,\FF_x)=0,$$
when $q\geq m-n+1$.

As a consequence, the term $E^{p,q}_2=0$ when $p+q>m-n+l$, so we get the Ext vanishing as that in the statement.
\end{proof}

Applying Lemma \ref{lem:caextvanish}, we obtain the following result which allows to rule out some destabilizing objects for $\Psi\rho^*E_{\Gamma}$.
\begin{lem}  
\label{lem:nosuppdim1sub}
Let $\Gamma$ be a non-degenerate smooth elliptic quintic spanning  a smooth cubic threefold $X$.
For any $F\in\Coh(\P^3,\BB_0)$ such that $\dim\mathrm{supp}(F)\leq 1$, we have 
$$\Hom_{\BB_0}(F,\Psi\rho^*E_{\Gamma})=0.$$
\end{lem}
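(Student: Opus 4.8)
The plan is to transform the statement into a vanishing on the blow-up $\tilde Y$ via adjunction, and then to read it off from the commutative-algebra vanishing of Lemma~\ref{lem:caextvanish}. Recall from \eqref{eq:defofphi} that $\Psi$ admits a left adjoint $\Phi(-)=\pi^*(-)\otimes_{\pi^*\BB_0}\EE'$. Hence
$$\Hom_{\BB_0}(F,\Psi\rho^*E_\Gamma)\cong \Hom_{\Db(\tilde Y)}(\Phi F,\rho^*E_\Gamma),$$
and it suffices to prove that the right-hand side vanishes.

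First I would identify the target. By Proposition~\ref{prop:EGamaECinKu}(2) we have $E_\Gamma=\iota_*F_\Gamma$ with $F_\Gamma$ a rank $2$ vector bundle on the smooth threefold $X$. By Condition~\ref{cond:choiceofL}(a) the line $L$ meets $X$ in the single point $P$, so $X$ does not contain the blow-up centre $L$; consequently $\tilde X:=\rho^{-1}(X)$ is the smooth blow-up $\mathrm{Bl}_P X$, the Cartesian square relating $\tilde\iota:\tilde X\hookrightarrow\tilde Y$, $\iota$, $\rho$ and $\tilde\rho:\tilde X\to X$ is Tor-independent (a local equation of the Cartier divisor $X$ pulls back to a non-zerodivisor), and base change yields $\rho^*E_\Gamma\cong \tilde\iota_*\mathcal{G}$ with $\mathcal{G}:=\tilde\rho^*F_\Gamma$ locally free of rank $2$ on $\tilde X$.

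Next I would control the source. Since $\pi$ is flat and $\Phi$ is the derived tensor of the sheaf $\pi^*F$ with $\EE'$, the complex $\Phi F$ is concentrated in non-positive cohomological degrees; as the target $\tilde\iota_*\mathcal{G}$ is a sheaf, the standard truncation argument gives
$$\Hom_{\Db(\tilde Y)}(\Phi F,\tilde\iota_*\mathcal{G})\cong \Hom_{\OO_{\tilde Y}}(\HH^0(\Phi F),\tilde\iota_*\mathcal{G}),$$
where $\HH^0(\Phi F)$ denotes the degree-zero cohomology sheaf on $\tilde Y$. Moreover $\mathrm{supp}\,\HH^0(\Phi F)\subseteq \pi^{-1}(\mathrm{supp}\,F)$, so that $\mathrm{supp}\,\HH^0(\Phi F)\cap\tilde X\subseteq (\pi|_{\tilde X})^{-1}(\mathrm{supp}\,F)$. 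The key geometric input is that $\pi|_{\tilde X}:\tilde X\to\P^3$ is generically finite, with one-dimensional fibres only over the finitely many points that are images of the (finitely many, by Condition~\ref{cond:choiceofL}(c)) lines on $X$ through $P$. Since $\dim\mathrm{supp}\,F\leq 1$, this forces $\dim\big((\pi|_{\tilde X})^{-1}(\mathrm{supp}\,F)\big)\leq 1$, hence $l:=\dim(\mathrm{supp}\,\HH^0(\Phi F)\cap\tilde X)\leq 1$.

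Finally I would apply Lemma~\ref{lem:caextvanish} with $V=\tilde Y$ (so $m=4$), $U=\tilde X$ (so $n=3$), $\mathcal{G}$ locally free on $\tilde X$, and $\mathcal{F}=\HH^0(\Phi F)$ with $l\leq 1$: the lemma yields $\Ext^i_{\OO_{\tilde Y}}(\HH^0(\Phi F),\tilde\iota_*\mathcal{G})=0$ for $i<n-l=2$, in particular for $i=0$. Combining the displayed isomorphisms gives $\Hom_{\BB_0}(F,\Psi\rho^*E_\Gamma)=0$. I expect the main obstacle to be the support estimate: arguing that $(\pi|_{\tilde X})^{-1}(\mathrm{supp}\,F)$ remains one-dimensional uses the full force of Condition~\ref{cond:choiceofL} (so that the locus of jumping fibres is finite in $\P^3$), together with the care that $\HH^0(\Phi F)$ is genuinely supported inside $\pi^{-1}(\mathrm{supp}\,F)$; the base-change identification of $\rho^*E_\Gamma$ is routine but must be verified to guarantee that $\mathcal{G}$ really is a vector bundle on a smooth variety, as required by Lemma~\ref{lem:caextvanish}.
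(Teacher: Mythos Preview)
Your argument is correct and follows the same overall strategy as the paper: transport the problem to $\tilde Y$ via the adjunction between $\Phi$ and $\Psi$, bound $\dim\big(\mathrm{supp}(\Phi(\text{--}))\cap\tilde X\big)\leq 1$ using Condition~\ref{cond:choiceofL}(c), and conclude with Lemma~\ref{lem:caextvanish}.

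Your execution is, however, more direct than the paper's. The paper first applies Serre duality on $\Db(\P^3,\BB_0)$, then the adjunction, then Serre duality on $\Db(\tilde Y)$, arriving at $\Hom_{\OO_{\tilde Y}}(\Phi(F\otimes_{\BB_0}\BB_{-3}),\rho^*E_\Gamma\otimes K_{\tilde Y}[1])$; you instead use the left-adjoint property of $\Phi$ in one step to reach $\Hom_{\OO_{\tilde Y}}(\Phi F,\rho^*E_\Gamma)$. Both land in the range where Lemma~\ref{lem:caextvanish} applies, but your version needs only the vanishing for $i=0<2$ rather than $i=1<2$. Your explicit truncation to $\HH^0(\Phi F)$ (justified because $\Phi F\in D^{\le 0}$ and the target is a sheaf) is also a more careful treatment of the fact that $\Phi F$ is a priori a complex; the paper applies Lemma~\ref{lem:caextvanish} directly to $\Phi(F\otimes_{\BB_0}\BB_{-3})$ without spelling this out. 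Finally, your base-change verification that $\rho^*E_\Gamma=\tilde\iota_*\tilde\rho^*F_\Gamma$ with $\tilde\rho^*F_\Gamma$ locally free on the smooth $\tilde X$ is exactly what is needed to feed into Lemma~\ref{lem:caextvanish}.
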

\begin{proof}
By using the property of adjoint functors and Serre duality, we have
\begin{align}
    & \Hom_{\BB_0}(F,\Psi\rho^*E_{\Gamma})\cong (\Hom_{\BB_0}(\Psi\rho^*E_{\Gamma},F\otimes_{\BB_0}\BB_{-3}[3]))^* \\ 
    \cong  &(\Hom_{\mathcal O_{\tilde Y}}(\rho^*E_{\Gamma},\Phi(F\otimes_{\BB_0}\BB_{-3})[3])))^*\\
    \cong & \Hom_{\mathcal O_{\tilde Y}}(\Phi(F\otimes_{\BB_0}\BB_{-3}),\rho^*E_{\Gamma}\otimes K_{\tilde Y}[1]). \label{eq:homY}
\end{align}
Recall from \eqref{eq:defofphi} that  $\Phi$ is the right adjoint functor of $\Psi$. By Condition \ref{cond:choiceofL}(c) on the choice of $P$, the morphism $\pi:\tilde X\rightarrow \P^3$ is generically finite and only contracts finitely many lines. Thus we have 
$$\dim(\mathrm{supp}(\Phi(F\otimes_{\BB_0}\BB_{-3}))\cap \tilde X)\leq 1.$$
By Lemma \ref{lem:caextvanish}, we conclude that the  $\Hom_{\mathcal O_{\tilde Y}}$ in the formula (\ref{eq:homY}) is $0$.
\end{proof}
\begin{prop}
Let $\Gamma$ be a non-degenerate smooth elliptic quintic spanning  a smooth cubic threefold. Then the object $\Psi\rho^*(E_\Gamma)$ is in $\Coh^{-1}(\P^3,\BB_0)$ and   $\sigma_{\alpha,-1}$-stable for $\alpha \gg 0$.
\label{prop:stabofEgamma}
\end{prop}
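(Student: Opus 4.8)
The plan is to locate $\Psi\rho^*(E_\Gamma)$ inside the heart $\Coh^{-1}(\P^3,\BB_0)$ and then to rule out destabilizing subobjects for $\alpha\gg0$. First I would record the numerics: by \eqref{eq:char-86} one has $\ch^{-1}_{\BB_0,\leq 2}(\Psi\rho^*(E_\Gamma))=(-8,6,\tfrac74)$, so the rank is $-8$, $\ch^{-1}_{\BB_0,1}=6$, and $\dlt(\Psi\rho^*(E_\Gamma))=6^2-2(-8)\tfrac74=64$. Since $\mu_{\mathrm{slope}}(\BB_j)=\tfrac{2j-5}{4}$, the module $\BB_{-1}$ has slope $-\tfrac74$ (hence $\BB_{-1}[1]\in\Coh^{-1}$) while $\BB_2$ has slope $-\tfrac14$ (hence $\BB_2\in\Coh^{-1}$), and the torsion modules $\TT_S,\TT_\Gamma$ lie in $\Coh^{-1}$. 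Using \eqref{eq:formofpsiox-h} I would check that $\Psi\rho^*(\OO_X(-H))$ is the shift $G_1[1]$ of a rank $4$ torsion-free module (an extension of $\BB_{-1}[1]$ by $\TT_S$ in $\Coh^{-1}$), and using \eqref{eq_PsiIGamma} that $\Psi\rho^*(\II_{\Gamma/X}(H))$ lies in $\Coh^{-1}[1]$, of the form $G_2[1]$. Feeding this into the cohomology sequence of \eqref{eq:triangleofpsiegamma} shows $\HH^{0}(\Psi\rho^*(E_\Gamma))=0$ and that $G:=\HH^{-1}(\Psi\rho^*(E_\Gamma))$ is a rank $8$ torsion-free module fitting in $0\to G_1\to G\to G_2\to 0$; equivalently $\Psi\rho^*(E_\Gamma)\cong G[1]$ with $\mu_{\mathrm{slope}}(G)=-\tfrac74$. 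Membership $\Psi\rho^*(E_\Gamma)\in\Coh^{-1}$ amounts to $G$ having no subsheaf of slope $>-1$, which I would obtain from the triangle together with the vanishings $\Hom(\BB_j[1],\Psi\rho^*(E_\Gamma))=0$ for $j\geq 1$ supplied by Lemma \ref{lem:homvanforEgamma}.

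For $\sigma_{\alpha,-1}$-stability I argue by contradiction. A destabilizing subobject for $\alpha\gg0$ must have negative rank: a rank $0$ subobject has $\mu_{\alpha,-1}$ staying finite while $\mu_{\alpha,-1}(\Psi\rho^*(E_\Gamma))\to+\infty$ (and a rank $0$ object supported in dimension $\leq 1$ is excluded by Lemma \ref{lem:nosuppdim1sub}), while the full rank $-8$ is impossible since $G[1]$ has no nonzero rank $0$ quotient in $\Coh^{-1}$. Hence a stable destabilizing factor $A_1\subset\Psi\rho^*(E_\Gamma)$ has rank $-4$, and by Corollary \ref{cor:rk4stontheright} its standard cohomology is $\HH^{-1}(A_1)=\BB_i$ with $\HH^{0}(A_1)$ torsion of dimension $\leq 1$; in particular $\ch_1$ of this torsion vanishes. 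The standard truncation gives $\BB_i[1]\hookrightarrow A_1\hookrightarrow\Psi\rho^*(E_\Gamma)$ in $\Coh^{-1}$, so $\BB_i[1]\in\Coh^{-1}$ forces $i\leq 0$, while the leading-order destabilizing inequality $\tfrac{4}{1-2i}\geq\tfrac{8}{6}$ forces $i\geq -1$. Thus the only surviving cases are $i\in\{-1,0\}$, and each would produce a nonzero class in $\Hom(\BB_i[1],\Psi\rho^*(E_\Gamma))$.

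These two borderline cases are exactly where I expect the main difficulty: they are precisely the slopes not covered by Lemma \ref{lem:homvanforEgamma} (indeed $\BB_{-1}=\Psi\rho^*(\OO_Y(-H))$ occurs among the building blocks, so $\Hom(\BB_{-1}[1],\Psi\rho^*(E_\Gamma))$ need not vanish a priori), and $i=-1$ is the slope-equality case responsible for strictness of stability. To settle them I would compute $\Hom(\BB_i[1],\Psi\rho^*(E_\Gamma))$ for $i=-1,0$ directly, transporting the group to $\tilde Y$ through the adjunction $(\Phi,\Psi)$ of \eqref{eq:defofphi} and Serre duality exactly as in the proof of Lemma \ref{lem:nosuppdim1sub}, and then applying the $\Ext$-vanishing Lemma \ref{lem:caextvanish} together with Condition \ref{cond:choiceofL} on the auxiliary line $L$ and the non-degeneracy of $\Gamma$ (which guarantees $\pi\rho^{-1}(\Gamma)\cong\Gamma$ and controls the support that enters the computation). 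Showing that these maps cannot yield a genuine destabilizing subobject — equivalently, that the extension $0\to G_1\to G\to G_2\to 0$ is non-split and slope-stable — is the crux of the argument; once it is done, $\Psi\rho^*(E_\Gamma)$ is $\sigma_{\alpha,-1}$-stable for $\alpha\gg0$, proving the proposition.
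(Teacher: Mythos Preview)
Your outline contains the right ingredients for membership in $\Coh^{-1}(\P^3,\BB_0)$, but the stability argument has a genuine gap, and it stems from analysing destabilising \emph{subobjects} rather than \emph{quotients}.

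First, a minor point: your cohomological description of the building blocks is not quite right. From the triangle $\TT_S\to\Psi\rho^*(\OO_X(-H))\to\BB_{-1}[1]$ one does \emph{not} obtain $\Psi\rho^*(\OO_X(-H))=G_1[1]$ for a torsion-free sheaf; its $\HH^0$ is a (possibly nonzero) quotient of $\TT_S$, a $2$-dimensional torsion sheaf. Likewise the paper's Step~1 shows only that $\HH^0(\Psi\rho^*(E_\Gamma))$ is supported in dimension~$0$ (it is $R^1\pi_*$ along the finitely many contracted lines through $P$), not that it vanishes. These issues do not affect the numerics you need, but your claim ``$\HH^0=0$'' is unjustified.

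The serious issue is your endgame. You reduce to showing $\Hom(\BB_i[1],\Psi\rho^*(E_\Gamma))=0$ for $i\in\{-1,0\}$, and correctly flag this as the crux --- but you do not resolve it, and there is no reason to expect Lemma~\ref{lem:caextvanish} or adjunction to deliver these vanishings. Indeed $\Hom(\BB_0[1],\Psi\rho^*(E_\Gamma))\cong H^0(\P^3,\Forg(\HH^{-1}))$, and there is nothing in your setup that forces this to vanish. The paper's proof bypasses this difficulty by working on the quotient side: a destabilising quotient $F$ for $\alpha\gg0$ is $\sigma_{+\infty,-1}$-stable of rank $-4$, hence by Corollary~\ref{cor:rk4stontheright} has $\HH^{-1}(F)=\BB_i$; one shows $F=\BB_i[1]$ and then, after excluding $i=0$ numerically (there $\mu_{\alpha,-1}(\BB_0[1])>\mu_{\alpha,-1}(\Psi\rho^*(E_\Gamma))$, so it cannot be a codestabilising quotient) and bounding $i\geq-2$ via $\ch^{-1}_{\BB_0,1}(K)\geq0$, the remaining cases $i\in\{-2,-1\}$ are killed by Serre duality
\[
\Hom_{\BB_0}(\Psi\rho^*(E_\Gamma),\BB_i[1])\cong\Hom_{\BB_0}(\BB_{i+3},\Psi\rho^*(E_\Gamma)[2])^*=0,
\]
which vanishes precisely because $i+3\in\{1,2\}$ and $\Psi\rho^*(E_\Gamma)\in\Ku(\P^3,\BB_0)$. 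This is the key trick you are missing: the orthogonality defining $\Ku$ is tailored to the quotient side, not the subobject side. Switching from subobjects to quotients turns your open ``crux'' into a one-line Serre-duality computation.
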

\begin{proof}
\textbf{Step 1:}
By Condition \ref{cond:choiceofL} on the choice of $P$, the projection map from $\pi|_{\tilde X}:\tilde X\rightarrow \P^3$ is generically finite  except contracting (the transverse image of) finitely many lines that across $P$ on $X$. Note that $\rho^*E_\Gamma$ is locally free, hence by definition of $\Psi$ in \eqref{eq:defofPsi} and Lemma \ref{lem:Eisvb}, the object $\Psi(\rho^*E_\Gamma)$ is contained in the extension closure of $\{\mathcal{T}\mathrm{or}^{\leq 0},\Coh(\P^3,\BB_0)[1]\}$, where $\mathcal T\mathrm{or}^{\leq 0}$ consists of torsion $\BB_0$-modules supported on a $0$-dimensional locus. In particular, the object $\Psi\rho^*(E_\Gamma)$ sits in  the distinguished triangle
\begin{equation}\label{eq:torextforpregamma}
    G[1]\rightarrow \Psi(\rho^*E_\Gamma) \rightarrow T\xrightarrow{+},
\end{equation}
where $G\in \Coh(\P^3,\BB_0)$ and $T$ is a torsion $\BB_0$-modules supported on a $0$-dimensional locus. 

\textbf{Step 2:} To show that $\Psi\rho^*(E_\Gamma)\in\Coh^{-1}(\P^3,\BB_0)$, it is enough to show that for any torsion-free $\BB_0$-module $D$ with rank $4$ and slope $\mu_{\mathrm{slope}}(D)>-1$, the vanishing 
$$\Hom_{\BB_0}(D,G)=0$$
holds.
Suppose $\Hom_{\BB_0}(D,G)\neq 0$, then $\Hom_{\BB_0}(D[1],\Psi(\rho^*E_\Gamma))\neq0$. Taking the double dual of $D$ as that in Definition \ref{def:doubledual}, we have the distinguished triangle
$$D_{\mathrm{tor}}\rightarrow D[1]\rightarrow D^{**}[1]\xrightarrow{+},$$
where $D_{\mathrm{tor}}$ is a torsion $\BB_0$-module supported at a locus with dimension at most $1$. By Lemma \ref{lem:nosuppdim1sub}, we have $\Hom_{\BB_0}(D^{**}[1],\Psi(\rho^*E_\Gamma))\neq0$. By Lemma \ref{lem:rk4b0alg} and the fact that $\mu_{\mathrm{slope}}(\BB_0)=-\frac{5}4$,  we may assume $$D^{**}\cong \BB_i\text{ for some }i\geq 1.$$
By Lemma \ref{lem:homvanforEgamma}, this can never happen. As a summary, the object $\Psi(\rho^*E_\Gamma)$ is in $\Coh^{-1}(\P^3,\BB_0)$.

\textbf{Step 3:} To show that $\Psi(\rho^*E_\Gamma)$ is $\sigma_{\alpha,-1}$-stable for $\alpha \gg 0$, we need to rule out the possibility that 
\begin{enumerate}
    \item[(i)] $\Psi(\rho^*E_\Gamma)$ has a sub-torsion object which is a torsion $\BB_0$-module with support of dimension at most $1$;
    \item[(ii)] $\Psi(\rho^*E_\Gamma)$ has a quotient object $F$ in $\Coh^{-1}(\P^3,\BB_0)$ such that  $F$ is a $\sigma_{+\infty,-1}$-stable with rank $-4$ and $\mu_{\alpha,-1}(F)\leq\mu_{\alpha,-1}(\Psi(\rho^*E_\Gamma))$ for $\alpha\gg 0$.
\end{enumerate}
Case (i) cannot happen by Lemma \ref{lem:nosuppdim1sub}. As for Case (ii), let $K$ be the kernel of $\Psi(\rho^*E_\Gamma)\twoheadrightarrow F$ in $\Coh^{-1}(\P^3,\BB_0)$.  Consider the exact sequence in $\Coh(\P^3,\BB_0)$:
$$0\rightarrow \HH^{-1}(K)\rightarrow \HH^{-1}(\Psi(\rho^*E_\Gamma))\rightarrow \HH^{-1}(F)\rightarrow \HH^{0}(K)\rightarrow \HH^{0}(\Psi(\rho^*E_\Gamma))\rightarrow \HH^{0}(F)\rightarrow 0.$$
Note that the term $\HH^{0}(\Psi(\rho^*E_\Gamma))$ is supported on a $0$-dimensional locus or is zero. By Corollary \ref{cor:rk4stontheright}, $\HH^0(F)=0$ and the object  $$F=\HH^{-1}(F)[1]=\BB_{i}[1]$$ for some $i\leq 0$. By Definition \ref{def:weakstabfunction}, Proposition \ref{prop:constructstabcond} and Remark \ref{rem:charofpsi2lambda}, we have $$\mu_{\alpha,-1}(\BB_0[1])=2\alpha^2+\frac{1}8>\frac{16\alpha^2+7}{24}=\mu_{\alpha,-1}(\Psi(\rho^*E_\Gamma))$$ for $\alpha\gg 0$. Therefore $\BB_0[1]$ does not codestabilize $\Psi(\rho^*E_\Gamma)$. We may assume $F=\BB_i$ for some $i\leq -1$. 

Note that $K\in \Coh^{-1}(\P^3,\BB_0)$, so we have $\ch^{-1}_{\BB_0,1}(K)\geq 0$ which implies $\ch^{-1}_{\BB_0,1}(F)\leq 6$. Therefore, we may assume $F=\BB_i$ for some $i\geq -2$.

In either case of $i=-1,-2$, by Serre duality and the fact that $\Psi(\rho^*E_\Gamma)$ is an object in $ \Ku(\P^3,\BB_0)$, we have
$$\Hom_{\BB_0}(\Psi(\rho^*E_\Gamma),\BB_i[1])\cong\left(\Hom_{\BB_0}(\BB_{i+3},\Psi(\rho^*E_\Gamma)[2])\right)^*=0.$$
 Therefore, Case (ii) can neither happen. We conclude that $\Psi(\rho^*E_\Gamma)$ is $\sigma_{\alpha,-1}$-stable for $\alpha \gg 0$.
\end{proof}

\subsection{Tilt-Stability of $\Psi\rho^*(E_C)$} \label{subsec_tiltstabC}
Let $C$ be a smooth conic curve on $Y$. Similarly to the case of non-degenerate elliptic quintics, we now study the $\sigma_{\alpha, -1}$-stability of the object $\Psi\rho^*(E_C)$ for $\alpha$ large enough. We choose the blown-up line $L$ for $\rho:\tilde Y\rightarrow Y$ such that:
\begin{cond}\label{cond:LforEC}
\begin{enumerate}
    \item [\rm(a)] the line $L$ does not intersect the projective plane spanned by $C$;
    \item [\rm(b)] the plane spanned by $L$ and a generic point  on $C$ intersects $Y$ at the union of $L$ and a smooth conic curve.
\end{enumerate}
\end{cond}
 By Lemma \ref{lem:formofEC}, the object $\Psi\rho^*(E_C)$ sits in the distinguished triangle:
\begin{equation}\label{eqn:formularofpsirhoEC}
    \Psi\rho^*\left(\OO_Y(-H)^{\oplus 2}[1]\right) \to \Psi\rho^*(E_C) \to \Psi\rho^*(\mathsf L_{\OO_Y}(\theta_C(H))[-1]).
\end{equation}

By \eqref{eq:imageofOHs},  the triangle can be simplified as
\begin{equation}
\label{eq_defE_CasB_0mod}
\BB_{-1}^{\oplus 2}[1] \to \Psi\rho^*(E_C) \to T_C,    
\end{equation}
where $$T_C:=\Psi\rho^*(\mathsf L_{\OO_Y}(\theta_C(H))[-1])=\Psi\rho^*(\theta_C(H)[-1]).$$ The second equality is by noticing that $\Psi\rho^*(\OO_Y)=0$.  By the choice of $L$ as that in Condition \ref{cond:LforEC}(a), the image $C'\coloneqq\pi(\rho^{-1}(C))$ is a smooth conic in $\P^3$. By the definition of $\Psi$ in \eqref{eq:defofPsi} and Lemma \ref{lem:Eisvb}, the object $T_C$ is a torsion $\BB_0$-module supported on $C'$.
\begin{lem}\label{lem:TCasasheaf}
Adopt the notation as above.
\begin{enumerate}
    \item As a $\OO_{\P^3}$-coherent sheaf, $\Forg(T_C)\cong \OO_{C'}^{\oplus 2}$. 
\item A torsion $\BB_0$-module with $C'$ as its support has rank at least $2$.  
\end{enumerate}
In particular, the sheaf $T_C$ is indecomposable as a $\BB_0$-module.
\end{lem}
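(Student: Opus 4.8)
The plan is to compute $T_C$ explicitly as an $\OO_{\P^3}$-sheaf, extract its $\BB_0$-module structure on a general fibre of $\pi$, and combine the two to rule out a nontrivial splitting. First I would unwind the definition: by \eqref{eq:defofPsi} one has $T_C=\Psi\rho^*(\theta_C(H)[-1])=\pi_*(\rho^*\theta_C(H)\otimes\EE)$, since the shifts cancel. By Condition \ref{cond:LforEC}(a) the blown-up line $L$ is disjoint from the plane of $C$, so $\rho$ is an isomorphism near $C$, giving $\rho^{-1}(C)\cong C$, and the projection restricts to an isomorphism $\pi\colon\rho^{-1}(C)\xrightarrow{\sim}C'$. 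As $\rho^*\theta_C(H)\otimes\EE$ is supported on $\rho^{-1}(C)$ and $\pi$ is an isomorphism there, I obtain $\Forg(T_C)\cong\theta_C(H)\otimes\Forg(\EE)|_{\rho^{-1}(C)}$, viewed on $C'$ through this isomorphism. By Lemma \ref{lem:Eisvb} this is a rank-two vector bundle on $C'\cong\P^1$; in particular $T_C$ has generic rank $2$ on $C'$.

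For statement (1) it remains to identify this rank-two bundle with $\OO_{C'}^{\oplus2}$. I would first compute its degree from the triangle \eqref{eq_defE_CasB_0mod}: the class $[T_C]=[\Psi\rho^*(E_C)]+2[\BB_{-1}]$, together with the Chern character of $E_C$ from Proposition \ref{prop:EGamaECinKu}(4) and Riemann--Roch on $\P^3$, yields degree $0$ on $C'\cong\P^1$. A degree-zero rank-two bundle on $\P^1$ has the form $\OO(a)\oplus\OO(-a)$, so I must still exclude $a>0$. For this I would restrict the defining sequence \eqref{eq:defofcE} to a general fibre $C_p=\pi^{-1}(p)$, which by Condition \ref{cond:LforEC}(b) is a smooth conic: it becomes $0\to\OO_{\P^2}(-2)^{\oplus4}\to\OO_{\P^2}(-1)^{\oplus4}\to\alpha_*\EE|_{\P^2}\to0$, exhibiting $\Forg(\EE)|_{C_p}$ as the balanced spinor bundle $\OO_{\P^1}(-1)^{\oplus2}$. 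Combining this fibrewise balancedness with a direct computation of the restriction of $\EE$ along the horizontal curve $\rho^{-1}(C)$ (equivalently, the vanishing $\Hom_{\OO_{C'}}(\OO_{C'}(1),\Forg(T_C))=0$, obtained by adjunction from a $\Hom$-vanishing on $\tilde Y$ in the style of the proof of Lemma \ref{lem:nosuppdim1sub}) forces $a=0$, hence $\Forg(T_C)\cong\OO_{C'}^{\oplus2}$.

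Statement (2) I would prove fibrewise. By Condition \ref{cond:LforEC}(b) the fibre $C_p$ is a smooth conic for general $p\in C'$, so the fibre algebra $\BB_0|_p$ is the even Clifford algebra of a nondegenerate rank-three quadratic form, which is isomorphic to $M_2(\C)$ by the structure of $\BB_0$ recalled from \cite{Kuz:Quadric}. Every nonzero module over $M_2(\C)$ has $\C$-dimension divisible by $2$, so any torsion $\BB_0$-module $N$ whose support is $C'$ satisfies $\dim_\C\bigl(N\otimes k(p)\bigr)\geq2$ at a general point $p\in C'$; that is, its generic rank on $C'$ is at least $2$.

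Finally, indecomposability follows formally. If $T_C\cong A\oplus B$ were a nontrivial decomposition of $\BB_0$-modules, then $\Forg(A)$ and $\Forg(B)$ would be $\OO_{C'}$-direct summands of $\Forg(T_C)\cong\OO_{C'}^{\oplus2}$, which is torsion-free over $\OO_{C'}$; hence neither summand is supported on a finite set, so both are supported on all of the irreducible curve $C'$. By part (2) each then has generic rank at least $2$, giving $\rk\Forg(T_C)\geq4$ and contradicting $\rk\Forg(T_C)=2$. Therefore $T_C$ is indecomposable as a $\BB_0$-module. I expect the genuine obstacle to be the final step of (1): the degree computation only determines $a$ up to sign, and pinning down the balancedness of $\Forg(\EE)|_{\rho^{-1}(C)}$ — via the fibrewise spinor computation together with the adjunction $\Hom$-vanishing — is the delicate point.
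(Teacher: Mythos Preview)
Your arguments for part (2) and for indecomposability are essentially the paper's: the even Clifford algebra at a generic smooth fibre is $M_2(\C)$, so any $\BB_0$-module supported on $C'$ has generic rank $\geq 2$, and then the rank count kills any nontrivial splitting.

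For part (1), however, your proposal has a real gap at exactly the point you flagged. Your fibrewise computation of $\EE|_{C_p}$ concerns the restriction of $\EE$ to a $\pi$-fibre, whereas what you need is the restriction to the horizontal section $\rho^{-1}(C)$; these are transverse directions, and balancedness on fibres says nothing about balancedness along a section. The alternative you sketch --- a $\Hom$-vanishing ``in the style of Lemma \ref{lem:nosuppdim1sub}'' --- is not actually carried out, and there is no obvious adjunction that produces $\Hom_{\OO_{C'}}(\OO_{C'}(1),\Forg(T_C))=0$ from the geometry alone.

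The paper bypasses this difficulty by using a structural input you did not exploit: since $E_C\in\Ku(Y)$, one has $\Psi\rho^*(E_C)\in\Ku(\P^3,\BB_0)$, hence $\Hom_{\BB_0}(\BB_1,\Psi\rho^*(E_C)[i])=0$ for all $i$. Twisting by $\BB_{-1}$ and applying \eqref{eq:forgetful} turns this into $H^i(C',\FF_{C'}\otimes\theta_{C'})=0$ for all $i$, where $\Forg(T_C)=\iota_*\FF_{C'}$ and $\theta_{C'}\cong\OO_{\P^1}(-1)$. For $\FF_{C'}\cong\OO(a)\oplus\OO(-a)$ this reads $H^i(\P^1,\OO(a-1)\oplus\OO(-a-1))=0$, which forces $a=0$. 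So the Kuznetsov orthogonality is exactly the missing ingredient that pins down the splitting type without any delicate restriction computation; I recommend replacing your degree-plus-balancedness argument with this.
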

\begin{proof}
(1). Note that $\Psi\rho^*(E_C)$ is an object in $\Ku(\P^3,\BB_0)$, so we have $$0=\Hom_{\BB_0}(\BB_1,\Psi\rho^*(E_C)[i])=\Hom_{\BB_0}(\BB_0,T_C\otimes_{\BB_0}\BB_{-1}[i])$$ for every $i\in \Z$. Denote the embedding map by $\iota:C'\rightarrow \P^3$. By the definition of $\Psi$ in \eqref{eq:defofPsi} and Lemma \ref{lem:Eisvb}, the sheaf $\Forg(T_C)=\iota_*(\FF_{C'})$ for some rank $2$ locally free sheaf $\FF_{C'}$ on $C'$.  By \eqref{eq:forgetful}, we have$$0=\Hom_{\OO_{\P^3}}(\OO_{\P^3},\Forg(T_C\otimes_{\BB_0}\BB_{-1})[i])=\Hom_{\OO_{C'}}(\OO_{C'},\FF_{C'}\otimes \theta_{C'}[i])$$ for every $i\in\Z$. This can only happen when $\FF_{C'}\cong\OO_{C'}^{\oplus 2}$.

(2). By the choice of $L$ as that in Condition \ref{cond:LforEC}(b), the $\BB_0$-algebra structure as that in \eqref{eq:clifalg} on a generic point on $C$ is isomorphic to $\mathrm{Mat}_{2\times 2}(\C)$, the $2$ by $2$ complex matrices, as a $\C$-algebra. Since a $\mathrm{Mat}_{2\times 2}(\C)$-module is at least with dimension $2$ as a $\C$-vector space, there is no torsion  $\BB_0$-module supported on $C'$ with rank $1$. In particular, $T_C$ is indecomposable as a $\BB_0$-module.
\end{proof}

\begin{lem}
\label{lem:basicforhomivan}
If $F\in\Coh(\P^3,\BB_0)$ is such that $\dim\mathrm{supp}(F)\leq m$, then $\Hom_{\BB_0}(F,\BB_i[j])=0,$ for $j\leq 2-m$ and all $i\in\Z$.
\end{lem}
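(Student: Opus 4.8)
The plan is to reduce the statement to an elementary cohomology vanishing on $\P^3$ by combining Serre duality with the local freeness of the $\BB_i$ over $\BB_0$.

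First I would apply the Serre functor \eqref{eq:serrep3b0} to rewrite
$$\Hom_{\BB_0}(F,\BB_i[j]) \cong \Hom_{\BB_0}\!\left(\BB_i[j],\,F\otimes_{\BB_0}\BB_{-3}[3]\right)^{*} = \Ext^{\,3-j}_{\BB_0}(\BB_i,\GG)^{*},$$
where $\GG:=\mathsf S_{\BB_0}(F)[-3]=F\otimes_{\BB_0}\BB_{-3}$. Since $\mathsf S_{\BB_0}$ is an exact autoequivalence that preserves the heart $\Coh(\P^3,\BB_0)$, the object $\GG$ is again a sheaf, and as $\BB_{-3}$ is locally free of rank one over $\OO_{\P^3}$ its forgetful image $\Forg(\GG)$ is supported on $\supp(F)$, so $\dim\supp(\GG)\le m$. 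Writing $\ell:=3-j$, the hypothesis $j\le 2-m$ becomes $\ell\ge m+1$, and it suffices to prove that $\Ext^{\ell}_{\BB_0}(\BB_i,\GG)=0$ for all $\ell\ge m+1$.

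The key structural input is that every $\BB_i$ is a locally free right $\BB_0$-module of rank one. For even $i=2k$ this is immediate from $\BB_{2k}=\BB_0\otimes\OO_{\P^3}(k)$, while for odd $i=2k+1$ it follows from $\BB_{2k+1}=\BB_1\otimes\OO_{\P^3}(k)$ together with the fact that $\BB_1$ is an invertible $\BB_0$-bimodule. Indeed, $-\otimes_{\BB_0}\BB_1$ coincides, up to the line bundle twist $\otimes\OO_{\P^3}(2)$ and a shift, with the Serre functor \eqref{eq:serrep3b0}, hence is an autoequivalence; this forces $\BB_1$ to be locally free of rank one on each side (concretely, at a corank-one degeneration of the conic fibration a direct computation of the fibre algebra shows $\BB_1\cong\BB_0$ as right $\BB_0$-modules). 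Consequently $\lHom_{\BB_0}(\BB_i,-)$ is exact and $\mathcal{E}xt^{q}_{\BB_0}(\BB_i,-)=0$ for all $q>0$. With this in hand the local-to-global spectral sequence collapses to $\Ext^{\ell}_{\BB_0}(\BB_i,\GG)\cong H^{\ell}\!\left(\P^3,\lHom_{\BB_0}(\BB_i,\GG)\right)$. The sheaf $\lHom_{\BB_0}(\BB_i,\GG)$ injects into $\lHom_{\OO_{\P^3}}(\Forg(\BB_i),\Forg(\GG))$, hence is supported on $\supp(\GG)$, of dimension $\le m$; by Grothendieck vanishing its cohomology vanishes in degrees $>m$. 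As $\ell\ge m+1$, we conclude $\Ext^{\ell}_{\BB_0}(\BB_i,\GG)=0$, and therefore $\Hom_{\BB_0}(F,\BB_i[j])=0$.

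The main obstacle is precisely the structural claim that the odd bimodule $\BB_1$ is locally free of rank one over $\BB_0$, including along the discriminant of the conic fibration, where $\BB_0$ is no longer Azumaya. I expect this to be the only point requiring genuine care; everything else is a formal consequence of Serre duality, vanishing of higher $\mathcal{E}xt$ against a locally free module, and Grothendieck vanishing on the support. This step can be settled either by the invertible-bimodule argument coming from the Serre functor \eqref{eq:serrep3b0}, or by the explicit fibrewise computation on the degeneration locus.
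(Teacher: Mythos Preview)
Your argument is correct and follows essentially the same route as the paper: apply Serre duality to flip the $\Hom$, then reduce to the vanishing of high-degree cohomology of a sheaf supported in dimension $\le m$. The paper's version is slightly more streamlined: rather than invoking local freeness of $\BB_i$ over $\BB_0$ and the local-to-global spectral sequence, it uses invertibility of the bimodules (so that $\Hom_{\BB_0}(\BB_i,\GG)\cong\Hom_{\BB_0}(\BB_0,\GG\otimes_{\BB_0}\BB_{-i})$) together with the forgetful adjunction \eqref{eq:forgetful} to rewrite the space directly as $H^{3-j}(\P^3,\Forg(F\otimes_{\BB_0}\BB_{-3-i}))^*$, and then applies Grothendieck vanishing.

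Two small remarks. First, your concern about $\BB_1$ being locally free over $\BB_0$ along the discriminant is not a real obstacle: it is exactly the content of \cite[Corollary 3.9]{Kuz:Quadric} (used repeatedly in the paper), and is equivalently settled by the Serre-functor argument you yourself sketch. Second, a slip: $\BB_{-3}$ is not ``locally free of rank one over $\OO_{\P^3}$'' --- its $\OO_{\P^3}$-rank is $4$. What you need, and what holds, is that it is locally free of rank one as a $\BB_0$-module, which is enough to ensure that $F\otimes_{\BB_0}\BB_{-3}$ has the same support as $F$.
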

\begin{proof}
By Serre duality and \eqref{eq:forgetful},
\begin{align*}
\Hom_{\BB_0}(F,\BB_i[j])& \cong(\Hom_{\BB_0}(\BB_0,F\otimes_{\BB_0}\BB_{-3-i}[3-j]))^* \\
&\cong(\Hom_{\OO_{\P^3}}(\OO_{\P^3},\Forg(F\otimes_{\BB_0}\BB_{-3-i})[3-j]))^*=0
\end{align*}
when $3-j\geq 1+m$.
\end{proof}
\begin{lem}
\label{lem:nosuppdim1subEC}
For any $F\in\Coh(\P^3,\BB_0)$ such that $\dim\mathrm{supp}(F)\leq 1$, we have 
$$\Hom_{\BB_0}(F,\Psi(\rho^*E_C))=0.$$
\end{lem}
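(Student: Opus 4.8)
The plan is to follow the strategy of Lemma \ref{lem:nosuppdim1sub}, but to exploit the explicit presentation of $\Psi(\rho^*E_C)$ as the two-term object \eqref{eq_defE_CasB_0mod}. First I would apply $\Hom_{\BB_0}(F,-)$ to the distinguished triangle \eqref{eq_defE_CasB_0mod}
$$\BB_{-1}^{\oplus 2}[1]\to\Psi(\rho^*E_C)\to T_C\xrightarrow{+}.$$
Since $\dim\mathrm{supp}(F)\leq 1$, Lemma \ref{lem:basicforhomivan} (with $m=1$, $j=1$) gives $\Hom_{\BB_0}(F,\BB_{-1}^{\oplus 2}[1])=0$, so the long exact sequence yields an injection
$$\Hom_{\BB_0}(F,\Psi(\rho^*E_C))\hookrightarrow \Hom_{\BB_0}(F,T_C),$$
whose image is the kernel of the connecting map $\Hom_{\BB_0}(F,T_C)\to\Ext^2_{\BB_0}(F,\BB_{-1}^{\oplus 2})$, that is, composition with the extension class of \eqref{eq_defE_CasB_0mod}. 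Hence it suffices to show that no nonzero map $F\to T_C$ lifts to $\Psi(\rho^*E_C)$.

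Next I would reduce by the support of $F$. Filtering $F$ by dimension of support, write $0\to F_0\to F\to F_1\to 0$ with $F_0$ of $0$-dimensional support and $F_1$ pure of dimension one. By Lemma \ref{lem:TCasasheaf}(1) the sheaf $T_C$ is pure of dimension one with support $C'$, so any morphism from a sheaf meeting $C'$ in dimension $0$ vanishes; in particular $\Hom_{\BB_0}(F_0,T_C)=0$, and $\Hom_{\BB_0}(F_1,T_C)=0$ whenever $\mathrm{supp}(F_1)$ contains no component of $C'$. Combined with the injection above and the long exact sequence of the filtration, this disposes of every $F$ with $\dim(\mathrm{supp}(F)\cap C')=0$ and reduces the statement to the case in which $F$ is an $\BB_0$-module supported on the smooth conic $C'$.

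For this remaining case the naive support argument of Lemma \ref{lem:nosuppdim1sub} no longer suffices, and this is the main obstacle. Here I would combine the adjunction isomorphism $\Hom_{\BB_0}(F,\Psi(\rho^*E_C))\cong \Hom_{\OO_{\tilde Y}}(\Phi(F\otimes_{\BB_0}\BB_{-3}),\rho^*E_C\otimes K_{\tilde Y}[1])$ from \eqref{eq:homY} with the decomposition of $E_C$ coming from Proposition \ref{prop:EGamaECinKu}(1): the triangle $\OO_Y(-H)^{\oplus 2}\to E_C\to \iota_*F_C$ together with the resolution $0\to F_C\to \OO_X^{\oplus 2}\to\theta_C(H)\to 0$. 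The contributions of the $\OO_Y(-H)$ summands are again killed by Lemma \ref{lem:basicforhomivan}, and those of $\iota_*\OO_X^{\oplus 2}$ by Lemma \ref{lem:caextvanish} applied with $U=\tilde X$ (using that $\pi|_{\tilde X}$ is generically finite by Condition \ref{cond:LforEC}, so that $\mathrm{supp}(\Phi(F\otimes_{\BB_0}\BB_{-3}))$ meets $\tilde X$ in dimension $\leq 1$). The only surviving contribution is the one from $\theta_C(H)$, which corresponds precisely to the self-maps of $T_C$. To rule these out I would use the indecomposability and minimal rank of $T_C$ from Lemma \ref{lem:TCasasheaf}, which force $\End_{\BB_0}(T_C)=\C$ and force every nonzero $\phi\colon F\to T_C$ to have finite-length cokernel, and then show that composition with the nonzero extension class of \eqref{eq_defE_CasB_0mod} is injective on such $\phi$. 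Proving this last non-degeneracy of the extension class along $C'$—where the hypothesis Condition \ref{cond:LforEC}(b), that the Clifford algebra is generically $\mathrm{Mat}_{2\times 2}(\C)$ along $C'$, is essential—is the crux of the argument and the step I expect to be the main difficulty.
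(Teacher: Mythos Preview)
Your overall framework is correct and matches the paper: apply $\Hom_{\BB_0}(F,-)$ to \eqref{eq_defE_CasB_0mod}, use Lemma \ref{lem:basicforhomivan} to get the injection into $\Hom_{\BB_0}(F,T_C)$, and reduce to showing that no nonzero $\phi:F\to T_C$ lifts. You also correctly observe (via Lemma \ref{lem:TCasasheaf}) that any such $\phi$ is injective with finite-length cokernel $F'$.

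However, at the key step you take an unnecessary and problematic detour. The paper's argument for ``composition with the extension class is injective'' is entirely elementary and stays in $\Db(\P^3,\BB_0)$: apply $\Hom_{\BB_0}(-,\BB_{-1}^{\oplus 2})$ to the short exact sequence $0\to F\xrightarrow{\phi} T_C\to F'\to 0$. Since $F'$ has $0$-dimensional support, Lemma \ref{lem:basicforhomivan} with $m=0$ gives $\Hom_{\BB_0}(F',\BB_{-1}^{\oplus 2}[2])=0$, so the map
\[
\phi^*:\Hom_{\BB_0}(T_C,\BB_{-1}^{\oplus 2}[2])\to\Hom_{\BB_0}(F,\BB_{-1}^{\oplus 2}[2])
\]
is injective. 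Hence the extension class $\mathsf{ev}:T_C\to\BB_{-1}^{\oplus 2}[2]$ (which is nonzero) has nonzero composition with $\phi$, and $\phi$ does not lift. The general case then follows by induction on $\ch_2$: if $g:F\to\Psi(\rho^*E_C)$ is nonzero, its image $g'$ in $\Hom(F,T_C)$ is nonzero; the vanishing for $\mathrm{im}(g')\subset T_C$ was just proved, and the vanishing for $\ker(g')$ holds by induction.

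Your proposed detour through $Y$ has genuine problems. You invoke a triangle involving $\iota_*F_C$ and want to apply Lemma \ref{lem:caextvanish} with $U=\tilde X$, but no cubic threefold $X$ is fixed in the setup for $E_C$: the line $L$ is chosen according to Condition \ref{cond:LforEC}, which says nothing about $L$ meeting a hyperplane section in a single point or about $\pi|_{\tilde X}$ being generically finite. So the hypotheses of Lemma \ref{lem:caextvanish} are not available here. The claim $\End_{\BB_0}(T_C)=\C$ is also not established and is not needed.
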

\begin{proof}
We first show that for any sub-$\BB_0$-module $F$ of $T_C$, the statement holds. By Lemma \ref{lem:TCasasheaf}, the sheaf $F$ is supported on $C'$, locally free with rank $2$ as a sheaf on $C'$. Moreover, any non-zero morphism $f:F\rightarrow T_C$ is injective. Applying $\Hom_{\BB_0}(-,\BB_{-1}^{\oplus 2})$ to the short exact sequence \begin{equation}
\label{eq_FandF'}
0 \to F \xrightarrow{f} T_C \to F' \to 0,
\end{equation} by Lemma \ref{lem:basicforhomivan}, we have $$\Hom_{\BB_0}(F',\BB_{-1}^{\oplus 2}[2])=0\text{ and }\Hom_{\BB_0}(F,\BB_{-1}^{\oplus 2}[1])=0.$$ Thus the morphism $$f\circ -\colon\Hom_{\BB_0}(T_C,\BB_{-1}^{\oplus 2}[2]) \to \Hom_{\BB_0}(F, \BB_{-1}^{\oplus 2}[2])$$ is injective. In other words, the composition of $\mathsf{ev}:T_C \to \BB_{-1}^{\oplus 2}[2]$ with any non-zero $f$ is a non-zero morphism in $\Hom_{\BB_0}(F, \BB_{-1}^{\oplus 2}[2])$. Therefore, in \eqref{eqn:formularofpsirhoEC}, any non-zero morphism $f:F\rightarrow T_C $ cannot lift to a morphism from $F$ to $\Psi\rho^*(E_C)$. Note that $\Hom_{\BB_0}(F,\BB_{-1}[1])=0$ by Lemma \ref{lem:basicforhomivan}, so the statement holds for any sub-$\BB_0$-module $F$ of $T_C$.

As for an arbitrary $F$ with $\dim\supp{F}\leq 1$, we make induction on its $\ch_2$. Let $g$ be a morphism in $\Hom_{\BB_0}(F,\Psi(\rho^*E_C))$. Applying $\Hom_{\BB_0}(F,-)$ to \eqref{eqn:formularofpsirhoEC}, we have
$$..\to \Hom_{\BB_0}(F,\BB_{-1}[1]^{\oplus 2})\to \Hom_{\BB_0}(F,\Psi(\rho^*E_C))\to \Hom_{\BB_0}(F,T_C)\to..$$
and since $\Hom_{\BB_0}(F,\BB_{-1}[1])=0$, the morphism $g$ is mapped to a  morphism $g'$. 

Suppose $g\neq 0$, then $g'\neq 0$ and $\Hom_{\BB_0}(\mathrm{im}(g'),\Psi(\rho^*E_C))=0$ as $\mathrm{im}(g')$ is a submodule of $T_C$. By induction, $\Hom_{\BB_0}(\mathrm{ker}(g'),\Psi(\rho^*E_C))=0$. Therefore, $\Hom_{\BB_0}(F,\Psi(\rho^*E_C))=0$, which contradicts $g\neq 0$. 
\end{proof}

\begin{prop}
Let $C$ be a smooth conic curve. Then the object $\Psi\rho^*(E_C)$ is in $\Coh^{-1}(\P^3,\BB_0)$ and   $\sigma_{\alpha,-1}$-stable for $\alpha \gg 0$.
\label{prop:stabofEC}
\end{prop}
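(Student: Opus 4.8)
The plan is to follow the exact template of Proposition~\ref{prop:stabofEgamma}, carrying out the same three-step argument but with the potential destabilizers controlled by the structure of the triangle \eqref{eq_defE_CasB_0mod} rather than \eqref{eq:torextforpregamma}. First I would establish that $\Psi\rho^*(E_C)$ lies in the heart $\Coh^{-1}(\P^3,\BB_0)$. For this, note that by Remark~\ref{rem:wkstabonp3b0}(ii) and the convention recorded in the excerpt, $\BB_{-1}[1]$ lies in $\Coh^{-1}(\P^3,\BB_0)$ (indeed $\BB_j[1]\in\Coh^{-1}$ precisely when $j\leq 2$, so $j=-1$ qualifies), and the torsion sheaf $T_C$, being supported in dimension $1$, also lies in $\Coh^{-1}(\P^3,\BB_0)$. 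Since the heart is extension-closed, the defining triangle \eqref{eq_defE_CasB_0mod} exhibits $\Psi\rho^*(E_C)$ as an extension of two objects of the heart, hence $\Psi\rho^*(E_C)\in\Coh^{-1}(\P^3,\BB_0)$. This is cleaner than the $E_\Gamma$ case because we do not need the double-dual reflexivity argument.

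Second, I would rule out the two types of destabilizing quotients and subobjects exactly as in the proof of Proposition~\ref{prop:stabofEgamma}. For type (i), a torsion subobject $F$ with $\dim\mathrm{supp}(F)\leq 1$, the vanishing $\Hom_{\BB_0}(F,\Psi\rho^*(E_C))=0$ is precisely Lemma~\ref{lem:nosuppdim1subEC}, so this case is excluded. For type (ii), a $\sigma_{+\infty,-1}$-stable quotient $F$ of rank $-4$, the same long exact sequence in cohomology combined with Corollary~\ref{cor:rk4stontheright} forces $F=\BB_i[1]$ for some $i\leq 0$; the slope comparison rules out $i=0$ for $\alpha\gg0$ (here one uses the character $\ch_{\BB_0,\leq 2}^{-1}(\Psi\rho^*E_C)$ computed from $2\lambda_1+\lambda_2$ via Remark~\ref{rem:charofpsi2lambda}, noting $E_C$ has character $2\lambda_1+\lambda_2$ by Proposition~\ref{prop:EGamaECinKu}(4), not $2\lambda_1+2\lambda_2$), and the constraint $\ch^{-1}_{\BB_0,1}(K)\geq0$ bounds $i$ from below, leaving $i\in\{-1,-2\}$. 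For these remaining cases I would invoke Serre duality together with $\Psi\rho^*(E_C)\in\Ku(\P^3,\BB_0)$ to get
$$\Hom_{\BB_0}(\Psi\rho^*(E_C),\BB_i[1])\cong\left(\Hom_{\BB_0}(\BB_{i+3},\Psi\rho^*(E_C)[2])\right)^*=0,$$
since $\BB_{i+3}\in\{\BB_2,\BB_1\}$ is one of the exceptional objects defining $\Ku(\P^3,\BB_0)$.

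The main obstacle I anticipate is that the character of $E_C$ differs from that of $E_\Gamma$: it equals $2\lambda_1+\lambda_2$ (as verified in Proposition~\ref{prop:EGamaECinKu}(4)), not $2\lambda_1+2\lambda_2$, so the rank and slope bookkeeping must be redone with care. In particular I must recompute $\ch_{\BB_0,\leq 2}^{-1}(\Psi\rho^*(E_C))$ from \eqref{eq:charoflambdas}, verify that its rank is still $-4$ so that Corollary~\ref{cor:rk4stontheright} applies verbatim, and recheck the slope inequality $\mu_{\alpha,-1}(\BB_0[1])>\mu_{\alpha,-1}(\Psi\rho^*(E_C))$ for $\alpha\gg0$ with the corrected central-charge values. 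The subtler point in Lemma~\ref{lem:nosuppdim1subEC} — that the evaluation map $T_C\to\BB_{-1}^{\oplus2}[2]$ restricts injectively on any sub-$\BB_0$-module, which relies on the indecomposability of $T_C$ established in Lemma~\ref{lem:TCasasheaf} — has already been handled, so once these numerical checks are in place the proof concludes by the identical structural argument as Proposition~\ref{prop:stabofEgamma}.
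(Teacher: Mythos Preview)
Your overall structure is right, but there are two genuine gaps.

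First, your claim that $E_C$ has character $2\lambda_1+\lambda_2$ is incorrect. Proposition~\ref{prop:EGamaECinKu}(4) states that both $E_\Gamma$ and $E_C$ have character $2\lambda_1+2\lambda_2$ (the displayed formula in its proof ending in ``$\ch(2\lambda_1+\lambda_2)$'' is a typo). You can check this directly from the triangle~\eqref{eq_defE_CasB_0mod}: the rank of $\Psi\rho^*(E_C)$ is $-8$, matching $2\lambda_1+2\lambda_2$ via~\eqref{eq:char-86}, whereas $2\lambda_1+\lambda_2$ would give rank $0$ by~\eqref{eq:charoflambdas}. Your entire ``main obstacle'' paragraph is thus a red herring; the numerics are identical to the $E_\Gamma$ case.

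Second, and more substantive: your reduction to $F=\BB_i[1]$ in Step~(ii) is too fast. Corollary~\ref{cor:rk4stontheright} only gives $\HH^{-1}(F)=\BB_i$, with $\HH^0(F)$ possibly a nonzero torsion sheaf of $1$-dimensional support. In the $E_\Gamma$ proof this was disposed of because $\HH^0(\Psi\rho^*E_\Gamma)$ had $0$-dimensional support, but here $\HH^0(\Psi\rho^*E_C)=T_C$ is $1$-dimensional, so that shortcut fails. The paper instead exploits the explicit shape of the long exact sequence: since $\HH^{-1}(\Psi\rho^*E_C)=\BB_{-1}^{\oplus 2}$ and $\Hom_{\BB_0}(\BB_{-1},\BB_i)=0$ for $i\leq -2$, one gets $\HH^{-1}(F)=\BB_{-1}$ directly (not via your $\ch^{-1}_{\BB_0,1}(K)\geq 0$ bound). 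Then $\HH^0(F)$ is a quotient of $T_C$; by Lemma~\ref{lem:TCasasheaf} it is either locally free of rank $2$ on $C'$ (excluded by slope) or supported in dimension $0$ (excluded by Corollary~\ref{cor:rk4stontheright}). Only then does $F=\BB_{-1}[1]$ follow, and a single Serre-duality vanishing (for $\BB_2$) finishes. Your Serre-duality computation for $\Hom(\Psi\rho^*E_C,\BB_i[1])$ does not control $\Hom(\Psi\rho^*E_C,F)$ until you have established $\HH^0(F)=0$.
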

\begin{proof}
By (\ref{eq_defE_CasB_0mod}), the object $\Psi\rho^*(E_C)$ is in $\Coh^{-1}(\P^3,\BB_0)$.   To show that $\Psi(\rho^*E_C)$ is $\sigma_{\alpha,-1}$-stable for $\alpha \gg 0$, we need to rule out the possibility that 
\begin{enumerate}
    \item[(i)] $\Psi(\rho^*E_C)$ has a sub-torsion object which is a torsion $\BB_0$-module with support of dimension at most $1$;
    \item[(ii)] $\Psi(\rho^*E_C)$ has a quotient object $F$ in $\Coh^{-1}(\P^3,\BB_0)$ such that $F$ is a $\sigma_{+\infty,-1}$-stable with rank $-4$ and $\mu_{\alpha,-1}(F)\leq\mu_{\alpha,-1}(\Psi(\rho^*E_\Gamma))$ for $\alpha\gg 0$.
\end{enumerate}
Case (i) cannot happen by Lemma \ref{lem:nosuppdim1subEC}. As for Case (ii), let $K$ be the kernel of $\Psi(\rho^*E_C)\twoheadrightarrow F$ in $\Coh^{-1}(\P^3,\BB_0)$, then we have the exact sequence in $\Coh(\P^3,\BB_0)$:
$$0\rightarrow \HH^{-1}(K)\rightarrow \BB_{-1}^{\oplus 2}\rightarrow \HH^{-1}(F)\rightarrow \HH^{0}(K)\rightarrow T_C \rightarrow \HH^{0}(F)\rightarrow 0.$$
By Corollary \ref{cor:rk4stontheright}, we may assume  $\HH^{-1}(F)$ is  $\BB_{i}[1]$ for some $i\leq 0$. Note that $\BB_0[1]$ has a larger slope than $\Psi(\rho^*E_\Gamma)$ with respect to $\sigma_{\alpha,-1}$, so we have $i\leq -1$. Since Hom$_{\BB_0}(\BB_{-1}^{\oplus 2},\BB_i)=0$ for every $i\leq -2$, the sheaf $\HH^{-1}(F)$ can only be $\BB_{-1}$ as well as $\HH^{-1}(K)$. Hence we have the sequence  in $\Coh(\P^3,\BB_0)$:
$$0\rightarrow  \HH^{0}(K)\rightarrow T_C \rightarrow \HH^{0}(F)\rightarrow 0.$$
 By Lemma \ref{lem:TCasasheaf}, the sheaf $\HH^0(F)$ is  either with $0$-dimensional support, or supported on $C'$, locally free of rank  $2$ as a sheaf on $C'$. The second case cannot happen since the slope of $F$ would be larger than that of $\Psi(\rho^*E_C)$. By Corollary \ref{cor:rk4stontheright}, $\HH^0(F)=0$. In other words, $F=\BB_{-1}[1]$.
 
 Since $\Psi(\rho^*E_C)\in \Ku(\P^3,\BB_0)$, by Serre duality, we have
$$\Hom_{\BB_0}(\Psi(\rho^*E_C),F)\cong(\Hom_{\BB_0}(\BB_{2},\Psi(\rho^*E_C)[2]))^*=0.$$
 Therefore, Case (ii) can neither happen. The object $\Psi(\rho^*E_C)$ is $\sigma_{\alpha,-1}$-stable for $\alpha \gg 0$.
\end{proof}

\subsection{No actual walls for  $\Psi\rho^*(E_\Gamma)$ and $\Psi\rho^*(E_C)$}\label{sec:nowall}

By Propositions \ref{prop:stabofEgamma} and \ref{prop:stabofEC}, we have the $\sigma_{\alpha,-1}$-stability of $\Psi\rho^*(E_\Gamma)$ ($\Psi\rho^*(E_C)$) for $\alpha\gg 0$. In this section, we show that $\sigma_{\alpha,-1}$-stable objects in $\Ku(\P^3,\BB_0)$ with this character cannot be destabilized when $\alpha$ decreases. 

We first list the character $\ch^{-1}_{\BB_0,\leq 2}$ of all possible destabilizing objects with respect to the weak stability conditions $\sigma_{\alpha,-1}$. Recall that the rank of $\BB_0$-modules on $\P^3$ is always a multiple of $4$. We can write the characters of potential destabilizing subobjects and quotient objects for $\Psi\rho^*(E_\Gamma)$ and $\Psi\rho^*(E_C)$ as
\begin{equation}
\label{chF}
\ch^{-1}_{\BB_0,\leq 2}(\Psi\rho^*(2\lambda_1+2\lambda_2))=(-8,6,\frac{14}8)=(4a,b,\frac{c}{8})+(-8-4a,6-b,\frac{14}8-\frac{c}{8}),
\end{equation}
where $a, b, c \in \Z$. These characters have to satisfy the following conditions:
\begin{enumerate}
    \item[(a)] The two characters have non-negative discriminant $\dlt$ by Proposition \ref{prop:constructstabcond}.
    \item[(b)] The two characters should be integral combinations of the characters of $\ch^{-1}_{\BB_0,\leq 2}(\BB_i)$ for $i=-1,0,1$ by restriction to $\Coh(\P^2,\BB_0|_{\P^2})$ and \cite[Proposition 2.12]{BMMS:Cubics}. In particular, the set 
    \begin{equation}\label{eq:basisforch2}
    \{(4,1,\frac{1}8),(0,2,0),(0,0,1)\}
         \end{equation} forms a $\Z$-linear basis for all possible characters. 
    \item[(c)] There exists $\alpha>0$ such that the two characters have the same slope with respect to $\sigma_{\alpha,-1}$. In particular, both $b$ and $6-b>0$. 
    \item[(d)] Without loss of generality, we may assume that the character $(4a,b,\frac{c}8)$ is the character of a destabilizing subobject. The equivalent numerical assumption is  $$\frac{4a}{b}>\frac{\rk(\Psi\rho^*(2\lambda_1+2\lambda_2))}{\ch^{-1}_{\BB_0,1}(\Psi\rho^*(2\lambda_1+2\lambda_2))}=-\frac{4}3.$$ 
\end{enumerate}

Using these conditions, by a standard computation we obtain the following result.

\begin{prop}
\label{walls}
All possible solutions of \eqref{chF} are:
\begin{enumerate}
\item for $\alpha=\frac{\sqrt{17}}{4}$, $a=0$, $b=2$, $c=16$; 
\item for $\alpha=\frac{\sqrt 5}{4}$, 
\begin{enumerate}
\item[(i)] $a=-1$, $b=5$, $c=15$;
\item[(ii)] $a=0$, $b=4$, $c=16$;
\item[(iii)] $a=0$, $b=2$, $c=8$;
\end{enumerate}
\item for $\alpha=\frac{1}{4}$, $a=1$, $b=3$, $c=9$.
\end{enumerate}
\end{prop}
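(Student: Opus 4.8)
The plan is to rewrite the four conditions (a)--(d) as explicit arithmetic constraints on $a,b,c\in\Z$ and then carry out a finite search. Writing a character as $(\rk,\ch^{-1}_{\BB_0,1},\ch^{-1}_{\BB_0,2})$, the central charge of Proposition \ref{prop:constructstabcond} gives $\mu_{\alpha,-1}(-8,6,\tfrac{14}{8})=\tfrac{1}{6}(\tfrac{14}{8}+4\alpha^2)$ for the total class and $\mu_{\alpha,-1}(4a,b,\tfrac{c}{8})=\tfrac{1}{b}(\tfrac{c}{8}-2a\alpha^2)$ for the potential subobject. Condition (c) asks these slopes to coincide for some $\alpha>0$; solving the resulting linear equation in $\alpha^2$ yields
\[
\alpha^2=\frac{3c-7b}{16(b+3a)} .
\]
Condition (d), namely $\tfrac{4a}{b}>-\tfrac{4}{3}$, is equivalent to $3a+b>0$ (using $b>0$), so the denominator above is positive; hence $\alpha^2>0$ forces $3c-7b>0$. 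Combined with $1\le b\le 5$ from (c), these sign conditions already give the lower bounds $a\ge -1$ and $c>\tfrac{7b}{3}$. Finally, expanding $(4a,b,\tfrac{c}{8})$ in the basis \eqref{eq:basisforch2} shows that condition (b) is equivalent to the congruences $a\equiv b\pmod 2$ and $c\equiv a\pmod 8$, while condition (a) becomes $\dlt(v_1)=b^2-ac\ge 0$ and $\dlt(v_2)=(6-b)^2+(2+a)(14-c)\ge 0$, where $v_1=(4a,b,\tfrac{c}{8})$ and $v_2=(-8-4a,6-b,\tfrac{14-c}{8})$.

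The crucial point is that the lower bounds $a\ge -1$ and $c>\tfrac{7b}{3}>0$, together with the two discriminant inequalities, make the search finite. If $a\ge 1$ then $ac\le b^2\le 25$ with $a,c$ positive, bounding both $a$ and $c$; if $a=0$ the inequality $\dlt(v_1)\ge 0$ is automatic, but $\dlt(v_2)\ge 0$ gives $c\le 14+\tfrac{1}{2}(6-b)^2$; and if $a=-1$, which by (d) requires $b\ge 4$, then $\dlt(v_2)\ge 0$ gives $c\le 14+(6-b)^2$. In each branch $a$ and $c$ therefore range over an explicit finite set.

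It then remains to enumerate, for each $b\in\{1,2,3,4,5\}$, the admissible $a$ (of the parity of $b$, with $3a+b>0$) and, for each such $a$, the values $c\equiv a\pmod 8$ in the allowed window, discarding every triple for which $\dlt(v_1)$ or $\dlt(v_2)$ is negative and reading $\alpha$ off the displayed formula. This is the routine computation referred to in the statement. For example, $b=1$ yields nothing since $\dlt(v_1)=1-ac<0$ once $a\ge 1$ and $c\ge 3$; for $b=2,\ a=0$ the window $8\le c\le 22$ with $c\equiv 0\pmod 8$ leaves $c\in\{8,16\}$, giving $\alpha=\tfrac{\sqrt5}{4}$ and $\alpha=\tfrac{\sqrt{17}}{4}$; for $b=3,\ a=1$ the constraint $\dlt(v_1)=9-c\ge0$ with $c\equiv1\pmod8$ and $c>7$ forces $c=9$; for $b=4,\ a=0$ one finds $c=16$; and for $b=5,\ a=-1$ one finds $c=15$, while $b=5,\ a=1$ is eliminated because $\dlt(v_2)<0$ for the surviving $c\in\{17,25\}$. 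Collecting the survivors reproduces exactly the list (1)--(3).

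The step I would be most careful about is not any single arithmetic check but the completeness of the case division: one must verify that the sign deductions $3a+b>0$ and $3c-7b>0$ are genuinely forced, so that $a\ge -1$ is a theorem rather than a convenient assumption, and that in the branches $a\in\{-1,0\}$, where $\dlt(v_1)\ge 0$ carries no information, the quotient discriminant $\dlt(v_2)\ge 0$ by itself terminates the search. No geometric input is needed beyond the already-established properties of $\sigma_{\alpha,-1}$; once (a)--(d) are recast as above, the argument is a bounded integer computation.
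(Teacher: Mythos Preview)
Your proof is correct and follows essentially the same approach as the paper's: both translate conditions (a)--(d) into explicit arithmetic constraints on $(a,b,c)$ and carry out a finite enumeration. The paper's proof is only a sketch, first bounding $-12\le -8-4a\le 0$ (i.e.\ $a\in\{-1,0,1\}$) and then listing the admissible pairs $(a,b)$ before pinning down $c$; you instead derive the explicit wall formula $\alpha^2=\tfrac{3c-7b}{16(b+3a)}$, extract the parity congruences from \eqref{eq:basisforch2}, and let the discriminant inequalities bound the search --- your treatment of $a\ge 2$ is implicit in the inequality $ac\le b^2$ combined with $c>7b/3$, rather than isolated as a separate lemma, but the enumeration is complete and the outcome identical.
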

\begin{proof}
We sketch the steps of the computation here. The first step is to rule out the `higher rank' wall case. Namely, by the non-negativity condition (a), (c) and (d), one may deduce that $-12\leq -8-4a\leq 0$. Then by condition (b) and (d), the possible pairs $(a,b)$ are $(-1,5)$, $(0,2)$, $(0,4)$, $(1,1)$, $(1,3)$, and $(1,5)$. By condition (a) again, one can list all possible triples of $(a,b,c)$.
\end{proof}

\begin{prop}\label{prop:nowallforobjinKU}
Let $E$ be a $\sigma_{\alpha_0,-1}$-stable object in $\Ku(\P^3,\BB_0)$ and $\Coh^{-1}(\P^3,\BB_0)$ with $\chbl(E)=(-8,6,\frac{14}8)$. Then $E$ is $\sigma_{\alpha,-1}$-stable for any $\alpha\leq \alpha_0$.
\end{prop}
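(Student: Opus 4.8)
The plan is to run the standard wall-crossing argument for the fixed class $\chbl(E)=(-8,6,\frac{14}8)$, using that the quadratic form $\dlt$ of Proposition \ref{prop:constructstabcond} makes the set of walls in the half-line $\{\sigma_{\alpha,-1}\}_{\alpha>0}$ locally finite, so that $\sigma_{\alpha,-1}$-stability can only fail, as $\alpha$ decreases from $\alpha_0$, by $E$ becoming strictly semistable at one of finitely many numerical walls. By Proposition \ref{walls} these walls are exhausted by the list (1)--(3) stated there, so it suffices to show that at each of those $\alpha$-values $E$ admits no destabilizing short exact sequence $0\to F\to E\to Q\to0$ in $\Coh^{-1}(\P^3,\BB_0)$ with $F,Q$ nonzero, $\sigma_{\alpha,-1}$-semistable of equal slope and of the corresponding characters. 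The two inputs I would feed into every case come from $E\in\Ku(\P^3,\BB_0)$: the semiorthogonal decomposition \eqref{eq:kuzofp3b0} gives $\Hom(\BB_i,E[p])=0$ for $i=1,2,3$ and all $p$, and applying the Serre functor $\mathsf S_{\BB_0}(-)=(-)\otimes_{\BB_0}\BB_{-3}[3]$ (so that $\mathsf S_{\BB_0}\BB_j\cong\BB_{j-3}[3]$) converts this, via Serre duality, into $\Hom(E,\BB_i[p])=0$ for $i=-2,-1,0$ and all $p$.

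The clean cases are those in which one factor has vanishing discriminant. A direct computation of $\dlt$ on the splittings of Proposition \ref{walls} shows that the quotient has $\dlt=0$ in cases (2)(i) and (2)(ii), while the subobject has $\dlt=0$ in case (3). In each such case the factor is $\sigma_{\alpha,-1}$-semistable with $\dlt=0$ and rank a nonzero multiple of $4$, so Lemmas \ref{lem:dlt0obj} and \ref{lem:rk4b0alg}, together with Corollary \ref{cor:rk4stontheright}, force it to be a direct sum of copies of a single $\BB_j$ or of a single $\BB_j[1]$. Matching $\chbl$ then pins it down exactly: the $\dlt=0$ quotient is $\BB_0[1]$ in case (2)(i) and $\BB_0[1]^{\oplus2}$ in case (2)(ii), and the $\dlt=0$ subobject is $\BB_2$ in case (3). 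But a surjection $E\twoheadrightarrow\BB_0[1]^{\oplus n}$ would give $\Hom(E,\BB_0[1])\neq0$, and an injection $\BB_2\hookrightarrow E$ would give $\Hom(\BB_2,E)\neq0$; both contradict the orthogonality above. Hence none of these is an actual wall.

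The genuine obstacle is the remaining pair of walls, cases (1) and (2)(iii), where the destabilizing subobject $F$ is a rank-$0$ (hence torsion) object of class $(0,2,2)$, respectively $(0,2,1)$, supported in dimension $2$, and where neither factor has $\dlt=0$, so the $\BB_j$-rigidity argument is unavailable. My plan is to exploit that $F$ and $Q$, as well as $\BB_1,\BB_2,\BB_3$, all lie in the heart $\Coh^{-1}(\P^3,\BB_0)$: since $\Hom^{<0}$ vanishes between objects of a heart, the degree-$0$ connecting maps in the long exact sequences obtained by applying $\Hom(\BB_i,-)$ to $0\to F\to E\to Q\to0$ vanish, giving injections $\Hom(\BB_i,F)\hookrightarrow\Hom(\BB_i,E)=0$ for $i=1,2,3$. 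Thus $F$ would be a nonzero torsion $\BB_0$-module, supported on a surface with $\ch_1(F)=2$, satisfying $\Hom(\BB_1,F)=\Hom(\BB_2,F)=\Hom(\BB_3,F)=0$. The hard part will be to show no such $F$ exists: translating these vanishings through \eqref{eq:forgetful} into the vanishing of $H^0$ of $\Forg(F)$ against the relevant twists, and combining this with the generic $\mathrm{Mat}_{2\times2}$-structure of $\BB_0$ on the supporting surface (as in Lemma \ref{lem:TCasasheaf}(2)) and with $\ch_1(F)=2>0$, one should force $F=0$, excluding cases (1) and (2)(iii) as well.

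Once every wall in Proposition \ref{walls} has been ruled out, $E$ remains $\sigma_{\alpha,-1}$-semistable for all $\alpha\le\alpha_0$; since $E$ is stable at $\alpha_0$, and stability is preserved across any non-wall while it can only degenerate to strict semistability at a wall, $E$ is in fact $\sigma_{\alpha,-1}$-stable for every $\alpha\le\alpha_0$, as claimed.
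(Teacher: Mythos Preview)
Your treatment of cases (2)(i) and (2)(ii) via Lemma \ref{lem:dlt0obj} is correct and matches the paper. The remaining three cases, however, contain genuine gaps.

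For case (3) you observe that the subobject has $\dlt=0$ and claim it must be $\BB_2$, but the results you cite do not apply: Lemma \ref{lem:dlt0obj} requires $\rk<0$, Corollary \ref{cor:rk4stontheright} is for rank $-4$ and for $\alpha\gg0$, and Lemma \ref{lem:rk4b0alg} needs reflexivity, which you have not established. This case may be salvageable (a $\dlt=0$ object has no walls, so one could push to large $\alpha$ and then argue as in the proof of Corollary \ref{cor:rk4stontheright}), but you have not carried this out.

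The serious gap is in cases (1) and (2)(iii). You correctly derive $\Hom(\BB_i,F)=0$ for $i=1,2,3$, but this is only a degree-zero vanishing and does not force a rank-zero $\BB_0$-module supported on a surface to be zero; there is no evident obstruction to such an $F$ existing. The appeal to the generic $\mathrm{Mat}_{2\times2}$-structure is a red herring: that argument (Lemma \ref{lem:TCasasheaf}(2)) bounds the minimal rank of a $\BB_0$-module supported on a curve, and says nothing about global-section vanishing for sheaves on a surface. You yourself flag this as ``the hard part,'' and indeed no argument is supplied.

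The paper handles all three remaining cases by a different and uniform mechanism. Rather than analyzing the subobject $S$, it studies the quotient $Q$: since $E\in\Ku(\P^3,\BB_0)$, the long exact sequence gives $\Hom(\BB_j,Q[i])\cong\Hom(\BB_j,S[i+1])$ for $j=1,2,3$ and all $i$. Careful slope comparisons of $S$ and $Q$ against $\BB_j$ and $\BB_j[1]$ (using Serre duality to convert the latter into maps from $\BB_{j+3}$) then show that all these groups vanish except possibly $\Hom(\BB_1,Q)$. Hence $\mathsf{L}_{\BB_1}Q\in\Ku(\P^3,\BB_0)$, so by \eqref{eqn:charofKuobj} its character must satisfy $\ch^{-1}_{\BB_0,2}/\rk=-\tfrac{7}{32}$. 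A direct numerical check then gives $\ch^{-1}_{\BB_0,2}(\mathsf{L}_{\BB_1}Q)/\rk(\mathsf{L}_{\BB_1}Q)\geq-\tfrac{3}{32}$ in every remaining case, a contradiction. The missing idea in your proposal is precisely this lattice obstruction coming from membership in $\Ku(\P^3,\BB_0)$, applied not to $E$ but to the mutation of $Q$.
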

\begin{proof}
Suppose $E$ becomes strictly semistable with respect to $\sigma_{\alpha,-1}$ for some $0<\alpha<\alpha_0$. 
By Proposition \ref{walls}, this may happen when $\alpha=\frac{1}{4}, \frac{\sqrt 5}{4}$ or $\frac{\sqrt{17}}{4}$. Let us denote the destabilizing sequence in $\Coh^{-1}(\P^3,\BB_0)$ as follows:
\begin{align}
0\rightarrow S\rightarrow E\rightarrow Q\rightarrow 0,\label{eq:desE}
\end{align}
where $S$ and $Q$ are $\sigma_{\alpha,-1}$-semistable objects with characters as those in Proposition \ref{walls}.\\

\textbf{Step I:} We get rid of two cases when the destabilizing object is $\BB_0^{\oplus a}[1]$. 

When $\alpha=\frac{\sqrt 5}{4}$, if Case ({2.i}) or ({2.ii}) in Proposition \ref{walls} happens, then the Chern character of the quotient object $Q$ is 
$$\ch_{\BB_0,\leq 2}^{-1}(Q)=(-4,1,-\frac{1}8) \text{ or }(-8,2,-\frac{1}4).$$
By Lemma \ref{lem:dlt0obj}, the quotient object $Q$ is either $\BB_0[1]$ or $\BB_0^{\oplus 2}[1]$. In either case, we would have
$$\Hom(\BB_3,E[2])\cong(\Hom(E,\BB_0[1]))^*\neq 0,$$
which contradicts the assumption that $E\in \Ku(\P^3,\BB_0)$.\\

\textbf{Step II:} We show that $\Hom(\BB_j,Q[i])=0$ for $i\geq 1$ and $j=1,2,3$.

Now there are three cases in Proposition \ref{walls} left. In Case (1) and (3), it is a direct computation that any $\sigma_{\frac{\sqrt{17}}4,-1}$-semistable (resp. $\sigma_{\frac{1}4,-1}$) objects with character $(0,2,2)$ and $(-8,4,-\frac{1}4)$ (resp. $(4,3,\frac{9}8)$ and $(-12,3,\frac{5}8)$) are $\sigma_{\frac{\sqrt{17}}4,-1}$-stable (resp. $\sigma_{\frac{1}4,-1}$). Both $S$ and $Q$ are $\sigma_{\alpha,-1}$-stable in these two cases. In Case (2.iii), the object $S$ with character $(0,2,8)$ is also $\sigma_{\frac{\sqrt{5}}4,-1}$-stable. If $Q$ is strictly $\sigma_{\frac{\sqrt5}4,-1}$-semistable, we may reduce to either Case (2.i) or (2.ii). Therefore, in any of the remaining cases, we may assume both $S$ and $Q$ are  $\sigma_{\alpha,-1}$-stable.

For each $\BB_j$, $1\leq j\leq 3$, apply $\Hom(\BB_j,-)$ to the sequence (\ref{eq:desE}). Since $E\in \Ku(\P^3,\BB_0)$, we have 
\begin{align}
\label{eq:homsq}\Hom(\BB_j,S[i+1])\cong \Hom(\BB_j,Q[i])
\end{align}
for all $i\in \Z$. 

We first show that $\Hom(\BB_j,S[i+1])=0$ for $i\geq 1$. In   Case (1) of Proposition \ref{walls},  the object $S$ is $\sigma_{\frac{\sqrt{17}}{4},-1}$-stable, we may let $\alpha_1=\frac{\sqrt{17}}{4}$. In Case (2.iii) of Proposition \ref{walls}, as $S$ is $\sigma_{\frac{\sqrt 5}{4},-1}$-stable, the object $S$ is $\sigma_{\alpha_1,-1}$-stable for some $\alpha_1<\frac{\sqrt 5}{4}$. In Case (3) of Proposition \ref{walls}, we may let $\alpha_1=\frac{1}4$. By the choice  of $\alpha_1$ in each case, we always have
\begin{align}\label{eqn:compare}
 \mu_{\alpha_1,-1}(S)=\begin{cases}
 1 & \text{Case (1)}\\
 \frac{1}2 & \text{Case (2.iii)}\\
 \frac{-4\alpha_1^2+9}{24} & \text{Case (3)}
 \end{cases}>\frac{4\alpha_1^2-1}8=\mu_{\alpha_1,-1}(\BB_0[1])\geq \mu_{\alpha_1,-1}(\BB_{j}[1])
\end{align}
for $j\leq 0$. Note that both $S$ and $\BB_j[1]$ are in $\Coh^{-1}(\P^3,\BB_0)$ and $\sigma_{\alpha_1,-1}$-stable for $j\leq 0$. By \eqref{eqn:compare} and Serre duality,
\begin{align}
\Hom(\BB_{j+3},S[i+1])\cong (\Hom(S,\BB_{j}[2-i]))^*=0
\end{align}
for any $j\leq 0$, $i\geq 1$. By \eqref{eq:homsq}, we have $\Hom(\BB_j,Q[i])=0$ for $i\geq 1$ and $1 \leq j\leq 3$. \\

\textbf{Step III:} We show that $\Hom(\BB_j,Q[i])=0$ for $i\leq -1$ and $j=1,2,3$, or $i=0$ and $j=2,3$.

As $\BB_1$, $\BB_2$, $\BB_3$ and $Q$ are in the heart $\Coh^{-1}(\P^3,\BB_0)$, we have $\Hom(\BB_j,Q[i])=0$ for any $j=1,2,3$ and $i\leq -1$. 
Together with Step II, this implies $\Hom(\BB_j,Q[i])$ may be nonzero only when $i=0$. In   Case (1) of Proposition \ref{walls}, as $\ch_{\BB_0,\leq 2}^{-1}(Q)=(-8,4,\frac{1}4)$, the object $Q$ is $\sigma_{\alpha_1,-1}$-stable for $\alpha_1\in(\frac{1}4,\frac{\sqrt{17}}{4}]$, we may let $\alpha_1=1$. In Case (2.iii) of Proposition \ref{walls}, as $Q$ is $\sigma_{\frac{\sqrt 5}{4},-1}$-stable, we may let $\alpha_1=\frac{\sqrt 5}{4}$. In Case (3) of Proposition \ref{walls}, the object $Q$ is $\sigma_{\alpha_1,-1}$-stable for some $\alpha_1<\frac{1}4$. By the choice  of $\alpha_1$ in each case, we always have
\begin{align}
 \mu_{\alpha_1,-1}(Q)=\begin{cases}
 \frac{4\alpha_1^2-1}{16} & \text{Case (1)}\\
 \frac{4\alpha_1^2+3}{16} & \text{Case (2.iii)}\\
 \frac{12\alpha_1^2+5}{24} & \text{Case (3)}
 \end{cases}<  \frac{-4\alpha_1^2+9}{24}= \mu_{\alpha_1,-1}(\BB_{2}) \leq \mu_{\alpha_1,-1}(\BB_{j})
\end{align}
for $j\geq 2$. Therefore, 
\begin{align}
\Hom(\BB_j,Q)=0 \text{ for } j\geq 2.
\end{align}

\textbf{Step IV:} We show that the character of $Q$ or  $\mathsf L_{\BB_1}Q$ cannot be in $\Ku(\P^3,\BB_0)$.

Now $\Hom(\BB_1,Q)$ is the only possible non-zero space among all $\Hom(\BB_j,Q[i])$ for $j=1,2,3$, $i\in \Z$. Therefore, the object $$\mathsf L_{\BB_1}Q=\mathrm{Cone}(\BB_1\otimes \Hom(\BB_1,Q)\rightarrow Q)$$ is in $\Ku(\P^3,\BB_0)$. By \eqref{eqn:charofKuobj} in Definition and Proposition \ref{prop:stabonku}, the character   $$\ch_{\BB_0,\leq 2}^{-1}(\mathsf L_{\BB_1}Q)\in\ch_{\BB_0,\leq 2}^{-1}(\Ku(\P^3,\BB_0))\subset \{(a,b,-\frac{7}{32}a)|a,b\in \R\}.$$

On the other hand, in any case of Proposition \ref{walls}, we have $\rk(Q)< 0$ and $\frac{\ch_{\BB_0, 2}^{-1}(Q)}{\rk(Q)}\geq -\frac{3}{32}$.  Note that $\ch_{\BB_0,\leq 2}^{-1}(\BB_1[1])=(-4,-1,-\frac{1}8)$, so we have 
$$\frac{\ch_{\BB_0, 2}^{-1}(\mathsf L_{\BB_1}Q)}{\rk(\mathsf L_{\BB_1}Q)}=\frac{\ch_{\BB_0, 2}^{-1}(Q)+\hom(\BB_1,Q)\ch_{\BB_0, 2}^{-1}(\BB_1[1])}{\rk(Q)+\hom(\BB_1,Q)\rk(\BB_1[1])}\geq -\frac{3}{32}>-\frac{7}{32}.$$
We get the contradiction. Therefore, the object $E$ does not become strictly $\sigma_{\alpha,-1}$-semistable for any $\alpha\leq \alpha_0$.
\end{proof}

\begin{thm}\label{thm:EgammaECinkustab}
Let $\Gamma$ be a non-degenerate elliptic quintic curve  spanning a smooth cubic threefold on $Y$. Let $C$ be a smooth conic curve on $Y$. Let $\sigma$ be the stability condition on $\Ku(Y)$ as that in Proposition and Definition \ref{prop:stabonku} and Remark \ref{rem:noL}. Then the objects $E_\Gamma$ and $E_C$ are $\sigma$-stable.
\end{thm}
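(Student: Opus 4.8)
The plan is to transport the statement to the Clifford-algebra model via the equivalence of Proposition \ref{prop:equivalentofKu} and then assemble the stability results proved earlier in this section. Since $\Psi\rho^*\colon\Ku(Y)\to\Ku(\P^3,\BB_0)$ is an equivalence and, by Proposition \ref{prop:change_line} together with Remark \ref{rem:noL}, the stability condition $\sigma$ on $\Ku(Y)$ is the pullback of $\sigma_\alpha$ and is independent of the auxiliary blown-up line $L$, the object $E_\Gamma$ (resp.\ $E_C$) is $\sigma$-stable if and only if $\Psi\rho^*(E_\Gamma)$ (resp.\ $\Psi\rho^*(E_C)$) is $\sigma_\alpha$-stable. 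Moreover we are free to choose $L$ so that Conditions \ref{cond:choiceofL} and \ref{cond:LforEC} hold. Thus the whole problem reduces to the $\sigma_\alpha$-stability of these two explicit complexes of $\BB_0$-modules.

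First I would record that both $\Psi\rho^*(E_\Gamma)$ and $\Psi\rho^*(E_C)$ lie in $\Ku(\P^3,\BB_0)\cap\Coh^{-1}(\P^3,\BB_0)$ and have twisted Chern character $\chbl=(-8,6,\frac{14}{8})$ by \eqref{eq:char-86}: membership in $\Coh^{-1}(\P^3,\BB_0)$ is part of Propositions \ref{prop:stabofEgamma} and \ref{prop:stabofEC}, while membership in $\Ku(\P^3,\BB_0)$ is immediate from $E_\Gamma,E_C\in\Ku(Y)$. Those same propositions give $\sigma_{\alpha,-1}$-stability for $\alpha\gg 0$. Feeding this into the no-wall Proposition \ref{prop:nowallforobjinKU} with $\alpha_0\gg 0$, I obtain $\sigma_{\alpha,-1}$-stability for every $\alpha\le\alpha_0$, in particular for any fixed $\alpha<\frac14$ entering the construction of $\sigma_\alpha$ in Proposition and Definition \ref{prop:stabonku}.

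The passage from tilt-stability to stability on the Kuznetsov component is then Lemma \ref{lem:tiltstabtokustab}, whose two hypotheses I would verify. Hypothesis (a) holds because the object is $\sigma_{\alpha,-1}$-stable with strictly positive slope — its imaginary central charge $\ch^{-1}_{\BB_0,1}=6>0$ places it in the torsion part of the tilt at slope $0$ — and lies in $\Ku(\P^3,\BB_0)$, so it belongs to $(\Coh^{-1}(\P^3,\BB_0))^0_{\sigma_{\alpha,-1}}\cap\Ku(\P^3,\BB_0)$. Hypothesis (b), the vanishing of $\Hom_{\BB_0}(T,-)$ for all $T$ supported in dimension $0$, is subsumed by Lemmas \ref{lem:nosuppdim1sub} and \ref{lem:nosuppdim1subEC}, which treat every $T$ with $\dim\supp(T)\le 1$. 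Lemma \ref{lem:tiltstabtokustab} then yields $\sigma_\alpha$-stability of $\Psi\rho^*(E_\Gamma)$ and $\Psi\rho^*(E_C)$, and pulling back along the equivalence gives the theorem.

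The point requiring genuine care is the generality: Propositions \ref{prop:stabofEgamma} and \ref{prop:stabofEC} are stated for \emph{smooth} non-degenerate elliptic quintics, whereas the theorem allows any non-degenerate elliptic quintic, that is, a locally complete intersection quintic with trivial canonical bundle. Here the key observation is that the Serre construction yields an honest rank-$2$ vector bundle $F_\Gamma$ for every such $\Gamma$, so $\rho^*E_\Gamma$ stays locally free and the geometric inputs of Sections \ref{subsec_expressionGamma} and \ref{subsec_tiltstablGamma} — the isomorphism $\pi\rho^{-1}(\Gamma)\cong\Gamma$, the description \eqref{eq:TGamma} of $\Psi\rho^*(\OO_\Gamma(H))$ as a shifted torsion $\BB_0$-module, and the avoidance Conditions \ref{cond:choiceofL} — persist once $L$ (equivalently the point $P=L\cap X$) is chosen generically, away from the chords of $\Gamma$. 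I would therefore re-run the arguments of those subsections for possibly singular $\Gamma$, verifying only that these genericity conditions survive; the tilt-stability computation and the no-wall analysis depend on $\Gamma$ only through the fixed character $\chbl=(-8,6,\frac{14}{8})$ and so go through unchanged. This bridging of the smooth and singular cases is the main obstacle; the remainder is a direct assembly of the cited results.
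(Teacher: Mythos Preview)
Your proposal is correct and follows the paper's own route: transport to $\Ku(\P^3,\BB_0)$, combine Propositions \ref{prop:stabofEgamma}, \ref{prop:stabofEC}, \ref{prop:nowallforobjinKU} to get $\sigma_{\alpha,-1}$-stability for all $\alpha>0$, then apply Lemma \ref{lem:tiltstabtokustab} with hypothesis (b) supplied by Lemmas \ref{lem:nosuppdim1sub} and \ref{lem:nosuppdim1subEC}. One small clarification: for hypothesis (a) you need the slope itself to be positive, not just $\Im Z_{\alpha,-1}=\ch^{-1}_{\BB_0,1}=6>0$; the paper makes this explicit by computing $\mu_{Z_{\alpha,-1}}=\tfrac{16\alpha^2+7}{24}>0$.

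Your remark on smoothness of $\Gamma$ is a genuine observation the paper glosses over: Proposition \ref{prop:stabofEgamma} and Lemmas \ref{lem:homvanforEgamma}, \ref{lem:nosuppdim1sub} are stated for smooth $\Gamma$, whereas the theorem allows any non-degenerate elliptic quintic. Your bridging is correct. The only inputs that could in principle see the singularities of $\Gamma$ are the local freeness of $\rho^*E_\Gamma$ restricted to $\tilde X$ (used in Step~1 of Proposition \ref{prop:stabofEgamma} and in Lemma \ref{lem:nosuppdim1sub} via Lemma \ref{lem:caextvanish}) and the avoidance Conditions \ref{cond:choiceofL}. Since $E_\Gamma=\iota_*F_\Gamma$ with $F_\Gamma$ a rank-two vector bundle on $X$ for every locally complete intersection $\Gamma$, and since the secant variety of such $\Gamma$ is still a proper subvariety of the smooth threefold $X$, both inputs persist verbatim; the remainder of the argument is purely numerical in the fixed character $(-8,6,\tfrac{7}{4})$.
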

\begin{proof}
By Proposition \ref{prop:stabofEgamma}, \ref{prop:stabofEC}
and \ref{prop:nowallforobjinKU}, the objects  $\Psi\rho^*(E_\Gamma)$ and $\Psi\rho^*(E_C)$  are in the heart $\Coh^{-1}(\P^3,\BB_0)$ and   $\sigma_{\alpha,-1}$-stable for every  $\alpha > 0$. Note that $$\mu_{Z_{\alpha,-1}}(E_\Gamma)=\mu_{Z_{\alpha,-1}}(E_C)=\frac{16\alpha^2+7}{24}>0.$$ Thus $\Psi\rho^*(E_\Gamma)$ and $\Psi\rho^*(E_C)$ are in $\left(\Coh^{-1}(\P^3, \BB_0)\right)^0_{\sigma_{\alpha,-1}}\bigcap \Ku(\P^3,\BB_0)$.

By Lemma \ref{lem:nosuppdim1sub} and Lemma \ref{lem:nosuppdim1subEC}, both $\Psi\rho^*(E_\Gamma)$ and $\Psi\rho^*(E_C)$ satisfy the conditions as those in Lemma \ref{lem:tiltstabtokustab}. Therefore, they are stable with respect to the stability condition as that defined in Proposition \ref{prop:stabonku}. By Proposition \ref{prop:equivalentofKu} and Remark \ref{rem:noL}, both of them are $\sigma$-stable.
\end{proof}

\subsection{Example of $\CC_{12}$}\label{sec:C12} We give an example when $E_\Gamma$ is not expected to be $\sigma$-stable.

Denote by $\CC_{12}$ the divisor in the moduli space of cubic fourfolds parametrizing cubic fourfolds containing a rational cubic scroll \cite[Section 4.1.2]{Hassett-specialcubic4fold}.
Let $\Gamma$ be a non-degenerate elliptic quintic curve in $\P^5$; then $\Gamma$ is contained in a rational cubic scroll $\Sigma \subset \langle \Gamma \rangle$ (see \cite[Lemma 6.11]{HarrisRothStarr}). Assume that $\Sigma \subset Y$ for some smooth cubic fourfold in $\P^5$, in particular, the fourfold $Y$ is in $\CC_{12}$. Consider the cubic threefold $X:= \langle \Gamma \rangle \cap Y$, which contains $\Sigma$ by our assumption. We point out that such $X$ cannot be smooth.

\begin{lem}\label{lem:C12destabEGamma}
Let  $\Gamma$ be a non-degenerate elliptic quintic curve contained in a cubic scroll $\Sigma$ in $Y$. Then the object $\II_{\Sigma/X}(H)$ is in $\Ku(Y)$. If 
$\II_{\Sigma/X}(H)$ is $\sigma$-stable, then 
$E_\Gamma$ is not $\sigma$-stable.
\end{lem}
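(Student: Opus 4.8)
The plan is to produce a nonzero morphism $\II_{\Sigma/X}(H)\to E_\Gamma$ between two $\sigma$-stable objects of equal $\sigma$-phase but distinct central charges, which is impossible once $E_\Gamma$ is assumed stable. First I would record that $E_\Gamma=\mathsf{pr}(\II_{\Gamma/X}(H))$. Since $\Gamma$ is non-degenerate, $\langle\Gamma\rangle$ is the unique hyperplane containing $\Gamma$ and it cuts out $X$, so $\Hom(\OO_Y,\II_{\Gamma/Y}(H))=\C$ with no higher $\Ext$; the cohomology computation in the proof of Proposition \ref{prop:EGamaECinKu}(2) uses only that $\Gamma$ spans $\langle\Gamma\rangle$, not the smoothness of $X$, so $\mathsf L_{\OO_Y}(\II_{\Gamma/Y}(H))=\II_{\Gamma/X}(H)$ and $E_\Gamma=\mathsf R_{\OO_Y(-H)}\mathsf R_{\OO_Y(-2H)}(\II_{\Gamma/X}(H))$. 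As $\II_{\Gamma/X}(H)$ already lies in $\OO_Y^\perp$, the restriction of $\mathsf{pr}$ to $\OO_Y^\perp$ is precisely the projection onto the right-hand component $\Ku(Y)$ of the decomposition \eqref{eq:sodfokuy1}, which is \emph{right} adjoint to the inclusion. Concretely, from the filtration triangle $\mathsf{pr}(C)\to C\to L_C$ with $L_C\in\langle\OO_Y(-2H),\OO_Y(-H)\rangle$ and $\Hom(A,L_C[\bullet])=0$ for $A\in\Ku(Y)$, one gets $\Hom(A,E_\Gamma)\cong\Hom_{\Db(Y)}(A,\II_{\Gamma/X}(H))$ for every $A\in\Ku(Y)$.

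Next I would prove the first assertion, $\II_{\Sigma/X}(H)\in\Ku(Y)$, i.e.\ $H^\bullet(X,\II_{\Sigma/X}((1-i)H))=0$ for $i=0,1,2$. For each twist $D\in\{H,\,0,\,-H\}$ I use the sequence $0\to\II_{\Sigma/X}(D)\to\OO_X(D)\to\OO_\Sigma(D)\to0$, the standard cohomology of the cubic hypersurface $X\subset\langle\Gamma\rangle=\P^4$, and that of the cubic scroll $\Sigma\cong\mathbb F_1$: since $\Sigma$ is rational one has $h^{\ge1}(\OO_\Sigma)=0$ and $h^\bullet(\OO_\Sigma(-1))=0$, while $\Sigma$ spans $\P^4$ so that $H^0(\OO_X(1))\to H^0(\OO_\Sigma(1))$ is an isomorphism. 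In each degree the restriction map is then an isomorphism, yielding the desired vanishing. This step is insensitive to the singularities of $X$ because $X$ is a cubic hypersurface.

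Now $\Gamma\subset\Sigma$ gives an inclusion of ideal sheaves $\II_{\Sigma/X}(H)\hookrightarrow\II_{\Gamma/X}(H)$, a nonzero element of $\Hom_{\Db(Y)}(\II_{\Sigma/X}(H),\II_{\Gamma/X}(H))\cong\Hom(\II_{\Sigma/X}(H),E_\Gamma)$ by the adjunction above; hence a nonzero map $g\colon\II_{\Sigma/X}(H)\to E_\Gamma$ in $\Ku(Y)$. It remains to match phases. A Grothendieck--Riemann--Roch computation for the smooth surface $\Sigma$ (using $\deg\Sigma=3$, $[\Sigma]^2=7$, and $\Gamma\in|{-}K_\Sigma|$, so that $\II_{\Gamma/\Sigma}\cong\omega_\Sigma$) shows that $\ch(\II_{\Sigma/X}(H))$ agrees with $\ch(\lambda_1+\lambda_2)$ in every degree except $\ch_2$, where the two differ by the primitive class $[\Sigma]-H^2$ generating the extra algebraic cohomology of the $\CC_{12}$-cubic (the Gram determinant of $\langle H^2,[\Sigma]\rangle$ is $3\cdot7-3\cdot3=12$). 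Since $([\Sigma]-H^2)\cdot H^2=0$, this discrepancy has vanishing rank and $\ch_1$ after applying $\Psi\rho^*$, i.e.\ it lies in the kernel of the rank-two lattice $\Lambda$ through which $Z_\sigma$ factors. Therefore $Z_\sigma(\II_{\Sigma/X}(H))=\tfrac12\,Z_\sigma(E_\Gamma)$, and in particular $\II_{\Sigma/X}(H)$ and $E_\Gamma$ have the same $\sigma$-phase $\phi$.

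Finally I conclude: assume $\II_{\Sigma/X}(H)$ is $\sigma$-stable and suppose, for contradiction, that $E_\Gamma$ is $\sigma$-stable. Both are then stable of the same phase $\phi$, hence simple objects of the abelian category $\mathcal P(\phi)$, so the nonzero morphism $g$ would be an isomorphism; this is impossible because $Z_\sigma(\II_{\Sigma/X}(H))=\tfrac12 Z_\sigma(E_\Gamma)\ne Z_\sigma(E_\Gamma)$, forcing the two objects to have different classes. Thus $E_\Gamma$ is not $\sigma$-stable. I expect the main obstacle to be the two geometric inputs, namely the cohomological verification that $\II_{\Sigma/X}(H)\in\Ku(Y)$ and the precise identification of its class (including the $\ch_2$-discrepancy and its annihilation by $Z_\sigma$), since both require controlling the cubic scroll $\Sigma$ and the anticanonical curve $\Gamma\subset\Sigma$ inside the necessarily singular threefold $X$; the adjunction producing the nonzero map $g$ is then the clean structural step.
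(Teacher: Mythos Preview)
Your proof is correct and follows the paper's approach step for step: show $\II_{\Sigma/X}(H)\in\Ku(Y)$ via the sequence $0\to\II_{\Sigma/X}(H)\to\OO_X(H)\to\OO_\Sigma(H)\to0$, use right-adjointness of the right mutations on $\OO_Y^\perp$ to get $\Hom(\II_{\Sigma/X}(H),E_\Gamma)\cong\Hom(\II_{\Sigma/X}(H),\II_{\Gamma/X}(H))\neq0$, and compute that the Mukai vector is $\lambda_1+\lambda_2$ plus a primitive class $s$ with $H^2\cdot s=0$, so the $\sigma$-slopes agree. You supply more detail than the paper on the cohomology of $\Sigma\cong\mathbb F_1$ and on the character computation.

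One caveat, shared with the paper's own proof: having $Z_\sigma(\II_{\Sigma/X}(H))=\tfrac12 Z_\sigma(E_\Gamma)$ only pins down the phase modulo $\Z$, so the inference ``both are stable of the same phase $\phi$'' is not automatic---strictly speaking one also needs both objects to sit in the same heart. The paper records this residual hypothesis as ``if $\Psi\rho^*(\II_{\Sigma/X}(H))$ is in $\Coh^{-1}(\P^3,\BB_0)$'' and, like you, does not discharge it; the lemma is meant to set up the subsequent Question rather than to be fully self-contained.
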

\begin{proof}
Consider the exact sequence $$0\rightarrow \II_{\Sigma/X}(H)\to \OO_X(H)\to \OO_\Sigma(H)\to 0.$$ 
Applying $\Hom(\OO_Y(mH),-)$ to the sequence for $m=0,1,2$, it is easy to observe that $\II_{\Sigma/X}(H)$ is in $\Ku(Y)$. In particular, by Serre duality, we have 
\begin{equation}\label{eqn:Isigmaku}
    \II_{\Sigma/X}(H)\in\!^\perp\!\langle \OO_Y(-2H),\OO_Y(-H)\rangle. 
\end{equation}
Recall from Definition \ref{def:EGammaEC} that:
$$E_\Gamma=\mathsf{pr}(\II_\Gamma(H))=\mathsf{R}_{\OO_Y(-H)}\mathsf{R}_{\OO_Y(-2H)}\mathsf L_{\OO_Y} (\II_\Gamma(H))=\mathsf{R}_{\OO_Y(-H)}\mathsf{R}_{\OO_Y(-2H)}( \II_{\Gamma/X}(H)).$$
 By \eqref{eqn:Isigmaku},
\begin{align*}
    \Hom(\II_{\Sigma/X}(H),E_\Gamma)& =\Hom(\II_{\Sigma/X}(H),\mathsf{R}_{\OO_Y(-H)}\mathsf{R}_{\OO_Y(-2H)}( \II_{\Gamma/X}(H)))\\ & \cong\Hom(\II_{\Sigma/X}(H), \II_{\Gamma/X}(H))\neq 0.
\end{align*}

Note that
$$\ch(\II_{\Sigma/X}(H))= \ch(\lambda_1) + \ch(\lambda_2) + s,$$
where $s=H^2- \Sigma$ is a class in $H^{2,2}(Y,\Z)_{\text{prim}}$. In particular,  $H^2s=0$ and $\mu_\sigma(\II_{\Sigma/X}(H))=\mu_\sigma(E_\Gamma)$. Therefore, if $\Psi\rho^*(\II_{\Sigma/X}(H))$ is in $\Coh^{-1}(\P^3,\BB_0)$, then it will destabilize  $E_\Gamma$ with respect to $\sigma$.
\end{proof}
In \cite[Section 5]{MZ:OGradytype}, the authors give a classification of walls for stability for objects with non-primitive Mukai vector with square $2$ and divisibility $2$ on a K3 surface. In our more general noncommutative setting, we expect similar results hold for the singular moduli space $M_\sigma(2\lambda_1+2\lambda_2)$. 
\begin{question}
Let $Y$ be in $\CC_{12}$, $\Gamma$, $\Sigma$ and $X$ be as those in the lemma. We expect that $\II_{\Sigma/ X}(H)$ is always $\sigma$-stable. Moreover, the object $E_\Gamma$ is strictly $\sigma$-semistable and $\sigma$ is on  the flopping wall predicted by \cite{MZ:OGradytype}.
\end{question}

\section{Application: Lagrangian fibration and twisted family of intermediate Jacobians}\label{sec:Lagrangian}

We are now ready to prove Theorem \ref{thm_compacttwistedJac}. Let $Y$ be a smooth cubic fourfold, and fix
\[v_0=\lambda_1+\lambda_2, \, v=2\lambda_1+2\lambda_2.\]
By \cite{BLM+}, we have stability conditions on $\Ku(Y)$ with full support property. In particular, we choose $\sigma_0$ which is generic with respect to $v$, and also is in a chamber whose closure contains the stability condition $\sigma$. By Theorem \ref{thm_OG10}, there exists a projective moduli space $M:=M_{\sigma_0}(v)$, which admits a projective hyperk\"ahler resolution $\tM$, deformation equivalent to O'G10.

For a very general $Y$, we can just take $\sigma_0=\sigma$. However, the example in Section \ref{sec:C12} shows that a change of the stability condition is necessary in special cases.

Recall from Section \ref{sec:stabobjinM} that, for every elliptic quintic $\Gamma$ contained in a \emph{smooth} hyperplane section of $Y$, we have an object $E_\Gamma \in \Ku(Y)$. By Theorem \ref{thm:EgammaECinkustab} and our choice of $\sigma_0$, we know that $E_\Gamma$ is $\sigma_0$-stable. We further denote by $M_0$ the locus of the objects of the form $E_\Gamma$ in $M$, which is identified with an open subvariety $i:M_0 \to \tM$. Similarly, by Theorem \ref{thm:EgammaECinkustab} we have a $\sigma_0$-stable object $E_C$ for every smooth conic $C$ in $Y$.

Recall that by Proposition \ref{prop:EGamaECinKu}(2), each $E_\Gamma$ is supported on a smooth cubic threefold. We define a liner series on $M_0$: for every point $x\in Y$, consider the divisors
\[D_x:= \{E_\Gamma\;|\;\Hom(E_\Gamma, \OO_x)\neq 0\}.\]
Using the embedding $Y \to \P^5$, these span a linear series on $M_0$, denoted by $|D|$. This linear series induces a morphism
\[\pi_0: M_0 \to \P_0 \subset \P^{5\vee},\]
which sends each $E_\Gamma$ to its support. Here $\P_0$ parameterizes smooth hyperplane sections of $Y$. Note that the fiber of $\pi_0$ is the moduli of $E_\Gamma$ on a fixed smooth cubic threefold, hence is affine and irreducible by Remark \ref{rem:instanton} and Proposition \ref{prop:EGamaECinKu}(2). Hence the elements in $|D|$ are prime divisors.

Now define a line bundle on $\tM$ as follows: by taking closure, each divisor in $|D|$ extends to a prime divisor in $\tM$. The closures of generic elements in $|D|$ remain linear equivalent. This defines a line bundle $\LL$. We use $|\LL|$ to denote the complete linear series associated to $\LL$, which is at least 5-dimensional by our construction.

Recall that two birational hyperk\"ahler manifolds are isomorphic outside a locus with codimension at least $2$ (see \cite[Section 2.2]{Huy:birsymplec}), hence the linear bundles on each are naturally identified. Now we have the following result by Matsushita:

\begin{prop}\label{birational_model}
There exists a projective hyperk\"ahler manifold $N$ birational to $\tM$, with the following properties:
\begin{enumerate}
    \item[\rm{a)}] the birational map restricts to an isomorphism away from $\emph{Bs}(\LL)$;
    \item[\rm{b)}] the induced line bundle $\LL'$ on $N$ is nef.
\end{enumerate}
\end{prop}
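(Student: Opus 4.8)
The statement is an instance of a general principle, going back to Matsushita, that a movable isotropic class on a projective hyperk\"ahler manifold becomes nef on a suitable birational model. Concretely, the plan is to exhibit $\LL$ as a boundary point of the movable cone of $\tM$ and then to invoke the chamber decomposition of this cone to produce $N$. The two numerical features of $\LL$ that drive the argument are that $\LL$ is \emph{movable} and that $q(\LL)=0$ for the Beauville--Bogomolov--Fujiki form $q$ constructed in the proof of Theorem~\ref{thm_OG10}; I would establish these first.

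For movability, note that by construction the general member of $|\LL|$ is the closure of a general element of $|D|$, which is a prime divisor (a fibre-swept divisor of $\pi_0$, irreducible by Remark~\ref{rem:instanton} and Proposition~\ref{prop:EGamaECinKu}(2)). As $x$ ranges over the $4$-dimensional $Y$ these prime divisors move in an at least $5$-dimensional linear system, so $|\LL|$ has no fixed component; hence $\mathrm{Bs}(\LL)$ has codimension $\geq 2$ and $[\LL]$ lies in the closure of the movable cone $\overline{\mathrm{Mov}}(\tM)$. Since $\overline{\mathrm{Mov}}(\tM)$ is contained in the closed positive cone, this already forces $q(\LL)\geq 0$. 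For the square, the rational map $\varphi_{|\LL|}$ factors through $\pi_0$ and has image the closure $\overline{\P_0}=\P^{5\vee}$, which is $5$-dimensional, so the Iitaka dimension of $\LL$ equals $5$. A movable class of strictly positive square is big, hence of Iitaka dimension $10$; combined with the Fujiki relation $(\LL^{10})=c\,q(\LL)^{5}$ this rules out $q(\LL)>0$, so $q(\LL)=0$.

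Next I would apply the structure theorem for the movable cone of a projective hyperk\"ahler manifold (Markman, building on Huybrechts and Boucksom): the closure of the movable cone is the union, over the smooth projective hyperk\"ahler birational models $f\colon \tM\dashrightarrow N$, of the pullbacks $f^{*}\overline{\mathrm{Nef}}(N)$. Equivalently, one runs the $\LL$-minimal model program, which for hyperk\"ahler manifolds proceeds by a finite sequence of flops. Since $[\LL]\in\overline{\mathrm{Mov}}(\tM)$, there is such a model $N$ with $[\LL]\in f^{*}\overline{\mathrm{Nef}}(N)$; setting $\LL':=f_{*}\LL$ gives a nef class on $N$, which is part (b), and $N$ is again a projective hyperk\"ahler manifold. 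Because $f$ is an isomorphism in codimension one (every birational map of hyperk\"ahler manifolds is, by \cite[Section 2.2]{Huy:birsymplec}), the Picard groups are canonically identified and $q(\LL')=q(\LL)=0$. The flopping contractions realizing $f$ are supported on the loci where $\LL$ is not nef, all swept out by curves lying in $\mathrm{Bs}(\LL)$; away from $\mathrm{Bs}(\LL)$ the system is base-point free, $\varphi_{|\LL|}$ is a morphism agreeing with the projection to $N$, and $f$ is a local isomorphism, which gives part (a).

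The main obstacle I anticipate is the precise matching in part (a): showing that the indeterminacy locus of $f$ is contained in $\mathrm{Bs}(\LL)$, rather than merely in some flopped locus. This needs the walls crossed by the $\LL$-minimal model program to correspond to curve classes degenerating into the base locus, which in turn uses that $\LL$ is globally generated on the complement of $\mathrm{Bs}(\LL)$ and that the flops are of Mukai type, supported on the fibres of the contractions. Establishing movability cleanly also deserves care, since the absence of a fixed component rests on the irreducibility of the general fibre of $\pi_0$ and on the fibres being genuinely positive-dimensional, so that no member of $|D|$ can contain a common divisorial component.
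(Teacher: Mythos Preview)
Your approach is essentially the same as the paper's: both rest on the single observation that $|\LL|$ has no fixed component (movability), after which the existence of $N$ with properties (a) and (b) is a direct application of Matsushita's result \cite[Proposition~1]{Matsushita:BeauvilleConj}. The paper simply cites this as a black box; you unpack its content as the movable-cone chamber decomposition and the $\LL$-MMP, which is correct in spirit.

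Two remarks. First, your computation that $q(\LL)=0$ is not needed for this proposition and does not appear in the paper's proof here; the paper establishes $q(\LL')=0$ only later, in the proof of Theorem~\ref{thm_L'semiample}, and by a different route (nef but not big via \cite[Corollary~3.2]{Matrushita_Zhang}). Second, the ``main obstacle'' you flag for part (a)---that the indeterminacy locus of $f$ is contained in $\mathrm{Bs}(\LL)$---is precisely what Matsushita's proposition delivers: since any curve $C$ not lying in $\mathrm{Bs}(\LL)$ meets a member of $|\LL|$ properly, one has $\LL\cdot C\geq 0$, so the $K$-negative (here $K$-trivial, $\LL$-negative) extremal rays contracted in the MMP are all supported in $\mathrm{Bs}(\LL)$. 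Your sketch is on the right track, but the cleanest way to close it is exactly the citation the paper uses.
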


\begin{proof}
Note that $|\LL|$ on $\tM$ has no fixed divisor (fixed component), as it contains prime divisors given as closures of elements in $|D|$. Now the existence of $N$ with a) and b) follows from \cite[Prop 1]{Matsushita:BeauvilleConj}.
\end{proof}

The aim of this section is to prove the following theorem:

\begin{thm}
\label{thm_L'semiample}
The line bundle $\LL'$ on $N$ is semiample. A multiple of it induces a Lagrangian fibration $\pi: N \to B$.
\end{thm}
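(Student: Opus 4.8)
The plan is to prove that $\LL'$ is \emph{nef and abundant} and then to invoke Kawamata's semiampleness theorem together with Matsushita's characterisation of fibrations on hyperk\"ahler manifolds. Recall that $\LL'$ is already nef by Proposition~\ref{birational_model}, and that $N$, being hyperk\"ahler, has trivial canonical bundle $K_N\cong\OO_N$; its dimension is $10$, so any Lagrangian fibration must have $5$-dimensional base. Writing $q$ for the Beauville--Bogomolov--Fujiki form, the decisive point will be to show that $q(\LL')=0$; once this is established, abundance follows formally and both semiampleness and the Lagrangian property come from the two cited theorems.

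The key step is the computation $q(\LL')=0$. Since the birational map $N\dashrightarrow\tM$ is an isomorphism in codimension one, it induces a Hodge isometry $H^2(N,\Z)\cong H^2(\tM,\Z)$ identifying $\LL'$ with $\LL$, so it suffices to compute $q_{\tM}(\LL)$. Following the strategy of Theorem~\ref{thm_OG10}, I would degenerate $Y$ along a family $g\colon\YY\to C$ as in Proposition~\ref{prop_deformovercurve} to a cubic fourfold whose Kuznetsov component is equivalent to the derived category of a (twisted) K3 surface; there $\tM$ becomes an O'Grady--$10$ type moduli space whose second cohomology and BBF form are described through the Mukai morphism. Under this identification the support class $\LL$ corresponds to the image of a Mukai vector orthogonal to $v_0$, and one checks that this vector is isotropic, whence $q(\LL)=0$. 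Geometrically this reflects the fact that, on the open locus $M_0$, the morphism $\pi_0$ realises $M_0$ as the Donagi--Markman integrable system $J\to\P_0$, whose fibres are Lagrangian for the holomorphic symplectic form \cite{DonagiMarkman}, so that $\LL=\pi_0^*\OO_{\P^{5\vee}}(1)$ restricts trivially to these Lagrangian fibres.

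With $q(\LL')=0$ in hand, the numerical dimension of $\LL'$ equals $5$: by the Fujiki relation $\int_N(\LL')^{10}=c_N\,q(\LL')^5=0$, and by Verbitsky's description of the subalgebra of $H^*(N,\Q)$ generated by $H^2(N,\Q)$ one has $(\LL')^{6}=0$, so $\nu(\LL')\le 5$, while $\int_N(\LL')^5 A^5=c\,q(\LL',A)^5>0$ for $A$ ample forces $\nu(\LL')\ge 5$. On the other hand the linear series $|D|\subset|\LL|$ induces the dominant rational map to $\P_0\subset\P^{5\vee}$, whose image has dimension $5$; hence $\kappa(\LL')=\kappa(\LL)\ge 5$. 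Combining this with the general inequality $\kappa(\LL')\le\nu(\LL')$ for nef divisors yields $\kappa(\LL')=\nu(\LL')=5$, so $\LL'$ is abundant.

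Finally, since $\LL'$ is a nef and abundant divisor on the projective manifold $N$ with $K_N\equiv 0$, Kawamata's theorem implies that $\LL'$ is semiample. Let $\pi\colon N\to B$ be the associated fibration, defined by $|m\LL'|$ for $m\gg0$, so that $B$ is normal projective of dimension $\kappa(\LL')=5$ and $\pi$ has connected fibres. As $0<\dim B=5<10=\dim N$, Matsushita's theorem shows that $\pi$ is a Lagrangian fibration, with general fibre an abelian fivefold isotropic for the symplectic form; this is the desired $\pi$. The main obstacle throughout is the vanishing $q(\LL')=0$, which requires the identification of $H^2(\tM)$ with the Mukai lattice of the K3 degeneration; once this is secured, the remaining steps are formal consequences of standard results in the birational geometry of hyperk\"ahler manifolds.
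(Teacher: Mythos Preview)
Your overall architecture matches the paper's: show $q(\LL')=0$, deduce $\nu(\LL')=5$, note $\kappa(\LL')\ge 5$ from the linear series, conclude $\kappa=\nu$, apply Kawamata's good divisor theorem, and finish with Matsushita. The step $\nu(\LL')=5$ from $q(\LL')=0$ via Fujiki/Verbitsky is exactly what \cite[Proposition~24.1]{GrossHuybrechtsJoyce} gives, and the final two steps are identical to the paper's.

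The substantive difference, and the gap in your argument, is how you obtain $q(\LL')=0$. Your primary route---degenerate to a K3-type cubic and read off $q(\LL)$ from the Mukai lattice---is not carried out, and it is not a routine check. The class $\LL$ was defined geometrically on $\tM$ via the incidence divisors $D_x=\{E:\Hom(E,\OO_x)\neq0\}$; to identify its image under the Mukai morphism after degeneration you would need to recognise $\LL$ as a determinant/theta line bundle attached to a specific class in $\kn(\Ku(Y))$ and then verify that this class is isotropic in $v_0^\perp$. For O'Grady--10 resolutions the description of $H^2$ involves, beyond $v_0^\perp$, the class of the exceptional divisor, so the bookkeeping is genuinely delicate. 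You flag this yourself as ``the main obstacle,'' and indeed it is not resolved in your sketch. Your secondary geometric remark---that the fibres of $\pi_0$ are Lagrangian so $\LL|_{\text{fibre}}$ is trivial---does not close the gap either: the fibres of $\pi_0\colon M_0\to\P_0$ are the loci $M^{\mathrm s}_{\mathrm{inst}}$, which are \emph{affine} (Remark~\ref{rem:instanton}), hence contain no proper curves on which to test $\LL'$.

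The paper sidesteps the lattice computation entirely. It proves a separate lemma embedding the larger open set $J_0=J\setminus(-F)$ into $\tM$ (hence into $N$, away from $\mathrm{Bs}(\LL)$). Since $-F$ has relative codimension three over $\P_0$, the fibres of $J_0\to\P_0$ still contain \emph{proper} curves $C$ with $\LL'\cdot C=0$; this forces $\LL'$ to be non-big, and then the implication ``nef, not big $\Rightarrow q=0$'' from \cite[Corollary~3.2]{Matrushita_Zhang} yields $q(\LL')=0$ with no lattice identification needed. If you want to repair your argument along your own lines, the cleanest fix is to adopt this geometric step: replace the degeneration by the observation that $J_0$ (not merely $M_0$) embeds in $N$ and supplies the proper $\LL'$-trivial curves.
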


\begin{rmk}
We do not know whether $B\cong \P^5$, though by construction $B$ contains the open subset $\P_0$. It is in general a conjecture that the base of a Lagrangian fibration on a hyperk\"ahler manifold is always isomorphic to a projective space.
\end{rmk}

To prove this theorem we need to introduce one more construction. Denote by $\XX \to \P_0$ the family of smooth hyperplane sections of $Y$. In \cite{Voisin:twisted}, the twisted family of intermediate Jacobians of $p: J \to \P_0$ was constructed, where the fiber $J_t$ is the twisted intermediate Jacobian of the cubic threefold $X_t$ for each $t\in \P_0$. Note that the relative Fano variety of lines naturally embeds into $J$, and we denote the image by $F$. The subvariety relevant to our case is the image of $F$ under the relative involution on $J$, denoted by $-F$.

Now for the family $\XX \to \P_0$, consider the relative moduli space $\tJ \to \P_0$ of semistable instanton sheaves. By Remark \ref{rem:instanton}, each fiber $\tJ_t$ is isomorphic to the blowup of $J_t$ along the involution of the Fano surface. We have the following relationship of $\tJ$ and $J$.

\begin{prop}\label{prop:blowup_J}
The space $\tJ$ is isomorphic to the blowup of $J$ along the involution of the relative Fano surface $-F$.
\end{prop}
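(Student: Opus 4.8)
The plan is to upgrade the fibrewise isomorphism of Remark~\ref{rem:instanton}(2) to an isomorphism of families over $\P_0$ by constructing a relative version of the second Chern class morphism $\mathfrak{c}_2$ and then invoking the universal property of the blow-up. First I would construct a morphism $\widetilde{\mathfrak c}_2\colon \tJ\to J$ over $\P_0$ whose restriction to each fibre $\tJ_t$ is the map $\mathfrak c_2\colon M_{\mathrm{inst}}(X_t)\to J^2(X_t)$ of \eqref{eq:chow2map}. Over the moduli space $\tJ$ there is a (possibly twisted) universal instanton sheaf on $\tJ\times_{\P_0}\XX$; taking its relative second Chern class and applying the relative Abel--Jacobi map produces a section of the family of intermediate Jacobians. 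The essential point, and the place where Voisin's \emph{twisted} family $J$ is needed, is that the Brauer obstruction to the existence of a genuine universal sheaf is precisely the twist carried by $J$, so that the relative Abel--Jacobi image is well defined and lands in $J$. This produces $\widetilde{\mathfrak c}_2$, which is fibrewise $\mathfrak c_2$ by construction.

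Next I would record the geometric input on the centre of the blow-up. For each $t$, the residual-line construction inside the planes of $\P^4$ identifies, in $J^2(X_t)$, the locus $F_{\mathrm{conic}}(X_t)$ of classes of conics with the image of the Fano surface of lines under the fibrewise involution; indeed a conic $C$ spans a plane meeting $X_t$ in $C\cup\ell$, so $[C]$ equals a fixed class minus $[\ell]$. Globally this locus is exactly the subvariety $-F\subset J$. Since $J\to\P_0$ is smooth (a family of torsors under abelian varieties) and $F\to\P_0$, hence $-F\to\P_0$, is a smooth family of Fano surfaces, the inclusion $-F\hookrightarrow J$ is a regular embedding of smooth $\P_0$-schemes. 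Consequently the blow-up $\mathrm{Bl}_{-F}J$ is flat over $\P_0$, commutes with base change, and has fibre $(\mathrm{Bl}_{-F}J)_t=\mathrm{Bl}_{-F_t}J_t$ over each $t$.

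Finally I would compare the two families. By the fibrewise statement of Remark~\ref{rem:instanton}(2) the scheme-theoretic preimage $\widetilde{\mathfrak c}_2^{-1}(-F)$ restricts on every fibre to the exceptional divisor of a blow-up, hence is a relative Cartier divisor on $\tJ$; the universal property of the blow-up then yields a factorisation $\phi\colon \tJ\to\mathrm{Bl}_{-F}J$ over $J$, and a fortiori over $\P_0$. Both $\tJ\to\P_0$ and $\mathrm{Bl}_{-F}J\to\P_0$ are flat and proper, and by Remark~\ref{rem:instanton}(2) the induced map $\phi_t$ on fibres is the canonical identification of $\tJ_t\cong M_{\mathrm{inst}}(X_t)$ with $\mathrm{Bl}_{-F_t}J_t$, hence an isomorphism for every $t$. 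The fibrewise criterion for isomorphisms of flat proper families then forces $\phi$ to be an isomorphism, proving $\tJ\cong\mathrm{Bl}_{-F}J$.

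I expect the main obstacle to be the first step: producing the relative morphism $\widetilde{\mathfrak c}_2$ globally, that is, checking that the fibrewise Abel--Jacobi maps glue into a morphism over $\P_0$ and matching the twist of the universal instanton sheaf on $\tJ\times_{\P_0}\XX$ with the twist defining $J$. Once this compatibility is in place, the remaining identification is a flatness-and-fibrewise-isomorphism formality.
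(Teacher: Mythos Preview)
Your proposal is correct and follows essentially the same approach as the paper: construct a relative second Chern class morphism $\tJ\to J$ and then invoke the universal property of the blow-up, using the fibrewise result of Druel. The paper's proof is terser: it bypasses your main concern about matching the Brauer twist by observing that a \emph{quasi}-universal family of instanton sheaves always exists on $\tJ\times_{\P_0}\XX$, and then cites \cite[Theorem~4.8]{Druel:Instanton} directly for the existence of the induced morphism $\tJ\to J$; your more careful flatness-and-fibrewise-isomorphism argument is then collapsed into the single phrase ``universal property of blowup.''
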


\begin{proof}
Note that there exists a quasi-universal family on $\tJ$ of instanton sheaves with second Chern classes given by 1-cycles of degree 2. By \cite[Theorem 4.8]{Druel:Instanton}, there exists a morphism $\tJ \to J$ induced by taking the second Chern class. By the previous discussion, we know this morphism is birational, with exceptional divisor in $\tJ$ mapped to $-F\subset J$. Now the result follows from the universal property of blowup.
\end{proof}

Now we have the following observation.

\begin{lem}
The variety $J_0 := J-(-F)$ is isomorphic to an open subset of $\tM$. More precisely, it is isomorphic to the union of $M_0$ and the open subset of the exceptional divisor over the locus parametrizing objects of the form $P_{\ell_1}\oplus P_{\ell_2}$ for disjoint lines. Moreover, this open set is disjoint from the base locus of $|\LL|$ on $\tM$.
\end{lem}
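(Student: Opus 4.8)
The plan is to realise $J_0$ as an open subvariety of $\tM$ by passing through the relative moduli space $\tJ$ of instanton sheaves and the relative projection functor $\mathsf{pr}$. By Proposition~\ref{prop:blowup_J}, the relative second Chern class map $\mathfrak{c}_2\colon\tJ\to J$ is the blow-up of $J$ along $-F$, so that $U:=\tJ\setminus\mathfrak{c}_2^{-1}(-F)$ maps isomorphically onto $J_0$. Fibrewise $(-F)_t$ is the conic locus $F_{\mathrm{conic}}(X_t)$, i.e. the classes of the form $h^2-[\ell]$; the strictly semistable sheaf $\II_{\ell_1}\oplus\II_{\ell_2}$ has class $[\ell_1]+[\ell_2]$, which lies in $F_{\mathrm{conic}}(X_t)$ exactly when $\ell_1,\ell_2$ are coplanar (so that $\ell_1+\ell_2+\ell_3$ is a plane section). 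Hence, after discarding $\mathfrak{c}_2^{-1}(-F)$, one is left with $U_t=M^{\mathrm s}_{\mathrm{inst},t}\sqcup\{\,\II_{\ell_1}\oplus\II_{\ell_2}:\ell_1,\ell_2\ \text{disjoint}\,\}$, and it suffices to produce an open immersion $U\to\tM$ with the prescribed image.

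First I would build the comparison morphism. Applying $\mathsf{pr}$ in families (as in \cite{BLM+}) to the pushforward to $Y$ of a quasi-universal instanton sheaf on $U$ yields a $U$-flat family of $\sigma_0$-semistable objects of class $v=2\lambda_1+2\lambda_2$ in $\Ku(Y)$: over $M^{\mathrm s}_{\mathrm{inst},t}$ these are the $\sigma_0$-stable objects $E_\Gamma=\iota_*F_\Gamma$ of Proposition~\ref{prop:EGamaECinKu}(2) and Theorem~\ref{thm:EgammaECinkustab}, while over the disjoint pairs they are the polystable objects $\mathsf{pr}(\iota_*(\II_{\ell_1}\oplus\II_{\ell_2}))\cong P_{\ell_1}\oplus P_{\ell_2}$ of Proposition~\ref{prop:EGamaECinKu}(3). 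This induces $g\colon U\to M$, which restricts to an isomorphism of the stable part onto $M_0$ (by the definition of $M_0$) and sends the disjoint-pairs divisor onto $\Ms=\Sym^2F(Y)$ via $(\ell_1,\ell_2)\mapsto[P_{\ell_1}\oplus P_{\ell_2}]$—a point that depends only on the pair of lines, not on the ambient section $X_t$. Since $\tM\to M$ is an isomorphism over $M\setminus\Ms$, the stable part of $g$ already lifts to the open inclusion $M_0\hookrightarrow\tM$.

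Next I would lift $g$ across the boundary. Over a disjoint pair the lines span $\langle\ell_1,\ell_2\rangle\cong\P^3$, and the smooth sections containing both form the pencil of hyperplanes through this $\P^3$, namely a $\P^1$ minus the finitely many members cutting out singular threefolds; this pencil is exactly the fibre of $g$ over $[P_{\ell_1}\oplus P_{\ell_2}]$. Since the transverse singularity of $M$ along $\Ms\setminus\Delta$ is of type $A_1$ and $\tM$ is its minimal resolution (proof of Theorem~\ref{thm_symplres}), and since $U$ is smooth with $g^{-1}(\Ms)$ a divisor, the universal property of the blow-up—equivalently, uniqueness of the minimal resolution of the transverse $A_1$-singularity—yields a lift $\tilde g\colon U\to\tM$. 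I would then check that $\tilde g$ is injective and étale, hence an open immersion, with image $M_0$ together with the open subset of the exceptional divisor lying over the disjoint-pairs locus (the missing points of each exceptional $\P^1$ being those indexed by the singular hyperplane sections). Composing with the isomorphism $\mathfrak{c}_2\colon U\xrightarrow{\sim}J_0$ gives the asserted statement. The hard part is precisely the étale-ness of $\tilde g$ along the boundary divisor: this amounts to identifying the exceptional line $\P(\Ext^1(P_{\ell_1},P_{\ell_2}))$ over a disjoint pair with the pencil of hyperplanes through $\langle\ell_1,\ell_2\rangle$, a deformation-theoretic computation of the normal directions to $\Ms$ at $[P_{\ell_1}\oplus P_{\ell_2}]$ of the same type as the extension analysis in Lemma~\ref{lemma_resolutionondisjointlines}; it is also where disjointness is essential, since for incident lines the relevant hyperplanes sweep out a $\P^2$ rather than a $\P^1$.

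Finally, to see that this open set avoids $\mathrm{Bs}(|\LL|)$, I would argue as follows. For $E_\Gamma\in M_0$ supported on $X_t$ one has $\Hom(E_\Gamma,\OO_x)=\Hom(\iota_*F_\Gamma,\OO_x)=0$ for every $x\in Y\setminus X_t$, so $E_\Gamma\notin D_x$ and hence $E_\Gamma\notin\mathrm{Bs}(|\LL|)$; for a point of the exceptional divisor over a disjoint pair, Lemma~\ref{lemma_resolutionondisjointlines} supplies a point $x\in Y$ with $p\notin D_x$, so such points lie off the base locus as well. This completes the plan.
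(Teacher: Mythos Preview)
Your overall strategy coincides with the paper's: identify $J_0$ with an open piece of $\tJ$ via Proposition~\ref{prop:blowup_J}, push the universal instanton family through $\mathsf{pr}\circ\iota_*$ to get a morphism to $M$, and then lift to $\tM$ using that the transverse singularity along $\Ms\setminus\Delta$ is $A_1$ and the universal property of the blow-up. The paper is actually terser than you are about the étale-ness step you flag as ``the hard part''---it simply notes that both the fibre of $J_0\to M$ and the exceptional fibre of $\tM\to M$ over a disjoint pair are (open subsets of) a $\P^1$ and invokes the universal property of the blow-up without a detailed normal-direction computation.

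There is one genuine issue: you appeal twice to Lemma~\ref{lemma_resolutionondisjointlines}, but that lemma sits inside a \texttt{comment} environment in the paper and is not part of the text. In particular your base-locus argument for points of the exceptional divisor rests on a result that is not available. The paper's replacement is simpler and avoids any extension computation: once the identification of $J_0$ with an open subset of $\tM$ is in hand, every such exceptional-divisor point $p$ is tagged with a smooth hyperplane section $X$ (the one indexing it in $J_0$). The divisor $D_x$ was defined on $M_0$ as the fibre of the support map over $\{X':x\in X'\}$, and since the support map extends to all of $J_0\to\P_0$, the closure of $D_x$ inside the open set $J_0\subset\tM$ is still that fibre. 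Choosing any $x\notin X$ then shows $p\notin\overline{D_x}$. You should replace your appeal to the commented-out lemma with this argument; note that it uses the first half of the lemma (the identification) to prove the second half, rather than treating the two independently.
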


\begin{proof}
Proposition \ref{prop:blowup_J} implies that $J_0$ can be identified with the moduli space parametrizing instanton sheaves $E_\Gamma$ and $\II_{\ell_1/X}\oplus \II_{\ell_2/X}$ for \textit{disjoint} lines on any smooth cubic threefold $X\subset Y$. Recall that $E_\Gamma$, viewed as a torsion sheaf on $Y$, is a $\sigma_0$-stable object in $\Ku(Y)$. By Proposition \ref{prop:EGamaECinKu}(3), the sheaf $\II_{\ell_1/X}\oplus \II_{\ell_2/X}$ projects to the $\sigma_0$-semistable object $P_{\ell_1}\oplus P_{\ell_2}$.

Hence the projection functor $\mathsf{pr}\iota_*$ induces a morphism $J_0 \to M$, which is an isomorphism over $M_0$. For the object $P_{\ell_1}\oplus P_{\ell_2} \in M$ with disjoint lines, the fiber of the morphism is an open set of the $\P^1$ parametrizing smooth cubic threefolds containing $\ell_1$ and $\ell_2$. Recall that $\tM$ is given by the blowup of $M$ along the singular locus. At $P_{\ell_1}\oplus P_{\ell_2}$, it is locally an $A_1$-singularity, and the resolution produces a $\P^1$-fiber. Now the result follows from the universal property of blowup.

The last assertion follows from the construction of $|\LL|$: it generically consists of the closure of $D_x \subset M_0$ in $\tM$. A point in the exceptional divisor of $\tM$ in the fiber over the point $P_{\ell_1}\oplus P_{\ell_2}$, for disjoint lines, is identified with a point in $J_0$, hence is associated to a cubic threefold $X$. Now choose $x \notin X$, then the point is not contained in the closure of $D_x$. This proves the statement.
\end{proof}

This implies the following result.

\begin{lem}
The line bundle $\LL'$ on $N$ is not big.
\end{lem}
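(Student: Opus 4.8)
The plan is to show that $\LL'$ fails to be big by bounding the Iitaka dimension of the linear system $|\LL|$ on $\tM$. Since $N$ and $\tM$ are birational hyperk\"ahler manifolds, they are isomorphic away from a closed subset of codimension at least two, so the natural identification of line bundles gives $H^0(\tM, m\LL) \cong H^0(N, m\LL')$ for every $m \geq 0$, whence $\kappa(\tM, \LL) = \kappa(N, \LL')$. As $\LL'$ is nef by Proposition \ref{birational_model}, it is big if and only if $\kappa(N, \LL') = \dim N = 10$; equivalently, by the Fujiki relation $(\LL')^{10} = c_N\, q(\LL')^5$ (with $q$ the Beauville--Bogomolov--Fujiki form and $c_N>0$), and since $q(\LL') \geq 0$ because $\LL'$ is nef, non-bigness amounts to $q(\LL') = 0$. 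In either formulation it suffices to prove $\kappa(\tM, \LL) \leq 5$.

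First I would pin down $\LL$ on the dense open subset $J_0 \subset \tM$ of the preceding lemma, which is disjoint from $\mathrm{Bs}|\LL|$. On $M_0$ the divisor $D_x$ is exactly the preimage under $\pi_0$ of the hyperplane $\{H \in \P^{5\vee} : x \in H\}$, so the subsystem of $|\LL|$ spanned by the $D_x$ is the pull-back of $|\OO_{\P^{5\vee}}(1)|$, and
\begin{equation*}
\LL|_{J_0} \cong \pi_0^*\OO_{\P^{5\vee}}(1).
\end{equation*}
By Remark \ref{rem:instanton} and Proposition \ref{prop:EGamaECinKu}, the fibre of $\pi_0$ over $t \in \P_0$ is the moduli space of instanton sheaves on the smooth cubic threefold $X_t$, which is irreducible of dimension $5$ (an open subset of the translate $J^2(X_t)$ of the intermediate Jacobian). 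Thus $\pi_0$ is dominant onto the $5$-dimensional base $\P_0$ and contracts each of these fibres, so $\LL$ restricts trivially to them.

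Finally I would conclude by the easy addition theorem for the Iitaka dimension. Resolving the indeterminacy of $\pi_0 \colon \tM \dashrightarrow \P^{5\vee}$ by a birational morphism $\mu \colon \tM' \to \tM$ and letting $f \colon \tM' \to \P^{5\vee}$ be the induced morphism, the general fibre $F$ of $f$ is birational to the closure of a fibre of $\pi_0$, and $\mu^*\LL|_F$ is trivial on the dense open subset coming from $M_0$. Granting that $\mu^*\LL|_F$ is numerically trivial on all of $F$, one gets $\kappa(F, \mu^*\LL|_F) \leq 0$, and easy addition yields
\begin{equation*}
\kappa(\tM, \LL) = \kappa(\tM', \mu^*\LL) \leq \dim \P^{5\vee} + \kappa(F, \mu^*\LL|_F) \leq 5,
\end{equation*}
so $\LL$, and hence $\LL'$, is not big. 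The main obstacle is precisely the parenthetical hypothesis in the last step: one must control $\LL$ along the boundary of the fibres, that is, over $\tM \setminus J_0$ (the indeterminacy locus of $\pi_0$ together with the part of the exceptional divisor lying over non-reduced cycles), and verify that $\LL$ stays numerically trivial on the compactified fibres $F$ despite their possible singularities. Once the identity $\LL|_{J_0} = \pi_0^*\OO(1)$ is upgraded in this way, the irreducibility and $5$-dimensionality of the fibres force $\kappa(\tM,\LL)\le 5$ and thus the non-bigness of $\LL'$.
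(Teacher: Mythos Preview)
Your approach has a genuine gap, which you yourself flag: the step ``granting that $\mu^*\LL|_F$ is numerically trivial on all of $F$'' is exactly the hard part, and it does not follow from $\LL$ being trivial on the dense open $M_0$-part of the fibre. A nef line bundle can be trivial on a dense open and still be big on the compactification (think of $\OO(D)$ for an effective divisor $D$). Without control of $\LL$ along $\tM\setminus J_0$, easy addition gives you nothing, and you have not indicated how to obtain that control.

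The paper sidesteps this entirely by working on $N$ rather than $\tM$ and arguing with curves instead of fibres. Since $J_0$ is disjoint from $\mathrm{Bs}|\LL|$, Proposition~\ref{birational_model}(a) identifies $J_0$ with an open subset of $N$, and on $J_0$ the divisors $D_x$ are pullbacks of hyperplanes via $p\colon J\to\P_0$, so $\LL'|_{J_0}\cong p^*\OO(1)$. The crucial geometric input is that $p$ is \emph{projective} with $5$-dimensional fibres and $-F$ has relative codimension $3$; hence $J_0$ is covered by \emph{proper} curves $C$ contracted by $p$, on which $\LL'\cdot C=0$. Now Kodaira's lemma finishes: if $\LL'$ were big then $n\LL'\sim A+E$ with $A$ ample and $E$ effective; choose such a $C$ not contained in $E$, so $E\cdot C\ge 0$ and $A\cdot C>0$, contradicting $\LL'\cdot C=0$. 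This argument never needs to compactify the fibration or understand $\LL$ on the boundary, which is precisely where your proposal stalls.
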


\begin{proof}
As $J_0 \subset \tM$ is away from $\text{Bs}(\LL)$, by Proposition \ref{birational_model} a), we can identify $J_0\subset N$. The important observation is that $p: J \to \P_0$ is a projective morphism and $-F$ is of relative codimension three. Hence the open set $J_0$ can be covered by \textit{proper} curves that are contracted by $p$.

Now recall from \cite[Corollary 2.2.7]{Laz:Positivity1} that a divisor $D$ is big if and only if
\[nD=A+E,\]
for some positive integer $n$ such that $A$ is an ample divisor, and $E$ is an effective divisor. In our case, for any effective divisor $E$, we can always choose a $p$-exceptional proper curve $C\subset J_0\subset N$ not contained in $E$. With this choice we have
\[\LL'.C=0,\, A.C>0,\, E.C>0,\]
so $\LL'$ is not big.
\end{proof}

Now we are ready to prove Theorem \ref{thm_L'semiample}.

\begin{proof}[Proof of Theorem \ref{thm_L'semiample}]
The theorem follows from several results in the literature. Note that by construction $|\LL'|$ is at least 5-dimensional, hence the Iitaka dimension $\kappa(\LL')\geq 5$.
On the other hand, since $\LL'$ is nef but not big on $N$, by \cite[Cor 3.2]{Matrushita_Zhang}, we have $q(\LL')=0$, where $q$ is the Beauville-Bogomolov form. By \cite[Prop 24.1]{GrossHuybrechtsJoyce}, this implies that the numerical dimension $\nu(\LL')=5$. Hence we have
\[\kappa(\LL')=\nu(\LL').\]
By \cite[Theorem 6.1]{Kawamata:Pluricanonical}, $\LL'$ is semiample. Now the assertion follows from \cite[Thm 1]{Matsushita:Addendum}.
\end{proof}

The following result completes the proof of Theorem \ref{thm_compacttwistedJac}.

\begin{prop}
\label{prop_compactification}
The hyperk\"ahler manifold $N$ provides a compactification of $J$, i.e.
$$J \cong \pi^{-1}(\P_0) \subset N.$$
\end{prop}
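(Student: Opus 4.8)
The plan is to upgrade the open identification of $J_0$ with an open subset of $N$ recorded above into a fibrewise isomorphism over $\P_0$. By the preceding lemma, $J_0=J\setminus(-F)$ is isomorphic to an open subset of $\tM$ lying away from $\mathrm{Bs}(\LL)$; by Proposition \ref{birational_model}(a) this open subset is carried isomorphically into $N$, and by the very construction of $\LL$ the Lagrangian fibration $\pi$ of Theorem \ref{thm_L'semiample} restricts on $J_0$ to the support map $p|_{J_0}\colon J_0\to\P_0$. Hence $J_0\subset\pi^{-1}(\P_0)$ compatibly with the two projections to $\P_0$, and, since $-F$ is the image of the relative Fano surface and therefore has codimension three in $J$, I obtain a birational map $\phi\colon J\dashrightarrow\pi^{-1}(\P_0)$ over $\P_0$ which is an open immersion on $J_0$ whose indeterminacy is contained in $-F$, of codimension at least two.

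First I would argue fibre by fibre. Fix $t\in\P_0$; the fibre $J_t$ is a smooth torsor under the five--dimensional intermediate Jacobian of the smooth cubic threefold $X_t$, and $\phi$ restricts to an open immersion $U_t:=J_t\setminus(-F_t)\hookrightarrow N_t:=\pi^{-1}(t)$ with $-F_t$ of codimension three in $J_t$. Since smooth fibres of a Lagrangian fibration on a projective hyperk\"ahler manifold are abelian torsors (Matsushita), the target $N_t$ is, over the locus $\P_0$ of smooth threefolds, a smooth abelian torsor of dimension five. A rational map from the smooth projective variety $J_t$ to an abelian torsor is everywhere defined, so the open immersion $U_t\hookrightarrow N_t$ extends to a morphism $J_t\to N_t$; being birational between abelian torsors of the same dimension, which contain no rational curves, this morphism is an isomorphism by Zariski's main theorem. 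The main obstacle is precisely to guarantee that $N_t$ is a smooth, reduced, five--dimensional abelian torsor for \emph{every} $t\in\P_0$ and not merely for generic $t$. I would control this by combining the equidimensionality of Lagrangian fibrations with the fact that $N_t$ already contains the five--dimensional smooth dense open $U_t$ with complement of codimension at least two: this forces $N_t$ to be five--dimensional and normal, and, being a fibre of the symplectic fibration over the smooth locus, an abelian torsor.

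Finally I would return to the total space. Once every fibre $N_t$ with $t\in\P_0$ is known to be an abelian torsor, the restriction $\pi^{-1}(\P_0)\to\P_0$ is a smooth proper family of abelian torsors, so the extension theorem for rational maps into abelian schemes shows that $\phi$ is in fact a morphism $J\to\pi^{-1}(\P_0)$ over $\P_0$, its indeterminacy locus $-F$ having codimension at least two. This morphism is birational and, by the fibrewise analysis, bijective; since $J$ is smooth over $\P_0$ and $\pi^{-1}(\P_0)$ is normal, Zariski's main theorem again shows it to be an isomorphism. Therefore $J\cong\pi^{-1}(\P_0)\subset N$, and $N$ provides the desired hyperk\"ahler compactification of the twisted intermediate Jacobian fibration over $\P_0$.
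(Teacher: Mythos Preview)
Your approach via extension of rational maps into abelian torsors is natural, but it has a genuine gap at precisely the step you flag as the main obstacle: establishing that every fibre $N_t=\pi^{-1}(t)$ with $t\in\P_0$ is a smooth abelian torsor. Your proposed justification does not work. Equidimensionality of Lagrangian fibrations gives $\dim N_t=5$, and $U_t$ is a smooth open in some component of $N_t$, but nothing you have written rules out further components, non-reduced structure, or singularities along $N_t\setminus U_t$. The assertion that $N_t\setminus U_t$ has codimension at least two is unsupported: you only know that $J_t\setminus U_t=(-F_t)$ has codimension three \emph{in $J_t$}, which says nothing about the size of the complement \emph{in $N_t$}. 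Even granting that, a variety containing a smooth dense open with small complement need not be normal. Finally, Matsushita's theorem identifies the \emph{smooth} fibres of a Lagrangian fibration as abelian torsors, so invoking it to deduce smoothness is circular. Without smoothness of the fibres, both the fibrewise extension step and the global Weil-type extension into an abelian scheme collapse.

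The paper bypasses fibrewise smoothness entirely by arguing on the total spaces via relative minimal model theory. Both $\pi^{-1}(\P_0)$ and $J$ are smooth, projective over $\P_0$, with trivial canonical class (each carries a holomorphic symplectic form), hence both are relative minimal models over $\P_0$. By \cite[Theorem~3.52]{KollarMori}, two birational relative minimal models are isomorphic in codimension one and connected by flops, whose exceptional loci are swept out by rational curves contracted over the base. Since $J\to\P_0$ is a family of abelian varieties, $J$ contains no such rational curves; therefore no flop can occur and $J\cong\pi^{-1}(\P_0)$. The smoothness of every fibre $N_t$ over $\P_0$ then follows as a \emph{consequence}, rather than being needed as an input.
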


\begin{proof}
We know that $J$, $\tJ$, $M$, $\tM$ and $N$ are birational to each other. Note that both $\pi: \pi^{-1}(\P_0) \to \P_0$ and $p: J \to \P_0$ are projective morphisms. Now since both $N$ and $J$ have symplectic structures, they are both relative minimal models. By \cite[Theorem 3.52]{KollarMori}, $\pi^{-1}(\P_0)$ and $J$ are isomorphic in codimension 1, hence related by a relative flop. Moreover, the exceptional loci are covers by rational curves contracted by $\pi$ and $p$. However, as $J \to \P_0$ is a family of abelian varieties, such a relative flop cannot exist. Hence, we know that $J \cong \pi^{-1}(\P_0)$.
\end{proof}

Note that this provides a modular construction of the results of \cite{Voisin:twisted} and \cite[Remark 1.10]{Sacca:birgeomJac} on the existence of a hyperk\"ahler compactification of $J$.

It remains an interesting question to determine all birational models of $N$ for very general $Y$, similarly to the work \cite{Sacca:birgeomJac}. We plan to study this in future work. In this paper, we focus on one flop between $N$ and $\tM$, which can be explicitly described by our construction. 

\begin{ex}
\label{ex_flopconics}
Recall that an open subset of the exceptional divisor of $\tJ$ parametrizes the sheaves of the form $F_C$ where $C\subset X$ is a conic. The blowdown morphism to $-F\subset J\subset N$ is defined by taking the residual line of $C$ in $X$. Hence the fiber of this morphism is isomorphic to the $\P^2$ parametrizing all conics contained in a fixed $X$ and residual to a fixed line.

On the other hand, the projection of $F_C$ into $\Ku(Y)$ gives the object $E_C$, which is $\sigma_0$-stable by Theorem \ref{thm:EgammaECinkustab}, and defines a point in $\tM$. Hence the fiber of this projection is isomorphic to the $\P^2$ parametrizing all cubic threefolds containing a fixed conic $C$. This explicitly describes a flop between $\tM$ and $N$.
\end{ex}

\begin{rmk}\label{rmk:compare_with_Voisin}
For a very general cubic fourfold $Y$, it is easy to see that the Picard rank of $\tM$ and $N$ is two. In this case, we know that their movable cones are identified, with boundaries given by the blow up and the Lagrangian fibration. This implies that for such $Y$, there exists a unique hyperk\"ahler compactification of the twisted family of intermediate Jacobians with a Lagrangian fibration structure. In particular, $\tM$ and $N$ are not isomorphic and $N$ is isomorphic to Voisin's construction in \cite{Voisin:twisted}.
\end{rmk}

\section{Application: elliptic quintics and MRC quotients} \label{sec_conjCastravet}
In this section we prove Proposition \ref{prop_objsinmodulispaceintro}. Let $Y$ be a smooth cubic fourfold, recall that we can write the semiorthogonal decomposition
$$\Db(Y)=\langle \OO_Y(-2H),\OO_Y(-H), \Ku(Y), \OO_Y \rangle.$$

Let $\Gamma \subset Y$ be an elliptic quintic, whose ideal sheaf is denoted by $\II_{\Gamma/Y}$. Recall from Definition \ref{def:EGammaEC} that we have the following projection $E_\Gamma$ in $\Ku(Y)$:
$$E_\Gamma:=\mathsf{pr}(\II_{\Gamma/Y}(H))=\mathsf{R}_{\OO_Y(-H)}\mathsf{R}_{\OO_Y(-2H)}\mathsf{L}_{\OO_Y} \II_{\Gamma/Y}(H) \in \Ku(Y).$$
By Proposition \ref{prop:EGamaECinKu}, if $\Gamma$ is non-degenerate and spanning a smooth cubic threefold $X \subset Y$, then $E_\Gamma\cong \iota_*F_\Gamma$ as that defined in \eqref{eq:defofFGamma}. In particular, it sits in the following short exact sequence in $\Coh(Y)$: 
\[ 0\to\OO_X(-H)\to E_\Gamma\rightarrow \II_{\Gamma/X}(H)\to 0.\]
Moreover, by Theorem \ref{thm:EgammaECinkustab}, the object $E_\Gamma$ is stable in the moduli $M=M_{\sigma_0}(2\lambda_1+2\lambda_2)$, where $\sigma_0$ is as chosen in Section \ref{sec:Lagrangian}.

Let $\CC$ be the connected component of the Hilbert scheme $\mbox{Hilb}^{5m}(Y)$ parametrizing elliptic quintics in $Y$.

\begin{prop}
\label{prop_objsinmodulispace}
There is a rational map $\rho: \CC \dashrightarrow M$ defined by the projection of $\II_{\Gamma/Y}(H)$ in $\Ku(Y)$, which is the maximally rationally connected (MRC) fibration of $\CC$.
\end{prop}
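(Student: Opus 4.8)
The plan is to construct $\rho$ on a dense open locus, compute its general fibre via the Serre construction of instanton bundles, and then recognise $\rho$ as the MRC fibration by checking that its general fibre is rationally connected while its base is not uniruled.

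\emph{Construction of $\rho$ and dominance.} Let $U \subset \CC$ be the open locus of non-degenerate elliptic quintics $\Gamma$ whose linear span cuts $Y$ in a smooth cubic threefold $X = \langle \Gamma\rangle \cap Y$; apart from the degenerate behaviour analysed in Section \ref{sec:C12}, this locus is dense. For $\Gamma \in U$, Proposition \ref{prop:EGamaECinKu}(2) gives $E_\Gamma = \mathsf{pr}(\II_{\Gamma/Y}(H)) \cong \iota_*F_\Gamma$, and Theorem \ref{thm:EgammaECinkustab} shows that $E_\Gamma$ is $\sigma_0$-stable, hence a point of $M$. Carrying out the projection $\mathsf{pr}$ in families over the universal quintic and invoking the relative moduli theory of \cite{BLM+} turns this into a morphism $U \to M$, i.e.\ the rational map $\rho$. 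Its image is the open subvariety $M_0 \subset M$ of objects of the form $E_\Gamma$; since $M$ is irreducible of dimension $10$ (Theorem \ref{thm_OG10}) and $M_0$ has dimension $10$ — the support $X$ moves in the $5$-dimensional family $\P_0$, and over a fixed $X$ the instanton bundles form a $5$-dimensional family by Remark \ref{rem:instanton} — the map $\rho$ is dominant.

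\emph{The general fibre.} A point $[E_\Gamma] = [\iota_*F_\Gamma] \in M_0$ determines both its scheme-theoretic support $X$ and the instanton bundle $F_\Gamma$. Conversely, any elliptic quintic $\Gamma'$ with $E_{\Gamma'} \cong E_\Gamma$ has the same associated bundle $F_{\Gamma'} = F_\Gamma$, and by the Serre construction recalled in Remark \ref{rem:instanton}(i) it is precisely the zero locus of a section of $F_\Gamma(H)$. Hence $\rho^{-1}([E_\Gamma])$ is identified with a dense open subset of the projective space $\P(H^0(X, F_\Gamma(H)))$; twisting the defining sequence $0 \to \OO_X(-H) \to F_\Gamma \to \II_{\Gamma/X}(H) \to 0$ by $H$ computes $H^0(F_\Gamma(H))$ and exhibits the fibre as an open subset of a projective space. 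In particular the general fibre of $\rho$ is rationally connected.

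\emph{Non-uniruledness of the base and conclusion.} The moduli space $M$ is birational to its symplectic resolution $\tM$, which by Theorem \ref{thm_OG10} is a projective hyperk\"ahler manifold; in particular $K_{\tM}$ is trivial, so $\kappa(\tM) = 0$. As a uniruled variety has Kodaira dimension $-\infty$, the manifold $\tM$ is not uniruled, and since uniruledness is a birational invariant neither is $M$. Now a dominant rational map with rationally connected general fibre and non-uniruled base is the MRC fibration: by the maximality of the MRC quotient, $\rho$ factors as $\CC \dashrightarrow \mathrm{MRC}(\CC) \xrightarrow{\phi} M$, and a general fibre of $\rho$, being rationally connected, surjects onto the corresponding fibre of $\phi$, which is therefore rationally connected; since $\mathrm{MRC}(\CC)$ is not uniruled (Graber--Harris--Starr) this fibre must be zero-dimensional, and by connectedness of the fibre of $\rho$ it is a single point. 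Thus $\phi$ is birational, and $\rho$ is the MRC fibration of $\CC$.

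\emph{Main obstacle.} The crux is the explicit description of the general fibre: one must verify that $E_\Gamma$ recovers the pair $(X, F_\Gamma)$ and that every quintic in $\CC$ with fixed associated bundle is swept out by $\P(H^0(F_\Gamma(H)))$, so that the fibre is an open subset of a projective space. The non-uniruledness of the base, by contrast, is immediate from the hyperk\"ahler structure of $\tM$, and the final identification with the MRC fibration is a formal consequence of the Graber--Harris--Starr theorem.
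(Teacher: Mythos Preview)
Your argument is correct and follows essentially the same route as the paper: construct $\rho$ on the open locus of good quintics via the projection functor, identify the general fibre as an open subset of $\P(H^0(X,F_\Gamma(H)))$ (hence rational), and conclude by the non-uniruledness of the base coming from the hyperk\"ahler compactification. The paper compresses your final two paragraphs into a citation of \cite[Lemma 1.4]{deJong_starr}, which is precisely the statement that a dominant map with rationally connected general fibre and non-uniruled target is the MRC fibration; you have simply unpacked that lemma via Graber--Harris--Starr.
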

\begin{proof}
Consider the open subset $U \subset \CC$ parametrizing non-degenerate elliptic quintic curves on $Y$. 
Let $\II$ be the universal family on $Y \times U$ parametrizing the objects $\II_{\Gamma/Y}(H)$. Then the projection $\FF$ of $\II$ in $\Ku(Y \times U)$ is a flat family of $\sigma_0$-stable objects in $\Ku(Y)$. Thus there is an induced morphism from $U$ to $M$, defining the rational map $\rho$ in the statement.

Recall that $E_\Gamma$ is an instanton bundle over its support $X$, and $\Gamma$ can be identified with the vanishing locus of a section of $E_\Gamma(H)$. For a generic section, the vanishing locus is a locally complete intersection, and is connected and reduced. So it satisfies our conditions on $\Gamma$. Hence we know that the general fibers of $\rho$ are rational. 

To see that $\rho$ is the MRC fibration of $\CC$, it is enough to note that by Proposition \ref{birational_model} there exists a hyperk\"ahler compactification of the locus $M_0$ parametrizing $E_\Gamma$. Now \cite[Lemma 1.4]{deJong_starr} proves the claim.
\end{proof}

A closely related question is about rational quartics on cubic fourfolds. The following was conjectured by Castravet \cite[Page 416]{deJong_starr}, and follows from our results in Section \ref{sec:Lagrangian}.

\begin{prop}\label{prop:quartics}
For any smooth cubic fourfold $Y$, the MRC quotient of the main component of the Hilbert scheme of rational quartics on $Y$ is (birational to) the twisted family of intermediate Jacobians $J$ of $Y$.
\end{prop}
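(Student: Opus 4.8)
The plan is to deduce the statement from the elliptic quintic case, Proposition~\ref{prop_objsinmodulispace}, by way of the classical residuation correspondence recorded in Remark~\ref{rmk_conjCastravet}. Let $\mathcal D$ denote the main component of the Hilbert scheme of rational quartics on $Y$. For a general $R\in\mathcal D$ the curve $R$ is a rational normal quartic, so it spans a $\P^4$, and $\langle R\rangle\cap Y$ is a cubic threefold $X_R$, smooth for $R$ general. This yields a dominant rational map $\mathcal D\dashrightarrow\P_0$ onto the space of smooth hyperplane sections, compatible with the analogous map $\CC\dashrightarrow\P_0$ for the quintic component. First I would make residuation precise over a fixed smooth $X$: inside the threefold $X$ the quartic $R$ has codimension two and can be linked by a complete intersection $Z=S_1\cap S_2$ with $S_1\in|H|$ a hyperplane section and $S_2\in|3H|$ a cubic section, so that $\deg Z=3\cdot 1\cdot 3=9$ and the residual curve $\Gamma=\overline{Z\setminus R}$ has degree $5$. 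Using $K_X=-2H$ a liaison computation gives $p_a(\Gamma)=1$, so that $\Gamma$ is an elliptic quintic spanning $\langle R\rangle\cong\P^4$. This realizes residuation as a correspondence between $\mathcal D$ and $\CC$ commuting with the two projections to $\P_0$.

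The decisive point is the behaviour on $1$-cycles. One has $[R]+[\Gamma]=[Z]$, and since all complete intersections of type $(1,3)$ on $X$ form a rational family they are rationally equivalent, so the Abel--Jacobi image $AJ(Z)=:c$ is constant. Hence the correspondence induces on each fibre of $J\to\P_0$ the affine map $x\mapsto c-x$, which is an isomorphism of torsors under the intermediate Jacobian; in particular it intertwines the second Chern class (Abel--Jacobi) maps of the two families, which by Remark~\ref{rem:instanton} and Proposition~\ref{prop:EGamaECinKu} identify the quintic side with $J$. Consequently, even though the residuation correspondence is many-valued upstairs (it depends on the choice of $Z$), it descends to a genuine isomorphism on the level of Abel--Jacobi quotients.

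With this in place I would rerun the argument of Proposition~\ref{prop_objsinmodulispace} verbatim. The cycle-class map assembles into a rational map $\mathcal D\dashrightarrow J$ over $\P_0$, whose general fibres are rationally connected: over a fixed $X$ they are carried by the residuation correspondence onto the fibres of the quintic Abel--Jacobi map, which are rational (the elliptic quintics sweeping out the sections of a fixed instanton bundle, cf.\ Remark~\ref{rem:instanton}). Since by Theorem~\ref{thm_compacttwistedJac} and Proposition~\ref{prop_compactification} the family $J$ admits the hyperk\"ahler, hence non-uniruled, compactification $N$, the criterion \cite[Lemma~1.4]{deJong_starr} applies exactly as before and shows that $\mathcal D\dashrightarrow J$ is the maximally rationally connected fibration. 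Equivalently, since the maximally rationally connected quotient is a birational invariant and residuation identifies the quotients of $\mathcal D$ and $\CC$ via $x\mapsto c-x$, the quotient of $\mathcal D$ is birational to that of $\CC$, which is $J$ by Proposition~\ref{prop_objsinmodulispace} and Proposition~\ref{birational_model}.

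The main obstacle is the control of residuation in families. One must verify that for a general rational quartic the residual curve is a reduced, non-degenerate elliptic quintic, that the complete-intersection data can be chosen to vary algebraically so as to produce an honest dominant correspondence $\mathcal D\dashrightarrow\CC$ over $\P_0$, and, most importantly, that the fibres of $\mathcal D\dashrightarrow J$ are rationally connected. This last point is where the rationality of the linear systems $|H|$ and $|3H|$ cutting out $Z$, together with the de Jong--Starr genericity arguments, must be combined; once it is secured, the constancy of $c=AJ(Z)$, the torsor identification, and the invocation of \cite[Lemma~1.4]{deJong_starr} via the hyperk\"ahler compactification of Section~\ref{sec:Lagrangian} are formal.
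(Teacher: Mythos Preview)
Your overall strategy --- reduce to the elliptic quintic case via residuation and then invoke the hyperk\"ahler compactification --- is in the spirit of Remark~\ref{rmk_conjCastravet}, but the paper's actual proof of Proposition~\ref{prop:quartics} is much shorter and proceeds differently. The paper simply cites \cite{deJong_starr}, where it was already established that the MRC quotient of the rational-quartic component dominates $J$ with rationally connected general fibre, so that the only outstanding issue was the non-uniruledness of $J$. The new input is exactly the hyperk\"ahler compactification of Proposition~\ref{prop_compactification}, which settles this. In other words, de~Jong and Starr had already carried out the reduction you are trying to reconstruct; the paper does not re-derive it.

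There is also a concrete error in your linkage. You propose to link the rational quartic $R$ to an elliptic quintic via a complete intersection $Z=S_1\cap S_2$ in $X$ with $S_1\in|H|$ and $S_2\in|3H|$. But a general $R$ is a rational normal quartic, hence spans $\langle R\rangle\cong\P^4$, and therefore cannot lie on any hyperplane section $S_1$ of $X$, which is contained in a $\P^3$. The residuation indicated in Remark~\ref{rmk_conjCastravet} and \cite[Section~8]{HarrisRothStarr} is not through a complete intersection at all: one passes through a cubic scroll surface $\Sigma\subset X$ containing the curve, and the elliptic quintic and rational quartic are residual on $\Sigma$. Even with this correction, your programme would still need to verify rational connectedness of the fibres of $\mathcal D\dashrightarrow J$ and constancy of the Abel--Jacobi class of the linking cycle --- precisely the content already supplied by \cite{deJong_starr}. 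So your approach can be salvaged, but it re-proves the de~Jong--Starr reduction rather than using it.
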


\begin{proof}
It was observed in \cite{deJong_starr} that it is enough to show that $J$ is not uniruled. This follows from the existence of the hyperk\"ahler compactification of the twisted family in Proposition \ref{prop_compactification}.
\end{proof}

This is the only remaining case of MRC quotients of rational curves on cubic fourfolds: the case of degree $d \leq 3$ is classical, while $d \geq 5$ was treated in \cite{deJong_starr}.

\begin{rmk}
\label{rmk_conjCastravet}
Here we briefly recall the connection between elliptic quintics and rational quartics. It was proved in \cite[Section 8]{HarrisRothStarr} that for a generic elliptic quintic in a generic cubic threefold, we can choose a generic cubic scroll surface containing the curve, such that the residual curve is a smooth rational quartic. Along this line, it should be possible to show that the main components of the Hilbert schemes corresponding to these two cases are stably birational. We do not need this result and leave it as an open question.
\end{rmk}

\bibliography{all}                      
\bibliographystyle{halpha}     
\end{document}